\theoremstyle{plain}
        \newtheorem{theorem}{Theorem}[section]
        \newtheorem*{theorem*}{Theorem}
        \newtheorem*{conj*}{Conjecture}
        \newtheorem{lemma}[theorem]{Lemma}
        \newtheorem{prop}[theorem]{Proposition}
        \newtheorem{cor}[theorem]{Corollary}
\theoremstyle{definition}
        \newtheorem{definition}[theorem]{Definition}
        \newtheorem{rem}[theorem]{Remark}
         \newtheorem{rems}[theorem]{Remarks}
         \newtheorem{ex}[theorem]{Example}
         \newtheorem*{assumptions}{The Assumptions}
\theoremstyle{remark}
        \newtheorem*{remark}{Remark}
\numberwithin{equation}{section}
\numberwithin{theorem}{section}
\numberwithin{table}{section}
\numberwithin{figure}{section}
\providecommand{\defn}[1]{\emph{#1}}
\newcommand{\diam}  {\operatorname{diam}}
\newcommand{\inte}  {\operatorname{inte}}
\newcommand{\inter}  {\operatorname{int}}
\newcommand{\id} {\operatorname{id}}
\newcommand{\card} {\operatorname{card}}
\newcommand{\supp}{\operatorname{supp}}
\newcommand{\R}{\mathbb{R}}
\newcommand{\B}{\mathbb{B}}    
\newcommand{\C}{\mathbb{C}}      
\newcommand{\N}{\mathbb{N}}      
\newcommand{\Z}{\mathbb{Z}}      
\providecommand{\abs}[1]{\lvert#1\rvert}
\providecommand{\Abs}[1]{\left|#1\right|}
\providecommand{\Norm}[1]{\left\|#1\right\|}
\renewcommand{\:}{\colon}
\def\({\left(}
\def\){\right)}
\def\[{\left[}
\def\]{\right]}
\newcommand{\crit}{\operatorname{crit}}
\newcommand{\post}{\operatorname{post}}
\newcommand{\CC}{\mathcal{C}}
 \newcommand{\DD}{\mathbf{D}}
\newcommand{\PP}{\mathbf{P}}
\newcommand{\X} {\mathbf{X}}
\newcommand{\E} {\mathbf{E}}
\newcommand{\V} {\mathbf{V}}
\newcommand{\CCC}{C}
\newcommand{\PPP}{\mathcal{P}}
\newcommand{\MMM}{\mathcal{M}}
\newcommand{\BB}{\mathcal{B}}
\newcommand{\Holder}[1] {\CCC^{0,#1}}
\newcommand{\Hseminorm}[2] {\Abs{#2}_{#1}}
\newcommand{\Hnorm}[2] {\Norm{#2}_{\Holder{#1}}}
\newcommand{\RR}{\mathcal{L}}
\begin{document}
\title[Equilibrium States for Expanding Thurston Maps]{Equilibrium States for Expanding Thurston Maps}
\author{Zhiqiang~Li}
\address{Department of Mathematics, UCLA, Los Angeles CA 90095-1555}
\thanks{The author was partially supported by NSF grant DMS-1162471.}
\email{lizq@math.ucla.edu}


\subjclass[2010]{Primary: 37D35; Secondary: 37D20, 37D25, 37D40, 37D50, 37B99, 37F15, 57M12}

\keywords{Thurston map, postcritically-finite map, thermodynamical formalism, equilibrium state, Gibbs state, Ruelle operator, equidistribution.}

\begin{abstract}
In this paper, we use the thermodynamical formalism to show that there exists a unique equilibrium state $\mu_\phi$ for each expanding Thurston map $f\: S^2\rightarrow S^2$ together with a real-valued H\"{o}lder continuous potential $\phi$. Here the sphere $S^2$ is equipped with a natural metric induced by $f$, called a visual metric. We also prove that identical equilibrium states correspond to potentials which are co-homologous upto a constant, and that the measure-preserving transformation $f$ of the probability space $(S^2,\mu_\phi)$ is exact, and in particular, mixing and ergodic. Moreover, we establish versions of equidistribution of preimages under iterates of $f$, and a version of equidistribution of a random backward orbit, with respect to the equilibrium state. As a consequence, all the above results hold for a postcritically-finite rational map with no periodic critical points on the Riemann sphere equipped with the chordal metric.
\end{abstract}

\maketitle

\tableofcontents

\section{Introduction}
Ergodic theory has been an important tool in the study of dynamical systems. The investigation of the existence and uniqueness of invariant measures and their properties has been a central part of ergodic theory. The realization of the connection between the orbit structure and the existence of a finite invariant measure can be traced back to H.~Poincar\'e.

However, a dynamical system may possess a large class of invariant measures, some of which may be more interesting than others. It is therefore crucial to examine the relevant invariant measures.

The \emph{thermodynamical formalism}  is one such mechanism to produce invariant measures with some nice properties under assumptions on the regularity of their \emph{Jacobian functions}. More precisely, for a continuous transformation on a compact metric space, we can consider the \emph{topological pressure} as a weighted version of the \emph{topological entropy}, with the weight induced by a real-valued continuous function, called \emph{potential}. The Variational Principle identifies the topological pressure with the supremum of its measure-theoretic counterpart, the \emph{measure-theoretic pressure}, over all invariant Borel probability measures \cite{Bo75, Wa76}. Under additional regularity assumptions on the transformation and the potential, one gets existence and uniqueness of an invariant Borel probability measure maximizing the measure-theoretic pressure, called the \emph{equilibrium state} for the given transformation and the potential. Often the Jacobian function for the transformation with respect to the equilibrium state is prescribed by a function induced by the potential. The study of the existence and uniqueness of the equilibrium states and their various properties such as ergodic properties, equidistribution, fractal dimensions, etc., has been the main motivation for much research in the area.

This theory, as a successful approach to choosing relevant invariant measures, was inspired by statistical mechanics, and created by D.~Ruelle, Ya.~Sinai, and others in the early seventies \cite{Do68, Si72, Bo75, Wa82}. Since then, the thermodynamical formalism has been applied in many classical contexts (see for example, \cite{Bo75, Ru89, Pr90, KH95, Zi96, MauU03, BS03, Ol03, Yu03, PU10, MayU10}). However, beyond several classical dynamical systems, even the existence of equilibrium states is largely unknown, and for those dynamical systems that do possess equilibrium states, often the uniqueness is unknown or at least requires additional conditions. The investigation of different dynamical systems from this perspective has been an active area of current research.

In this paper, we apply the theory of thermodynamical formalism to study the equilibrium states for a class of non-uniformly expanding dynamical systems that are not among the classical dynamical systems studied in the works mentioned above, namely, the class of expanding Thurston maps on the sphere $S^2$. Thurston maps are branched covering maps on the sphere $S^2$ that generalize rational maps with finitely many postcritical points on the Riemann sphere. More precisely, a (non-homeomorphic) branched covering map $f\: S^2 \rightarrow S^2$ is a Thurston map if it has finitely many critical points each of which is preperiodic. These maps arose in W.~P.~Thurston's characterization of postcritically-finite rational maps (see [DH93]). For a more detailed introduction to Thurston maps, see Section~\ref{sctThurstonMap}.

In order to obtain the existence and uniqueness of \emph{the measure of maximal entropy} (i.e., the equilibrium state for the constant potential 0) for a Thurston map, some condition of expansion had to be imposed. P.~Ha\"{i}ssinsky and K.~Pilgrim introduced such a notion for any finite branched coverings between two topological spaces that are Hausdorff, locally compact, and locally connected (see \cite[Section~2.1 and Section~2.2]{HP09}). M.~Bonk and D.~Meyer formulated \cite{BM10} an equivalent definition of expansion in the context of Thurston maps. We will discuss the precise definition in Section~\ref{sctThurstonMap}. We call Thurston maps with such an expansion property \emph{expanding Thurston maps}. We refer to \cite[Proposition~8.2]{BM10} for a list of equivalent definitions.

As a consequence of their general results in \cite{HP09}, P.~Ha\"issinsky and K.~Pilgrim proved that for each expanding Thurston map, there exists a measure of maximal entropy and that the measure of maximal entropy is unique for expanding Thurston maps without periodic critical points. M.~Bonk and D.~Meyer, on the other hand, proved the existence and uniqueness of the measure of maximal entropy for all expanding Thurston maps in \cite{BM10} using an explicit combinatorial construction.

Actually the notion of expansion on a Thurston map $f\: S^2\rightarrow S^2$ is sufficient for us to establish the existence and uniqueness of the equilibrium state, denoted by $\mu_\phi$, for a H\"{o}lder continuous potential $\phi\: S^2\rightarrow \R$. Here the sphere $S^2$ is equipped with a natural metric called a \emph{visual metric} (see Lemma~\ref{lmCellBoundsBM} and the preceding discussion). This generalizes the existence and uniqueness of the measure of maximal entropy of an expanding Thurston map in \cite{HP09} and \cite{BM10}. We also prove that the measure-preserving transformation $f$ of the probability space $(S^2,\mu_\phi)$ is \emph{exact} (see Definition~\ref{defExact}), and in particular, mixing and ergodic (Theorem~\ref{thmFisExact} and Corollary~\ref{corMixing}). This generalizes the corresponding results in \cite{BM10} and \cite{HP09} for the measure of maximal entropy to our context.

In \cite{HP09} and \cite{Li13}, various versions of equidistribution of preimage, periodic, and preperiodic points of an expanding Thurston map with respect to the measure of maximal entropy were established. In Section~\ref{sctEquidistribution}, we prove some versions of equidistribution with respect to the equilibrium state in our context. These results generalize the corresponding equidistribution results in \cite{HP09} and \cite{Li13}.

In this paper, we use the framework set by M.~Bonk and D.~Meyer in \cite{BM10} to study expanding Thurston maps, but our approach to the investigation of equilibrium states is different from the treatment of measures of maximal entropy in \cite{BM10} and \cite{HP09}. We use the thermodynamical formalism to establish the existence and uniqueness of the equilibrium states and their various properties.

In order to state our results more precisely, we quickly review some key concepts.

For an expanding Thurston map $f\: S^2\rightarrow S^2$ and a continuous function $\psi\: S^2\rightarrow \R$, each $f$-invariant Borel probability measure $\mu$ on $S^2$ corresponds to a quantity
$$
P_\mu(f,\psi) = h_\mu (f) + \int \! \psi \,\mathrm{d} \mu
$$ 
called the \emph{measure-theoretic pressure} of $f$ for $\mu$ and $\psi$, where $h_\mu(f)$ is the measure-theoretic entropy of $f$ for $\mu$. The well-known Variational Principle (see for example, \cite[Theorem~3.4.1]{PU10}) asserts that
\begin{equation}  \label{eqIntroVP}
P(f,\psi) = \sup P_\mu(f,\psi),
\end{equation}
where the supremum is taken over all $f$-invariant Borel probability measures $\mu$, and $P(f,\psi)$ is the \emph{topological pressure} of $f$ with respect to $\psi$ defined in (\ref{defTopPressure}). A measure $\mu$ that attains the supremum in (\ref{eqIntroVP}) is called an \emph{equilibrium state} for $f$ and $\psi$.

We assume for now that $\psi$ is H\"{o}lder continuous (with respect to a given visual metric for $f$ on $S^2$). One characterization of the topological pressure in our context is given by the following formula (Proposition~\ref{propTopPressureDefPreImg}):
\begin{equation}   \label{eqIntroTopPressurePreImgDef}
P(f,\psi)= \lim\limits_{n\to +\infty} \frac{1}{n} \log \sum\limits_{y\in f^{-n}(x)} \deg_{f^n}(y) \exp (S_n\psi(y)),
\end{equation}
for each $x\in S^2$, independent of $x$, where $\deg_{f^n}(y)$ is the local degree of $f^n$ at $y$ and $S_n\psi(y)=\sum\limits_{i=0}^{n-1} \psi(f^i(y))$. 

An important tool we use to find the equilibrium state and to establish its uniqueness, is the \emph{Ruelle operator} $\RR_{\psi}$ on the Banach space $\CCC(S^2)$ of real-valued continuous functions on $S^2$, given by 
$$
\RR_\psi(u)(x) = \sum\limits_{y\in f^{-1}(x)} \deg_f(y)u(y)\exp(\psi(y)),
$$
for $u\in \CCC(S^2)$ and $x\in S^2$. 

The Ruelle operator plays a central role in the thermodynamical formalism, and has been studied carefully for various dynamical systems (see for example, \cite{Bo75, Ru89, Pr90, Zi96, MauU03, PU10, MayU10}). Some of the ideas that we apply in this paper for its investigation are well-known and repeatedly used in the literature, see for example \cite{PU10, Zi96}. 

A main difficulty of our analysis comes from the lack of uniform expansion property that arises from the existence of critical points (i.e., branch points of a branched covering map). As an example, identities of the form (\ref{eqIntroTopPressurePreImgDef}) that are usually easy to derive for classical dynamical systems (see for example, \cite[Proposition~4.4.3]{PU10}) become difficult to verify directly in our context.

We remark on the subtlety of our notion of expansion by pointing out that each expanding Thurston map without periodic critical points is \emph{asymptotically $h$-expansive}, but not \emph{$h$-expansive}; on the other hand, expanding Thurston maps with at least one periodic critical point are not even asymptotically $h$-expansive \cite{Li14}. Asymptotic $h$-expansiveness and $h$-expansiveness are two notions of weak expansion introduced by M.~Misiurewicz \cite{Mi73} and R.~Bowen \cite{Bo72}, respectively. Note that forward-expansiveness implies $h$-expansiveness, which in turn implies asymptotic $h$-expansiveness \cite{Mi76}. Both conditions guarantee that the measure-theoretic entropy as a function on the space of invariant Borel probability measures (equipped with the weak$^*$ topology) is upper semi-continuous \cite{Mi76}. We do not use the last fact in this paper, but we remark here that the upper semi-continuity of the measure-theoretic entropy implies the existence of at least one equilibrium state for a general real-valued continuous potential. So we can get a stronger existence result for equilibrium states for expanding Thurston maps without periodic critical points than that in the Main Theorem below (see \cite[Theorem~1.3]{Li14}).

\smallskip

The following statement summarizes the main results of this paper.

\begin{theorem}[Main Theorem]    \label{thmMain}
Let $f\:S^2 \rightarrow S^2$ be an expanding Thurston map and $d$ be a visual metric on $S^2$ for $f$. Let $\phi$ be a real-valued H\"{o}lder continuous function on $S^2$ with respect to the metric $d$. 

Then there exists a unique equilibrium state $\mu_\phi$ for the map $f$ and the potential $\phi$. If $\psi$ is another real-valued H\"{o}lder continuous function on $S^2$ with respect to the metric $d$, then $\mu_\phi=\mu_\psi$ if and only if there exists a constant $K\in\R$ such that $\phi - \psi$ and $K \mathbbm{1}_{S^2}$ are co-homologous in the space of real-valued continuous functions on $S^2$, i.e., $\phi-\psi - K \mathbbm{1}_{S^2} = u\circ f - u$ for some real-valued continuous function $u$ on $S^2$.

Moreover, $\mu_\phi$ is a non-atomic $f$-invariant Borel probability measure on $S^2$ and the measure-preserving transformation $f$ of the probability space $(S^2,\mu_\phi)$ is forward quasi-invariant, nonsingular, exact, and in particular, mixing and ergodic.

In addition, the preimages points of $f$ are equidistributed with respect to $\mu_\phi$, i.e., for each sequence $\{x_n\}_{n\in\N}$ of points in $S^2$, as $n\longrightarrow +\infty$,
\begin{equation}    \label{eqMainThm1}
\frac{1}{Z_n(\phi)} \sum\limits_{y\in f^{-n}(x_n)} \deg_{f^n} (y) \exp\(S_n\phi(y)\) \frac{1}{n} \sum\limits_{i=0}^{n-1} \delta_{f^i(y)} \stackrel{w^*}{\longrightarrow} \mu_\phi,
\end{equation}
\begin{equation}   \label{eqMainThm2}
\frac{1}{Z_n\big(\widetilde\phi \hspace{0.5mm} \big)} \sum\limits_{y\in f^{-n}(x_n)} \deg_{f^n} (y) \exp\big(S_n\widetilde\phi(y)\big) \delta_y \stackrel{w^*}{\longrightarrow} \mu_\phi,
\end{equation} 
where $Z_n(\psi) = \sum\limits_{y\in f^{-n}(x_n)} \deg_{f^n} (y) \exp\(S_n\psi(y)\)$, for each $n\in\N$ and each $\psi\in\CCC(S^2)$.
\end{theorem}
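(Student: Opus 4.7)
The plan is to construct $\mu_\phi$ via the Ruelle--Perron--Frobenius framework for the transfer operator $\RR_\phi$, adapted to the branched-cover setting of expanding Thurston maps. The first step is to establish a Bowen-type distortion estimate: using the fact that the Bonk--Meyer tile structure forces $n$-tiles to shrink exponentially in the visual metric $d$, I would show that if $y_1, y_2$ lie in the same $n$-tile then $\abs{S_n\phi(y_1) - S_n\phi(y_2)}$ is uniformly bounded, with $\Holder{\alpha}$-decay in $d(f^n(y_1), f^n(y_2))$ where $\alpha$ is the H\"older exponent of $\phi$. This estimate, combined with the preimage formula (\ref{eqIntroTopPressurePreImgDef}) for topological pressure, gives the basic bounds needed to run transfer-operator arguments in our setting.

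Next, I would show that the normalized iterates $e^{-nP(f,\phi)}\RR_\phi^n(\mathbbm{1}_{S^2})$ are uniformly bounded above and below and are equicontinuous on $(S^2,d)$. An Arzel\`a--Ascoli extraction together with a Ces\`aro average then produces a positive H\"older continuous eigenfunction $u_\phi$ of $\RR_\phi$ with eigenvalue $e^{P(f,\phi)}$. Dually, a Schauder--Tychonoff fixed-point argument applied to the normalized action of $\RR_\phi^*$ on Borel probability measures produces an eigenmeasure $m_\phi$. After normalizing so that $\int u_\phi \,\mathrm{d} m_\phi = 1$, the measure $\mu_\phi := u_\phi \, m_\phi$ is $f$-invariant by a direct computation from the defining relation of $\RR_\phi$, and its non-atomicity follows from the exponential shrinking of $n$-tiles combined with the eigenmeasure property of $m_\phi$.

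The main obstacle, and the point where the branched-cover nature of $f$ genuinely intrudes via the local degrees $\deg_{f^n}$, is to upgrade these eigen-objects to a \emph{unique} equilibrium state. I would analyze the normalized Markov operator $\widetilde{\RR}_\phi v := e^{-P(f,\phi)} u_\phi^{-1} \RR_\phi(u_\phi v)$, which preserves $\mu_\phi$, and prove via the distortion estimate and the exponential shrinking of $n$-tiles that $\widetilde{\RR}_\phi^n g$ converges uniformly to $\int g \,\mathrm{d}\mu_\phi$ for every H\"older $g$. This convergence has three consequences: it yields $P_{\mu_\phi}(f,\phi) = P(f,\phi)$ together with a sharp upper bound $P_\nu(f,\phi) \le P(f,\phi)$ for any other invariant $\nu$, with equality forcing $\nu$ to have the same Jacobian prescribed by $\phi$ and hence $\nu = \mu_\phi$; it yields exactness of $(S^2,\mu_\phi,f)$ by applying the uniform convergence to characteristic functions of sets in the tail $\sigma$-algebra; and it yields the explicit Jacobian formula for $\mu_\phi$ involving $\deg_f$, $e^{\phi - P(f,\phi)}$, and $u_\phi\circ f/u_\phi$, from which forward quasi-invariance and nonsingularity are immediate.

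Finally, both equidistribution statements (\ref{eqMainThm1}) and (\ref{eqMainThm2}) reduce to the convergence of normalized transfer-operator iterates at the varying base point $x_n$: since $Z_n(\psi) = \RR_\psi^n(\mathbbm{1}_{S^2})(x_n)$, pairing the measures in (\ref{eqMainThm1})--(\ref{eqMainThm2}) against a continuous test function $h$ yields expressions of the form $\RR_\phi^n(h_n)(x_n)/Z_n(\phi)$ with $h_n$ under uniform control, and the uniform convergence $\widetilde{\RR}_\phi^n \mathbbm{1}_{S^2} \to 1$ forces the limit to equal $\int h \,\mathrm{d}\mu_\phi$, independent of the sequence $\{x_n\}_{n\in\N}$. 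For the cohomology classification, $\mu_\phi = \mu_\psi$ forces both potentials to give the same Jacobian data for the common measure; taking logarithms and rearranging yields
$\phi - \psi - (P(f,\phi)-P(f,\psi))\mathbbm{1}_{S^2} = \log(u_\psi/u_\phi)\circ f - \log(u_\psi/u_\phi),$
the required coboundary relation in $\CCC(S^2)$, while the converse direction is a direct computation from the definition of $P_\mu(f,\cdot)$.
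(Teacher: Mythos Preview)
Your overall architecture---distortion lemmas, Schauder fixed point for $m_\phi$, Arzel\`a--Ascoli/Ces\`aro for $u_\phi$, the normalized operator $\widetilde{\RR}_\phi=\RR_{\widetilde\phi}$, and equidistribution via uniform convergence of iterates---matches the paper's. Two places diverge substantively.

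\textbf{Uniqueness.} The step ``equality forcing $\nu$ to have the same Jacobian prescribed by $\phi$ and hence $\nu=\mu_\phi$'' is a genuine gap. An arbitrary equilibrium state $\nu$ is not known a priori to admit a Jacobian of the form $ce^{-\phi}$, nor to be absolutely continuous with respect to $m_\phi$; and even granting a Rokhlin-type formula $h_\nu(f)=\int\log J_\nu\,\mathrm{d}\nu$, passing from ``$J_\nu=e^{P-\phi}$ $\nu$-a.e.'' to ``$\nu=\mu_\phi$'' still requires a separate argument. The paper bypasses this entirely: it uses the uniform convergence you describe to prove (Lemma~\ref{lmDerivConv}, Theorem~\ref{thmPisDiff}) that $t\mapsto P(f,\phi+t\gamma)$ is differentiable at $0$ for every H\"older $\gamma$, and then invokes the convex-analysis fact (Theorem~\ref{thmUniqueTangent}) that G\^ateaux differentiability of the convex function $P(f,\cdot)$ at $\phi$ forces a unique tangent functional there. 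Since every equilibrium state induces such a tangent, uniqueness follows.

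\textbf{Cohomology.} Your Jacobian-comparison argument for the forward implication is different from, and more direct than, the paper's route, which goes through a closing lemma (Lemma~\ref{lmClosingLemma}) and a Liv\v{s}ic-type characterization (Proposition~\ref{propCohomologousEquiv}) before passing to the iterate $f^{n_\CC}$. Your argument does work once you know $\mu_\phi=m_{\widetilde\phi}$ has full support (Gibbs property) and that $u_\phi,u_\psi$ are uniquely normalized; note however that unwinding $\widetilde\phi=\widetilde\psi$ gives $u=\log(u_\phi/u_\psi)$, not $\log(u_\psi/u_\phi)$. A smaller point: non-atomicity is not as immediate as you suggest. When $f$ has a periodic critical point $x$, the degrees $\deg_{f^n}(x)$ grow exponentially, so tile-shrinking alone does not exclude an atom there; the paper handles periodic postcritical points for $m_\phi$ by a direct counting argument (Theorem~\ref{thmMexists}(ii)) and then deduces non-atomicity of $\mu_\phi$ from exactness (Corollary~\ref{corNonAtomic}).
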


Here the symbol $w^*$ indicates the convergence is in the weak$^*$ topology, $\deg_{f^n}(x)$ denotes the local degree of the map $f^n$ at $x$, $S_n\psi(y)= \sum\limits_{i=0}^{n-1} \psi(f^i(y))$, and $\widetilde\phi$ is a potential related to $\phi$ defined in (\ref{eqDefPhiTilde}).

The Main Theorem above combines Theorem~\ref{thmUniqueES}, Theorem~\ref{thmFisExact}, Corollary~\ref{corNonAtomic}, Corollary~\ref{corMixing}, Theorem~\ref{thmCohomologous}, and Proposition~\ref{propWeakConvPreImgWithWeight}.

As a quick consequence of the proof of the uniqueness of the equilibrium state, we show in Proposition~\ref{propWeakConvR*^nProbToMu} that under the assumptions in the Main Theorem, the images of each Borel probability measure $\mu$ under iterates of the adjoint of the Ruelle operator $\RR_{\widetilde\phi}$ converge, in the weak$^*$ topology to the unique equilibrium state $\mu_\phi$, i.e.,
\begin{equation}   \label{eqIntroWeakConvR*^nProbToMu}
\big(\RR^*_{\widetilde\phi} \big)^n (\mu)  \stackrel{w^*}{\longrightarrow} \mu_\phi \text{ as } n\longrightarrow +\infty.
\end{equation}

A rational Thurston map is expanding if and only if it has no periodic critical points (see \cite[Proposition~19.1]{BM10}). So when we restrict to rational Thurston maps, we get the following corollary as an immediate consequence of Theorem~\ref{thmMain} and Remark~\ref{rmChordalVisualQSEquiv}.

\begin{cor}   \label{corMainRational}
Let $f$ be a postcritically-finite rational map on the Riemann sphere $\widehat\C$ with no periodic critical points and with degree at least $2$. Let $\phi$ be a real-valued H\"{o}lder continuous function on $\widehat\C$ equipped with the chordal metric. 

Then there exists a unique equilibrium state $\mu_\phi$ for the map $f$ and the potential $\phi$. If $\psi$ is another real-valued H\"{o}lder continuous function on $\widehat\C$, then $\mu_\phi=\mu_\psi$ if and only if there exists a constant $K\in\R$ such that $\phi - \psi$ and $K \mathbbm{1}_{\widehat\C}$ are co-homologous in the space of real-valued continuous functions on $\widehat\C$, i.e., $\phi-\psi - K \mathbbm{1}_{\widehat\C} = u\circ f - u$ for some real-valued continuous function $u$ on $\widehat\C$.

Moreover, $\mu_\phi$ is an non-atomic $f$-invariant Borel probability measure on $S^2$ and the measure-preserving transformation $f$ of the probability space $(S^2,\mu_\phi)$ is forward quasi-invariant, nonsingular, exact, and in particular, mixing and ergodic.

In addition, both (\ref{eqMainThm1}) and (\ref{eqMainThm2}) hold as $n\longrightarrow +\infty$.
\end{cor}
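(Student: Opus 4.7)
The plan is to reduce Corollary~\ref{corMainRational} to the Main Theorem (Theorem~\ref{thmMain}) by verifying its two hypotheses for $f$ acting on $S^2=\CDach$: that $f$ is an expanding Thurston map, and that each potential under consideration is H\"older continuous with respect to some visual metric on $\CDach$ for $f$.

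First I would invoke \cite[Proposition~19.1]{BM10}, quoted in the text, which asserts that a rational Thurston map is expanding if and only if it has no periodic critical points. Since $f$ is postcritically-finite, rational, of degree at least $2$, and has no periodic critical points, it is an expanding Thurston map in the sense of this paper, and consequently a visual metric $d$ on $\CDach$ for $f$ exists. The properties of $\mu_\phi$ asserted in the corollary beyond existence and uniqueness---non-atomicity, forward quasi-invariance, nonsingularity, exactness (hence mixing and ergodicity), and the two equidistribution statements (\ref{eqMainThm1}) and (\ref{eqMainThm2})---are formulated purely in dynamical terms and do not reference any metric. Hence once the equilibrium state is produced by Theorem~\ref{thmMain}, those conclusions transfer immediately.

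The remaining point is that the corollary only assumes H\"older continuity with respect to the chordal metric $\sigma$, whereas Theorem~\ref{thmMain} uses the visual metric $d$. This is precisely where I would invoke Remark~\ref{rmChordalVisualQSEquiv}, which asserts that $\sigma$ and $d$ are quasisymmetrically equivalent. Because $(\CDach,\sigma)$ is a compact connected metric space with more than one point, it is uniformly perfect; on such spaces every quasisymmetric homeomorphism is in fact a power quasisymmetry, so there exist constants $C>0$ and $\beta\in(0,1]$ such that
\[
\tfrac{1}{C}\,\sigma(x,y)^{1/\beta}\le d(x,y)\le C\,\sigma(x,y)^{\beta}
\qquad \text{for all } x,y\in\CDach.
\]
An $\alpha$-H\"older function with respect to $\sigma$ is therefore $\alpha\beta$-H\"older with respect to $d$, and conversely with the analogous change of exponent. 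Thus the H\"older hypothesis on $\phi$ and $\psi$ in the corollary implies the H\"older hypothesis required by the Main Theorem.

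Putting these pieces together, applying Theorem~\ref{thmMain} to $f$ together with the visual metric $d$ and the potential $\phi$ yields the existence and uniqueness of $\mu_\phi$, the co-homological characterization of when $\mu_\phi=\mu_\psi$ (which is stated purely in terms of continuous functions and is therefore insensitive to the choice between $\sigma$ and $d$, since the two metrics induce the same topology on $\CDach$), and all the dynamical and equidistribution conclusions. The only step requiring thought is the H\"older transfer between the two metrics, and with Remark~\ref{rmChordalVisualQSEquiv} in hand this is routine; I do not anticipate any real obstacle, and I expect the author's proof to amount to a short deduction that cites Remark~\ref{rmChordalVisualQSEquiv} and Theorem~\ref{thmMain}.
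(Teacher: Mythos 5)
Your proposal is correct and coincides with the paper's own (very brief) argument: the paper deduces Corollary~\ref{corMainRational} directly from Theorem~\ref{thmMain} together with \cite[Proposition~19.1]{BM10} for expansion and Remark~\ref{rmChordalVisualQSEquiv} for the equivalence of the H\"older classes under the chordal and visual metrics. Your unpacking of the quasisymmetry-to-H\"older step is just the content of the Heinonen reference cited in Remark~\ref{rmChordalVisualQSEquiv}, not a new route.
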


The postcritically-finite rational maps (with degree at least $2$) is another name for the rational Thurston maps, used by many authors in holomorphic dynamics.

The existence and uniqueness of the equilibrium state for a general rational map $R$ on the Riemann sphere and a real-valued H\"{o}lder continuous potential $\phi$ can be established under the additional assumption that $\sup \{ \phi(z) \,|\, z\in J(R) \}  < P(R,\phi)\}$, where $J(R)$ is the Julia set of $R$ and $P(R,\phi)$ is the topological pressure of $R$ with respect to $\phi$ (see \cite{DU91,Pr90,DPU96}). This assumption can sometimes be dropped: one can either restrict to certain subclasses of rational maps, such as topological Collet-Eckmann maps, see \cite{CRL11}, or hyperbolic rational maps (more generally, distance-expanding maps), see \cite{PU10}; or one can impose other conditions on the function $\phi$, such as hyperbolicity of $\phi$, see \cite{IRRL12}. It is easy to check that a rational expanding Thurston map is topological Collet-Eckmann.

\smallskip

We will now give a brief description of the structure of this paper.

After fixing some notation in Section~\ref{sctNotation}, we review Thurston maps in Section~\ref{sctThurstonMap}. A few key concepts and results from \cite{BM10}, which are going to be used in this paper, are recorded or generalized. In Lemma~\ref{lmMetricDistortion}, we prove that an expanding Thurston map locally expands the distance, with respect to a visual metric, between two points exponentially as long as they belong to one set in some particular partition of $S^2$ induced by a backward iteration of some Jordan curve on $S^2$. This observation, generalizing a result of M.~Bonk and D.~Meyer \cite[Lemma~16.1]{BM10}, enables us to establish the distortion lemmas (Lemma~\ref{lmSnPhiBound} and Lemma~\ref{lmSigmaExpSnPhiBound}) in Section~\ref{sctExistence}, which serve as cornerstones for the mechanism of thermodynamical formalism.

In Section~\ref{sctAssumptions}, we state the assumptions on some of the objects in this paper, which we are going to repeatedly refer to later as \emph{the Assumptions}.

In Section~\ref{sctExistence}, following the ideas from \cite{PU10} and \cite{Zi96}, we use the thermodynamical formalism to prove the existence of the equilibrium states for expanding Thurston maps and real-valued H\"{o}lder continuous potentials. We first recall briefly some concepts from dynamical systems, such as the measure-theoretic pressure, the topological pressure, the equilibrium state, and others. We then establish two distortion lemmas (Lemma~\ref{lmSnPhiBound} and Lemma~\ref{lmSigmaExpSnPhiBound}), which will be used frequently throughout this paper. Next, we define \emph{Gibbs states} and \emph{radial Gibbs states}. Later in Proposition~\ref{propRadialGibbsIFFGibbs}, we prove that for an expanding Thurston map the notion of a Gibbs state is equivalent to that of a radial Gibbs state if and only if the map does not have periodic critical points. 

We then introduce the Ruelle operator $\RR_\psi$ on the Banach space $\CCC(S^2)$ of real-valued continuous functions on $S^2$ for $\psi\in\CCC(S^2)$, which is the main tool for our investigation. By applying the Schauder-Tikhonov Fixed Point Theorem, we establish in Theorem~\ref{thmMexists} the existence of an eigenmeasure $m_\phi$ of the adjoint $\RR^*_\phi$ of the Ruelle operator $\RR_\phi$, for a real-valued H\"{o}lder continuous potential $\phi$. We also show in Theorem~\ref{thmMexists} that the Jacobian function $J$ for $f$ with respect to $m_\phi$ is
$$
J = c\exp(-\phi),
$$
where $c$ is the eigenvalue corresponding to $m_\phi$, which is proved to be equal to $\exp(P(f,\phi))$ later in Proposition~\ref{propTopPressureDefPreImg}. We establish in Proposition~\ref{propMisGibbsState} that $m_\phi$ is a Gibbs state. The measure $m_\phi$ may not be $f$-invariant. In Theorem~\ref{thmMuExist}, we adjust the potential $\phi$ to get a new potential $\overline\phi$ such that there exists an eigenfunction $u_\phi$ of $\RR_{\overline\phi}$ with eigenvalue $1$. The positive function $u_\phi$ constructed as the uniform limit of the sequence
$$
\Bigg\{\frac{1}{n}\sum\limits_{j=0}^{n-1} \RR_{\overline{\phi}}^j(\mathbbm{1})\Bigg\}_{n\in\N}
$$
is shown to be bounded away from $0$ and $+\infty$, and H\"{o}lder continuous with the same exponent as that of $\phi$. Then we demonstrate that the measure $\mu_\phi = u_\phi m_\phi$ is an $f$-invariant Gibbs state. Finally, by combining Proposition~\ref{propInvGibbsIsEqlbStatePLessThanPressure} and Proposition~\ref{propTopPressureDefPreImg}, we prove in Corollary~\ref{corExistES} that $\mu_\phi$ is an equilibrium state for $f$ and $\phi$.

In Section~\ref{sctUniqueness}, we establish the uniqueness of the equilibrium state for an expanding Thurston map $f$ and a real-valued H\"{o}lder continuous potential $\phi$. We use the idea in \cite{PU10} to apply the G\^ateaux differentiability of the topological pressure function and some techniques from functional analysis. More precisely, a general fact from functional analysis (recorded in Theorem~\ref{thmUniqueTangent}) states that for an arbitrary convex continuous function $Q\: V\rightarrow \R$ on a separable Banach space $V$, there exists a unique continuous linear functional $L\:V\rightarrow\R$ \emph{tangent to $Q$ at $x\in V$} if and only if the function $t\longmapsto Q(x+ty)$ is differentiable at $0$ for all $y$ in a subset $U$ of $V$ that is dense in the weak topology on $V$. One then observes that for each continuous map $g\: X\rightarrow X$ on a compact metric space $X$, the topological pressure function $P(g,\cdot) \: \CCC(X)\rightarrow \R$ is continuous and convex (see \cite[Theorem~3.6.1 and Theorem~3.6.2]{PU10}), and if $\mu$ is an equilibrium state for $g$ and $\psi\in\CCC(X)$, then the continuous linear functional $u \longmapsto \int \! u\,\mathrm{d}\mu$, for $u\in\CCC(X)$, is tangent to $P(g,\cdot)$ at $\psi$ (see \cite[Proposition~3.6.6]{PU10}). So in order to verify the uniqueness of the equilibrium state for an expanding Thurston map $f$ and a real-valued H\"{o}lder continuous potential $\phi$, it suffices to prove that the function $t\longmapsto P(f,\phi + t\gamma)$ is differentiable at $0$, for all $\gamma$ in a suitable subspace of $\CCC(S^2)$. This is established in Theorem~\ref{thmPisDiff}.

Following the procedures in \cite{PU10} to prove Theorem~\ref{thmPisDiff}, we introduce a new potential $\widetilde\phi$ induced by $\phi$, and establish some uniform bounds in Theorem~\ref{thmChbInChb} and Lemma~\ref{lmRtildeSupBound}, which are then used to show uniform convergence results in Theorem~\ref{thmRR^nConv} and Lemma~\ref{lmDerivConv}. In some sense, Theorem~\ref{thmChbInChb} gives a quantitative form of the fact that $\RR_{\widetilde\phi}$ is \emph{almost periodic} (see Corollary~\ref{corRAlmostPeriodic}), and Theorem~\ref{thmRR^nConv} exhibits a uniform version of the contracting behavior of $\RR_{\widetilde\phi}$ on a codimension $1$ subspace of $\CCC(S^2)$. As a by-product, we demonstrate in Corollary~\ref{corMandMuUnique} that for each expanding Thurston map $f$ and each real-valued H\"{o}lder continuous potential $\phi$, the operator $\RR^*_\phi$ has a unique eigenmeasure $m_\phi$. Moreover, the measure $\mu_\phi$ is the unique eigenmeasure $m_{\widetilde\phi}$ of $\RR^*_{\widetilde\phi}$ with the corresponding eigenvalue $1$. Another consequence is Proposition~\ref{propWeakConvR*^nProbToMu} which implies (\ref{eqIntroWeakConvR*^nProbToMu}) mentioned earlier.

In Section~\ref{sctErgodicity}, we prove that the measure-preserving transformation $f$ of the probability space $(S^2,\mu_\phi)$ is exact (Theorem~\ref{thmFisExact}), where the equilibrium state $\mu_\phi$ is non-atomic (Corollary~\ref{corNonAtomic}). It follows in particular that the transformation $f$ is mixing and ergodic (Corollary~\ref{corMixing}). To establish these results, we first show in Proposition~\ref{propEdgeIs0set} that
\begin{equation*}
m_\phi \(\bigcup_{i=0}^{+\infty} f^{-i}(\CC)\) = \mu_\phi \(\bigcup_{i=0}^{+\infty} f^{-i}(\CC)\)  = 0.
\end{equation*}
for each Jordan curve $\CC\subseteq S^2$ containing the postcritical points of $f$ that satisfies $f^l(\CC)\subseteq\CC$ for some $l\in\N$. This proposition is also used in the proof of Theorem~\ref{thmCohomologous}.

Theorem~\ref{thmCohomologous}, the main result of Section~\ref{sctCohomologous}, asserts that if $\phi$ and $\psi$ are two real-valued H\"older continuous functions with the corresponding equilibrium states $\mu_\phi$ and $\mu_\psi$, respectively, then $\mu_\phi=\mu_\psi$ if and only if there exists a constant $K\in\R$ such that $\phi - \psi$ and $K \mathbbm{1}_{S^2}$ are \emph{co-homologous} in the space $\CCC(S^2)$ of real-valued continuous functions, i.e., $\phi-\psi - K \mathbbm{1}_{S^2} = u\circ f - u$ for some $u\in\CCC(S^2)$. For Theorem~\ref{thmCohomologous}, we first formulate a form of the \emph{closing lemma} for expanding Thurston maps (Lemma~\ref{lmClosingLemma}). For such maps, we then include in Lemma~\ref{lmwLimitPt} a direct proof of the existence of a point whose forward orbit is dense in $S^2$. Finally, we give the proof of Theorem~\ref{thmCohomologous} at the end of the section.

In Section~\ref{sctEquidistribution}, we first establish in Proposition~\ref{propWeakConvPreImgWithWeight} versions of equidistribution of preimages with respect to the equilibrium state, using results we obtain in Section~\ref{sctUniqueness}. These results partially generalize Theorem~1.2 in \cite{Li13} where we treated the case for the measure of maximal entropy. At the end of this paper, we include in Theorem~\ref{thmASConvToES} a generalization of Theorem~7.1 in \cite{Li13}, following the idea of J.~Hawkins and M.~Taylor \cite{HT03}. Theorem~\ref{thmASConvToES} states that the equilibrium state $\mu_\phi$ from the Main Theorem above is almost surely the limit of
$$
\frac{1}{n} \sum\limits_{i=0}^{n-1} \delta_{q_i}
$$
as $n\longrightarrow +\infty$ in the weak$^*$ topology, where $q_0$ is an arbitrary fixed point in $S^2$, and for each $i\in\N_0$, the point $q_{i+1}$ is randomly chosen from the set $f^{-1}(q_i)$ with the probability of each $x\in f^{-1}(q_i)$ being $q_{i+1}$ conditional on $q_i$ proportional to the local degree of $f$ at $x$ times $\exp\big( \widetilde\phi(x) \big)$. This theorem is an immediate consequence of a theorem of H.~Furstenberg and Y.~Kifer in \cite{FK83} and the fact that the equilibrium state is the unique Borel probability measure invariant under the adjoint of the Ruelle operator $\RR_{\widetilde\phi}$ (Corollary~\ref{corMandMuUnique}). A similar result for certain hyperbolic rational maps on the Riemann sphere and the measures of maximal entropy was proved by M.~Barnsley \cite{Ba88}. J.~Hawkins and M.~Taylor generalized it to any rational map on the Riemann sphere of degree $d\geq 2$ \cite{HT03}.

\bigskip
\noindent
\textbf{Acknowledgments.} The author wants to express his gratitude to M.~Bonk for his introduction to this subject of expanding Thurston maps and his patient teaching and guidance as the advisor of the author. The author also wants to thank P.~Ha\"issinsky for his valuable remarks.

\section{Notation} \label{sctNotation}
Let $\C$ be the complex plane and $\widehat{\C}$ be the Riemann sphere. We use the convention that $\N=\{1,2,3,\dots\}$ and $\N_0 = \{0\} \cup \N$. As usual, the symbol $\log$ denotes the logarithm to the base $e$, and $\log_b$ the logarithm to the base $b$ for $b>0$.

The cardinality of a set $A$ is denoted by $\card{A}$. For $x\in\R$, we define $\lfloor x\rfloor$ as the greatest integer $\leq x$, and $\lceil x \rceil$ the smallest integer $\geq x$.

Let $g\: X\rightarrow Y$ be a function between two sets $X$ and $Y$. We denote the restriction of $g$ to a subset $Z$ of $X$ by $g|_Z$. 

Let $(X,d)$ be a metric space. For subsets $A,B\subseteq X$, we set $d(A,B)=\inf \{d(x,y)\,|\, x\in A,\,y\in B\}$, and $d(A,x)=d(x,A)=d(A,\{x\})$ for $x\in X$. For each subset $Y\subseteq X$, we denote the diameter of $Y$ by $\diam_d(Y)=\sup\{d(x,y)\,|\,x,y\in Y\}$, the interior of $Y$ by $\inter Y$, and the characteristic function of $Y$ by $\mathbbm{1}_Y$, which maps each $x\in Y$ to $1\in\R$. We use the convention that $\mathbbm{1}=\mathbbm{1}_X$ when the space $X$ is clear from the context. The identity map $\id_X\: X\rightarrow X$ sends each $x\in X$ to $x$ itself. For each $r>0$, we define $N^r_d(A)$ to be the open $r$-neighborhood $\{y\in X \,|\, d(y,A)<r\}$ of $A$, and $\overline{N^r_d}(A)$ the closed $r$-neighborhood $\{y\in X \,|\, d(y,A)\leq r\}$ of $A$. For $x\in X$, we denote the open (resp.\ closed) ball of radius $r$ centered at $x$ by $B_d(x, r)$ (resp.\ $\overline{B_d}(x,r)$). 

We set $\CCC(X)$ (resp.\ $B(X)$) to be the space of continuous (resp.\ bounded Borel) functions from $X$ to $\R$, by $\MMM(X)$ the set of finite signed Borel measures, and $\PPP(X)$ the set of Borel probability measures on $X$. For $\mu\in\MMM(X)$, we use $\Norm{\mu}$ to denote the total variation norm of $\mu$, $\supp \mu$ the support of $\mu$, and
$$
\langle \mu,u \rangle = \int \! u \,\mathrm{d}\mu
$$
for each $u\in\CCC(S^2)$. If we do not specify otherwise, we equip $\CCC(X)$  with the uniform norm $\Norm{\cdot}_\infty$. For a point $x\in X$, we define $\delta_x$ as the Dirac measure supported on $\{x\}$. For $g\in\CCC(X)$ we set $\MMM(X,g)$ to be the set of $g$-invariant Borel probability measures on $X$.

The space of real-valued H\"{o}lder continuous functions with an exponent $\alpha\in (0,1]$ on a compact metric space $(X,d)$ is denoted as $\Holder{\alpha}(X,d)$. For each $\phi\in\Holder{\alpha}(X,d)$,
\begin{equation}   \label{eqDef|.|alpha}
\Hseminorm{\alpha}{\phi}= \sup \bigg\{\frac{\abs{\phi(x)- \phi(y)}}{ d(x,y)^\alpha} \,\bigg|\, x,y\in X, \,x\neq y \bigg\},
\end{equation}
and the H\"{o}lder norm is defined as
\begin{equation}  \label{eqDefHolderNorm}
\Hnorm{\alpha}{\phi} = \Hseminorm{\alpha}{\phi}  + \Norm{\phi}_{\infty}.
\end{equation}
For given $f\: X \rightarrow X$ and $\varphi \in \CCC(X)$, we define
\begin{equation}    \label{eqDefSnPt}
S_n \varphi (x)  = \sum\limits_{j=0}^{n-1} \varphi(f^j(x)) 
\end{equation}
for $x\in X$ and $n\in\N_0$. Note that when $n=0$, by definition, we always have $S_0 \varphi = 0$.

\section{Thurston maps} \label{sctThurstonMap}
In this section, we quickly go over some key concepts and results on Thurston maps, and expanding Thurston maps in particular. For a more thorough treatment of the subject, we refer to \cite{BM10}. We end this section by generalizing a lemma in \cite{BM10}.

Most of the definitions and results here were discussed in \cite[Section~3]{Li13}, but for the convenience of the reader, we record them here nonetheless. We will use the same formulation as in \cite[Section~3]{Li13} whenever possible.

Let $S^2$ denote an oriented topological $2$-sphere. A continuous map $f\:S^2\rightarrow S^2$ is called a \defn{branched covering map} on $S^2$ if for each point $x\in S^2$, there exists a positive integer $d\in \N$, open neighborhoods $U$ of $x$ and $V$ of $y=f(x)$, open neighborhoods $U'$ and $V'$ of $0$ in $\widehat{\C}$, and orientation-preserving homeomorphisms $\varphi\:U\rightarrow U'$ and $\eta\:V\rightarrow V'$ such that $\varphi(x)=0$, $\eta(y)=0$, and
$$
(\eta\circ f\circ\varphi^{-1})(z)=z^d
$$
for each $z\in U'$. The positive integer $d$ above is called the \defn{local degree} of $f$ at $x$ and is denoted by $\deg_f (x)$. The \defn{degree} of $f$ is
\begin{equation}   \label{eqDeg=SumLocalDegree}
\deg f=\sum\limits_{x\in f^{-1}(y)} \deg_f (x)
\end{equation}
for $y\in S^2$ and is independent of $y$. If $f\:S^2\rightarrow S^2$ and $g\:S^2\rightarrow S^2$ are two branched covering maps on $S^2$, then so is $f\circ g$, and
\begin{equation} \label{eqLocalDegreeProduct}
 \deg_{f\circ g}(x) = \deg_g(x)\deg_f(g(x)), \qquad \text{for each } x\in S^2,
\end{equation}   
and moreover, 
\begin{equation}  \label{eqDegreeProduct}
\deg(f\circ g) =  (\deg f)( \deg g).
\end{equation}

A point $x\in S^2$ is a \defn{critical point} of $f$ if $\deg_f(x) \geq 2$. The set of critical points of $f$ is denoted by $\crit f$. A point $y\in S^2$ is a \defn{postcritical point} of $f$ if $y = f^n(x)$ for some $x\in\crit f$ and $n\in\N$. The set of postcritical points of $f$ is denoted by $\post f$. Note that $\post f=\post f^n$ for all $n\in\N$.

\begin{definition} [Thurston maps] \label{defThurstonMap}
A Thurston map is a branched covering map $f\:S^2\rightarrow S^2$ on $S^2$ with $\deg f\geq 2$ and $\card(\post f)<+\infty$.
\end{definition}

We now recall the notation for cell decompositions of $S^2$ used in \cite{BM10} and \cite{Li13}. A \defn{cell of dimension $n$} in $S^2$, $n \in \{1,2\}$, is a subset $c\subseteq S^2$ that is homeomorphic to the closed unit ball $\overline{\B^n}$ in $\R^n$. We define the \defn{boundary of $c$}, denoted by $\partial c$, to be the set of points corresponding to $\partial\B^n$ under such a homeomorphism between $c$ and $\overline{\B^n}$. The \defn{interior of $c$} is defined to be $\inte (c) = c \setminus \partial c$. For each point $x\in S^2$, the set $\{x\}$ is considered a \defn{cell of dimension $0$} in $S^2$. For a cell $c$ of dimension $0$, we adopt the convention that $\partial c=\emptyset$ and $\inte (c) =c$. 

We record the following three definitions from \cite{BM10}.

\begin{definition}[Cell decompositions]\label{defcelldecomp}
Let $\DD$ be a collection of cells in $S^2$.  We say that $\DD$ is a \defn{cell decomposition of $S^2$} if the following conditions are satisfied:

\begin{itemize}

\smallskip
\item[(i)]
the union of all cells in $\DD$ is equal to $S^2$,

\smallskip
\item[(ii)] if $c\in \DD$, then $\partial c$ is a union of cells in $\DD$,

\smallskip
\item[(iii)] for $c_1,c_2 \in \DD$ with $c_1 \ne c_1$, we have $\inte (c_1) \cap \inte (c_2)= \emptyset$,  

\smallskip
\item[(iv)] every point in $S^2$ has a neighborhood that meets only finitely many cells in $\DD$.

\end{itemize}
\end{definition}

\begin{definition}[Refinements]\label{defrefine}
Let $\DD'$ and $\DD$ be two cell decompositions of $S^2$. We
say that $\DD'$ is a \defn{refinement} of $\DD$ if the following conditions are satisfied:
\begin{itemize}

\smallskip
\item[(i)] every cell $c\in \DD$ is the union of all cells $c'\in \DD'$ with $c'\subseteq c$.

\smallskip
\item[(ii)] for every cell $c'\in \DD'$ there exits a cell $c\in \DD$ with $c'\subseteq c$,

\end{itemize}
\end{definition}

\begin{definition}[Cellular maps and cellular Markov partitions]\label{defcellular}
Let $\DD'$ and $\DD$ be two cell decompositions of  $S^2$. We say that a continuous map $f \: S^2 \rightarrow S^2$ is \defn{cellular} for  $(\DD', \DD)$ if for every cell $c\in \DD'$, the restriction $f|_c$ of $f$ to $c$ is a homeomorphism of $c$ onto a cell in $\DD$. We say that $(\DD',\DD)$ is a \defn{cellular Markov partition} for $f$ if $f$ is cellular for $(\DD',\DD)$ and $\DD'$ is a refinement of $\DD$.
\end{definition}

Let $f\:S^2 \rightarrow S^2$ be a Thurston map, and $\CC\subseteq S^2$ be a Jordan curve containing $\post f$. Then the pair $f$ and $\CC$ induces natural cell decompositions $\DD^n(f,\CC)$ of $S^2$, for $n\in\N_0$, in the following way:

By the Jordan curve theorem, the set $S^2\setminus\CC$ has two connected components. We call the closure of one of them the \defn{white $0$-tile} for $(f,\CC)$, denoted by $X^0_w$, and the closure of the other one the \defn{black $0$-tile} for $(f,\CC)$, denoted by $X^0_b$. The set of \defn{$0$-tiles} is $\X^0(f,\CC)=\{X_b^0,X_w^0\}$. The set of \defn{$0$-vertices} is $\V^0(f,\CC)=\post f$. We set $\overline\V^0(f,\CC) = \{ \{x\} \,|\, x\in \V^0(f,\CC) \}$. The set of \defn{$0$-edges} $\E^0(f,\CC)$ is the set of the closures of the connected components of $\CC \setminus  \post f$. Then we get a cell decomposition 
$$
\DD^0(f,\CC)=\X^0(f,\CC) \cup \E^0(f,\CC) \cup \overline\V^0(f,\CC)
$$
of $S^2$ consisting of \emph{cells of level $0$}, or \defn{$0$-cells}.

We can recursively define unique cell decompositions $\DD^n(f,\CC)$, $n\in\N$, consisting of \defn{$n$-cells} such that $f$ is cellular for $(\DD^{n+1}(f,\CC),\DD^n(f,\CC))$. We refer to \cite[Lemma~5.4]{BM10} for more details. We denote by $\X^n(f,\CC)$ the set of $n$-cells of dimension 2, called \defn{$n$-tiles}; by $\E^n(f,\CC)$ the set of $n$-cells of dimension 1, called \defn{$n$-edges}; by $\overline\V^n(f,\CC)$ the set of $n$-cells of dimension 0; and by $\V^n(f,\CC)$ the set $\big\{x\,\big|\, \{x\}\in \overline\V^n(f,\CC)\big\}$, called the set of \defn{$n$-vertices}. The \defn{$k$-skeleton}, for $k\in\{0,1,2\}$, of $\DD^n(f,\CC)$ is the union of all $k$-cells in this cell decomposition. 

We record Proposition~6.1 of \cite{BM10} here in order to summarize properties of the cell decompositions $\DD^n(f,\CC)$ defined above.

\begin{prop}[M.~Bonk \& D.~Meyer, 2010] \label{propCellDecomp}
Let $k,n\in \N_0$, let   $f\: S^2\rightarrow S^2$ be a Thurston map,  $\CC\subseteq S^2$ be a Jordan curve with $\post f \subseteq \CC$, and   $m=\card(\post f)$. 
 
\smallskip
\begin{itemize}

\smallskip
\item[(i)] The map  $f^k$ is cellular for $(\DD^{n+k}(f,\CC), \DD^n(f,\CC))$. In particular, if  $c$ is any $(n+k)$-cell, then $f^k(c)$ is an $n$-cell, and $f^k|_c$ is a homeomorphism of $c$ onto $f^k(c)$.

\smallskip
\item[(ii)]  Let  $c$ be  an $n$-cell.  Then $f^{-k}(c)$ is equal to the union of all 
$(n+k)$-cells $c'$ with $f^k(c')=c$.

\smallskip
\item[(iii)] The $1$-skeleton of $\DD^n(f,\CC)$ is  equal to  $f^{-n}(\CC)$. The $0$-skeleton of $\DD^n(f,\CC)$ is the set $\V^n(f,\CC)=f^{-n}(\post f )$, and we have $\V^n(f,\CC) \subseteq \V^{n+k}(f,\CC)$. 

\smallskip
\item[(iv)] $\card(\X^n(f,\CC))=2(\deg f)^n$,  $\card(\E^n(f,\CC))=m(\deg f)^n$,  and $\card (\V^n(f,\CC)) \leq m (\deg f)^n$.

\smallskip
\item[(v)] The $n$-edges are precisely the closures of the connected components of $f^{-n}(\CC)\setminus f^{-n}(\post f )$. The $n$-tiles are precisely the closures of the connected components of $S^2\setminus f^{-n}(\CC)$.

\smallskip
\item[(vi)] Every $n$-tile  is an $m$-gon, i.e., the number of $n$-edges and the number of $n$-vertices contained in its boundary are equal to $m$.  

\end{itemize}
\end{prop}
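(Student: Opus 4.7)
The plan is to deduce all six assertions from the recursive definition that $f$ is cellular for $(\DD^{n+1}(f,\CC),\DD^n(f,\CC))$ together with the abstract axioms of cell decompositions in Definition~\ref{defcelldecomp}. Part (i) follows by induction on $k$: the base case $k=1$ is the definition, and the inductive step rests on the elementary observation that a composition of two cellular maps is cellular, since the composition of two homeomorphisms (the restriction $f|_{c''}\:c''\to f(c'')$ and $f^{k-1}|_{f(c'')}\:f(c'')\to f^k(c'')$) is again a homeomorphism. Part (iii) is then proved by a parallel induction on $n$: because $f$ maps each cell of $\DD^{n+1}$ of dimension $i$ homeomorphically onto a cell of $\DD^n$ of dimension $i$ for $i\in\{0,1,2\}$, the $i$-skeleton of $\DD^{n+1}$ coincides with $f^{-1}$ of the $i$-skeleton of $\DD^n$, and iterating gives the stated formulas $f^{-n}(\CC)$ and $f^{-n}(\post f)$. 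The inclusion $\V^n\subseteq\V^{n+k}$ then follows from the forward invariance $f(\post f)\subseteq \post f$, which forces $f^{-n}(\post f)\subseteq f^{-(n+k)}(\post f)$.

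For part (ii), the inclusion $\supseteq$ is immediate from $c'\subseteq f^{-k}(f^k(c'))=f^{-k}(c)$. For the converse, I would fix $x\in f^{-k}(c)$ and find an $(n+k)$-cell $c'$ of the same dimension as $c$ containing $x$ with $f^k(c')=c$. Locally around $x$, the cells of $\DD^{n+k}$ incident to $x$ partition a small neighborhood of $x$ into finitely many pieces (sectors, edge-arcs, and $x$ itself), and by (i) the map $f^k$ sends each such piece homeomorphically onto the corresponding piece in the neighborhood decomposition of $f^k(x)$ by cells of $\DD^n$ incident to $f^k(x)$. Moreover, by the local branched-cover model every cell of $\DD^n$ incident to $f^k(x)$ arises as the image of $\deg_{f^k}(x)$ many cells of $\DD^{n+k}$ incident to $x$. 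Applying this to the cell $c$, which is incident to $f^k(x)$ because $f^k(x)\in c$, yields the desired $c'$.

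Parts (iv)--(vi) are then direct consequences. For (iv), by (ii) the $n$-cells of dimension $2$ (respectively $1$) mapping under $f^n$ onto a fixed $0$-tile (respectively $0$-edge) are in bijection with the preimages under $f^n$ of any interior point, of which there are $(\deg f)^n$ by the multiplicativity \eqref{eqDegreeProduct} of the degree together with the fact that $f^n$ is a local homeomorphism on the interior of each $n$-tile or $n$-edge; summing over the two $0$-tiles yields $2(\deg f)^n$, and over the $m$ zero-edges yields $m(\deg f)^n$. The vertex count is only an upper bound because $f^n$ may ramify at preimages of postcritical points, so several cells incident to an $n$-vertex can share it in the fiber of a $0$-vertex. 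For (v), by (iii) the $1$-skeleton of $\DD^n$ equals $f^{-n}(\CC)$; each connected component of $S^2\setminus f^{-n}(\CC)$ is open and connected, and by (ii) lies in the interior of a unique $n$-tile, which being the homeomorphic image of its target $0$-tile has connected interior and accounts for exactly one such component. The analogous statement for edges follows the same pattern. Finally, (vi) is immediate since each $n$-tile is mapped homeomorphically by $f^n$ onto one of the two $0$-tiles, and both $0$-tiles are $m$-gons by the construction of $\DD^0(f,\CC)$ from the curve $\CC\supseteq\post f$ with $\card(\post f)=m$. The main obstacle is the local analysis underlying (ii), where one must leverage the branched-cover structure of $f^k$ near $x$ to produce an $(n+k)$-cell incident to $x$ whose image is exactly the prescribed cell $c$; once this is secured, the remaining assertions follow by essentially bookkeeping arguments combined with the degree formula.
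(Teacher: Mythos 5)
The paper itself offers no proof of this proposition; it simply records it as Proposition~6.1 of [BM10], which is where the cell decompositions $\DD^n(f,\CC)$ are constructed, so there is no ``paper's proof'' to compare against. Judged on its own terms, your outline has the right architecture: (i) by induction on $k$ from the defining cellularity of $f$ for $(\DD^{n+1},\DD^n)$ plus closure of cellularity under composition, (iii) by tracking interiors of cells under the homeomorphism $f|_c$, and (iv)--(vi) by the counting and bookkeeping you describe once (ii) and (iii) are in hand. All of that is correct.

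The one place that needs tightening is (ii), which is the technical heart. Your argument appeals to ``the local branched-cover model'' to claim that every $n$-cell incident to $f^k(x)$ is the image of $\deg_{f^k}(x)$ many $(n+k)$-cells incident to $x$. That statement is true, but it is not a formal consequence of the cell-decomposition axioms in Definitions~\ref{defcelldecomp}--\ref{defcellular} together with cellularity of $f^k$; it is precisely the kind of compatibility between the branched-cover structure and the cell structure that the construction in \cite[Lemma~5.4]{BM10} has to establish (the ``flower'' structure around vertices), and quoting it here without proof leaves the argument circular-feeling. A cleaner route that stays entirely inside the axioms is: set $Y=\bigcup\{c'\in\DD^{n+k}:f^k(c')=c\}$, a closed set, and first prove $f^{-k}(\inte(c))\subseteq Y$. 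Indeed, any $x$ with $f^k(x)\in\inte(c)$ lies in $\inte(c'_0)$ for a unique $c'_0\in\DD^{n+k}$; since $f^k|_{c'_0}$ is a homeomorphism sending interior to interior, $f^k(x)\in\inte(f^k(c'_0))\cap\inte(c)$, and disjointness of interiors (Definition~\ref{defcelldecomp}(iii)) forces $f^k(c'_0)=c$, so $x\in c'_0\subseteq Y$. Then, because a branched covering map is open, $f^{-k}\big(\overline{\inte(c)}\big)\subseteq\overline{f^{-k}(\inte(c))}\subseteq\overline{Y}=Y$, and $c=\overline{\inte(c)}$ finishes the inclusion $f^{-k}(c)\subseteq Y$. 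This dispenses with any local modelling of the branched cover and replaces your hand-wavy step with a two-line argument using only openness of $f^k$ and the axiom that distinct cells have disjoint interiors.

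One small stylistic point in (iv): the bound $\card(\V^n)\le m(\deg f)^n$ is best expressed as $\card f^{-n}(\post f)=\sum_{p\in\post f}\card f^{-n}(p)$ (the fibers are disjoint, so this is an equality) followed by $\card f^{-n}(p)\le(\deg f)^n$, with strictness exactly when some preimage of $p$ is a critical point of $f^n$. Your phrasing mixes the two inequalities in a way that reads as though preimage sets of distinct postcritical points might overlap, which is not the source of the slack.
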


We also note that for each $n$-edge $e\in\E^n(f,\CC)$, $n\in\N_0$, there exist exactly two $n$-tiles $X,X'\in\X^n(f,\CC)$ such that $X\cap X' = e$.

For $n\in \N_0$, we define \defn{the set of black $n$-tiles} as
$$
\X_b^n(f,\CC)=\{X\in\X^n(f,\CC) \, |\,  f^n(X)=X_b^0\},
$$
and the \defn{set of white $n$-tiles} as
$$
\X_w^n(f,\CC)=\{X\in\X^n(f,\CC) \, |\, f^n(X)=X_w^0\}.
$$
It follows immediately from Proposition~\ref{propCellDecomp} that
\begin{equation}   \label{eqCardBlackNTiles}
\card \( \X_b^n(f,\CC) \) = \card \(\X_w^n(f,\CC)\) = (\deg f)^n 
\end{equation}
for each $n\in\N_0$.

From now on, if the map $f$ and the Jordan curve $\CC$ are clear from the context, we will sometimes omit $(f,\CC)$ in the notation above.

If we fix the cell decomposition $\DD^n(f,\CC)$, $n\in\N_0$, we can define for each $v\in \V^n$ the \defn{$n$-flower of $v$} as
\begin{equation}   \label{defFlower}
W^n(v) = \bigcup  \{\inte (c) \,|\, c\in \DD^n,\, v\in c \},
\end{equation}
which is the interior of the union of all $n$-cells containing $v$. We denote for each $x\in S^2$
\begin{align}  \label{defU^n}
U^n(x) = \bigcup \{Y^n\in \X^n \,|\,  & \text{there exists } X^n\in\X^n \\
                                      & \text{with } x\in X^n, \, X^n\cap Y^n \neq \emptyset  \},  \notag
\end{align}
and for each integer $m\leq -1$, set $U^m(x)= S^2$. We define the \defn{$n$-partition} $O_n$ of $S^2$ induced by $(f,\CC)$ as
\begin{equation}   \label{defn-partition}
O_n= \{\inte (X^n)\,|\,X^n\in\X^n\} \cup \{\inte (e^n) \,|\,e^n\in\E^n\} \cup \overline{\V}^n.
\end{equation}

We can finally give a definition of expanding Thurston maps.

\begin{definition} [Expansion] \label{defExpanding}
A Thurston map $f\:S^2\rightarrow S^2$ is called \defn{expanding} if there exist a metric $d$ on $S^2$ that induces the standard topology on $S^2$ and a Jordan curve $\CC\subseteq S^2$ containing $\post f$ such that 
\begin{equation*}
\lim\limits_{n\to+\infty}\max \{\diam_d(X) \,|\, X\in \X^n(f,\CC)\}=0.
\end{equation*}
\end{definition}

It is proved in \cite[Corollary~6.4]{BM10} that for each expanding Thurs\-ton map $f$, we have $\card(\post f) \geq 3$.

\begin{rems}  \label{rmExpanding}
It is clear that if $f$ is an expanding Thurston map, so is $f^n$ for each $n\in\N$. We observe that being expanding is a topological property of a Thurston map and independent of the choice of the metric $d$ that generates the standard topology on $S^2$. By Lemma~8.1 in \cite{BM10}, it is also independent of the choice of the Jordan curve $\CC$ containing $\post f$. More precisely, if $f$ is an expanding Thurston map, then
\begin{equation*}
\lim\limits_{n\to+\infty}\max \!\big\{ \! \diam_{\widetilde{d}}(X) \,\big|\, X\in \X^n\big(f,\widetilde\CC \hspace{0.5mm}\big)\hspace{-0.3mm} \big\}\hspace{-0.3mm}=0,
\end{equation*}
for each metric $\widetilde{d}$ that generates the standard topology on $S^2$ and each Jordan curve $\widetilde\CC\subseteq S^2$ that contains $\post f$.
\end{rems}

P. Ha\"{\i}ssinsky and K. Pilgrim developed a notion of expansion in a more general context for finite branched coverings between topological spaces (see \cite[Section~2.1 and Section~2.2]{HP09}). This applies to Thurston maps and their notion of expansion is equivalent to our notion defined above in the context of Thurston maps (see \cite[Proposition~8.2]{BM10}). Such concepts of expansion are natural analogs, in the contexts of finite branched coverings and Thurston maps, to some of the more classical versions, such as expansive homeomorphisms and forward-expansive continuous maps between compact metric spaces (see for example, \cite[Definition~3.2.11]{KH95}), and distance-expanding maps between compact metric spaces (see for example, \cite[Chapter~4]{PU10}). Our notion of expansion is not equivalent to any such classical notion in the context of Thurston maps. In fact, as mentioned in the introduction, there are subtle connections between our notion of expansion and some classical notions of weak expansion. More precisely, one can prove that an expanding Thurston map is asymptotically $h$-expansive if and only if it has no periodic points. Moreover, such a map is never $h$-expansive. See \cite{Li14} for details.

For an expanding Thurston map $f$, we can fix a particular metric $d$ on $S^2$ called a \emph{visual metric for $f$} . For the existence and properties of such metrics, see \cite[Chapter~8]{BM10}. One major advantage of a visual metric $d$ is that in $(S^2,d)$ we have good quantitative control over the sizes of the cells in the cell decompositions discussed above. We summarize several results of this type (\cite[Lemma~8.10, Lemma~8.12, Lemma~8.13]{BM10}) in the lemma below.

\begin{lemma}[M.~Bonk \& D.~Meyer, 2010]   \label{lmCellBoundsBM}
Let $f\:S^2 \rightarrow S^2$ be an expanding Thurston map, and $\CC \subseteq S^2$ be a Jordan curve containing $\post f$. Let $d$ be a visual metric on $S^2$ for $f$ with an expansion factor $\Lambda>1$. Then there exist constants $C\geq 1$, $C'\geq 1$, $K\geq 1$, and $n_0\in\N_0$ with the following properties:
\begin{enumerate}
\smallskip
\item[(i)] $d(\sigma,\tau) \geq C^{-1} \Lambda^{-n}$ whenever $\sigma$ and $\tau$ are disjoint $n$-cells for $n\in \N_0$.

\smallskip
\item[(ii)] $C^{-1} \Lambda^{-n} \leq \diam_d(\tau) \leq C\Lambda^{-n}$ for all $n$-edges and all $n$-tiles $\tau$ for $n\in\N_0$.

\smallskip
\item[(iii)] $B_d(x,K^{-1} \Lambda^{-n} ) \subseteq U^n(x) \subseteq B_d(x, K\Lambda^{-n})$ for $x\in S^2$ and $n\in\N_0$.

\smallskip
\item[(iv)] $U^{n+n_0} (x)\subseteq B_d(x,r) \subseteq U^{n-n_0}(x)$ where $n= \lceil -\log r / \log \Lambda \rceil$ for $r>0$ and $x\in S^2$.

\smallskip
\item[(v)] For every $n$-tile $X^n\in\X^n(f,\CC)$, $n\in\N_0$, there exists a point $p\in X^n$ such that $B_d(p,C^{-1}\Lambda^{-n}) \subseteq X^n \subseteq B_d(p,C\Lambda^{-n})$.
\end{enumerate}

Conversely, if $\widetilde{d}$ is a metric on $S^2$ satisfying conditions \textnormal{(i)} and \textnormal{(ii)} for some constant $\widetilde{C}\geq 1$, then $\widetilde{d}$ is a visual metric with an expansion factor $\widetilde{\Lambda}>1$.
\end{lemma}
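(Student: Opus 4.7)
The plan is to deduce the five properties directly from the definition of a visual metric. Following Bonk and Meyer, a metric $d$ on $S^2$ is a visual metric for $f$ with expansion factor $\Lambda>1$ if, for some (and in fact for every) Jordan curve $\CC_0\supseteq \post f$, one has
$$
d(x,y)\asymp \Lambda^{-m(x,y)},
$$
where $m(x,y)=m_{f,\CC_0}(x,y)$ is the largest $n\in\N_0$ for which there exist $n$-cells $X^n,Y^n$ in $\DD^n(f,\CC_0)$ with $x\in X^n$, $y\in Y^n$, and $X^n\cap Y^n\neq\emptyset$ (with $m(x,x)=+\infty$). My first step is to note that replacing $\CC_0$ by the given $\CC$ alters $m(\cdot,\cdot)$ by only a bounded additive constant: by Proposition~\ref{propCellDecomp}(i) together with the expansion hypothesis (cf.\ Remark~\ref{rmExpanding}), there exists $N\in\N_0$ depending only on $f$, $\CC$, and $\CC_0$ such that every $(n+N)$-cell for one curve is contained in an $n$-cell for the other.

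Properties (i) and (ii) then essentially fall out of the definition. For (i), disjoint $n$-cells $\sigma,\tau$ (with respect to $\CC$) force $m_{f,\CC}(x,y)<n$ for all $x\in\sigma$, $y\in\tau$, so absorbing the additive shift between $\CC$ and $\CC_0$ yields $d(x,y)\gtrsim \Lambda^{-n}$. For (ii), the upper diameter bound on an $n$-cell $\tau$ comes from $m_{f,\CC}(x,y)\geq n$ whenever $x,y\in\tau$; for the lower bound one must produce, inside every $n$-edge or $n$-tile, a pair of points separated at level $n$, which is supplied by the cellular Markov structure: fix inside every $1$-cell of $\DD^1(f,\CC)$ a pair of disjoint sub-cells at a uniform level $N_1$, and pull these back under $f^{n-1}$ using Proposition~\ref{propCellDecomp}(i).

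Properties (iii) and (iv) are then bookkeeping from (i) and (ii). For (iii), $U^n(x)$ is a union of a uniformly bounded number of $n$-tiles each of diameter $\lesssim \Lambda^{-n}$, giving the outer inclusion $U^n(x)\subseteq B_d(x,K\Lambda^{-n})$; the complement $S^2\setminus U^n(x)$ is a union of $n$-cells disjoint from every $n$-tile containing $x$, so (i) gives the inner inclusion $B_d(x,K^{-1}\Lambda^{-n})\subseteq U^n(x)$. Choosing $n=\lceil -\log r/\log\Lambda\rceil$ and shifting by a uniform $n_0$ absorbs the multiplicative constants into a level shift, producing (iv). For (v), given $X^n\in\X^n(f,\CC)$, choose a uniform $n_1\in\N$ and an $(n+n_1)$-subtile $Y\subseteq \inter(X^n)$ staying at definite combinatorial distance from $\partial X^n$ (obtained as the $f^n$-pullback of a fixed interior substructure of a $0$-tile); any $p\in Y$ then satisfies $B_d(p,C^{-1}\Lambda^{-n})\subseteq X^n$ by (i) applied to $Y$ and the cells of $\partial X^n$, and $X^n\subseteq B_d(p,C\Lambda^{-n})$ by (ii).

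For the converse, given $\widetilde d$ satisfying (i) and (ii) with constant $\widetilde C$ and some $\widetilde\Lambda>1$, the goal is to show $\widetilde d(x,y)\asymp \widetilde\Lambda^{-m(x,y)}$: the upper bound $\widetilde d(x,y)\lesssim \widetilde\Lambda^{-m(x,y)}$ comes from (ii) applied to any $m(x,y)$-cell covering both $x$ and $y$, while the lower bound comes from (i) applied to disjoint $(m(x,y)+1)$-cells separating them, identifying $\widetilde d$ as a visual metric with expansion factor $\widetilde\Lambda$. I expect the most delicate step to be (v), because it is the only part requiring one to build a substantial metric ball inside each $n$-tile rather than merely controlling distances; this rests on the uniformly ``thick'' interior substructure of every cell, which is a consequence of the cellular Markov partition together with the expansion hypothesis, and keeping the resulting constants uniform across the various changes between $\CC$, $\CC_0$, and the combinatorial versus metric neighborhoods is the main bookkeeping obstacle throughout.
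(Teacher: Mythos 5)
The paper does not actually prove this lemma: it is stated with the attribution ``M.~Bonk \& D.~Meyer, 2010'' and the surrounding text explicitly says it summarizes Lemmas~8.10, 8.12, and 8.13 of \cite{BM10}, which are simply cited. So there is no ``paper's own proof'' for your sketch to match; you are reconstructing an argument the paper delegates to the reference.

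That said, your overall framework is the natural one and essentially mirrors the strategy in \cite{BM10}: fix the comparison number $m_{f,\CC_0}(x,y)$ (largest level at which non-disjoint cells still contain $x$ and $y$), use the defining estimate $d(x,y)\asymp\Lambda^{-m(x,y)}$, note that changing the Jordan curve only shifts $m$ by an additive constant, and then translate the combinatorics of cells, flowers, and the sets $U^n(x)$ into metric estimates. Parts (ii)--(v) and the converse direction are sketched at the right level, even if all the constants-chasing is deferred.

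There is, however, one step you assert without justification, and it is not immediate: for (i) you claim that if $\sigma,\tau$ are disjoint $n$-cells and $x\in\sigma$, $y\in\tau$, then $m_{f,\CC}(x,y)<n$. This does not follow merely from $\sigma\cap\tau=\emptyset$. The point $x$ may lie in several $n$-cells besides $\sigma$ (for instance if $\sigma$ is a vertex or an edge on the boundary of several tiles), and likewise for $y$; so a priori there could exist $n$-tiles $X\ni x$, $Y\ni y$ with $X\cap Y\neq\emptyset$ even though $\sigma\cap\tau=\emptyset$, which would give $m(x,y)\geq n$. To get the claimed lower bound on $d(\sigma,\tau)$ one needs a genuine combinatorial argument ruling out, up to a bounded additive shift in level, the existence of short chains of $n$-tiles connecting $x$ to $y$; in \cite{BM10} this is handled with some care and is not a one-line observation. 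As written, your sketch treats this as automatic, which is the main gap. Once that is filled, the bookkeeping for (iii)--(v) and the converse that you describe would go through.
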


Recall $U^n(x)$ is defined in (\ref{defU^n}).

In addition, we will need the fact that a visual metric $d$ induces the standard topology on $S^2$ (\cite[Proposition~8.9]{BM10}) and the fact that the metric space $(S^2,d)$ is linearly locally connected (\cite[Proposition~16.3]{BM10}). A metric space $(X,d)$ is \defn{linearly locally connected} if there exists a constant $L\geq 1$ such that the following conditions are satisfied:
\begin{enumerate}
\smallskip

\item  For all $z\in X$, $r > 0$, and $x,y\in B_d(z,r)$ with $x\neq y$, there exists a continuum $E\subseteq X$ with $x,y\subseteq E$ and $E\subseteq B_d(z,rL)$.

\smallskip

\item For all $z\in X$, $r > 0$, and $x,y\in X \setminus B_d(z,r)$ with $x\neq y$, there exists a continuum $E\subseteq X$ with $x,y\subseteq E$ and $E\subseteq X \setminus B_d(z,r/L)$.
\end{enumerate}
We call such a constant $L \geq 1$ a \defn{linear local connectivity constant of $d$}.

\begin{rem}   \label{rmChordalVisualQSEquiv}
If $f\: \widehat\C \rightarrow \widehat\C$ is a rational expanding Thurston map, then a visual metric is quasisymmetrically equivalent to the chordal metric on the Riemann sphere $\widehat\C$ (see \cite[Corollary~19.4]{BM10}). Here the chordal metric $\sigma$ on $\widehat\C$ is given by
$
\sigma(z,w) =\frac{2\abs{z-w}}{\sqrt{1+\abs{z}^2} \sqrt{1+\abs{w}^2}}
$
for $z,w\in\C$, and $\sigma(\infty,z)=\sigma(z,\infty)= \frac{2}{\sqrt{1+\abs{z}^2}}$ for $z\in \C$. We also note that a quasisymmetric embedding of a bounded connected metric space is H\"older continuous (see \cite[Section~11.1 and Corollary~11.5]{He01}). Accordingly, the classes of H\"older continuous functions on $\widehat\C$ equipped with the chordal metric and on $S^2=\widehat\C$ equipped with any visual metric for $f$ are the same (upto a change of the H\"older exponent).
\end{rem}

A Jordan curve $\CC\subseteq S^2$ is \defn{$f$-invariant} if $f(\CC)\subseteq \CC$. We are interested in $f$-invariant Jordan curves that contain $\post f$, since for such a Jordan curve $\CC$, we get a cellular Markov partition $(\DD^1(f,\CC),\DD^0(f,\CC))$ for $f$. According to Example~15.5 in \cite{BM10}, such $f$-invariant Jordan curves containing $\post{f}$ need not exist. However, M.~Bonk and D.~Meyer \cite[Theorem~1.2]{BM10} proved that there exists an $f^n$-invariant Jordan curve $\CC$ containing $\post{f}$ for each sufficiently large $n$ depending on $f$.

\begin{theorem}[M.~Bonk \& D.~Meyer, 2010]   \label{thmCexistsBM}
Let $f\:S^2\rightarrow S^2$ be an expanding Thurston map. Then for each $n\in\N$ sufficiently large, there exists a Jordan curve $\CC\subseteq S^2$ containing $\post f$ such that $f^n(\CC)\subseteq\CC$.
\end{theorem}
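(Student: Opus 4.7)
The plan is to construct $\CC$ as a Hausdorff limit of Jordan curves obtained by iteratively ``pulling back through $f^n$.'' Equivalently, I seek a Jordan curve $\CC$ with $\post f \subseteq \CC$ and $\CC \subseteq f^{-n}(\CC)$, which is tantamount to $f^n(\CC) \subseteq \CC$. First fix an auxiliary Jordan curve $\CC_0\subseteq S^2$ through $\post f$ (which exists since $\card(\post f)<+\infty$), so that the cell decompositions $\DD^k(f,\CC_0)$ are defined for all $k\in\N_0$.

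The key local lemma I would prove is: for all sufficiently large $n$ and every $0$-edge $e^0\in\E^0(f,\CC_0)$ with endpoints $p,q\in\post f$, there exists an edge path $\gamma_{e^0}$ in the $1$-skeleton $f^{-n}(\CC_0)$ of $\DD^n(f,\CC_0)$ from $p$ to $q$ such that $\gamma_{e^0}$ is isotopic to $e^0$ rel $\post f$ in $S^2$. The idea uses Definition~\ref{defExpanding}: for $n$ large, every $n$-tile has diameter less than one third of the minimum pairwise distance between distinct points of $\post f$ in any fixed metric inducing the standard topology. One can then chain $n$-tiles whose interiors meet $e^0$, and concatenate appropriate portions of their boundaries (which all lie in $f^{-n}(\CC_0)$), to obtain an edge path that hugs $e^0$ closely enough to lie in the correct homotopy class rel $\post f$.

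With the lemma in hand, set $\CC_1 := \bigcup_{e^0\in\E^0(f,\CC_0)}\gamma_{e^0}$. For $n$ large, the diameter control ensures distinct $\gamma_{e^0}$ meet only at common postcritical endpoints, so $\CC_1$ is a Jordan curve contained in $f^{-n}(\CC_0)$, containing $\post f$, and isotopic to $\CC_0$ rel $\post f$. Iterating, with $\CC_k$ playing the role of $\CC_0$ at the $k$-th stage (checking that a uniform threshold for $n$ works, since the only geometric input is the minimum pairwise distance within $\post f$, which does not change), produces a sequence of Jordan curves $\CC_k$ with $\CC_{k+1}\subseteq f^{-n}(\CC_k)$ and $\CC_{k+1}$ isotopic to $\CC_k$ rel $\post f$. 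Using expansion one shows $\Hdist(\CC_{k+1},\CC_k)$ decays geometrically in $k$ (the new edges sit in preimages of old edges, whose diameters shrink exponentially by the analog of Lemma~\ref{lmCellBoundsBM}(ii) for any metric generating the standard topology, up to a multiplicative loss). Hence $\{\CC_k\}$ is Cauchy in Hausdorff distance and converges to some compact set $\CC$.

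Finally, $\CC$ contains $\post f$ (as each $\CC_k$ does), and the inclusion $\CC_{k+1}\subseteq f^{-n}(\CC_k)$ together with continuity of $f^n$ yields $f^n(\CC)\subseteq\CC$ in the limit. The non-trivial point is that $\CC$ is a \emph{Jordan} curve rather than a degenerate continuum: since the $\CC_k$ are pairwise isotopic rel $\post f$ with uniformly controlled combinatorial ``thickness'' of the annular regions between consecutive curves, the limit cannot pinch, so $\CC$ is a topological circle. The main obstacle is the key local lemma: finding \emph{some} path in the connected graph $f^{-n}(\CC_0)$ joining $p$ to $q$ is easy, but forcing the correct homotopy class rel $\post f$ and simultaneously guaranteeing that the various $\gamma_{e^0}$ assemble into an embedded circle requires delicate use of expansion, and is what compels $n$ to be taken large.
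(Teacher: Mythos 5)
The paper does not prove this theorem; it simply cites \cite[Theorem~1.2]{BM10}, so there is no in-paper proof to compare against. Your proposal is in the spirit of the Bonk--Meyer argument: produce a Jordan curve $\CC_1\subseteq f^{-n}(\CC_0)$ isotopic to $\CC_0$ rel $\post f$, iterate, and take a Hausdorff limit. But as written it has two genuine gaps.

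First, the claimed geometric decay of $\Hdist(\CC_{k+1},\CC_k)$ does not follow for an arbitrary metric inducing the standard topology. Definition~\ref{defExpanding} only gives that $n$-tile diameters tend to zero, with no rate; the exponential estimate of Lemma~\ref{lmCellBoundsBM}(ii) is the defining property of \emph{visual} metrics, not of all metrics ``up to a multiplicative loss.'' You should fix a visual metric at the outset (these exist for any expanding Thurston map independently of the existence of an invariant curve; see \cite[Chapter~8]{BM10}). Relatedly, the iteration step is shakier than claimed: the threshold for $n$ in your local lemma depends not only on the minimum pairwise distance within $\post f$, but also on how close the interior of each $0$-edge of $\CC_k$ comes to the \emph{other} postcritical points, a quantity that is not controlled a priori and could degenerate as $k$ grows if you simply re-run the local lemma with $\CC_k$ in place of $\CC_0$. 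Bonk--Meyer avoid this by not re-running the lemma: once $\CC_1$ is constructed, they \emph{lift the isotopy} rel $\post f$ through the covering map $f^n\colon S^2\setminus f^{-n}(\post f)\to S^2\setminus\post f$ to obtain $\CC_{k+1}$ from $\CC_k$, and the supports of the lifted isotopies shrink geometrically in a visual metric, which supplies both the uniform control along the iteration and the Cauchy estimate in one stroke.

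Second, and more seriously, the Jordan-ness of the limit is asserted rather than proved, and this is the technical heart of the theorem. ``Uniformly controlled combinatorial thickness of the annular regions, so the limit cannot pinch'' is a heuristic, not an argument: Hausdorff limits of Jordan curves need not be Jordan. What is actually required is a uniform lower bound $d(\gamma_k(s),\gamma_k(t))\geq\eta(\abs{s-t})$, with $\eta$ a positive modulus independent of $k$, for suitably compatible parameterizations $\gamma_k\colon S^1\to\CC_k$; such a bound passes to the uniform limit and forces the limiting parameterization to be injective. Bonk--Meyer extract this modulus from the quantitative separation of disjoint cells (Lemma~\ref{lmCellBoundsBM}(i)) in a visual metric, together with the fact that the lifted isotopies move points only within controlled flowers. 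Without an estimate of this type your proposal does not establish that $\CC$ is a Jordan curve, only that it is a continuum invariant under $f^n$.
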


The following lemma was proved in \cite[Lemma~3.14]{Li13}. 

\begin{lemma}   \label{lmPreImageDense}
Let $f\:S^2\rightarrow S^2$ be an expanding Thurston map. Then for each $p\in S^2$, the set $\bigcup\limits_{n=1}^{+\infty}  f^{-n}(p)$ is dense in $S^2$, and
\begin{equation}
\lim\limits_{n\to +\infty}  \card(f^{-n}(p))  = +\infty.
\end{equation}
\end{lemma}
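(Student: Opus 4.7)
The plan is to treat the two claims in order, using expansion for density and then bootstrapping from density to the cardinality bound.

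For the density claim, fix $x\in S^2$ and $\varepsilon>0$, and pick any Jordan curve $\CC\supseteq\post f$ with associated cell decompositions $\DD^n(f,\CC)$. By Definition~\ref{defExpanding} and Remark~\ref{rmExpanding}, for $n$ sufficiently large every $n$-tile has diameter less than $\varepsilon/2$. Select an $n$-tile $X$ containing $x$ and an $n$-tile $X'$ sharing an $n$-edge with $X$; such an $X'$ exists since each $n$-tile has $m=\card(\post f)\geq 3$ boundary edges by Proposition~\ref{propCellDecomp}(vi), and each $n$-edge lies in the boundary of exactly two $n$-tiles. A standard local-degree argument at an interior point of the shared edge (using that $f^n$ is a local homeomorphism there and that $\CC$ locally separates $X_b^0$ from $X_w^0$) shows that $f^n(X)$ and $f^n(X')$ are the two distinct $0$-tiles, so $f^n(X)\cup f^n(X')=S^2\ni p$. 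Hence $p\in f^n(Y)$ for some $Y\in\{X,X'\}$, and by Proposition~\ref{propCellDecomp}(i), $f^n|_Y$ is a homeomorphism, producing $y\in Y$ with $f^n(y)=p$. Since $X\cup X'\subseteq B_d(x,\varepsilon)$, we obtain $y\in f^{-n}(p)\cap B_d(x,\varepsilon)$, which proves density of $\bigcup_{n\geq 1}f^{-n}(p)$ in $S^2$.

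For the cardinality claim, density combined with the absence of isolated points in $S^2$ makes $\bigcup_{n\geq 1}f^{-n}(p)$ infinite, while $\post f$ is finite, so we may pick $N\in\N$ and $y\in f^{-N}(p)\setminus\post f$. The chain rule (\ref{eqLocalDegreeProduct}) gives $\crit(f^k)=\bigcup_{i=0}^{k-1}f^{-i}(\crit f)$, whence the critical values of $f^k$ lie in $\bigcup_{j=1}^{k}f^j(\crit f)\subseteq\post f$. Thus $y\notin\post f$ forces $y$ to be a regular value of $f^k$ for every $k\in\N$, equivalently $\deg_{f^k}(z)=1$ for every $z\in f^{-k}(y)$. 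Applying (\ref{eqDeg=SumLocalDegree}) to $f^k$ at $y$, together with (\ref{eqDegreeProduct}), yields $\card(f^{-k}(y))=(\deg f)^k$. Since $f^{N+k}(z)=f^N(y)=p$ for every $z\in f^{-k}(y)$, we have $f^{-k}(y)\subseteq f^{-(N+k)}(p)$, and therefore
\[
\card(f^{-(N+k)}(p))\geq (\deg f)^k\longrightarrow +\infty\quad\text{as }k\to+\infty.
\]
Combined with $\deg f\geq 2$, this gives $\lim_{n\to+\infty}\card(f^{-n}(p))=+\infty$.

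No step should present a serious obstacle. The only mildly subtle point is the density argument when $p$ lies in the $1$-skeleton $\CC$ (in particular when $p\in\post f$), but the two-adjacent-tiles trick handles every $p$ uniformly by ensuring that among $\{X,X'\}$ one tile always maps onto a $0$-tile containing $p$. Once density is in place, the cardinality bound is essentially automatic: a single non-postcritical point in the backward orbit of $p$ produces a $(\deg f)^k$-fold fiber at each later iterate, which swamps any initial deficit.
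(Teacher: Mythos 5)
Your proof is correct, and both halves follow the route one would expect here. For density: choosing two adjacent $n$-tiles $X,X'$ near $x$ and noting that they map under $f^n$ onto the two distinct $0$-tiles is exactly the right move; the local-homeomorphism argument at an interior point of the shared $n$-edge is sound because $\crit(f^n)\subseteq f^{-n}(\post f)=\V^n(f,\CC)$ (a consequence of (\ref{eqLocalDegreeProduct}) together with $\crit f\subseteq f^{-1}(\post f)$ and the forward invariance of $\post f$), so such a point avoids the branch set of $f^n$; the ``checkerboard'' fact that adjacent $n$-tiles have opposite colors is not stated verbatim in this paper but is a standard consequence of the cell-decomposition construction in \cite{BM10}, and you in any case supply the justification. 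For the cardinality claim: locating a single non-postcritical point $y$ in the backward orbit of $p$ (available because the backward orbit is dense, hence infinite, while $\post f$ is finite), observing that $y$ is then a regular value of every $f^k$, and applying (\ref{eqDeg=SumLocalDegree}) and (\ref{eqDegreeProduct}) to get $\card(f^{-k}(y))=(\deg f)^k$ is clean and complete, and $f^{-k}(y)\subseteq f^{-(N+k)}(p)$ gives the desired divergence. A small cosmetic point: the existence of a neighboring tile $X'$ only requires that $\partial X$ contain at least one $n$-edge, so the appeal to $m\geq 3$ in Proposition~\ref{propCellDecomp}(vi) is not actually needed. Overall the argument is essentially the standard one and I see no gaps.
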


Expanding Thurston maps are Lipschitz with respect to a visual metric.

\begin{lemma}    \label{lmLipschitz}
Let $f\:S^2 \rightarrow S^2$ be an expanding Thurston map, and $d$ be a visual metric on $S^2$ for $f$ with an expansion factor $\Lambda>1$. Then $f$ is Lipschitz with respect to $d$.
\end{lemma}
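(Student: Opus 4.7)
The plan is to prove the Lipschitz estimate directly from the geometric control over cells provided by Lemma~\ref{lmCellBoundsBM}, exploiting the fact that $f$ maps $(n+1)$-tiles homeomorphically onto $n$-tiles (Proposition~\ref{propCellDecomp}(i)), so applying $f$ increases tile diameters by at most a factor of $\Lambda$.

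More precisely, fix a Jordan curve $\CC \subseteq S^2$ with $\post f \subseteq \CC$, and let $C \geq 1$, $K \geq 1$, $n_0 \in \N_0$ be the constants from Lemma~\ref{lmCellBoundsBM}. Given distinct $x,y \in S^2$, set $r = d(x,y)$. First I would treat small $r$: choose $n = \lceil -\log r / \log\Lambda \rceil$, so that $\Lambda^{-n} \leq r$. By Lemma~\ref{lmCellBoundsBM}(iv), $y \in B_d(x,r) \subseteq U^{n-n_0}(x)$, provided $n \geq n_0$. By the definition (\ref{defU^n}) of $U^{n-n_0}(x)$, there exist $(n-n_0)$-tiles $X, Y \in \X^{n-n_0}(f,\CC)$ with $x \in X$, $y \in Y$, and $X \cap Y \neq \emptyset$.

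Next I apply $f$. By Proposition~\ref{propCellDecomp}(i), $f(X)$ and $f(Y)$ are $(n-n_0-1)$-tiles (assuming $n \geq n_0+1$), and since $X \cap Y \neq \emptyset$, we have $f(x) \in f(X)$, $f(y) \in f(Y)$, and $f(X) \cap f(Y) \neq \emptyset$. By Lemma~\ref{lmCellBoundsBM}(ii), each of $\diam_d(f(X))$ and $\diam_d(f(Y))$ is at most $C\Lambda^{-(n-n_0-1)}$, so
\begin{equation*}
d(f(x), f(y)) \leq \diam_d(f(X)) + \diam_d(f(Y)) \leq 2C \Lambda^{n_0+1} \Lambda^{-n} \leq 2C\Lambda^{n_0+1} \, d(x,y).
\end{equation*}

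Finally, I dispose of the remaining case when $r$ is not small enough to force $n \geq n_0+1$, i.e., when $r \geq \Lambda^{-(n_0+1)}$. In that regime, since $(S^2,d)$ is compact (its topology is standard by Proposition~8.9 of~\cite{BM10}), its diameter $M := \diam_d(S^2)$ is finite, and one obtains $d(f(x),f(y)) \leq M \leq M \Lambda^{n_0+1} \, d(x,y)$. Setting $L = \max\{2C, M\}\Lambda^{n_0+1}$ then yields the Lipschitz bound $d(f(x),f(y)) \leq L \, d(x,y)$ for all $x,y \in S^2$. There is no real obstacle here; the only minor subtlety is correctly handling the boundary cases for the integer $n$ and the possibility $n < n_0+1$, which is why one needs the separate compactness argument for large $r$.
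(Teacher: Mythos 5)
Your proof is essentially correct and follows the same route as the paper's: both use the quantitative control of Lemma~\ref{lmCellBoundsBM} together with the cellularity of $f$ from Proposition~\ref{propCellDecomp}(i), so that applying $f$ drops the level by one and hence increases diameters by a bounded multiple of $\Lambda$; the paper works directly with the flowers $U^m(x)$ and part~(iii) (choosing $m$ maximal with $y\in U^m(x)$), while you unpack into tiles via parts~(ii) and~(iv), which is a matter of presentation rather than a different idea. Two minor bookkeeping slips worth fixing: since $B_d(x,r)$ denotes the \emph{open} ball and $r = d(x,y)$, the point $y$ does not lie in $B_d(x,r)$, so you should invoke Lemma~\ref{lmCellBoundsBM}(iv) with a radius strictly larger than $r$ (say $2r$), which only worsens the final constant; and the exact threshold ensuring $n = \lceil -\log r/\log\Lambda\rceil \geq n_0+1$ is $r < \Lambda^{-n_0}$, not $r < \Lambda^{-(n_0+1)}$, though since your large-$r$ case already covers the overlapping range this does not break the argument.
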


\begin{proof}
Let $x,y\in S^2$ and we assume that 
\begin{equation}   \label{eqPflmLipschitz1}
0<d(x,y)< K^{-1}\Lambda^{-2},
\end{equation}
where $K\ge 1$ is a constant from Lemma~\ref{lmCellBoundsBM} depending only on $f$, $d$, $\CC$, and $\Lambda$.

Set $m=\max \big\{k\in\N_0 \,\big|\, y\in U^k(x) \big\}$, where $U^k(x)$ is defined in (\ref{defU^n}). By Lemma~\ref{lmCellBoundsBM}(iii), the number $m$ is finite. Then $y\notin U^{m+1}(x)$. Thus by Lemma~\ref{lmCellBoundsBM}(iii),
$$
\frac{1}{K} \Lambda^{-m-1} \leq d(x,y) \leq K \Lambda^{-m}.
$$
By (\ref{eqPflmLipschitz1}) we get $m\geq 1$. Since $f(y)\in f\(U^m(x)\) \subseteq U^{m-1}(f(x))$ by Proposition~\ref{propCellDecomp}, we get from Lemma~\ref{lmCellBoundsBM}(iii) that 
$$
d(f(x),f(y)) \leq K\Lambda^{-m+1}.
$$
Therefore, 
$$
\frac{d(f(x),f(y))}{d(x,y)}   \leq \frac{K\Lambda^{-m+1}}{\frac{1}{K}\Lambda^{-m-1}} =K^2\Lambda^2,
$$
and $f$ is Lipschitz with respect to $d$.
\end{proof}

We now establish a generalization of \cite[Lemma~16.1]{BM10}. It is an essential ingredient for the distortion lemmas (Lemma~\ref{lmSnPhiBound} and Lemma~\ref{lmSigmaExpSnPhiBound}) that we will repeatedly use later.

\begin{lemma}  \label{lmMetricDistortion}
Let $f\:S^2 \rightarrow S^2$ be an expanding Thurston map, and $\CC \subseteq S^2$ be a Jordan curve that satisfies $\post f \subseteq \CC$ and $f^{n_\CC}(\CC)\subseteq\CC$ for some $n_\CC\in\N$. Let $d$ be a visual metric on $S^2$ for $f$ with an expansion factor $\Lambda>1$. Then there exists a constant $C_0 > 1$, depending only on $f$, $d$, and $\Lambda$, with the following property:

If $k,n\in\N_0$, $X^{n+k}\in\X^{n+k}(f,\CC)$, and $x,y\in X^{n+k}$, then 
\begin{equation}   \label{eqMetricDistortion}
\frac{1}{C_0} d(x,y) \leq \frac{d(f^n(x),f^n(y))}{\Lambda^n}  \leq C_0 d(x,y).
\end{equation}
\end{lemma}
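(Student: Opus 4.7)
My plan is to reduce the statement to Lemma~16.1 of \cite{BM10}---which establishes the analogous bi-Lipschitz comparison under the stronger assumption that $\CC$ is $f$-invariant---by applying that lemma to the iterate $F := f^{n_\CC}$, for which $F(\CC) \subseteq \CC$ holds automatically. Since $\X^{j}(F, \CC) = \X^{j n_\CC}(f, \CC)$ for each $j \in \N_0$, the metric $d$ is also a visual metric for $F$, with expansion factor $\Lambda^{n_\CC}$, by the characterization in Lemma~\ref{lmCellBoundsBM}. Applying Lemma~16.1 of \cite{BM10} to $F$ and its invariant curve $\CC$ therefore yields a constant $C_1 \geq 1$ such that for all $j, \ell \in \N_0$, $Y \in \X^{j + \ell}(F, \CC)$, and $u, v \in Y$,
\begin{equation*}
\frac{1}{C_1} d(u, v) \leq \frac{d(F^j(u), F^j(v))}{\Lambda^{j n_\CC}} \leq C_1 d(u, v). \qquad (\ast)
\end{equation*}

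Given $n, k \in \N_0$, $X^{n+k} \in \X^{n+k}(f, \CC)$, and $x, y \in X^{n+k}$, I write $n = q n_\CC + r$ with $0 \leq r < n_\CC$ and factor $f^n = F^q \circ f^r = f^r \circ F^q$ (using that the iterates of $f$ commute). The estimate $(\ast)$ handles the $F^q$-part and Lemma~\ref{lmLipschitz} (which gives a Lipschitz constant $L \geq 1$ for $f$) handles the $f^r$-part. For the upper bound I apply $(\ast)$ at $j = q$: since $F(\CC) \subseteq \CC$ implies that $\DD^{m + n_\CC}(f, \CC)$ refines $\DD^m(f, \CC)$ for every $m \in \N_0$, after passing to a slightly enlarged enclosing $F$-tile one can place $x, y$ inside a common $F$-tile of level at least $q$; bounding the $f^r$-part by $L^{n_\CC}$ then gives $d(f^n(x), f^n(y)) \leq C_1 L^{n_\CC} \Lambda^n d(x, y)$.

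For the lower bound---the main difficulty, since the Lipschitz estimate runs in the wrong direction---the trick is to bring in one further iteration of $F$. Writing $F^{q+1} = f^{n_\CC - r} \circ f^n$, Lipschitzness of $f^{n_\CC - r}$ yields $d(F^{q+1}(x), F^{q+1}(y)) \leq L^{n_\CC - r} d(f^n(x), f^n(y))$, and the lower half of $(\ast)$ at $j = q+1$ produces $d(F^{q+1}(x), F^{q+1}(y)) \geq C_1^{-1} \Lambda^{(q+1) n_\CC} d(x, y) \geq C_1^{-1} \Lambda^n d(x, y)$ once $x, y$ are placed in a common $F$-tile of level at least $q+1$. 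Combining the two inequalities gives $d(f^n(x), f^n(y)) \geq L^{-n_\CC} C_1^{-1} \Lambda^n d(x, y)$, and taking $C_0 = L^{n_\CC} C_1$ (enlarged to absorb finitely many boundary cases) closes the argument.

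I expect the chief obstacle to lie precisely in the bookkeeping of refinement relations: without $f$-invariance of $\CC$, the decomposition $\DD^{m+1}(f, \CC)$ need not refine $\DD^m(f, \CC)$ in general, and only the coarser relation $\DD^{m + n_\CC}(f, \CC) \succeq \DD^m(f, \CC)$ survives. When $r + k$ is not a multiple of $n_\CC$, the tile $X^{n+k}$ may fail to be contained in any $F$-tile of the level demanded by $(\ast)$, and one then has to either relax to the smallest $F$-tile actually containing both $x$ and $y$ (adjusting the $F$-exponent accordingly, at the cost of a bounded factor absorbed into $C_0$) or perform one additional application of $f$ to realign the tile level with the $F$-decomposition. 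This is the delicate part of the argument, but since the range of exceptional offsets $r, (r+k) \bmod n_\CC$ is finite and uniformly bounded in terms of $n_\CC$, the constant $C_0$ can indeed be chosen independently of $n$ and $k$.
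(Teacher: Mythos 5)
Your reduction to Lemma~16.1 of \cite{BM10} via $F = f^{n_\CC}$ is the right starting point and matches the paper. However, the realignment step that you flag as ``the chief obstacle'' is not merely bookkeeping: as written, your argument has a genuine gap in both inequalities, and the paper resolves it with a device (lifting to preimages) that your sketch does not contain.

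For the upper bound, your claim that ``one can place $x,y$ inside a common $F$-tile of level at least $q$'' is not justified. The refinement $\DD^{m+n_\CC}(f,\CC) \succeq \DD^m(f,\CC)$ only shows that $X^{n+k}$ is contained in some tile at each of the levels $(n+k) - jn_\CC$, $j\in\N_0$, and all these levels are $\equiv r+k \pmod{n_\CC}$. If $r+k \not\equiv 0 \pmod{n_\CC}$, none of these levels is a multiple of $n_\CC$, so no containment of $X^{n+k}$ in an $F$-tile is produced; and an $F$-tile merely \emph{intersecting} $X^{n+k}$ does not suffice, since Lemma~16.1 of \cite{BM10} requires both points to lie in the \emph{same} tile. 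The paper avoids this entirely: writing $n+k = (mN+s) + (lN+t)$ with $s,t \in \{0,\dots,N-1\}$, it applies $f^{t+s}$ \emph{first}, so the image of $X^{n+k}$ lands exactly in a tile of level $(m+l)N$, a multiple of $N$, where $(\ast)$ can be applied to $f^{(l-1)N}$, and then Lipschitz handles the residual $f^{N-s}$ on the outside.

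For the lower bound the difficulty is sharper and your argument breaks down structurally. You want to apply $(\ast)$ at $j=q+1$, which requires $x,y$ to lie in a common $F$-tile of level at least $q+1$, i.e.\ an $f$-tile of level at least $(q+1)n_\CC = n + (n_\CC - r)$. But $x,y$ are given only in a tile of level $n+k$, and when $k < n_\CC - r$ (for instance $k=0$ and $r \geq 1$) we have $n+k < (q+1)n_\CC$, so there is simply no tile of the required level available at all --- this is not an ``exceptional offset'' absorbable into $C_0$, it is an obstruction to applying $(\ast)$ in the stated form. The paper handles the lower bound by \emph{lifting} $x,y$ to preimages $x',y'$ at level $(m+l+2)N$, i.e.\ choosing $Y \in \X^{(m+l+2)N}(f,\CC)$ with $f^{2N-s-t}(Y) = X^{n+k}$ and $f^{2N-s-t}(x') = x$, $f^{2N-s-t}(y') = y$ (possible by Proposition~\ref{propCellDecomp}), applying $(\ast)$ to $f^{(l+2)N}$ at this higher, $N$-aligned level, and then relating $d(x',y')$ to $d(x,y)$ and $d(f^{(l+2)N}(x'),f^{(l+2)N}(y'))$ to $d(f^n(x),f^n(y))$ by Lipschitz estimates of bounded length. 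This lifting step is the essential missing ingredient in your sketch.
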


\begin{proof}
In this proof, we set a constant $K= 2\max\{1,l_f\}$, where $l_f$ is the Lipschitz constant of $f$ with respect to $d$. Let $N=n_\CC$.

By Remark~\ref{rmExpanding}, the map $f^{N}$ is an expanding Thurston map. It is easy to see from Lemma~\ref{lmCellBoundsBM} that the metric $d$ is a visual metric for the expanding Thurston map $f^{N}$ with an expansion factor $\Lambda^{N}$. So by Lemma~16.1 in \cite{BM10}, there exists a constant $D\geq 1$ depending only on $f^N,d,\CC$, and $\Lambda^N$ such that for each $k,l\in\N_0$, each $X\in\X^{(l+k)N} (f,\CC)$, and each pair of points $x,y \in X$, we have
\begin{equation}     \label{eqExponDecayBM}
\frac{1}{D} d(x,y) \leq \frac{d(f^{lN}(x),f^{lN}(y))}{\Lambda^{lN}}  \leq D d(x,y).
\end{equation}

Fix $m,l\in \N_0$, $s,t\in\{0,1,\dots,N-1\}$, $X\in\X^{(mN +s)+(lN +t)} (f,\CC)$, and $x,y\in X$.

We prove the second inequality in (\ref{eqMetricDistortion}) with $n=mN+s$ and $k=lN+t$ by considering the following cases depending on whether $l=0$ or $l\geq 1$.

If $l=0$, then by Lemma~\ref{lmLipschitz} and the fact that $K> l_f$,
$$
d\(f^{lN+t}(x),f^{lN+t}(y)\) \leq K^t d(x,y) \leq K^{2N}d(x,y) \Lambda^{lN+t}.
$$

If $l\geq 1$, then by Lemma~\ref{lmLipschitz}, (\ref{eqExponDecayBM}), and the fact that $K>l_f$,
\begin{align*}
      & d\(f^{lN+t}(x),f^{lN+t}(y)\)  \\
   =  & d\(f^{(l-1)N+(N-s)}\(f^{t+s}(x)\) , f^{(l-1)N+(N-s)}\(f^{t+s}(y)\)     \)  \\
 \leq & K^{N-s} d\(f^{(l-1)N }\(f^{t+s}(x)\) , f^{(l-1)N }\(f^{t+s}(y)\)     \) \\
 \leq & K^{N-s} D d\( f^{t+s}(x), f^{t+s}(y)  \)  \Lambda^{(l-1)N}  \\
 \leq & K^{N-s} D \(K^{t+s} d\( x,y \) \) \Lambda^{lN + t}  \\
 \leq & K^{2N}  D d(x,y) \Lambda^{lN + t}.
\end{align*}

We consider the first inequality in (\ref{eqMetricDistortion}) with $n=mN+s$ and $k=lN+t$ now. By Proposition~\ref{propCellDecomp}(i), we can choose  $Y\in \X^{(m+l+2)N}(f,\CC)$ and two points $x',y'\in Y$ such that $f^{2N -s-t} (Y)=X$, $f^{2N -s-t} (x')=x$, and $f^{2N -s-t} (y')=y$. Note that $2N -s-t \geq 2$. Then by Lemma~\ref{lmLipschitz}, (\ref{eqExponDecayBM}), and the fact that $K>l_f$,
\begin{align*}
      & d\(f^{lN+t}(x),f^{lN+t}(y)\)  \\
   =  & d\(f^{lN+t}\(f^{2N -s-t}(x')\),f^{lN+t}\(f^{2N -s-t}(y')\)\)  \\
   =  & d\(f^{lN+2N -s}(x'),f^{lN+2N -s}(y')\)  \\
 \geq & K^{-s} d\(f^{lN+2N}(x'),f^{lN+2N}(y')\)  \\
 \geq & K^{-s} D^{-1} d(x',y') \Lambda^{lN+2N}   \\
 \geq & K^{-s} D^{-1} K^{-(2N -s-t)} d(x,y)   \Lambda^{lN+t}   \\
 \geq & K^{-2N} D^{-1} d(x,y) \Lambda^{lN+t} .
\end{align*}
Therefore, 
\begin{equation*}
\frac{1}{\widetilde{C}_0} d(x,y) \leq \frac{d(f^{lN +t}(x),f^{lN +t}(y))}{\Lambda^{lN +t}}  \leq \widetilde{C}_0 d(x,y),
\end{equation*}
where $\widetilde{C}_0 = K^{2N} D$ is a constant depending only on $f$, $d$, $\CC$, $\Lambda$, and $N$. For $f$, $d$, and $\Lambda$ fixed, we can now define $C_0$ to be its infimum of $\widetilde{C}_0$ over all Jordan curves $\CC\subseteq S^2$ containing $\post f$ and $N\in\N$ that satisfy $f^{N}(\CC)\subseteq \CC$. Then $C_0\geq K> 1$ satisfies (\ref{eqMetricDistortion}) and only depends on $f$, $d$, and $\Lambda$.
\end{proof}

\section{The Assumptions}      \label{sctAssumptions}
We state below the hypothesis under which we will develop our theory in most parts of this paper. We will repeatedly refer to such assumptions in the later sections.

\begin{assumptions}
\quad

\begin{enumerate}

\smallskip

\item $f\:S^2 \rightarrow S^2$ is an expanding Thurston map.

\smallskip

\item $\CC\subseteq S^2$ is a Jordan curve containing $\post f$ with the property that there exists $n_\CC\in\N$ such that $f^{n_\CC} (\CC)\subseteq \CC$ and $f^m(\CC)\nsubseteq \CC$ for each $m\in\{1,2,\dots,n_\CC-1\}$.

\smallskip

\item $d$ is a visual metric on $S^2$ for $f$ with an expansion factor $\Lambda>1$ and a linear local connectivity constant $L\geq 1$.

\smallskip

\item $\phi\in \Holder{\alpha}(S^2,d)$ is a real-valued H\"{o}lder continuous function with an exponent $\alpha\in(0,1]$.

\end{enumerate}

\end{assumptions}

Observe that by Theorem~\ref{thmCexistsBM}, for each $f$ in (1), there exists at least one Jordan curve $\CC$ that satisfies (2). Since for a fixed $f$, the number $n_\CC$ is uniquely determined by $\CC$ in (2), in the remaining part of the paper we will say that a quantity depends on $\CC$ even if it also depends on $n_\CC$.

Note that even though the values of $\Lambda$ and $L$ are not uniquely determined by the metric $d$, in the remainder of this paper, for each visual metric $d$ on $S^2$ for $f$, we will fix a choice of expansion factor $\Lambda$ and a choice of linear local connectivity constant $L$. We will say a quantity depends on the visual metric $d$ without mentioning the dependence on $\Lambda$ or $L$, even though if we had not fixed a choice of $\Lambda$ and a choice of $L$, it would have depended on $\Lambda$ or $L$ as well.

In the discussion below, depending on the conditions we will need, we will sometimes say ``Let $f$, $\CC$, $d$, $\phi$, $\alpha$ satisfy the Assumptions.'', and sometimes say ``Let $f$ and $d$ satisfy the Assumptions.'', etc.

\section{Existence}  \label{sctExistence}

We start this section with a brief review of some necessary concepts from the ergodic theory and dynamical systems. We then establish two distortion lemmas (Lemma~\ref{lmSnPhiBound} and Lemma~\ref{lmSigmaExpSnPhiBound}) before giving the definitions of a Gibbs state and a radial Gibbs state. Next, we define the Ruelle operator $\RR_\phi \: \CCC(S^2) \rightarrow \CCC(S^2)$ and show that any eigenmeasure $m_\phi$ for its adjoint $\RR_\phi^*$ is a Gibbs state. We will eventually see in Corollary~\ref{corMandMuUnique} that there is exactly one eigenmeasure $m_\phi$ for $\RR_\phi^*$. In Theorem~\ref{thmMuExist}, we construct an $f$-invariant Gibbs state $\mu_\phi$ which is absolutely continuous with respect to $m_\phi$. Finally, we prove that $\mu_\phi$ is an equilibrium state in Proposition~\ref{propTopPressureDefPreImg} and Corollary~\ref{corExistES}. We end this section by proving in Proposition~\ref{propRadialGibbsIFFGibbs} that the concept of a Gibbs state and that of a radial Gibbs state coincide if and only if the map has no periodic critical point.

We first review some concepts from dynamical systems. We refer the readers to \cite[Chapter~3]{PU10}, \cite[Chapter~9]{Wa82} or \cite[Chapter~20]{KH95} for a more detailed study of these concepts.

Let $(X,d)$ be a compact metric space and $g\:X\rightarrow X$ a continuous map. For $n\in\N$ and $x,y\in X$,
$$
d^n_g(x,y)=\operatorname{max}\big\{ \hspace{-0.2mm} d\!\(g^k(x),g^k(y)\) \hspace{-0.3mm} \big| k\in\{0,1,\dots,n-1\} \!\big\}
$$
defines a new metric on $X$. A set $F\subseteq X$ is \defn{$(n,\epsilon)$-separated}, for some $n\in\N$ and $\epsilon>0$, if for each pair of distinct points $x,y\in F$, we have $d^n_g(x,y)\geq \epsilon$. For $\epsilon > 0$ and $n\in\N$, let $F_n(\epsilon)$ be a maximal (in the sense of inclusion) $(n,\epsilon)$-separated set in $X$.

For each $\psi \in\CCC(X)$, the following limits exist and are equal, and we denote the limits by $P(g,\psi)$ (see for example, \cite[Theorem~3.3.2]{PU10}):
\begin{align}  \label{defTopPressure}
P(g,\psi)  & = \lim \limits_{\epsilon\to 0} \limsup\limits_{n\to+\infty} \frac{1}{n} \log  \sum\limits_{x\in F_n(\epsilon)} \exp(S_n \psi(x)) \notag \\
           & = \lim \limits_{\epsilon\to 0} \liminf\limits_{n\to+\infty} \frac{1}{n} \log  \sum\limits_{x\in F_n(\epsilon)} \exp(S_n \psi(x)), 
\end{align}
where $S_n \psi (x) = \sum\limits_{j=0}^{n-1} \psi(g^j(x))$ is defined in (\ref{eqDefSnPt}). We call $P(g,\psi)$ the \defn{topological pressure} of $g$ with respect to the \emph{potential} $\psi$. The quantity $h_{\operatorname{top}}(g) = P(g,0)$ is called the \defn{topological entropy} of $g$. Note that $P(g,\psi)$ is independent of $d$ as long as the topology on $X$ defined by $d$ remains the same (see \cite[Section~3.2]{PU10}).

A \defn{measurable partition} $\xi$ of $X$ is a countable collection $\xi=\{A_j\,|\,j\in J\}$ of mutually disjoint Borel sets with $\bigcup \xi = X$, where $J$ is a countable index set. For $x\in X$, we denote by $\xi(x)$ the unique element of $\xi$ that contains $x$. Let $\mu\in \MMM(X,g)$ be a $g$-invariant Borel probability measure on $X$. The \defn{information function} $I$ maps a measurable partition $\xi$ of $X$ to a $\mu$-a.e.\ defined real-valued function on $X$ in the following way:
\begin{equation}   \label{eqDefI}
I(\xi)(x) = -\log \mu(\xi(x)), \qquad \text{for } x\in X.
\end{equation} 

Let $\xi=\{A_j\,|\,j\in J\}$ and $\eta=\{B_k\,|\,k\in K\}$ be measurable partitions of $X$, where $J$ and $K$ are the corresponding index sets. We say $\xi$ is a \defn{refinement} of $\eta$ if for each $A_j\in\xi$, there exists $B_k\in\eta$ such that $A_j\subseteq B_k$. The \defn{common refinement} $\xi \vee \eta= \{A_j\cap B_k \,|\, j\in J,\, k\in K\}$ of $\xi$ and $\eta$ is also a measurable partition. Set $g^{-1}(\xi)=\{g^{-1}(A_j) \,|\,j\in J\}$, and denote for $n\in\N$,
$$
\xi^n_g= \bigvee\limits_{j=0}^{n-1} g^{-j}(\xi) = \xi\vee g^{-1}(\xi)\vee\cdots\vee g^{-(n-1)}(\xi),
$$
and let $\xi^\infty_g$ be the smallest $\sigma$-algebra containing $\bigcup\limits_{n=1}^{+\infty}\xi^n_g$. The \defn{entropy} of $\xi$ is
$$
H_{\mu}(\xi)=-\sum\limits_{j\in J} \mu(A_j)\log\(\mu (A_j)\),
$$
where $0\log 0$ is defined to be 0. One can show (see \cite[Chapter 4]{Wa82}) that if $H_{\mu}(\xi)<+\infty$, then the following limit exists:
$$
h_{\mu}(g,\xi)=\lim\limits_{n\to+\infty} \frac{1}{n} H_{\mu}(\xi^n_g) \in[0,+\infty).
$$

The \defn{measure-theoretic entropy} of $g$ for $\mu$ is given by
\begin{align}   \label{eqDefMeasThEntropy}
h_{\mu}(g)=\sup\{h_{\mu}(g,\xi)\,|\, \xi & \text{ is a measurable partition of } X\\
                    &\qquad \qquad \quad \text{ with } H_{\mu}(\xi)<+\infty\}.    \notag
\end{align}
For each $\psi\in\CCC(X)$, the \defn{measure-theoretic pressure} $P_\mu(g,\psi)$ of $g$ for the measure $\mu$ and the potential $\psi$ is
\begin{equation}  \label{defMeasTheoPressure}
P_\mu(g,\psi)= h_\mu (g) + \int \! \psi \,\mathrm{d}\mu.
\end{equation}

By the Variational Principle (see for example, \cite[Theorem~3.4.1]{PU10}), we have that for each $\psi\in\CCC(X)$,
\begin{equation}  \label{eqVPPressure}
P(g,\psi)=\sup\{P_\mu(g,\psi)\,|\,\mu\in \MMM(X,g)\}.
\end{equation}
In particular, when $\psi$ is the constant function $0$,
\begin{equation}  \label{eqVPEntropy}
h_{\operatorname{top}}(g)=\sup\{h_{\mu}(g)\,|\,\mu\in \MMM(X,g)\}.
\end{equation}
A measure $\mu$ that attains the supremum in (\ref{eqVPPressure}) is called an \defn{equilibrium state} for the transformation $g$ and the potential $\psi$. A measure $\mu$ that attains the supremum in (\ref{eqVPEntropy}) is called a \defn{measure of maximal entropy} of $g$.

\smallskip

Now we go back to the dynamical system $(S^2,f)$ where $f$ is an expanding Thurston map.

By the work of P.~Ha\"{\i}ssinsky and K.~Pilgrim \cite{HP09}, and M.~Bonk and D.~Meyer \cite{BM10}, we know that there exists a unique measure of maximal entropy $\mu_f$ for $f$, and that
\begin{equation*}
h_{\operatorname{top}} (f)=\log(\deg f).
\end{equation*}
In this section, we generalize the existence part of this result to equilibrium states for real-valued H\"older continuous potentials. We prove the uniqueness in the next section.

We first establish the following two distortion lemmas that serve as the cornerstones for all the analysis in the thermodynamical formalism. 

\begin{lemma}   \label{lmSnPhiBound}
Let $f$, $\CC$, $d$, $L$, $\Lambda$, $\phi$, $\alpha$ satisfy the Assumptions. Then there exists a constant $C_1=C_1(f,d,\phi,\alpha)$ depending only on $f$, $d$, $\phi$, and $\alpha$ such that
\begin{equation}  \label{eqSnPhiBound}
\Abs{S_n\phi(x)-S_n\phi(y)}  \leq C_1 d(f^n(x),f^n(y))^\alpha,
\end{equation}
for $n,m\in\N_0$ with $n\leq m $, $X^m\in\X^m(f,\CC)$, and $x,y\in X^m$. Quantitatively, we choose 
\begin{equation}   \label{eqC1Expression}
C_1= \frac{\Hseminorm{\alpha}{\phi} C_0^\alpha}{1-\Lambda^{-\alpha}},
\end{equation}
where $C_0 > 1$ is a constant depending only on $f$ and $d$ from Lemma~\ref{lmMetricDistortion}.
\end{lemma}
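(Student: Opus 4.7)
\smallskip
\noindent\textbf{Proof proposal.}
The plan is to write $S_n\phi(x)-S_n\phi(y)$ as a telescoping sum $\sum_{j=0}^{n-1}\bigl(\phi(f^j(x))-\phi(f^j(y))\bigr)$ and to estimate each term using the H\"older continuity of $\phi$ together with the bounded distortion estimate from Lemma~\ref{lmMetricDistortion}. The key observation is that $x,y\in X^m$ and $j\leq n-1\leq m-1$ imply, by Proposition~\ref{propCellDecomp}(i), that $f^j(X^m)\in\X^{m-j}(f,\CC)$ and that the points $f^j(x),\, f^j(y)$ both lie in this $(m-j)$-cell. Hence we may apply Lemma~\ref{lmMetricDistortion} to these two points while iterating the map $n-j$ more times.

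More precisely, fix $j\in\{0,1,\dots,n-1\}$ and apply Lemma~\ref{lmMetricDistortion} to the $(m-j)$-cell $f^j(X^m)=f^j(X^m)\in\X^{(n-j)+(m-n)}(f,\CC)$ containing $f^j(x)$ and $f^j(y)$, with the iterate $f^{n-j}$. The upper bound in \eqref{eqMetricDistortion} yields
\begin{equation*}
d(f^j(x),f^j(y)) \;\leq\; C_0\,\Lambda^{-(n-j)}\, d(f^n(x),f^n(y)).
\end{equation*}
Raising to the power $\alpha$ and invoking the H\"older seminorm of $\phi$ (see \eqref{eqDef|.|alpha}), we obtain
\begin{equation*}
\Abs{\phi(f^j(x))-\phi(f^j(y))} \;\leq\; \Hseminorm{\alpha}{\phi}\, C_0^\alpha\, \Lambda^{-\alpha(n-j)}\, d(f^n(x),f^n(y))^\alpha.
\end{equation*}

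Summing over $j\in\{0,1,\dots,n-1\}$ and applying the triangle inequality to the telescoping decomposition gives
\begin{equation*}
\Abs{S_n\phi(x)-S_n\phi(y)} \;\leq\; \Hseminorm{\alpha}{\phi}\, C_0^\alpha\, d(f^n(x),f^n(y))^\alpha \sum_{j=0}^{n-1}\Lambda^{-\alpha(n-j)}.
\end{equation*}
Since $\sum_{j=0}^{n-1}\Lambda^{-\alpha(n-j)}=\sum_{k=1}^{n}\Lambda^{-\alpha k}\leq \sum_{k=0}^{+\infty}\Lambda^{-\alpha k}=(1-\Lambda^{-\alpha})^{-1}$, the estimate \eqref{eqSnPhiBound} follows with the constant $C_1$ as defined in \eqref{eqC1Expression}; this constant depends only on $f$, $d$, $\phi$, and $\alpha$ because $C_0$ depends only on $f$ and $d$ by Lemma~\ref{lmMetricDistortion}.

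There is no real obstacle here; the only subtle point is the bookkeeping that ensures $f^j(x)$ and $f^j(y)$ always belong to a single $(m-j)$-cell so that the hypothesis of Lemma~\ref{lmMetricDistortion} is legitimately available at every level $j$. Once this is in place, the estimate is a geometric-series calculation and the explicit form of $C_1$ drops out.
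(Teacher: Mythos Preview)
Your proof is correct and follows essentially the same approach as the paper: both arguments apply Lemma~\ref{lmMetricDistortion} to the points $f^j(x),f^j(y)$ in the $(m-j)$-tile $f^j(X^m)$ to obtain $d(f^j(x),f^j(y))\leq C_0\Lambda^{-(n-j)}d(f^n(x),f^n(y))$, then use H\"older continuity and sum the resulting geometric series. Your version is in fact slightly more explicit than the paper's in justifying, via Proposition~\ref{propCellDecomp}(i), that $f^j(X^m)$ is indeed an $(m-j)$-tile containing both iterates.
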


Note that due to the convention described in Section~\ref{sctAssumptions}, we do not say that $C_1$ depends on $\Lambda$.

\begin{proof}
For $n=0$, inequality (\ref{eqSnPhiBound}) trivially follows from the definition of $S_n$.

By Lemma~\ref{lmMetricDistortion}, we have that for each $m\in \N_0$, each $m$-tile $X^m \in\X^m(f,\CC)$, each $x,y\in X^m$, and for $0\leq j \leq n \leq m$,
$$
d(f^j(x),f^j(y)) \leq  C_0 \Lambda^{-(n-j)} d(f^n(x),f^n(y)).
$$
So $\Abs{\phi(f^j(x))-\phi(f^j(y))}  \leq \Hseminorm{\alpha}{\phi} C_0^\alpha \Lambda^{-\alpha(n-j)} d(f^n(x),f^n(y))^\alpha$. Thus for each $n\in\N$ with $n \leq m $, we have
\begin{align*}
 \Abs{S_n\phi(x)-S_n\phi(y)} \leq & \sum\limits_{j=0}^{n-1}   \Abs{\phi(f^j(x))-\phi(f^j(y))}  \\
   \leq & \Hseminorm{\alpha}{\phi}  C_0^\alpha   d(f^n(x),f^n(y))^\alpha \sum\limits_{j=0}^{n-1} \Lambda^{-\alpha(n-j)}  \\
   \leq & \Hseminorm{\alpha}{\phi}  C_0^\alpha   d(f^n(x),f^n(y))^\alpha \sum\limits_{k=0}^{+\infty} \Lambda^{-\alpha k}  \\
   \leq & \frac{\Hseminorm{\alpha}{\phi} C_0^\alpha}{1-\Lambda^{-\alpha}}   d(f^n(x),f^n(y))^\alpha  \\
    =   & C_1   d(f^n(x),f^n(y))^\alpha. 
\end{align*}
\end{proof}

\begin{lemma}   \label{lmSigmaExpSnPhiBound}
Let $f$, $\CC$, $d$, $L$, $\Lambda$, $\phi$, $\alpha$ satisfy the Assumptions. Then there exists $C_2 = C_2(f,d,\phi,\alpha) \geq 1$ depending only on $f$, $d$, $\phi$, and $\alpha$ such that for each $x,y\in S^2$, and each $n\in\N_0$, we have
\begin{equation}   \label{eqSigmaExpSnPhiBound}
\frac{\sum\limits_{x'\in f^{-n}(x)}  \deg_{f^n}(x')  \exp (S_n\phi(x'))}{\sum\limits_{y'\in f^{-n}(y)}  \deg_{f^n}(y')  \exp (S_n\phi(y'))} \leq \exp\(4C_1 Ld(x,y)^\alpha\) \leq C_2,
\end{equation}
where $C_1$ is the constant from Lemma~\ref{lmSnPhiBound}. Quantitatively, we choose 
\begin{equation}  \label{eqC2Bound}
C_2 = \exp\(4C_1 L \(\diam_d(S^2)\)^\alpha \)  = \exp\(4 \frac{\Hseminorm{\alpha}{\phi} C_0}{1-\Lambda^{-1}} L \(\diam_d (S^2)\)^\alpha \),
\end{equation}
where $C_0 > 1$ is a constant depending only on $f$ and $d$ from Lemma~\ref{lmMetricDistortion}.
\end{lemma}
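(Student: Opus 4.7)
The second inequality in~(\ref{eqSigmaExpSnPhiBound}) is immediate from $d(x,y)\le\diam_d(S^2)$ and the definition~(\ref{eqC2Bound}) of $C_2$, so the task is to prove the first inequality. Writing
\begin{equation*}
\Sigma_n(z) = \sum_{z'\in f^{-n}(z)} \deg_{f^n}(z') \exp(S_n\phi(z')),
\end{equation*}
it suffices by symmetry to show $\Sigma_n(x)\le \exp(4C_1 L\, d(x,y)^\alpha)\, \Sigma_n(y)$. The plan is to connect $x$ and $y$ by a short continuum in $S^2$, lift it via the branched covering $f^n$, and then compare the weighted preimage sums component by component using Lemma~\ref{lmSnPhiBound}.

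First I exploit the linear local connectivity of $(S^2,d)$ with constant $L$: applied at radius slightly larger than $d(x,y)$ centered at $x$, this produces a continuum $E\subseteq S^2$ with $x,y\in E$ and $\diam_d(E)\le 2L\, d(x,y)$ (by taking the radius to $d(x,y)^+$ and passing to a Hausdorff limit). The preimage $f^{-n}(E)$ is compact and decomposes into finitely many connected components $F_1,\ldots,F_N$. By the elementary theory of branched covers, each restriction $f^n|_{F_j}\:F_j\to E$ is itself a branched covering of some positive integer degree $d_j$, with $\sum_j d_j = (\deg f)^n$, and for every $z\in E$,
\begin{equation*}
\sum_{z'\in F_j\cap f^{-n}(z)} \deg_{f^n}(z') = d_j.
\end{equation*}
In particular, within any single component $F_j$ the weighted preimages of $x$ and of $y$ both have total mass $d_j$, giving a \emph{bijection with multiplicity} between them.

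To complete the estimate, I aim to show $\abs{S_n\phi(x')-S_n\phi(y')}\le 4C_1 L\, d(x,y)^\alpha$ for every pair $x'\in F_j\cap f^{-n}(x)$, $y'\in F_j\cap f^{-n}(y)$. The geometric step is that, because $\diam_d(E)$ is small, each component $F_j$ lies in a very small union of $n$-tiles --- ideally a single $n$-tile, and in the worst case a pair of adjacent $n$-tiles sharing one $n$-edge. In the single-tile case Lemma~\ref{lmSnPhiBound} with $m=n$ applies directly and yields $C_1 d(x,y)^\alpha$, which is already below the target. In the two-tile case I insert an intermediate point $z\in F_j$ on the shared $n$-edge and apply Lemma~\ref{lmSnPhiBound} twice, once on each of the two $n$-tiles; since $f^n(x')=x$, $f^n(y')=y$, and $f^n(z)\in E$, each link is bounded by $C_1(2L\, d(x,y))^\alpha\le 2C_1 L\, d(x,y)^\alpha$ (using $L\ge1$ and $\alpha\in(0,1]$, so $(2L)^\alpha\le 2L$), for a total of $4C_1 L\, d(x,y)^\alpha$. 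Summing the resulting per-pair estimate over $j$ via the bijection with multiplicity from the previous paragraph yields $\Sigma_n(x)\le\exp(4C_1 L\, d(x,y)^\alpha)\,\Sigma_n(y)$, as desired.

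The main obstacle is precisely this geometric step: verifying, \emph{uniformly in} $n$, that every component $F_j$ of $f^{-n}(E)$ is contained in at most two adjacent $n$-tiles (or at least in a bounded tile chain of universal size). When $E$ stays in the interior of a single $0$-cell this is automatic from the cellular Markov property and no edge-crossing occurs. The difficult case is when $E$ straddles the invariant Jordan curve $\CC$, so that $F_j$ meets $f^{-n}(\CC)$, the $1$-skeleton of $\DD^n(f,\CC)$; controlling how many $n$-edges $F_j$ can cross in this regime is the technical crux, and should follow from the visual-metric cell estimates in Lemma~\ref{lmCellBoundsBM} combined with the distortion bound in Lemma~\ref{lmMetricDistortion}.
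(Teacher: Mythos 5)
The proposal has a genuine gap, one which you explicitly acknowledge in your final paragraph but do not close: the claim that every component $F_j$ of $f^{-n}(E)$ is contained in at most two adjacent $n$-tiles (or even a bounded tile chain) uniformly in $n$. This is not a routine consequence of Lemma~\ref{lmCellBoundsBM} and Lemma~\ref{lmMetricDistortion}, and I am doubtful it is true in the form you need. When $x$ and $y$ lie in different $0$-tiles --- which is exactly the hard case --- the continuum $E$ must cross $\CC$, and nothing prevents it from crossing $\CC$ at many points. Each such crossing gives $F_j$ a chance to pass through an $n$-edge into a new $n$-tile, and the number of $n$-tiles a single component threads through is governed by the combinatorics of $E\cap\CC$, not just $\diam_d(E)$. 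Lemma~\ref{lmMetricDistortion} only controls distortion \emph{within} a single tile, so invoking it to bound $\diam_d(F_j)$ and then conclude $F_j$ lies in few tiles is circular. In short, the geometric step on which your whole decomposition rests is unproven and looks false without extra hypotheses.

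The paper's argument sidesteps all of this by exploiting the cell decomposition structure directly, and is substantially shorter. Fix a $0$-tile $X^0$ and take $x,y\in X^0$. Each $n$-tile $X^n$ with $f^n(X^n)=X^0$ contains \emph{exactly one} preimage $x'=(f^n|_{X^n})^{-1}(x)$ and one preimage $y'=(f^n|_{X^n})^{-1}(y)$; these lie in the same $n$-tile, so Lemma~\ref{lmSnPhiBound} gives $\exp(S_n\phi(x'))\le\exp(C_1 d(x,y)^\alpha)\exp(S_n\phi(y'))$ pairwise. Summing over all such $n$-tiles --- and noting that a point $z\in f^{-n}(X^0)$ appears in exactly $\deg_{f^n}(z)$ of them, so the local degrees are automatically accounted for --- yields $\Sigma(x,n)/\Sigma(y,n)\le\exp(C_1 d(x,y)^\alpha)$ with no lifting of continua and no component analysis. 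For $x\in X^0_w$, $y\in X^0_b$, the LLC continuum $E\subseteq B_d(x,Ld(x,y))$ is used only to produce \emph{one} point $z\in\CC\cap E$, and the bound is obtained by chaining through $z$: $\Sigma(x,n)/\Sigma(y,n)\le \exp\bigl(C_1(d(x,z)^\alpha+d(z,y)^\alpha)\bigr)\le\exp(4C_1 Ld(x,y)^\alpha)$, the factor $4$ coming from two steps times $(2L)^\alpha\le 2L$. You should redo the argument along these lines; the observation that each $n$-tile over $X^0$ already pairs up preimages of $x$ and $y$ in a tile-respecting way is the key structural fact you are missing.
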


\begin{proof}
We denote $\Sigma(x,n) = \sum\limits_{x'\in f^{-n}(x)}  \deg_{f^n}(x')  \exp (S_n\phi(x'))$ for $x\in S^2$ and $n\in \N_0$.

We start with proving the first inequality in (\ref{eqSigmaExpSnPhiBound}). 

Let $X^0$ be either the black 0-tile $X^0_b$ or the white 0-tile $X^0_w$ in $\X^0(f,\CC)$. For $n\in \N_0$ and $X^n\in\X^n(f,\CC)$ with $f^n(X^n)=X^0$, by Proposition~\ref{propCellDecomp}(i), $f^n|_{X^n}$ is a homeomorphism of $X^n$ onto $X^0$. So for $x,y\in X^0$, there exist unique points $x',y'\in X^n$ with $x'\in f^{-n}(x)$ and $y'\in f^{-n}(y)$. Then by Lemma~\ref{lmSnPhiBound}, we have 
\begin{align*}
\exp \( S_n\phi(x') - S_n\phi(y')  \) & \leq \exp\(C_1 d(f^n(x'),f^n(y'))^\alpha\) \\
                                      & = \exp\(C_1 d(x,y)^\alpha\).
\end{align*}
Thus 
$
\exp \( S_n\phi(x')\) \leq \exp\(C_1 d(x,y)^\alpha\)  \exp \( S_n\phi(y')\).
$

By summing the last inequality over all pairs of $x',y'$ that are contained in the same $n$-tile $X^n$ with $f^n(X^n)=X^0$, and noting that each $x'$ (resp.\ $y'$) is contained in exactly $\deg_{f^n}(x')$ (resp.\ $\deg_{f^n}(y')$) distinct $n$-tiles $X^n$ with $f^n(X^n)=X^0$, we can conclude that
$$
\frac{\Sigma(x,n)}{\Sigma(y,n)}  \leq \exp\(C_1 d(x,y)^\alpha\).
$$

Recall that $f$, $\CC$, $d$, $L$, $\Lambda$, $\phi$, $\alpha$ satisfy the Assumptions. We then consider arbitrary $x\in X^0_w$ and $y\in X^0_b$. Since the metric space $(S^2,d)$ is linearly locally connected with a linear local connectivity constant $L\geq 1$, there exists a continuum $E\subseteq S^2$ with $x,y\in E$ and $E\subseteq B_d(x,Ld(x,y))$. We can then fix a point $z\in\CC \cap E$. Thus, we have
\begin{align*}
\frac{\Sigma(x,n)}{\Sigma(y,n)} \leq & \frac{\Sigma(x,n)}{\Sigma(z,n)} \frac{\Sigma(z,n)}{\Sigma(y,n)} \leq \exp\(C_1 \(d(x,z)^\alpha+d(z,y)^\alpha\)\)\\
\leq & \exp\(2 C_1 (\diam_d(E))^\alpha\) \leq \exp\(4 C_1 L d(x,y)^\alpha \).
\end{align*}

Finally, (\ref{eqC2Bound}) follows from (\ref{eqC1Expression}) in Lemma~\ref{lmSnPhiBound}.
\end{proof}

Let $f$, $\CC$, $d$, $L$, $\Lambda$, $\phi$, $\alpha$ satisfy the Assumptions. We now define the Gibbs states with respect to $f$, $\CC$, and $\phi$.

\begin{definition}   \label{defGibbsState}
A Borel probability measure $\mu \in \PPP(S^2)$ is a \defn{Gibbs state} with respect to $f$, $\CC$, and $\phi$ if there exist constants $P_\mu\in \R$ and $C_\mu \geq 1$ such that for each $n\in\N_0$, each $n$-tile $X^n\in\X^n(f,\CC)$, and each $x\in X^n$, we have
\begin{equation}  \label{eqGibbsState}
\frac{1}{C_\mu}  \leq  \frac{\mu(X^n)}{\exp(S_n\phi(x)-nP_\mu)}  \leq  C_\mu.
\end{equation}
\end{definition}

Compare the above definition with the following one, which is used in some classical dynamical systems.

\begin{definition}   \label{defRadialGibbsState}
A Borel probability measure $\mu \in \PPP(S^2)$ is a \defn{radial Gibbs state} with respect to $f$, $d$, and $\phi$ if there exist constants $\widetilde{P}_\mu\in\R$ and $\widetilde{C}_\mu \geq 1$ such that for each $n\in\N_0$, and each $x\in S^2$, we have
\begin{equation}  \label{eqRadialGibbsState}
\frac{1}{\widetilde{C}_\mu}  \leq  \frac{\mu\big( B_d(x,\Lambda^{-n})\big)}{\exp\big(S_n\phi(x)-n\widetilde{P}_\mu\big)}  \leq  \widetilde{C}_\mu.
\end{equation}
\end{definition}

One observes that for each Gibbs state $\mu$ with respect to $f$, $\CC$, and $\phi$, the constant $P_\mu$ is unique. Similarly, the constant $\widetilde{P}_\mu$ is unique for each radial Gibbs state with respect to $f$, $d$, and $\phi$.

\begin{ex}
Let $f\: S^2 \rightarrow S^2$ be an expanding Thurston map. There exists a unique measure of maximal entropy $\mu_0$ of $f$ (see \cite[Section~3.4 and Section~3.5]{HP09} and \cite[Theorem~20.9]{BM10}), which is an equilibrium state for a potential $\phi \equiv 0$. We can show that $\mu_0$ is a Gibbs state for $f$, $\CC$, $\phi\equiv 0$, whenever $\CC$ is a Jordan curve on $S^2$ containing $\post f$.

Indeed, we know that there exist constants $w,b\in (0,1)$ depending only on $f$ such that for each $n\in\N_0$, each white $n$-tile $X^n_w\in\X^n_w(f,\CC)$, and each black $n$-tile $X^n_b \in \X^n_b(f,\CC)$, we have $\mu_0 (X^n_w) = w (\deg f)^{-n}$ and $\mu_0 (X^n_b) = b (\deg f)^{-n}$ (\cite[Proposition~20.7 and Theorem~20.9]{BM10}). Thus $\mu_0$ is a Gibbs state for $f$, $\CC$, $\phi\equiv 0$, with $P_{\mu_0} = \deg f = h_{\operatorname{top}} (f)$ (see \cite[Corollary~20.8]{BM10}).
\end{ex}

As we see from the example above, Definition~\ref{defGibbsState} is a more appropriate definition for expanding Thurston map. Moreover, we will prove in Proposition~\ref{propRadialGibbsIFFGibbs} that the concept of a Gibbs state and that of a radial Gibbs state coincide if and only if $f$ has no periodic critical point.

\begin{prop}  \label{propInvGibbsIsEqlbStatePLessThanPressure}
Let $f$, $\CC$, $n_\CC$, $d$, $\phi$, $\alpha$ satisfy the Assumptions. Then for each $f$-invariant Gibbs state $\mu\in\MMM(S^2,f)$ with respect to $f$, $\CC$, and $\phi$, we have
\begin{equation}
P_\mu \leq h_\mu(f) + \int \! \phi \,\mathrm{d}\mu \leq P(f,\phi).
\end{equation}
\end{prop}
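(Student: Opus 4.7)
The second inequality $h_\mu(f)+\int\phi\,\mathrm{d}\mu \leq P(f,\phi)$ is immediate from the Variational Principle (\ref{eqVPPressure}), so the task reduces to establishing the first inequality $P_\mu \leq h_\mu(f)+\int\phi\,\mathrm{d}\mu$. My first move is to reduce to the case $n_\CC=1$ (i.e., $f(\CC)\subseteq\CC$) by passing to $F:=f^{n_\CC}$: since the $k$-cells of $\DD^k(F,\CC)$ coincide with the $(kn_\CC)$-cells of $\DD^{kn_\CC}(f,\CC)$, one checks that $\mu$ is an $F$-invariant Gibbs state for $(F,\CC,S_{n_\CC}\phi)$ with pressure parameter $n_\CC P_\mu$, and combining the standard identity $h_\mu(F)=n_\CC h_\mu(f)$ with the $f$-invariance identity $\int S_{n_\CC}\phi\,\mathrm{d}\mu=n_\CC\int\phi\,\mathrm{d}\mu$ reduces the general case to $n_\CC=1$.

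Henceforth assume $f(\CC)\subseteq\CC$, so that $\DD^n(f,\CC)$ refines $\DD^{n-1}(f,\CC)$ for each $n\geq 1$. I would take $\xi$ to be a Borel partition whose atoms are in bijection with the $1$-tiles in $\X^1(f,\CC)$, with the $1$-skeleton $f^{-1}(\CC)$ distributed (in a Borel way, consistently under forward iterates of $f$) so that each atom is contained in its corresponding closed $1$-tile. The combinatorial core of the argument is the claim: for every $n\geq 1$ and every non-empty atom $A$ of $\xi_n := \xi\vee f^{-1}\xi\vee\cdots\vee f^{-(n-1)}\xi$, there is a unique $n$-tile $X^n(A)\in\X^n(f,\CC)$ with $A\subseteq X^n(A)$. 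I would prove this by induction on $n$ using Proposition~\ref{propCellDecomp}(i): since $f^{n-1}|_{X^{n-1}}$ is a homeomorphism from each $(n-1)$-tile $X^{n-1}$ onto a $0$-tile, inside $X^{n-1}$ there is exactly one $n$-subtile whose image under $f^{n-1}$ equals any prescribed $1$-tile contained in $f^{n-1}(X^{n-1})$.

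Granted this uniqueness, the Gibbs property yields $\mu(A)\leq\mu(X^n(A))\leq C_\mu\exp(S_n\phi(x_A)-nP_\mu)$ for any $x_A\in X^n(A)$, while Lemma~\ref{lmSnPhiBound} gives the oscillation bound $|S_n\phi(x)-S_n\phi(y)|\leq M:=C_1(\diam_d(S^2))^\alpha$ for $x,y$ in the same $n$-tile. Together with the $f$-invariance identity $\int S_n\phi\,\mathrm{d}\mu=n\int\phi\,\mathrm{d}\mu$, these give
$$
H_\mu(\xi_n)=-\sum_{A}\mu(A)\log\mu(A)\geq nP_\mu-n\int\phi\,\mathrm{d}\mu-\log C_\mu - M,
$$
and dividing by $n$ and letting $n\to+\infty$ gives $h_\mu(f,\xi)\geq P_\mu-\int\phi\,\mathrm{d}\mu$, hence $h_\mu(f)\geq P_\mu-\int\phi\,\mathrm{d}\mu$ as desired. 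The main obstacle is the combinatorial uniqueness claim together with the consistent Borel assignment of the $1$-skeleton of $\DD^1$ to atoms of $\xi$; once these are in place the entropy estimate is a standard calculation.
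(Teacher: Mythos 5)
Your overall strategy — a generating‐type partition adapted to the tile structure, the Gibbs estimate on atoms, and the entropy inequality $h_\mu(f)\geq h_\mu(f,\xi)\geq P_\mu-\int\phi\,\mathrm{d}\mu$ — is exactly the strategy of the paper, and your reduction to $f(\CC)\subseteq\CC$ by passing to $F=f^{n_\CC}$ is an unproblematic repackaging of the paper's direct use of $N=n_\CC$. The point where you correctly sense "the main obstacle" is a genuine gap. With a partition $\xi$ whose atoms are pieces of closed $1$-tiles, the induction you propose breaks down at skeleton points: if $x$ lies in an atom $A$ of $\xi_{n}$ and $f^{n-1}(x)\in\CC$, the prescribed $1$-tile $X^1_{i_{n-1}}\supseteq f^{n-1}(A)$ need not be contained in the $0$-tile $f^{n-1}(X^{n-1})$ — it can lie on the other side of $\CC$ — and then there is no $n$-subtile of $X^{n-1}$ mapping onto it. Thus the claim "each atom of $\xi_n$ lies in a single $n$-tile" does not hold for an arbitrary Borel distribution of $f^{-1}(\CC)$; it forces a compatibility constraint that, for every $x$ with $f(x)\in\CC$, the assigned $1$-tiles $T_1(x)$ over all $x\in f^{-1}(y)$ have the same color and $T_1(y)$ lies in that color's $0$-tile. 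Such an assignment does exist (e.g.\ assign all skeleton points to a fixed color, say the minimal white $1$-tile containing them, which propagates since $f(\CC)\subseteq\CC$), but constructing it and verifying the induction is exactly the work you left undone.

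The paper sidesteps this entirely by choosing a coarser partition $O_N$ (see (\ref{defn-partition})) whose atoms are interiors of $N$-tiles, interiors of $N$-edges, and $N$-vertex singletons. Because $f^N(\CC)\subseteq\CC$ forces $\DD^{(k+1)N}$ to refine $\DD^{kN}$, one gets the clean identity $O_N\vee f^{-N}(O_N)\vee\cdots\vee f^{-kN}(O_N)=O_{(k+1)N}$ (equation (\ref{eqVeeO})) by Proposition~\ref{propCellDecomp}(i), and the containment of each atom of $O_{(k+1)N}$ in a single $(k+1)N$-tile is automatic — no assignment of skeleton points to tiles is needed. The paper then applies the Gibbs estimate pointwise to $X^{(k+1)N}(x)$ and integrates via Shannon–McMillan–Breiman; your route of summing over atoms and invoking Lemma~\ref{lmSnPhiBound} to control the oscillation of $S_n\phi$ on a tile is a fine alternative once the partition issue is fixed. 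I'd suggest either filling in the consistent-assignment construction or simply switching to $O_N$ as in the paper, which eliminates the obstacle at no cost.
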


\begin{proof}
Note that the second inequality follows from the Variational Principle (\ref{eqVPPressure}) (see for example, \cite[Theorem~3.4.1]{PU10} for details).

Let $N=n_\CC$.

Recall measurable partitions $O_n,n\in\N$, of $S^2$ defined in (\ref{defn-partition}). Since $f^N(\CC)\subseteq \CC$, it is clear that $O_{iN}$ is a refinement of $O_{jN}$ for $i\geq j \geq 1$. Observe that by Proposition~\ref{propCellDecomp}(i) and induction, we can conclude that for each $k\in\N$,
\begin{equation}   \label{eqVeeO}
O_N \vee f^{-N} (O_N) \vee \cdots \vee f^{-kN}(O_N) = O_{(k+1)N}.
\end{equation}
So for $m,k\in\N$, the measurable partition $\bigvee\limits_{j=0}^{kN+m-1} f^{-j}(O_N)$ is a refinement of $O_{(k+1)N}$.

By Shannon-McMillan-Breiman Theorem (see for example, \cite[Theorem 2.5.4]{PU10}), $h_\mu (f,O_N) = \int \! f_\mathcal{I} \,\mathrm{d}\mu$, where
$$
f_\mathcal{I}=\lim\limits_{n\to+\infty} \frac{1}{n+1} I\bigg(\bigvee\limits_{j=0}^n f^{-j}(O_N)\bigg)  \quad \mu\text{-a.e.\ and in }L^1(\mu),
$$
and the information function $I$ is defined in (\ref{eqDefI}).

Note that for $n\in\N$, $c\in O_n$, and $X^n\in\X^n(f,\CC)$, either $c\cap X^n =\emptyset$ or $c\subseteq X^n$.

For $n\in\N_0$ and $x\in S^2$, we denote by $X^n(x)$ any one of the $n$-tiles containing $x$. Recall that $O_n(x)$ denotes the unique set in the measurable partition $O_n$ that contains $x$. Note that $O_n(x) \subseteq X^n(x)$. By (\ref{eqVeeO}) and (\ref{eqGibbsState}) we get
\begin{align*}
\int \! f_\mathcal{I}\,\mathrm{d}\mu  &= \lim\limits_{k\to+\infty}  \int \! \frac{1}{kN+1} I\bigg(\bigvee\limits_{j=0}^{kN} f^{-j}(O_N)\bigg)(x)\,\mathrm{d}\mu(x)  \\
      &\geq \liminf\limits_{k\to+\infty}  \int \! \frac{1}{kN+1} I(O_{(k+1)N})(x) \,\mathrm{d}\mu(x) \\
      &\geq \liminf\limits_{k\to+\infty}  \int \! \frac{1}{kN+1} \(-\log\mu\(X^{(k+1)N}(x)\)\)\,\mathrm{d}\mu(x)  \\
      &\geq \liminf\limits_{k\to+\infty}  \int \! \frac{(k+1)N P_\mu-S_{(k+1)N}\phi(x)-\log C_\mu}{(k+1)N}\,\mathrm{d}\mu(x)  \\
      & = P_\mu - \liminf\limits_{k\to+\infty} \frac{1}{(k+1)N} \int \!S_{(k+1)N}\phi(x) \,\mathrm{d}\mu(x)  \\
      & = P_\mu- \int\!\phi\,\mathrm{d}\mu,
\end{align*}
where the last equality comes from (\ref{eqDefSnPt}) and the identity $\int\! \psi \circ f \,\mathrm{d}\mu = \int\! \psi \,\mathrm{d}\mu$ for each $\psi\in\CCC(S^2)$ which is equivalent to the fact that $\mu$ is $f$-invariant. Since $O_N$ is a finite measurable partition, the condition that $H_\mu(O_N)<+\infty$ in (\ref{eqDefMeasThEntropy}) is fulfilled. By (\ref{eqDefMeasThEntropy}), we get that 
$$
h_\mu(f) \geq h_\mu(f,O_N)\geq P_\mu- \int\!\phi\,\mathrm{d}\mu.
$$
Therefore, $P_\mu \leq h_\mu(f) + \int\!\phi\,\mathrm{d}\mu$.
\end{proof}

Let $f\: S^2 \rightarrow S^2$ be an expanding Thurston map and $\psi\in \CCC(S^2)$ a continuous function. We define the \defn{Ruelle operator} $\RR_\psi$ on $\CCC(S^2)$ as the following
\begin{equation}   \label{eqDefRuelleOp}
\RR_\psi(u)(x)= \sum\limits_{y\in f^{-1}(x)}  \deg_f(y) u(y) \exp(\psi(y)),
\end{equation}
for each $u\in \CCC(S^2)$. To show that $\RR_\psi$ is well-defined, we need to prove that $\RR_\psi(u)(x)$ is continuous in $x\in S^2$ for each $u\in \CCC(S^2)$. Indeed, by fixing an arbitrary Jordan curve $\CC\subseteq S^2$ containing $\post f$, we know that for each $x$ in the white $0$-tile $X^0_w$, 
$$
\RR_\psi(u)(x)=\sum\limits_{X\in \X^1_w}u(y_X) \exp(\psi(y_X)), 
$$
where $y_X$ is the unique point contained in the white $1$-tile $X$ with the property that $f(y_X)=x$ (Proposition~\ref{propCellDecomp}(i)). If we move $x$ around continuously within $X^0_w$, then $y_X$ moves around continuously within $X$ for each $X\in\X^1_w$. Thus $\RR_\psi(u)(x)$ restricted to $X^0_w$ is continuous in $x$. Similarly, $\RR_\psi(u)(x)$ restricted to the black $1$-tile $X^0_b$ is also continuous in $x$. Hence $\RR_\psi(u)(x)$ is continuous in $x\in S^2$.

Note that by a similar argument as above, we see that the Ruelle operator $\RR_\psi \: \CCC(S^2) \rightarrow \CCC(S^2)$ has a natural extension to the space of real-valued bounded Borel functions $B(S^2)$ (equipped with the uniform norm) given by (\ref{eqDefRuelleOp}) for each $u\in B(S^2)$.

It is clear that $\RR_\psi$ is a positive, continuous operator on $\CCC(S^2)$ (resp.\ $B(S^2)$) with the operator norm $\sup\{\RR_\psi(\mathbbm{1})(x) \,|\, x\in S^2\}$. Moreover, we note that by induction and (\ref{eqLocalDegreeProduct}) we have
\begin{equation}   \label{eqR^nExpr}
\RR_\psi^n(u)(x)= \sum\limits_{y\in f^{-n}(x)} \deg_{f^n}(y)u(y)\exp (S_n\psi(y)),
\end{equation}
and
\begin{align}  \label{eqRuvf}
\RR_\psi(u(v\circ f))(x) & = \sum\limits_{y\in f^{-1}(x)} \deg_{f}(y)u(y)(v\circ f)(y) \exp (\psi(y)) \notag  \\ 
                           & = v (x) \RR_\psi(u)(x),
\end{align}
for $u,v\in B(S^2)$, $x\in S^2$, and $n\in\N$. Recall that the adjoint operator $\RR_\psi^*\: \CCC^*(S^2)\rightarrow \CCC^*(S^2)$ of $\RR_\psi$ acts on the dual space $\CCC^*(S^2)$ of the Banach space $\CCC(S^2)$. We identify $\CCC^*(S^2)$ with the space $\MMM(S^2)$ of finite signed Borel measures on $S^2$ by the Riesz representation theorem. From now on, we write $\langle\mu,u\rangle=\int \! u\,\mathrm{d}\mu$ whenever $u\in B(S^2)$ and $\mu\in \MMM(S^2)$.

\begin{lemma} \label{lmRmuLocal}
Let $f\: S^2 \rightarrow S^2$ be an expanding Thurston map, $\psi\in\CCC(S^2)$, and $\mu\in\CCC^*(S^2)$. Then
\begin{enumerate}

\smallskip
\item[(i)] $\langle \RR^*_\psi (\mu), u \rangle = \langle \mu , \RR_\psi(u) \rangle$ for $u\in B(S^2)$.

\smallskip
\item[(ii)] For each Borel set $A\subseteq S^2$ on which $f$ is injective, we have that $f(A)$ is a Borel set, and
\begin{equation} \label{eqRmuLocal}
\RR_\psi^*(\mu)(A) = \int_{f(A)} \!  (\deg_f (\cdot) \exp(\psi))\circ (f|_A)^{-1} \,\mathrm{d}\mu.
\end{equation}
\end{enumerate}
\end{lemma}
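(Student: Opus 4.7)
The plan is to establish part (i) by an extension argument from continuous functions, and to deduce part (ii) by applying (i) to the indicator function $\mathbbm{1}_A$ after verifying the Borel measurability of $f(A)$.

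For part (i), by the definition of the adjoint $\RR_\psi^*$ under the Riesz identification $\CCC^*(S^2) = \MMM(S^2)$, the identity $\langle \RR_\psi^*(\mu), u \rangle = \langle \mu, \RR_\psi(u) \rangle$ holds tautologically for every $u \in \CCC(S^2)$. I extend it to $u \in B(S^2)$ via a functional monotone class argument. Let $\mathcal{H}$ be the collection of $u \in B(S^2)$ satisfying the identity. Then $\mathcal{H}$ is a vector space containing $\CCC(S^2)$ and the constant function $1$. Suppose $u_n \in \mathcal{H}$ with $\|u_n\|_\infty \leq M$ and $u_n \to u$ pointwise. Since $f^{-1}(x)$ is finite for every $x \in S^2$, the defining formula (\ref{eqDefRuelleOp}) yields $\RR_\psi(u_n)(x) \to \RR_\psi(u)(x)$ for each $x \in S^2$, together with the uniform bound $\|\RR_\psi(u_n)\|_\infty \leq M \|\RR_\psi(\mathbbm{1})\|_\infty$. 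Dominated convergence with respect to the total variation measures $|\mu|$ and $|\RR_\psi^*(\mu)|$ then passes the identity to the limit, so $\mathcal{H}$ is closed under uniformly bounded pointwise convergence. The functional monotone class theorem therefore yields $\mathcal{H} = B(S^2)$.

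For part (ii), I first show that $f(A)$ is Borel. Fix any Jordan curve $\CC \subseteq S^2$ containing $\post f$ (which exists since $\post f$ is finite). By Proposition~\ref{propCellDecomp}(i), for every cell $c \in \DD^1(f,\CC)$ the restriction $f|_c$ is a homeomorphism of $c$ onto a cell $f(c)$ of $\DD^0(f,\CC)$. Since $c$ and $f(c)$ are closed in $S^2$, Borel subsets of $c$ in its subspace topology coincide with Borel subsets of $S^2$ contained in $c$, and likewise for $f(c)$; hence $f(A \cap c) = (f|_c)(A \cap c)$ is Borel in $S^2$. As $\DD^1(f,\CC)$ is finite, $f(A) = \bigcup_{c \in \DD^1(f,\CC)} f(A \cap c)$ is Borel. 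Observe that the injectivity hypothesis plays no role at this step.

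Applying part (i) with $u = \mathbbm{1}_A \in B(S^2)$ now gives $\RR_\psi^*(\mu)(A) = \int \RR_\psi(\mathbbm{1}_A) \, \mathrm{d}\mu$. Since $f|_A$ is injective, the set $f^{-1}(x) \cap A$ contains at most one point, and it contains exactly the point $(f|_A)^{-1}(x)$ precisely when $x \in f(A)$. Therefore
\[
\RR_\psi(\mathbbm{1}_A)(x) = \mathbbm{1}_{f(A)}(x) \cdot \deg_f\!\bigl((f|_A)^{-1}(x)\bigr) \exp\!\bigl(\psi((f|_A)^{-1}(x))\bigr),
\]
and integrating against $\mu$ yields the formula (\ref{eqRmuLocal}). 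The main obstacle is the Borel measurability of $f(A)$, which the cell decomposition resolves cleanly; the extension in (i) and the final computation in (ii) are otherwise straightforward.
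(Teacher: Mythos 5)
Your proof is correct, and it takes a genuinely different route from the paper at both steps. For (i), the paper restricts attention to indicator functions $\mathbbm{1}_A$ and runs Dynkin's $\pi$-$\lambda$ theorem on the collection $\mathfrak{L}$ of Borel sets $A$ satisfying the adjoint identity, using the $\pi$-system of open sets and the Lebesgue Monotone Convergence Theorem to seed $\mathfrak{L} \supseteq \mathfrak{G}$; this establishes the identity for indicators and leaves the passage to general $u \in B(S^2)$ implicit. You instead apply the functional monotone class theorem directly to the vector space $\mathcal{H} \subseteq B(S^2)$ of functions satisfying the identity, using $\CCC(S^2)$ as the multiplicative generating class; this lands directly on $B(S^2)$ with no intermediate reduction to simple functions, and the use of dominated convergence against the total variation measures handles signed $\mu$ cleanly (the paper's appeal to monotone convergence implicitly requires splitting $\mu$ into positive and negative parts). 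For (ii), the paper shows $f(A)$ is Borel by noting it coincides with the nonvanishing set $\{\RR_\psi(\mathbbm{1}_A) \neq 0\}$, which relies on having already extended $\RR_\psi$ to $B(S^2)$; you instead argue via the cell decomposition, writing $f(A)$ as a finite union of homeomorphic images $f(A\cap c)$ over $1$-cells $c$. Your approach makes transparent that Borel measurability of $f(A)$ needs no injectivity hypothesis, whereas the paper's nonvanishing characterization (which also does not strictly need injectivity, though the lemma only invokes it under that hypothesis) obscures this. Both arguments are sound; yours is a bit more self-contained on the measure-theoretic side, while the paper's is slightly leaner by reusing the $B(S^2)$-extension of $\RR_\psi$ already in hand.
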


Recall that a collection $\mathfrak{P}$ of subsets of a set $\Omega$ is a \defn{$\pi$-system} if it is closed under intersection, i.e., if $A,B\in\mathfrak{P}$ then $A\cap B\in\mathfrak{P}$. A collection $\mathfrak{L}$ of subsets of $\Omega$ is a \defn{$\lambda$-system} if the following are satisfied: (1) $\Omega\in\mathfrak{L}$. (2) If $B,C\in\mathfrak{L}$ and $B\subseteq C$, then $C\setminus B \in \mathfrak{L}$. (3) If $A_n\in\mathfrak{L}$, $n\in\N$, with $A_n\subseteq A_{n+1}$, then $\bigcup\limits_{n\in\N} A_n \in\mathfrak{L}$.

\begin{proof}
For (i), it suffices to show that for each Borel set $A\subset S^2$,
\begin{equation}   \label{eqR*BorelSet}
\langle \RR^*_\psi (\mu), \mathbbm{1}_A \rangle = \langle \mu , \RR_\psi(\mathbbm{1}_A) \rangle.
\end{equation}

Let $\mathfrak{L}$ be the collection of Borel sets $A\subseteq S^2$ for which (\ref{eqR*BorelSet}) holds. Denote the collection of open subsets of $S^2$ by $\mathfrak{G}$. Then $\mathfrak{G}$ is a $\pi$-system.

We first observe from (\ref{eqDefRuelleOp}) that if $\{u_n\}_{n\in\N}$ is a non-decreasing sequence of real-valued functions on $S^2$, then so is $\{ \RR_\psi(u_n) \}_{n\in\N}$.
 
By the definition of $\RR^*_\psi$, we have
\begin{equation}    \label{eqR*CtsFns}
\langle \RR^*_\psi (\mu), u \rangle = \langle \mu , \RR_\psi(u) \rangle
\end{equation}
for $u\in\CCC(S^2)$. Fix an open set $U\subseteq S^2$, then there exists a non-decreasing sequence $\{g_n\}_{n\in\N}$ of real-valued continuous functions on $S^2$ supported in $U$ such that $g_n$ converges to $\mathbbm{1}_U$ pointwise as $n\longrightarrow +\infty$. Then $\{ \RR_\psi(g_n) \}_{n\in\N}$ is also a non-decreasing sequence of continuous functions, whose pointwise limit is $\RR_\psi(\mathbbm{1}_U)$. By the Lebesgue Monotone Convergence Theorem and (\ref{eqR*CtsFns}), we can conclude that (\ref{eqR*BorelSet}) holds for $A= U$. Thus $\mathfrak{G}\subseteq\mathfrak{L}$.

We now prove that $\mathfrak{L}$ is a $\lambda$-system. Indeed, since (\ref{eqR*CtsFns}) holds for $u=\mathbbm{1}_{S^2}$, we get $S^2\in\mathfrak{L}$. Given $B,C\in \mathfrak{L}$ with $B\subseteq C$, then $\mathbbm{1}_C - \mathbbm{1}_B = \mathbbm{1}_{C\setminus B}$ and $\RR_\psi(\mathbbm{1}_C) - \RR_\psi(\mathbbm{1}_B) = \RR_\psi(\mathbbm{1}_C-\mathbbm{1}_B)  = \RR_\psi(\mathbbm{1}_{C\setminus B})$ by (\ref{eqDefRuelleOp}). Thus $C\setminus B \in \mathfrak{L}$. Finally, given $A_n \in\mathfrak{L}$, $n\in\N$, with $A_n\subseteq A_{n+1}$, and let $A=\bigcup\limits_{n\in\N} A_n$. Then $\{ \mathbbm{1}_{A_n}\}_{n\in\N}$ and $\{ \RR_\psi( \mathbbm{1}_{A_n} )\}_{n\in\N}$ are non-decreasing sequences of real-valued Borel functions on $S^2$ that converge to $\mathbbm{1}_A$ and $\RR_\psi (\mathbbm{1}_A)$, respectively, as $n\longrightarrow+\infty$. Then by the Lebesgue Monotone Convergence Theorem, we get $A\in \mathfrak{L}$. Hence $\mathfrak{L}$ is a $\lambda$-system.

Recall that Dynkin's $\pi$-$\lambda$ theorem (see for example, \cite[Theorem~3.2]{Bi95}) states that if $\mathfrak{P}$ is a $\pi$-system and $\mathfrak{L}$ is a $\lambda$-system that contains $\mathfrak{P}$, then the $\sigma$-algebra $\sigma(\mathfrak{P})$ generated by $\mathfrak{P}$ is a subset of $\mathfrak{L}$. Thus by Dynkin's $\pi$-$\lambda$ theorem, the Borel $\sigma$-algebra $\sigma(\mathfrak{G})$ is a subset of $\mathfrak{L}$, i.e., equality (\ref{eqR*BorelSet}) holds for each Borel set $A\subseteq S^2$.

\smallskip

For (ii), we fix a Borel set $A\subseteq S^2$ on which $f$ is injective. By (\ref{eqDefRuelleOp}), we get that $\RR_\psi (\mathbbm{1}_A)(x)\neq 0$ if and only if $x\in f(A)$. Thus $f(A)$ is Borel. Then (\ref{eqRmuLocal}) follows immediately from (i) and (\ref{eqDefRuelleOp}) for $u\in B(S^2)$.
\end{proof}

\begin{definition}   \label{defJacobian}
Let $f\: S^2 \rightarrow S^2$ be an expanding Thurston map and $\mu \in \PPP(S^2)$ a Borel probability measure on $S^2$. A Borel function $J\:S^2 \rightarrow [0,+\infty)$ is a \defn{Jacobian (function)} for $f$ with respect to $\mu$ if for every Borel $A \subseteq S^2$ on which $f$ is injective, the following equation holds:
\begin{equation}   \label{eqDefJacobian}
\mu(f(A)) = \int_A \! J \, \mathrm{d}\mu.
\end{equation}
\end{definition}

\begin{cor}  \label{corJacobian}
Let $f\: S^2 \rightarrow S^2$ be an expanding Thurston map. For each $\psi\in \CCC(S^2)$ and each Borel probability measure $\mu\in\PPP(S^2)$, if $\RR_\psi^*(\mu)=c\mu$ for some constant $c > 0$, then the Jacobian $J$ for $f$ with respect to $\mu$ is given by
\begin{equation}   \label{eqJforRu=cu}
J(x) = \frac{c}{\deg_f(x)\exp(\psi(x))}  \qquad \text{for } x\in S^2.
\end{equation}
\end{cor}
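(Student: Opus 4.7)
The plan is to verify the defining identity of the Jacobian directly by testing the operator identity $\RR_\psi^*(\mu) = c\mu$ against a carefully chosen bounded Borel function. First I would observe that $J := c/(\deg_f \cdot \exp(\psi))$ is a nonnegative bounded Borel function on $S^2$: the local degree $\deg_f$ is upper semi-continuous (in fact, equals $1$ off the finite set $\crit f$) and takes values in $\{1,\dots,\deg f\}$; the continuous function $\exp(\psi)$ is bounded away from $0$ on the compact space $S^2$; and $c > 0$ by hypothesis. In particular, $J \in B(S^2)$.

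Next, fix a Borel set $A \subseteq S^2$ on which $f$ is injective, and set $u := J \cdot \mathbbm{1}_A \in B(S^2)$. Using the extension of $\RR_\psi$ to $B(S^2)$ noted after the definition (\ref{eqDefRuelleOp}), I would compute for each $x \in S^2$:
\begin{equation*}
\RR_\psi(u)(x) = \sum_{y\in f^{-1}(x)} \deg_f(y)\, J(y)\,\mathbbm{1}_A(y)\,\exp(\psi(y)) = c \cdot \card\bigl(f^{-1}(x) \cap A\bigr).
\end{equation*}
Since $f|_A$ is injective, $f^{-1}(x) \cap A$ contains at most one point, so the last cardinality equals $\mathbbm{1}_{f(A)}(x)$. (Here $f(A)$ is Borel by Lemma~\ref{lmRmuLocal}(ii).) Therefore $\RR_\psi(u) = c\,\mathbbm{1}_{f(A)}$.

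Finally, I would combine this pointwise identity with Lemma~\ref{lmRmuLocal}(i) applied to the bounded Borel function $u$ and the eigenvalue assumption $\RR_\psi^*(\mu) = c\mu$:
\begin{equation*}
c\,\mu(f(A)) \;=\; \int \RR_\psi(u)\,\mathrm{d}\mu \;=\; \langle \mu, \RR_\psi(u)\rangle \;=\; \langle \RR_\psi^*(\mu), u\rangle \;=\; c\,\langle \mu, u\rangle \;=\; c\!\int_A \! J\,\mathrm{d}\mu.
\end{equation*}
Dividing by $c>0$ yields $\mu(f(A)) = \int_A J\,\mathrm{d}\mu$, which is exactly the Jacobian identity (\ref{eqDefJacobian}). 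There is no serious obstacle: the only subtlety is to use the extensions to $B(S^2)$ of both the Ruelle operator and the duality identity of Lemma~\ref{lmRmuLocal}(i), both of which are already justified in the excerpt.
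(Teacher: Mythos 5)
Your proof is correct, and it takes a genuinely different (and cleaner) route than the paper's. The paper starts from the explicit formula of Lemma~\ref{lmRmuLocal}(ii), which expresses $\mu(A)$ as an integral of $1/\bigl(J\circ(f|_A)^{-1}\bigr)$ over $f(A)$ --- the ``wrong direction'' for the Jacobian identity. To invert this, the paper restricts to $A$ contained in a single $1$-tile $X$, introduces the push-forward measure $\widetilde\mu(B) = \mu(f(B))$ on $X$, argues mutual absolute continuity of $\mu_X$ and $\widetilde\mu$ using the fact that $J$ is bounded away from $0$ and $\infty$ on $X$, and then appeals to the Radon--Nikodym theorem to flip the formula. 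You instead choose the test function $u = J\,\mathbbm{1}_A \in B(S^2)$ so that the factors $\deg_f(y)\exp(\psi(y))$ cancel against $J(y)$ in the sum defining $\RR_\psi(u)(x)$, leaving $\RR_\psi(u) = c\,\mathbbm{1}_{f(A)}$; plugging this into the duality identity of Lemma~\ref{lmRmuLocal}(i) immediately produces $\mu(f(A)) = \int_A J\,\mathrm{d}\mu$. Your route avoids the reduction to tiles and the Radon--Nikodym step entirely, and is the argument one would naturally expect given that the formula for $J$ is designed to produce exactly this cancellation; the paper's longer argument is equally valid but amounts to re-deriving the same change of variables by an indirect route.
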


\begin{proof}
We fix some $\CC$, $d$, $L$, $\Lambda$ that satisfy the Assumptions. 

By Lemma~\ref{lmRmuLocal}, for every Borel $A\subseteq S^2$ on which $f$ is injective, we have that $f(A)$ is Borel, and
\begin{equation}  \label{eqPfcorJacobian1}
\mu(A) = \frac{\RR_\psi^*(\mu)(A)}{c} =  \int_{f(A)} \!  \frac{1}{ J \circ (f|_A)^{-1} } \,\mathrm{d}\mu,
\end{equation}
for the function $J$ given in (\ref{eqJforRu=cu}).

Since $f$ is injective on each $1$-tile $X^1\in\X^1(f,\CC)$, and both $X^1$ and $f(X^1)$ are closed subsets of $S^2$ by Proposition~\ref{propCellDecomp}, in order to verify (\ref{eqDefJacobian}), it suffices to assume that $A\subseteq X$ for some $1$-tile $X\in\X^1(f,\CC)$. Denote the restriction of $\mu$ on $X$ by $\mu_X$, i.e., $\mu_X$ assigns $\mu(B)$ to each Borel subset $B$ of $X$.

Let $\widetilde\mu$ be a function defined on the set of Borel subsets of $X$ in such a way that $\widetilde\mu(B)= \mu(f(B))$ for each Borel $B\subseteq X$. It is clear that $\widetilde\mu$ is a Borel measure on $X$. In this notation, we can write (\ref{eqPfcorJacobian1}) as
\begin{equation}   \label{eqPfcorJacobian2}
\mu_X(A) = \int_A \! \frac{1}{J|_X} \,\mathrm{d}\widetilde\mu,
\end{equation}
for each Borel $A\subseteq X$.

By (\ref{eqPfcorJacobian2}), we know that $\mu_X$ is absolutely continuous with respect to $\widetilde\mu$. On the other hand, since $J$ is positive and uniformly bounded away from $+\infty$ on $X$, we can conclude that $\widetilde\mu$ is absolutely continuous with respect to $\mu_X$. Therefore, by the Radon-Nikodym theorem, for each Borel $A\subseteq X$, we get
$
\mu(f(A)) = \widetilde\mu(A) = \int_A \! J|_X \,\mathrm{d} \mu_X = \int_A \! J \,\mathrm{d}\mu.
$
\end{proof}

\begin{lemma}  \label{lmTileInIntTile}
Let $f\:S^2 \rightarrow S^2$ be an expanding Thurston map, and $\CC\subseteq S^2$ be a Jordan curve containing $\post f$. Then there exists a constant $M\in\N$ with the following property:

For each $m\in\N$ with $m \geq M$, each $n\in\N_0$, and each $n$-tile $X^n\in\X^n(f,\CC)$, there exist a white $(n+m)$-tile $X_w^{n+m}\in\X_w^{n+m}(f,\CC)$ and a black $(n+m)$-tile $X_b^{n+m}\in\X_b^{n+m}(f,\CC)$ such that $X_w^{n+m}\cup X_b^{n+m}\subseteq \inte (X^n)$.
\end{lemma}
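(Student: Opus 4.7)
The strategy is to first reduce to the case $n=0$ using the cellular-Markov structure, and then for each of the two $0$-tiles produce a pair of adjacent $m$-tiles of opposite colors lying inside a suitable small neighborhood in its interior. For the reduction, given $X^n \in \X^n(f,\CC)$, Proposition~\ref{propCellDecomp}(i) provides a homeomorphism $f^n|_{X^n}\: X^n \rightarrow X^0$ onto the $0$-tile $X^0 = f^n(X^n)$; as a homeomorphism of closed topological disks it carries $\inte(X^n)$ onto $\inte(X^0)$. Combining Proposition~\ref{propCellDecomp}(i) and (ii) with the bijectivity of $f^n|_{X^n}$ shows that for every $m$-tile $Y \subseteq X^0$ there is a unique $(n+m)$-tile $\widetilde{Y} \subseteq X^n$ with $f^n(\widetilde{Y}) = Y$, namely $\widetilde{Y} = (f^n|_{X^n})^{-1}(Y)$. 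Since $f^{n+m}(\widetilde{Y}) = f^m(Y)$, the color of $\widetilde{Y}$ as an $(n+m)$-tile matches the color of $Y$ as an $m$-tile, so it suffices to prove the lemma for $n=0$.

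For $n = 0$, fix $X^0 \in \{X^0_w, X^0_b\}$ and any metric $d$ on $S^2$ inducing the standard topology. Choose $p \in \inte(X^0)$ and set $3\epsilon = d(p, \partial X^0) > 0$. By Remark~\ref{rmExpanding} together with the expansion of $f$, $\max\{\diam_d(Y) \,|\, Y \in \X^m(f,\CC)\} \to 0$ as $m \to +\infty$, so there exists $M_0 \in \N$ such that $\diam_d(Y) < \epsilon$ for every $m$-tile $Y$ and every $m \geq M_0$. Let $Y_1$ be any $m$-tile containing $p$; then $Y_1 \subseteq B_d(p,\epsilon) \subseteq \inte(X^0)$. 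Pick any $m$-edge $e$ of $Y_1$ (which exists by Proposition~\ref{propCellDecomp}(vi)) and let $Y_2$ be the unique other $m$-tile with $Y_1 \cap Y_2 = e$; since $Y_2$ meets $Y_1$ and has diameter less than $\epsilon$, we get $Y_2 \subseteq B_d(p,2\epsilon) \subseteq \inte(X^0)$. Taking the maximum of $M_0$ over the two $0$-tiles and appealing to the reduction step gives the desired constant $M$.

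The key obstacle is verifying that $Y_1$ and $Y_2$ carry opposite colors. Pick $y$ in the relative interior of $e$; then $y \notin \V^m$, and the plan is to show $\crit(f^m) \subseteq \V^m$ for every $m \geq 1$, so that $\deg_{f^m}(y) = 1$. Indeed, iterating (\ref{eqLocalDegreeProduct}) yields $\crit(f^m) = \bigcup_{j=0}^{m-1} f^{-j}(\crit f)$; for $x \in f^{-j}(\crit f)$ one has $f^{j+1}(x) \in f(\crit f) \subseteq \post f$, whence $f^m(x) \in f^{m-j-1}(\post f) \subseteq \post f$ by the forward invariance $f(\post f) \subseteq \post f$, i.e., $x \in f^{-m}(\post f) = \V^m$. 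With $\deg_{f^m}(y)=1$, the map $f^m$ is a local homeomorphism at $y$, mapping a small neighborhood $U$ of $y$ homeomorphically onto a neighborhood $V$ of $f^m(y)$, where $f^m(y)$ lies in the relative interior of the $0$-edge $f^m(e)$. Shrinking $U$ so that $U \subseteq Y_1 \cup Y_2$, the two components of $U \setminus e$ lie in $Y_1$ and $Y_2$ respectively, and correspond via $f^m|_U$ to the two components of $V \setminus f^m(e)$, which lie in $X^0_w$ and $X^0_b$ respectively. Hence $f^m(Y_1)$ and $f^m(Y_2)$ are distinct $0$-tiles, so $Y_1$ and $Y_2$ carry opposite colors, completing the argument.
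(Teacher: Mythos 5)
Your proof is correct, but it follows a genuinely different route from the paper. The paper's argument does not reduce to $n=0$; instead it fixes a visual metric and invokes the quantitative ball inclusions of Lemma~\ref{lmCellBoundsBM}(v), producing a point $p\in X^n$ with $B_d(p,C^{-1}\Lambda^{-n})\subseteq X^n$, then taking $M=\lceil\log_\Lambda(4C^2)\rceil+1$ so that two adjacent $(n+m)$-tiles through $p$ fit inside that ball. This gives an explicit $M$ and handles all $n$ simultaneously, at the cost of using the full visual-metric machinery; it then simply \emph{asserts} that the two adjacent tiles have opposite colors. Your version first reduces to $n=0$ via the homeomorphism $f^n|_{X^n}\colon X^n\to X^0$ (a reduction the paper does not make), finds small adjacent $m$-tiles in $\inte(X^0)$ by a soft compactness/expansion argument with an arbitrary compatible metric, and then supplies the color-alternation fact with a self-contained argument: $\crit(f^m)\subseteq\V^m$ via \eqref{eqLocalDegreeProduct} and forward invariance of $\post f$, so $f^m$ is a local homeomorphism at an interior edge point and carries the two local sides of $e$ to the two sides of the $0$-edge $f^m(e)$. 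That last piece is something the paper leaves implicit, so you actually prove a bit more. The trade-off is that your $M$ is not explicit, whereas the paper's is.

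One small imprecision: for a general metric $d$ inducing the topology, the open balls $B_d(p,\epsilon)$ and $B_d(p,2\epsilon)$ need not be connected, so the inclusions $B_d(p,\epsilon)\subseteq\inte(X^0)$ and $B_d(p,2\epsilon)\subseteq\inte(X^0)$ are not automatic from $d(p,\partial X^0)=3\epsilon$ alone. What you actually need and what does hold is that $Y_1\subseteq\inte(X^0)$ and $Y_2\subseteq\inte(X^0)$: each $Y_i$ is connected, disjoint from $\partial X^0$ (being contained in the corresponding ball, which avoids $\partial X^0$), and meets $\inte(X^0)$, hence lies entirely in $\inte(X^0)$. Rephrasing that sentence in terms of connectedness of the tiles, rather than containment of the balls in $\inte(X^0)$, closes the gap without changing the structure of your argument.
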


\begin{proof}
We fix some $d$, $L$, $\Lambda$ that satisfy the Assumptions. 

By Lemma~\ref{lmCellBoundsBM}(v), there exists a constant $C \geq 1$ depending only on $f$, $\CC$, and $d$ such that for each $k\in\N_0$, each $k$-tile $Z^k\in\X^k(f,\CC)$, there exists a point $q\in Z^k$ such that 
\begin{equation*}
B_d(q,C^{-1}\Lambda^{-k}) \subseteq  Z^k  \subseteq B_d(q, C\Lambda^{-k}).
\end{equation*}
We set $M= \lceil \log_\Lambda (4C^2) \rceil + 1$. We fix an arbitrary $n\in\N$ and an $n$-tile $X^n\in\X^n(f,\CC)$. Choose a point $p\in X^n$ with $B_d(p,C^{-1}\Lambda^{-n})\subseteq X^n \subseteq B_d(p,C\Lambda^{-n})$. Then for each $m\in\N$ with $m \geq M$, we have $4C\Lambda^{-(n+m)} < C^{-1}\Lambda^{-n}$, and we can choose $X^{n+m}, Y^{n+m} \in\X^{n+m}(f,\CC)$ in such a way that $X^{n+m}$ is the $(n+m)$-tile containing $p$ and $Y^{n+m}\cap X^{n+m} = e^{n+m} \in \E^{n+m}(f,\CC)$ for each $m>M$. Thus $\diam_d(X^{n+m})\leq 2C\Lambda^{-(n+m)},\diam_d(Y^{n+m}) \leq 2C\Lambda^{-(n+m)}$, and 
$$
X^{n+m}\cup Y^{n+m} \subseteq \overline{B_d}\(p, 4 C\Lambda^{-(n+m)}\)  \subseteq B_d \(p, C^{-1}\Lambda^{-n}\) \subseteq \inte (X^n).
$$
Moreover, exactly one of $X^{n+m}$ and $Y^{n+m}$ is a white $(n+m)$-tile and the other one is a black $(n+m)$-tile.
\end{proof}

\begin{theorem}   \label{thmMexists}
Let $f\:S^2 \rightarrow S^2$ be an expanding Thurston map, and $d$ be a visual metric on $S^2$ for $f$. Let $\phi\in \Holder{\alpha}(S^2,d)$ be a real-valued H\"{o}lder continuous function with an exponent $\alpha\in(0,1]$. Then there exists a Borel probability measure $m_\phi \in\PPP(S^2)$ such that
\begin{equation}   \label{eqRm=cm}
\RR_\phi^*(m_\phi)=c m_\phi,
\end{equation}
where $c=\langle \RR_\phi^*(m_\phi),\mathbbm{1} \rangle$. Moreover, any $m_\phi \in\PPP(S^2)$ that satisfies (\ref{eqRm=cm}) for some $c>0$ has the following properties:
\begin{enumerate}
\smallskip

\item[(i)] The Jacobian for $f$ with respect to $m_\phi$ is 
$$
J(x)=c \exp(-\phi(x)).
$$

\smallskip

\item[(ii)] $m_\phi \bigg(\bigcup\limits_{j=0}^{+\infty} f^{-j} (\post f)\bigg)  = 0$.

\smallskip

\item[(iii)] The map $f$ with respect to $m_\phi$ is forward quasi-invariant (i.e., for each Borel set $A\subseteq S^2$, if $m_\phi(A)=0$, then $m_\phi(f(A))=0$), and nonsingular (i.e., for each Borel set $A\subseteq S^2$, $m_\phi(A)=0$ if and only if $m_\phi(f^{-1}(A))=0$).
\end{enumerate}
\end{theorem}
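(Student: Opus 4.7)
I plan to apply the Schauder-Tychonoff fixed point theorem to the weak-$*$ continuous map $T : \PPP(S^2) \to \PPP(S^2)$ given by $T(\mu) = \RR_\phi^*(\mu)/\langle \mu, \RR_\phi(\mathbbm{1})\rangle$. This map is well-defined because $\RR_\phi(\mathbbm{1})$ is a strictly positive continuous function on the compact space $S^2$, hence bounded below by a positive constant, so the normalizing denominator is uniformly positive on $\PPP(S^2)$. Weak-$*$ continuity of $T$ follows from the continuity of $\RR_\phi$ on $\CCC(S^2)$ via the identity $\langle \RR_\phi^*(\mu), u\rangle = \langle \mu, \RR_\phi(u)\rangle$ in Lemma~\ref{lmRmuLocal}(i). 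Since $\PPP(S^2)$ is convex and weak-$*$ compact (Banach-Alaoglu), $T$ admits a fixed point $m_\phi$ satisfying $\RR_\phi^*(m_\phi) = c m_\phi$ with $c = \langle m_\phi, \RR_\phi(\mathbbm{1})\rangle > 0$. Part (i) is then immediate from Corollary~\ref{corJacobian}.

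\textbf{(ii).} Since $\bigcup_{j \geq 0} f^{-j}(\post f)$ is a countable set, it suffices to prove $m_\phi$ is non-atomic. I first treat a point $v \in S^2$ that is \emph{not} periodic. The preimage sets $\{f^{-n}(v)\}_{n\geq 0}$ are then pairwise disjoint, since a common element $y\in f^{-n_1}(v)\cap f^{-n_2}(v)$ with $n_1<n_2$ would yield $f^{n_2-n_1}(v)=v$. Iterating Lemma~\ref{lmRmuLocal}(ii) applied to singletons gives, for $z\in f^{-n}(v)$,
\begin{equation*}
c^n m_\phi(\{z\}) = \deg_{f^n}(z)\exp(S_n\phi(z))\, m_\phi(\{v\}),
\end{equation*}
so summing produces $m_\phi(f^{-n}(v)) = m_\phi(\{v\})\,\RR_\phi^n(\mathbbm{1})(v)/c^n$. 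From $\int\RR_\phi^n(\mathbbm{1})\,dm_\phi = c^n$ (by iterating the eigenmeasure equation) and from the bounded distortion estimate in Lemma~\ref{lmSigmaExpSnPhiBound}, one obtains $\RR_\phi^n(\mathbbm{1})(v)/c^n \geq 1/C_2$. Therefore
\begin{equation*}
1 \;\geq\; m_\phi\!\bigg(\bigcup_n f^{-n}(v)\bigg) \;=\; \sum_n m_\phi(f^{-n}(v)) \;\geq\; \sum_n m_\phi(\{v\})/C_2,
\end{equation*}
which forces $m_\phi(\{v\}) = 0$. For an arbitrary $v$, any periodic preimage $y\in f^{-n}(v)$ of period $p$ satisfies $f^p(v)=v$, so $v$ itself lies on the $f$-orbit of $y$; hence the periodic elements of $f^{-n}(v)$ all lie on a single finite orbit, and their number is bounded uniformly in $n$. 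Since $\card(f^{-n}(v))\to +\infty$ by Lemma~\ref{lmPreImageDense}, for large $n$ there exists a non-periodic $y\in f^{-n}(v)$ with $m_\phi(\{y\})=0$; the displayed identity with this $y$ now forces $m_\phi(\{v\})=0$.

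\textbf{(iii).} Fix any Jordan curve $\CC\subseteq S^2$ with $\post f\subseteq\CC$, and decompose $S^2 = \bigsqcup_{\tau\in\DD^1(f,\CC)} \inte \tau$ as a finite disjoint union of Borel sets on each of which $f$ is a homeomorphism onto $\inte f(\tau)$. For Borel $A\subseteq S^2$, each $f(A\cap\inte\tau)$ is Borel, and part (i) gives
\begin{equation*}
m_\phi(f(A\cap\inte\tau)) = \int_{A\cap\inte\tau} J\,dm_\phi, \qquad J = c\exp(-\phi),
\end{equation*}
where $J$ is strictly positive and bounded above. Summing over the finitely many $\tau$ yields forward quasi-invariance. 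Applied with $A$ replaced by $f^{-1}(A)\cap\inte\tau$, the same formula together with the strict positivity of $J$ implies $m_\phi(A)=0 \Rightarrow m_\phi(f^{-1}(A))=0$; the reverse implication for nonsingularity follows from surjectivity of $f$ combined with forward quasi-invariance. The principal obstacle is part (ii)---particularly ruling out atoms at periodic (possibly critical) points, where the direct summation argument breaks because the levels $f^{-n}(v)$ overlap; the key device is Lemma~\ref{lmPreImageDense}, which lets us pass to a far-back non-periodic preimage and reduce to the non-periodic case.
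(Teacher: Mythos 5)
Your existence argument and part (iii) follow the paper's approach closely. Part (i) as stated is a small overclaim: Corollary~\ref{corJacobian} gives the Jacobian as $J(x)=c\big(\deg_f(x)\exp(\phi(x))\big)^{-1}$, which agrees with $c\exp(-\phi(x))$ only off $\crit f$; to pass to the stated form one needs (ii) (which yields $m_\phi(\crit f)=0$, since $\crit f\subseteq f^{-1}(\post f)$). There is no circularity, though, because your proof of (ii) uses the Jacobian in its original form with the $\deg_f$ factor --- you should simply note that (i) requires (ii), rather than calling it immediate.

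Part (ii) is where you genuinely diverge from the paper, and your route is both simpler and stronger. The paper first reduces to showing that atoms at periodic postcritical points vanish, then constructs explicit mutually disjoint families $T_{k_i}$ of preimages nested inside shrinking flowers $W^{k_il}(x)$ via Lemma~\ref{lmTileInIntTile}, and shows the total mass of these families diverges --- this is somewhat delicate because at a periodic critical point of degree $\delta=\deg_{f^l}(x)>1$ the per-point atom shrinks by $\delta^{-k}$ while the flower produces $2\delta^k$ preimages, so divergence comes from constant contributions over infinitely many levels. Your argument instead exploits that for a non-periodic $v$ the levels $f^{-n}(v)$ are pairwise disjoint, uses the identity $m_\phi(f^{-n}(v))=m_\phi(\{v\})\,\RR_\phi^n(\mathbbm{1})(v)/c^n$ together with the bounded-distortion lower bound $\RR_\phi^n(\mathbbm{1})(v)\geq c^n/C_2$ (from Lemma~\ref{lmSigmaExpSnPhiBound} and $\int\RR_\phi^n(\mathbbm{1})\,\mathrm{d}m_\phi=c^n$) to get a uniform lower bound on each level's mass, and then reduces the periodic case to the non-periodic one via a far-back non-periodic preimage supplied by Lemma~\ref{lmPreImageDense}. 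This avoids flowers and Lemma~\ref{lmTileInIntTile} entirely and in fact proves that $m_\phi$ is non-atomic --- a result the paper only reaches later (Corollary~\ref{corNonAtomic}) via exactness. The argument uses only lemmas available before Theorem~\ref{thmMexists}, so it is logically valid at this stage.
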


We will see later in Corollary~\ref{corMandMuUnique} that $m_\phi\in\PPP(S^2)$ satisfying (\ref{eqRm=cm}) is unique. We will also prove in Corollary~\ref{corNonAtomic} that $m_\phi$ is non-atomic.

\begin{proof}
We pick a Jordan curve $\CC\subseteq S^2$ that satisfies the Assumptions (see Theorem~\ref{thmCexistsBM} for the existence of such $\CC$).

Define $\tau\:\PPP(S^2)\rightarrow \PPP(S^2)$ by $\tau(\mu)=\frac{\RR_\phi^*(\mu)}{\langle\RR_\phi^*(\mu),\mathbbm{1}\rangle}$. Then $\tau$ is a continuous transformation on the non-empty, convex, compact (in the weak$^*$ topology, by Alaoglu's theorem) space $\PPP(S^2)$ of Borel probability measures on $S^2$. By the Schauder-Tikhonov Fixed Point Theorem (see for example, \cite[Theorem~3.1.7]{PU10}), there exists a measure $m_\phi\in\PPP(S^2)$ such that $\tau(m_\phi)=m_\phi$. Thus $\RR_\phi^*(m_\phi)=c m_\phi$ with $c=\langle \RR_\phi^*(m_\phi),\mathbbm{1} \rangle$. 

By Corollary~\ref{corJacobian}, the formula for the Jacobian for $f$ with respect to $m_\phi$ is
\begin{equation}  \label{eqJacobianWithDeg}
J(x)=c(\deg_f(x)\exp(\phi(x)))^{-1}, \qquad \text{for } x\in S^2.
\end{equation}

Since $\bigcup\limits_{j=0}^{+\infty} f^{-j} (\post f)$ is a countable set, the property (ii) follows if we can prove that $m_\phi(\{y\})=0$ for each $y\in \bigcup\limits_{j=0}^{+\infty} f^{-j} (\post f)$. Since for each $x\in S^2$,
\begin{equation} \label{eqm(f(x))}
m_\phi(\{f(x)\}) = \frac{c}{\deg_f(x)\exp(\phi(x))}   m_\phi(\{x\}),
\end{equation}
it suffices to prove that $m_\phi(\{x\})=0$ for each periodic $x\in \post f$.

Suppose that there exists $x\in \post f$ such that $f^l(x)=x$ for some $l\in\N$ and $m_\phi(\{x\})\neq 0$. Then by (\ref{eqm(f(x))}), (\ref{eqLocalDegreeProduct}), and induction,
\begin{equation}
m_\phi (\{x\}) = \frac{c^l}{\deg_{f^l}(x) \exp\(S_l\phi(x)\) } m_\phi(\{x\}),
\end{equation}
where $S_l \phi$ is defined in (\ref{eqDefSnPt}). Thus $c^l = \deg_{f^l}(x) \exp\( S_l\phi(x)\) $.

Similarly, for each $k\in\N$ and each $y\in f^{-kl}(x)$, we have
\begin{equation}
m_\phi (\{x\}) = \frac{c^{kl}}{\deg_{f^{kl}}(y) \exp\( S_{kl}\phi(y) \) } m_\phi(\{y\}).
\end{equation}
Thus
\begin{equation}
m_\phi(\{y\}) = \frac{ \deg_{f^{kl}}(y) \exp(S_{kl}\phi(y))}{\(\deg_{f^{l}}(x)\)^k \exp(S_{kl}\phi(x))}  m_\phi(\{x\}).
\end{equation}

\begin{figure}
    \centering
    \begin{overpic}
    [width=10cm, 
    tics=20]{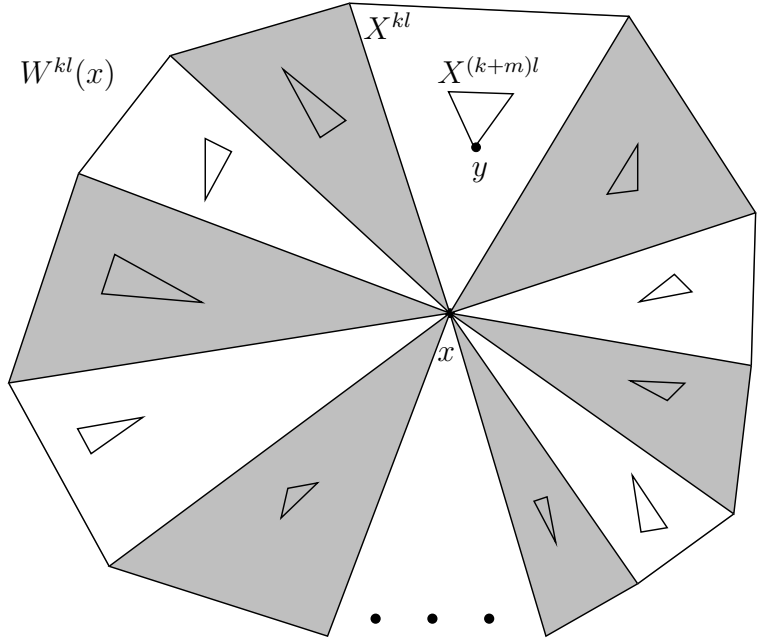}
    \put(5,210){$W^{kl}(x)$}
    \put(163,105){$x$}
    \put(176,175){$y$}
    \put(135,227){$X^{kl}$}
    \put(163,210){$X^{(k+m)l}$}
    \end{overpic}
    \caption{A $(kl)$-flower $W^{kl}(x)$, with $\card(\post f) = 3$.}
    \label{figFlower}
\end{figure}
    
Note that for each $k\in\N$, we have $x\in \V^{kl}(f,\CC)$. The closure of the $(kl)$-flower $W^{kl}(x)$ of $x$ contains exactly $2\(\deg_{f^l}(x)\)^k$ distinct $(kl)$-tiles whose intersection is $\{x\}$ (see \cite[Lemma~7.2(i)]{BM10}). By Lemma~\ref{lmTileInIntTile}, there exists $m\in\N$ that only depends on $f$, $\CC$, and $d$ such that for each $k\in\N$, each $(kl)$-tile $X^{kl}\in \X^{kl}(f,\CC)$ contained in $\overline{W^{kl}(x)}$, there exists a $((k+m)l)$-tile $X^{(k+m)l}\in\X^{(k+m)l}(f,\CC)$ such that $X^{(k+m)l} \subseteq \inte \(X^{kl}\)$. So there exists a unique $y\in X^{(k+m)l}\subseteq \inte \(X^{kl}\)$ such that $f^{(k+m)l}(y) = x$ by Proposition~\ref{propCellDecomp}(i), see Figure~\ref{figFlower}. For each $k\in\N$, we denote by $T_k$ the set consisting of one such $y$ from each $(kl)$-tile $X^{kl}\subseteq \overline{W^{kl}(x)}$. Note that 
\begin{equation}  \label{eqCardTk}
T_k = 2\(\deg_{f^l}(x)\)^k.
\end{equation}
Then $\{T_k\}_{k\in\N}$ is a sequence of subsets of $\bigcup\limits_{j=0}^{+\infty} f^{-j} (\post f)$. Since $f$ is expanding, we can choose an increasing sequence $\{k_i\}_{i\in\N}$ of integers recursively in such a way that $W^{lk_{i+1}}(x) \cap \Big(\bigcup\limits_{j=1}^i T_{k_j}\Big) = \emptyset$ for each $i\in\N$. Then $\{T_{k_i}\}_{i\in\N}$ is a sequence of mutually disjoint sets. Thus by Lemma~\ref{lmSnPhiBound}, there exists a constant $D$ that only depends on $f$, $d$, $\phi$, and $\alpha$ such that
\begin{align*}
      & m_\phi\bigg( \bigcup\limits_{j=0}^{+\infty} f^{-j} (\post f) \bigg)  
\geq \sum\limits_{i=1}^{+\infty}  \sum\limits_{y\in T_{k_i}}  m_\phi(\{y\})   \\
=     & \sum\limits_{i=1}^{+\infty}  \sum\limits_{y\in T_{k_i}}  \frac{ \deg_{f^{(k_i+m)l}}(y) \exp(S_{(k_i+m)l}\phi(y))}{\(\deg_{f^{l}}(x)\)^{k_i+m} \exp(S_{(k_i+m)l}\phi(x))}  m_\phi(\{x\})  \\
\geq  &  m_\phi(\{x\}) \sum\limits_{i=1}^{+\infty}  \sum\limits_{y\in T_{k_i}}  \frac{ \exp(S_{k_i l}\phi(y)-S_{k_i l}\phi(x)) \exp(-2ml\Norm{\phi}_{\infty})}{\(\deg_{f^{l}}(x)\)^{k_i+m} }\\
\geq  & m_\phi(\{x\}) \sum\limits_{i=1}^{+\infty}  \sum\limits_{y\in T_{k_i}}  \frac{ \exp(D - 2ml\Norm{\phi}_{\infty})}{\(\deg_{f^{l}}(x)\)^m \(\deg_{f^{l}}(x)\)^{k_i}}.
\end{align*}
Combining the above with (\ref{eqCardTk}), we get
\begin{equation*}
m_\phi\bigg( \bigcup\limits_{j=0}^{+\infty} f^{-j} (\post f) \bigg) =      \frac{m_\phi(\{x\}) \exp(D - 2ml\Norm{\phi}_{\infty})}{\(\deg_{f^{l}}(x)\)^m } \sum\limits_{i=1}^{+\infty}  2 =     +\infty.
\end{equation*}
This contradicts the fact that $m_\phi$ is a finite Borel measure.

Next, in order to prove the formula for the Jacobian for $f$ with respect to $m_\phi$ in property (i), we observe that by Lemma~\ref{lmRmuLocal} and (\ref{eqJacobianWithDeg}), for every Borel set $A\subseteq S^2$ on which $f$ is injective, we have that $f(A)$ is a Borel set and
\begin{align*}
m_\phi(f(A)) & = m_\phi(f(A) \setminus \post f ) = m_\phi(f(A\setminus(\post f \cup \crit f)))  \\
          & = \int_{A\setminus (\post f \,\cup \,\crit f)} \! c \exp(-\phi) \,\mathrm{d}m_\phi = \int_A \! c \exp(-\phi) \,\mathrm{d}m_\phi.
\end{align*} 

Finally, we prove the last property. Fix a Borel set $A\subseteq S^2$ with $m_\phi(A)=0$. For each $1$-tile $X^1\in\X^1(f,\CC)$, the map $f$ is injective both on $A\cap X^1$ and on $f^{-1}(A) \cap X^1$ by Proposition~\ref{propCellDecomp}(i). So it follows from the formula for the Jacobian that  $m_\phi\(f\(A\cap X^1\)\) = 0$ and $m_\phi\(f^{-1}(A) \cap X^1\) = 0$. Thus $m_\phi(f(A))=0$ and $\phi(f^{-1}(A))=0$. It is clear now that $f$ is forward quasi-invariant and nonsingular with respect to $m_\phi$.
\end{proof}

\begin{prop}   \label{propMfABounds}
Let $f$, $d$, $\phi$, $\alpha$ satisfy the Assumptions. Let $m_\phi$ be a Borel probability measure defined in Theorem~\ref{thmMexists} with $\RR_\phi^* (m_\phi) = c m_\phi$ where $c = \langle \RR_\phi^*(m_\phi),\mathbbm{1} \rangle$. Then for every Borel set $A\subseteq S^2$, we have
$$
\frac{1}{\deg f} \int_A \! J \, \mathrm{d}m_\phi  \leq m_\phi(f(A)) \leq  \int_A \! J \, \mathrm{d}m_\phi.
$$
where $J=c\exp(-\phi)$.
\end{prop}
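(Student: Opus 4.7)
The plan is to reduce to the Jacobian identity on Borel sets where $f$ is injective (Theorem~\ref{thmMexists}(i) and Definition~\ref{defJacobian}), by decomposing $S^2$ into finitely many disjoint Borel pieces on which $f$ is injective and then tracking the multiplicity of the cover of $f(A)$. First, I would fix a Jordan curve $\CC \subseteq S^2$ containing $\post f$ (which exists by Theorem~\ref{thmCexistsBM}) and enumerate the $1$-tiles as $\X^1(f,\CC) = \{X_1,\dots,X_N\}$ with $N = 2\deg f$. Setting $S_i = X_i \setminus \bigcup_{j < i} X_j$ produces a finite Borel partition $\{S_1,\dots,S_N\}$ of $S^2$; by Proposition~\ref{propCellDecomp}(i) the map $f$ is a homeomorphism on each $X_i$, hence injective on each $S_i$.

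Next, for a fixed Borel set $A \subseteq S^2$ I would apply Theorem~\ref{thmMexists}(i) to each $A \cap S_i$ (on which $f$ is injective) to obtain
\[
m_\phi(f(A \cap S_i)) = \int_{A \cap S_i} \! J \, \mathrm{d}m_\phi,
\]
where $f(A \cap S_i)$ is Borel by Lemma~\ref{lmRmuLocal}(ii). Summing over $i$ and interchanging sum and integral yields
\[
\int_A \! J \, \mathrm{d}m_\phi \,=\, \sum_{i=1}^N m_\phi(f(A \cap S_i)) \,=\, \int_{S^2} \! N(y) \, \mathrm{d}m_\phi(y),
\]
where $N(y) := \sum_{i=1}^N \mathbbm{1}_{f(A \cap S_i)}(y)$.

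The crux is to identify $N(y) = \card(A \cap f^{-1}(y))$. Since the $S_i$ are pairwise disjoint and $f|_{S_i}$ is injective, each preimage $x \in A \cap f^{-1}(y)$ lies in a unique $S_{i(x)}$ and contributes exactly one to the sum, while conversely each index $i$ with $y \in f(A \cap S_i)$ determines a unique such preimage in $A \cap S_i$. Combining this with the elementary bound $\card(f^{-1}(y)) \leq \deg f$ (from $\sum_{x \in f^{-1}(y)} \deg_f(x) = \deg f$ in (\ref{eqDeg=SumLocalDegree}) together with $\deg_f \geq 1$) gives the pointwise sandwich
\[
\mathbbm{1}_{f(A)}(y) \,\leq\, N(y) \,\leq\, (\deg f)\, \mathbbm{1}_{f(A)}(y).
\]
Integrating both inequalities against $m_\phi$ produces the two desired bounds simultaneously. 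There is no genuinely hard step; the only care required is the combinatorial bookkeeping for the multiplicity $N(y)$ and the measurability of the images $f(A \cap S_i)$, both of which are handled by results already established in the paper.
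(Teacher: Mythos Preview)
Your proof is correct and, in a sense, cleaner than the paper's. Both arguments partition via $1$-tiles and exploit the Jacobian identity from Theorem~\ref{thmMexists}(i), but they are organized differently. The paper treats the two inequalities separately: the upper bound $m_\phi(f(A)) \le \int_A J\,\mathrm{d}m_\phi$ is dispatched in one line by subadditivity of $m_\phi(f(\cdot))$ over a tile decomposition of $A$, while the lower bound is obtained by splitting $f(A)$ into $B = f(A)\cap X^0_w$ and $C = f(A)\cap \inte(X^0_b)$, observing that each of the $\deg f$ white (resp.\ black) $1$-tiles contributes a copy of $m_\phi(B)$ (resp.\ $m_\phi(C)$) to $\int_{f^{-1}(f(A))} J\,\mathrm{d}m_\phi \ge \int_A J\,\mathrm{d}m_\phi$. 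Your approach instead produces both inequalities at once from the single pointwise sandwich $\mathbbm{1}_{f(A)} \le N \le (\deg f)\,\mathbbm{1}_{f(A)}$ via the multiplicity function $N(y) = \card(A\cap f^{-1}(y))$, which is conceptually more transparent and avoids the asymmetric $X^0_w$ versus $\inte(X^0_b)$ split. The trade-off is that the paper's argument for the lower bound makes the role of the colouring of $1$-tiles explicit, whereas yours hides it inside the cardinality bound $\card(f^{-1}(y)) \le \deg f$. One minor remark: you invoke Theorem~\ref{thmCexistsBM} for the existence of $\CC$, but in fact any Jordan curve containing $\post f$ suffices here, since invariance of $\CC$ is never used.
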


\begin{proof}
We pick a Jordan curve $\CC\subseteq S^2$ that satisfies the Assumptions.

The second inequality follows from Definition~\ref{defJacobian} and Theorem~\ref{thmMexists}.

Let $B=f(A) \cap X^0_w$ and $C=f(A)\cap \inte (X^0_b)$, where $X^0_w,X^0_b\in \X^0(f,\CC)$ are the white $0$-tile and the black $0$-tile, respectively. Then $B\cap C=\emptyset$ and $B\cup C = f(A)$. For each white 1-tile $X^1_w\in\X^1_w(f,\CC)$ and each black 1-tile $X^1_b\in\X^1_b(f,\CC)$, we have
$$
\int_{f^{-1}(B) \,\cap\, X^1_w} \!  J \,\mathrm{d} m_\phi = m_\phi(B),
$$
$$
\int_{f^{-1}(C)\,\cap\, \inte (X^1_b) } \!  J \,\mathrm{d} m_\phi  = m_\phi(C),
$$
by Definition~\ref{defJacobian} and Theorem~\ref{thmMexists}. Then the first inequality follows from the fact that $\card \(\X^1_w(f,\CC)\) = \card \(\X^1_b(f,\CC)\) = \deg f$ (see (\ref{eqCardBlackNTiles})).
\end{proof}

\begin{prop}    \label{propMisGibbsState}
Let $f$, $\CC$, $d$, $\phi$, $\alpha$ satisfy the Assumptions. Let $m_\phi$ be a Borel probability measure defined in Theorem~\ref{thmMexists} which satisfies $\RR_\phi^* (m_\phi) = c m_\phi$ where $c = \langle \RR_\phi^*(m_\phi),\mathbbm{1} \rangle$. Then $m_\phi$ is a Gibbs state with respect to $f$, $\CC$, and $\phi$, with 
\begin{equation}  \label{eqLogc=P}
P_{m_\phi} = \log c = \lim\limits_{n\to+\infty}  \frac{1}{n} \log \RR_\phi^n(\mathbbm{1})(y),
\end{equation}
for each $y\in S^2$.
\end{prop}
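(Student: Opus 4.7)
The plan is to establish the Gibbs inequality with $P_{m_\phi}:=\log c$ directly from the eigenequation $\RR_\phi^*(m_\phi)=cm_\phi$, and then read off the limit formula from Lemma~\ref{lmSigmaExpSnPhiBound}. The starting point will be the iterated Jacobian identity: by Proposition~\ref{propCellDecomp}(i), $f^n|_{X^n}$ is a homeomorphism of $X^n$ onto the $0$-tile $X^0:=f^n(X^n)$, so $f^n$ is injective on $X^n$, and a routine induction based on Theorem~\ref{thmMexists}(i) will show that $J_n(x)=c^n\exp(-S_n\phi(x))$ is the Jacobian of $f^n$ with respect to $m_\phi$. Hence
\begin{equation*}
m_\phi(X^0)=\int_{X^n}c^n\exp(-S_n\phi)\,\mathrm{d}m_\phi.
\end{equation*}
Applying Lemma~\ref{lmSnPhiBound} with $m=n$ to any pair $x,x_0\in X^n$ gives $\abs{S_n\phi(x)-S_n\phi(x_0)}\leq D_1:=C_1(\diam_d S^2)^\alpha$, so the integrand is sandwiched between $\exp(-D_1)$ and $\exp(D_1)$ times $\exp(-S_n\phi(x_0))$, yielding
\begin{equation*}
\exp(-D_1)\,m_\phi(X^0)\leq\frac{m_\phi(X^n)}{c^{-n}\exp(S_n\phi(x_0))}\leq\exp(D_1)\,m_\phi(X^0).
\end{equation*}
This is exactly the Gibbs inequality with $P_{m_\phi}=\log c$, provided both $0$-tiles carry strictly positive $m_\phi$-mass; I will then take $C_\mu:=\exp(D_1)\max\{m_\phi(X^0_w),m_\phi(X^0_b),1/m_\phi(X^0_w),1/m_\phi(X^0_b)\}$.

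The main technical step is therefore the positivity $m_\phi(X^0_\varepsilon)>0$ for each $\varepsilon\in\{b,w\}$, and Lemma~\ref{lmTileInIntTile} is tailor-made for this. Fixing $n_0\geq M$ and applying that lemma with $n=0$ will produce both a white $n_0$-tile $X^{n_0}_w$ and a black $n_0$-tile $X^{n_0}_b$ inside $\inte X^0_\varepsilon$. Since $f^{n_0}|_{X^{n_0}_w}$ and $f^{n_0}|_{X^{n_0}_b}$ are homeomorphisms onto $X^0_w$ and $X^0_b$ respectively, every $y\in S^2=X^0_w\cup X^0_b$ will admit at least one preimage $x\in f^{-n_0}(y)$ lying in $X^{n_0}_w\cup X^{n_0}_b\subseteq X^0_\varepsilon$. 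Consequently
\begin{equation*}
\RR_\phi^{n_0}(\mathbbm{1}_{X^0_\varepsilon})(y)\geq\exp(S_{n_0}\phi(x))\geq\exp(-n_0\Norm{\phi}_\infty)
\end{equation*}
uniformly in $y\in S^2$. Iterating the duality from Lemma~\ref{lmRmuLocal}(i) gives $\int\RR_\phi^{n_0}(u)\,\mathrm{d}m_\phi=c^{n_0}\int u\,\mathrm{d}m_\phi$ for every bounded Borel $u$; applying this with $u=\mathbbm{1}_{X^0_\varepsilon}$ produces $c^{n_0}m_\phi(X^0_\varepsilon)\geq\exp(-n_0\Norm{\phi}_\infty)>0$, so indeed $m_\phi(X^0_\varepsilon)>0$.

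For the identity $\log c=\lim_{n\to+\infty}\frac{1}{n}\log\RR_\phi^n(\mathbbm{1})(y)$, taking $u=\mathbbm{1}$ in the same duality gives $\int\RR_\phi^n(\mathbbm{1})\,\mathrm{d}m_\phi=c^n$ for every $n\in\N_0$. Combining this with the pointwise ratio bound $\RR_\phi^n(\mathbbm{1})(x)/\RR_\phi^n(\mathbbm{1})(y)\leq C_2$ from Lemma~\ref{lmSigmaExpSnPhiBound} will force $c^n/C_2\leq\RR_\phi^n(\mathbbm{1})(y)\leq C_2\,c^n$ uniformly in $y\in S^2$, so dividing by $n$ and taking logarithms gives the stated limit immediately, with convergence uniform in $y$. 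The delicate part is the second paragraph: the Schauder fixed-point construction in Theorem~\ref{thmMexists} provides no \emph{a priori} reason why either $0$-tile should carry positive $m_\phi$-mass, and Lemma~\ref{lmTileInIntTile} is precisely the combinatorial ingredient that forces a preimage of every point to land inside any prescribed $0$-tile.
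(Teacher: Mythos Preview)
Your proof is correct and follows essentially the same architecture as the paper's: both establish the iterated Jacobian identity $m_\phi(f^n(X^n))=\int_{X^n}c^n\exp(-S_n\phi)\,\mathrm{d}m_\phi$ by induction from Theorem~\ref{thmMexists}(i), apply the distortion bound of Lemma~\ref{lmSnPhiBound} to extract the Gibbs inequality, and deduce the limit formula from $\int\RR_\phi^n(\mathbbm{1})\,\mathrm{d}m_\phi=c^n$ together with the ratio bound of Lemma~\ref{lmSigmaExpSnPhiBound}.

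The one genuine difference lies in the positivity argument for $m_\phi(X^0_\varepsilon)$. The paper argues by contradiction: if $m_\phi(X^0)=0$ then Proposition~\ref{propMfABounds} (the forward measure bound $m_\phi(f(A))\leq\int_A J\,\mathrm{d}m_\phi$) forces $m_\phi(f^j(X^0))=0$ for all $j$, while Lemma~\ref{lmTileInIntTile} guarantees $f^j(X^0)=S^2$ for large $j$. Your route is more direct: Lemma~\ref{lmTileInIntTile} places both a white and a black $n_0$-tile inside $X^0_\varepsilon$, so every $y\in S^2$ has an $f^{n_0}$-preimage there, giving a uniform lower bound on $\RR_\phi^{n_0}(\mathbbm{1}_{X^0_\varepsilon})$ and hence on $c^{n_0}m_\phi(X^0_\varepsilon)$ via the duality of Lemma~\ref{lmRmuLocal}(i). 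Your argument bypasses Proposition~\ref{propMfABounds} entirely and is arguably cleaner; the paper's version has the advantage of isolating the forward-quasi-invariance estimate as a separate tool, which it reuses later (e.g., in the proof of exactness in Theorem~\ref{thmFisExact}).
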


In particular, since the existence of $m_\phi$ in Theorem~\ref{thmMexists} is independent of $\CC$, this proposition asserts that $m_\phi$ is a Gibbs state with respect to $f$, $\CC$, and $\phi$, for each $\CC$ that satisfies the Assumptions. In general, it is not clear that a Gibbs state with respect to $f$, $\CC_1$, and $\phi$ is also a Gibbs state with respect to $f$, $\CC_2$, and $\phi$, even though the answer is positive in the case when $f$ has no periodic critical points as shown in Corollary~\ref{corGibbsIndepCurves}.

\begin{proof}
We first need to prove that $\mu = m_\phi$ satisfies (\ref{eqGibbsState}).

We observe that
\begin{equation}    \label{eqf^nJacobianM}
m_\phi(f^i(B)) = \int_{B}\!  \exp(i \log c -S_i\phi(x)) \,\mathrm{d} m_\phi(x)
\end{equation}
for $n\in\N$, $i\in\{0,1,\dots,n\}$, and each Borel set $B\subseteq S^2$ on which $f^n$ is injective. Indeed, by the formula for the Jacobian in Theorem~\ref{thmMexists}, for a given Borel set $A\subseteq S^2$ on which $f$ is injective, we have
$$
\int_{f(A)}\!  g(x) \,\mathrm{d} m_\phi(x) = \int_{A}\! (g\circ f)(x) \exp( \log c - \phi(x)) \,\mathrm{d} m_\phi(x)
$$
for each simple function $g$ on $S^2$, thus also for each integrable function $g$. We establish (\ref{eqf^nJacobianM}) for each $n\in\N$ and each Borel set $B\subseteq S^2$ on which $f^n$ is injective by induction on $i$. For $i=0$, equation (\ref{eqf^nJacobianM}) holds trivially. Assume that (\ref{eqf^nJacobianM}) is established for some $i\in\{0,1,\dots,n-1\}$, then since $f^i$ is injective on $f(B)$, we get
\begin{align*}
m_\phi(f^{i+1}(B)) & = \int_{f(B)}\!  \exp(i \log c -S_i\phi(x)) \,\mathrm{d} m_\phi(x) \\
                   & =  \int_{B}\!  \exp((i+1) \log c -S_{i+1}\phi(x)) \,\mathrm{d} m_\phi(x).
\end{align*}
The induction is now complete. In particular, by Proposition~\ref{propCellDecomp}(i),
$$
m_\phi(f^n(X^n)) = \int_{X^n}\!  \exp(n \log c -S_n\phi(x)) \,\mathrm{d} m_\phi(x),
$$
for $n\in \N$ and $X^n\in\X^n(f,\CC)$.
 
Thus by Lemma~\ref{lmSnPhiBound}, there exists a constant $C\geq 1$ such that for each $n\in \N_0$, each $X^n\in\X^n(f,\CC)$, and each $x\in X^n$,
$$
m_\phi (f^n(X^n)) \geq C^{-1} \exp(n \log c -S_n\phi(x)) m_\phi(X^n)
$$
and
$$
m_\phi(f^n(X^n)) \leq C\exp(n \log c -S_n\phi(x)) m_\phi(X^n).
$$
Note that $f^n(X^n)$ is either the black 0-tile $X^0_b\in\X^0(f,\CC)$ or the white 0-tile $X^0_w\in\X^0(f,\CC)$. Both $X^0_b$ and $X^0_w$ are of positive $m_\phi$-measure, for otherwise, suppose that $m_\phi(X^0)=0$ for some $X^0\in\X^0(f,\CC)$, then by Proposition~\ref{propMfABounds}, $m_\phi(f^j(X^0))=0$, for each $j\in\N$. Then by Lemma~\ref{lmTileInIntTile}, $m_\phi(S^2)=0$, a contradiction. Hence (\ref{eqGibbsState}) follows, and $m_\phi$ is a Gibbs state with respect to $f$, $\CC$, and $\phi$, with $P_{m_\phi} = \log c$.

To finish the proof, we note that by (\ref{eqR^nExpr}) and Lemma~\ref{lmSigmaExpSnPhiBound}, for each $x,y\in S^2$ and each $n\in\N_0$, we have
\begin{equation}  \label{eqRR1Quot}
\frac{1}{C_2} \leq \frac{\RR_\phi^n(\mathbbm{1})(x)}{\RR_\phi^n(\mathbbm{1})(y)} \leq C_2,
\end{equation}
where $C_2$ is a constant depending only $f$, $d$, $\phi$, and $\alpha$ from Lemma~\ref{lmSigmaExpSnPhiBound}. 
Since $\langle m_\phi , \RR_\phi^n(\mathbbm{1})\rangle = \langle(\RR_\phi^*)^n(m_\phi), \mathbbm{1} \rangle = \langle c^n m_\phi,\mathbbm{1} \rangle = c^n$, by (\ref{eqR^nExpr}) and (\ref{eqRR1Quot}), we have that for each arbitrarily chosen $y\in S^2$,
\begin{align} \label{eqLogc=SumPreImg}
\log c & = \lim\limits_{n\to+\infty} \frac{1}{n} \log \int \!\RR_\phi^n(\mathbbm{1})(x)\,\mathrm{d}m_\phi(x)   \notag  \\
       & = \lim\limits_{n\to+\infty} \frac{1}{n} \log \int \!\RR_\phi^n(\mathbbm{1})(y)\,\mathrm{d}m_\phi(x) \\  
       & = \lim\limits_{n\to+\infty} \frac{1}{n} \log \RR_\phi^n(\mathbbm{1})(y). \notag
\end{align}
\end{proof}

\begin{cor} \label{corLimitPxExists}
Let $f$, $d$, $\phi$, $\alpha$ satisfy the Assumptions. Then the limit $\lim\limits_{n\to+\infty}\frac{1}{n}\log\RR_\phi^n(\mathbbm{1})(x)$ exists for each $x\in S^2$ and is independent of $x\in S^2$. 
\end{cor}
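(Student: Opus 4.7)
The plan is to observe that the corollary is essentially a direct restatement of part of Proposition~\ref{propMisGibbsState}, so the proof is a short extraction of that content. First I would invoke Theorem~\ref{thmCexistsBM} to produce a Jordan curve $\CC \subseteq S^2$ with $\post f \subseteq \CC$ and $f^{n_\CC}(\CC) \subseteq \CC$ for some $n_\CC \in \N$, so that the tuple $(f, \CC, d, \phi, \alpha)$ satisfies the Assumptions. This is needed because Proposition~\ref{propMisGibbsState} is stated in terms of such a curve, although the conclusion (\ref{eqLogc=P}) does not reference $\CC$.

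Next I would apply Theorem~\ref{thmMexists} to produce a Borel probability measure $m_\phi \in \PPP(S^2)$ with $\RR_\phi^*(m_\phi) = c m_\phi$, where $c = \langle \RR_\phi^*(m_\phi), \mathbbm{1}\rangle > 0$. Then Proposition~\ref{propMisGibbsState} gives
\begin{equation*}
\lim_{n\to+\infty} \frac{1}{n} \log \RR_\phi^n(\mathbbm{1})(x) = \log c
\end{equation*}
for each $x \in S^2$. Since the right-hand side does not depend on $x$, the limit exists pointwise on $S^2$ and is constant, which is exactly the statement of the corollary.

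There is really no obstacle here; the work was already done inside Proposition~\ref{propMisGibbsState}, where the key ingredient was the uniform two-sided bound
\begin{equation*}
\frac{1}{C_2} \leq \frac{\RR_\phi^n(\mathbbm{1})(x)}{\RR_\phi^n(\mathbbm{1})(y)} \leq C_2
\end{equation*}
from Lemma~\ref{lmSigmaExpSnPhiBound} combined with the fact that $\langle m_\phi, \RR_\phi^n(\mathbbm{1})\rangle = c^n$. The corollary merely records this consequence in a form convenient for later reference (in particular, it prepares the way for identifying $\log c$ with the topological pressure $P(f,\phi)$ in Proposition~\ref{propTopPressureDefPreImg}).
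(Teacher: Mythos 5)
Your proof is correct and follows the same route the paper takes: produce a suitable Jordan curve via Theorem~\ref{thmCexistsBM}, obtain $m_\phi$ from Theorem~\ref{thmMexists}, and then read off the conclusion from (\ref{eqLogc=P}) in Proposition~\ref{propMisGibbsState}. Your remark correctly identifies the uniform two-sided bound from Lemma~\ref{lmSigmaExpSnPhiBound} and the identity $\langle m_\phi, \RR_\phi^n(\mathbbm{1})\rangle = c^n$ as the substance underlying the cited proposition.
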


We denote the limit as $D_\phi \in \R$.

\begin{proof}
By Theorem~\ref{thmMexists} , there exists a measure $m_\phi$ such as the one in Proposition~\ref{propMisGibbsState}. The limit then clearly only depends on $f$, $d$, $\phi$, and $\alpha$, and in particular, does not depend on $\CC$ or the choice of $m_\phi$.
\end{proof}

Let $f$, $\CC$, $d$, $\phi$, $\alpha$ satisfy the Assumptions. We define the function 
\begin{equation}   \label{eqDefPhi-}
\overline{\phi}= \phi - D_\phi \in \Holder{\alpha}(S^2,d).
\end{equation}
Then
\begin{equation}  \label{eqDefR-}
\RR_{\overline{\phi}}= e^{-D_\phi}\RR_\phi.
\end{equation}
If $m_\phi$ is a Gibbs state from Theorem~\ref{thmMexists}, then by Proposition~\ref{propMisGibbsState} and Corollary~\ref{corLimitPxExists} we have
\begin{equation}
\RR_{\phi}^*(m_\phi)=e^{D_\phi} m_\phi = e^{P_{m_\phi}} m_\phi,
\end{equation}
and
\begin{equation}   \label{eqR*m}
\RR_{\overline{\phi}}^*(m_\phi)=m_\phi,
\end{equation}
since for each $u\in\CCC(S^2)$, 
\begin{align*}
\langle \RR_{\overline{\phi}}^*(m_\phi), u \rangle= & \langle m_\phi, \RR_{\overline{\phi}}(u)\rangle= e^{-D_\phi } \langle m_\phi, \RR_\phi(u)\rangle \\
             =  & e^{-D_\phi} \langle \RR_\phi^*(m_\phi), u\rangle = \langle m_\phi,u\rangle.
\end{align*}

We summarize in the following lemma the properties of $\RR_{\overline{\phi}}$ that we will need.
\begin{lemma}   \label{lmR1properties}
Let $f$, $\CC$, $d$, $L$, $\Lambda$, $\phi$, $\alpha$ satisfy the Assumptions. Then there exists a constant $C_3$ depending only on $f$, $d$, $\phi$, and $\alpha$ such that for each $x,y\in S^2$ and each $n\in \N_0$ the following equations are satisfied
\begin{equation}  \label{eqR1Quot}
\frac{\RR_{\overline{\phi}}^n(\mathbbm{1})(x)}{\RR_{\overline{\phi}}^n(\mathbbm{1})(y)} \leq \exp\(4C_1 Ld(x,y)^\alpha\) \leq C_2,
\end{equation}
\begin{equation}   \label{eqR1Bound}
\frac{1}{C_2} \leq \RR_{\overline{\phi}}^n(\mathbbm{1})(x)  \leq C_2,
\end{equation}
\begin{align}    \label{eqR1Diff}
       &  \Abs{\RR_{\overline{\phi}}^n(\mathbbm{1})(x) - \RR_{\overline{\phi}}^n(\mathbbm{1})(y) }  \\
\leq   & C_2 \( \exp\(4C_1 Ld(x,y)^\alpha\) - 1 \)  \leq C_3 d(x,y)^\alpha,   \notag
\end{align}
where $C_1, C_2$ are constants in Lemma~\ref{lmSnPhiBound} and Lemma~\ref{lmSigmaExpSnPhiBound} depending only on $f$, $d$, $\phi$, and $\alpha$.
\end{lemma}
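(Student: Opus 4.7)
The plan is to prove the three inequalities in order, exploiting the relation $\RR_{\overline\phi} = e^{-D_\phi} \RR_\phi$ from (\ref{eqDefR-}) so that all statements can be reduced to Lemma~\ref{lmSigmaExpSnPhiBound} and the invariance $\RR_{\overline\phi}^*(m_\phi)=m_\phi$ from (\ref{eqR*m}).

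First I will establish (\ref{eqR1Quot}). By (\ref{eqR^nExpr}) and (\ref{eqDefR-}),
\begin{equation*}
\RR_{\overline\phi}^n(\mathbbm{1})(x) = e^{-nD_\phi}\sum_{y\in f^{-n}(x)} \deg_{f^n}(y) \exp(S_n\phi(y)),
\end{equation*}
so the factor $e^{-nD_\phi}$ cancels when forming the ratio $\RR_{\overline\phi}^n(\mathbbm{1})(x)/\RR_{\overline\phi}^n(\mathbbm{1})(y)$, and Lemma~\ref{lmSigmaExpSnPhiBound} applied to the numerator and denominator gives the two inequalities in (\ref{eqR1Quot}) with the same constants $C_1, C_2$.

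Next, for (\ref{eqR1Bound}), I integrate against $m_\phi$. Since $\RR_{\overline\phi}^*(m_\phi) = m_\phi$ by (\ref{eqR*m}), iterating and pairing with $\mathbbm{1}$ yields
\begin{equation*}
\int \RR_{\overline\phi}^n(\mathbbm{1}) \, \mathrm{d} m_\phi = \langle (\RR_{\overline\phi}^*)^n(m_\phi),\mathbbm{1}\rangle = \langle m_\phi,\mathbbm{1}\rangle = 1.
\end{equation*}
Because $m_\phi$ is a Borel probability measure and $\RR_{\overline\phi}^n(\mathbbm{1})$ is continuous, its supremum is $\geq 1$ and its infimum is $\leq 1$; by continuity on the connected space $S^2$ there exists $z_n\in S^2$ with $\RR_{\overline\phi}^n(\mathbbm{1})(z_n)=1$. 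Setting $y = z_n$ in (\ref{eqR1Quot}) and then $x = z_n$ in (\ref{eqR1Quot}) produces the upper and lower bounds of (\ref{eqR1Bound}).

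Finally, for (\ref{eqR1Diff}), I may assume without loss of generality that $\RR_{\overline\phi}^n(\mathbbm{1})(x) \geq \RR_{\overline\phi}^n(\mathbbm{1})(y)$. Then, using (\ref{eqR1Quot}) and (\ref{eqR1Bound}),
\begin{equation*}
\bigl|\RR_{\overline\phi}^n(\mathbbm{1})(x) - \RR_{\overline\phi}^n(\mathbbm{1})(y)\bigr|
= \RR_{\overline\phi}^n(\mathbbm{1})(y)\left(\frac{\RR_{\overline\phi}^n(\mathbbm{1})(x)}{\RR_{\overline\phi}^n(\mathbbm{1})(y)} - 1\right)
\leq C_2\bigl(\exp(4C_1 L\, d(x,y)^\alpha) - 1\bigr),
\end{equation*}
which is the first inequality of (\ref{eqR1Diff}). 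For the second inequality, use the elementary bound $e^t - 1 \leq t\, e^M$ valid for $t\in[0,M]$ with $M = 4C_1 L (\diam_d(S^2))^\alpha$, which gives $\exp(4C_1 L\, d(x,y)^\alpha) - 1 \leq 4C_1 L\, e^M\, d(x,y)^\alpha$. Setting
\begin{equation*}
C_3 = 4\, C_1\, C_2\, L\, \exp\!\bigl(4C_1 L (\diam_d(S^2))^\alpha\bigr),
\end{equation*}
which depends only on $f$, $d$, $\phi$, and $\alpha$, completes the proof. No real obstacle is expected here; the only subtlety is the appeal to the invariance of $m_\phi$ in the middle step, where one must invoke the already-proved identity (\ref{eqR*m}) rather than confusing the roles of $\phi$ and $\overline\phi$.
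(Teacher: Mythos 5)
Your proof is correct and follows essentially the same route as the paper: reduce (\ref{eqR1Quot}) to Lemma~\ref{lmSigmaExpSnPhiBound} via (\ref{eqDefR-}), use the eigen\-measure identity (\ref{eqR*m}) to normalize for (\ref{eqR1Bound}), and derive (\ref{eqR1Diff}) from the first two. The only minor variation is in (\ref{eqR1Bound}): you locate a point $z_n$ with $\RR_{\overline{\phi}}^n(\mathbbm{1})(z_n)=1$ via the intermediate value theorem, whereas the paper simply integrates the pointwise inequality $\RR_{\overline{\phi}}^n(\mathbbm{1})(x) \leq C_2\, \RR_{\overline{\phi}}^n(\mathbbm{1})(y)$ in $y$ against $m_\phi$ — avoiding any appeal to connectedness — but both produce the same bound from the same underlying fact $\int \RR_{\overline{\phi}}^n(\mathbbm{1})\, \mathrm{d}m_\phi = 1$.
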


\begin{proof}
Inequality (\ref{eqR1Quot}) follows from (\ref{eqDefR-}), (\ref{eqR^nExpr}), and Lemma~\ref{lmSigmaExpSnPhiBound}.

To prove (\ref{eqR1Bound}), we choose a Gibbs state $m_\phi$ with respect to $f$, $\CC$, and $\phi$ from Theorem~\ref{thmMexists}. Then by (\ref{eqR*m}) and (\ref{eqR1Quot}), we have
$$
\RR_{\overline{\phi}}^n(\mathbbm{1})(x)\leq C_2 \big\langle m_\phi, \RR_{\overline{\phi}}^n(\mathbbm{1}) \big\rangle  = C_2 \big\langle (\RR_{\overline{\phi}}^*)^n (m_\phi), \mathbbm{1} \big\rangle = C_2 \langle m_\phi,\mathbbm{1}\rangle = C_2.
$$
The first inequality in (\ref{eqR1Bound}) can be proved similarly.

Applying (\ref{eqR1Quot}) and (\ref{eqR1Bound}), we get
\begin{align*}
\RR_{\overline{\phi}}^n(\mathbbm{1})(x) - \RR_{\overline{\phi}}^n(\mathbbm{1})(y)  & = \( \frac{\RR_{\overline{\phi}}^n(\mathbbm{1})(x)}{\RR_{\overline{\phi}}^n(\mathbbm{1})(y)} - 1 \) \RR_{\overline{\phi}}^n(\mathbbm{1})(x)   \\
&  \leq C_2 \( \exp\(4C_1 Ld(x,y)^\alpha\) - 1 \) \\
&  \leq C_3 d(x,y)^\alpha,
\end{align*}
for some constant $C_3$ depending only on $L$, $C_1$, $C_2$, and $\diam_d(S^2)$.
\end{proof}

We can now prove the existence of an $f$-invariant Gibbs state.

\begin{theorem}  \label{thmMuExist}
Let $f\:S^2 \rightarrow S^2$ be an expanding Thurston map and $\CC \subseteq S^2$ be a Jordan curve containing $\post f$ with the property that $f^l(\CC)\subseteq \CC$ for some $l\in\N$. Let $d$ be a visual metric on $S^2$ for $f$ with an expansion factor $\Lambda>1$. Let $\phi\in \Holder{\alpha}(S^2,d)$ be a real-valued H\"{o}lder continuous function with an exponent $\alpha\in(0,1]$. Then the sequence $\Big\{\frac{1}{n}\sum\limits_{j=0}^{n-1} \RR_{\overline{\phi}}^j(\mathbbm{1})\Big\}_{n\in\N}$ converges uniformly to a function $u_\phi \in \Holder{\alpha}(S^2,d)$, which satisfies
\begin{equation}    \label{eqRu=u}
\RR_{\overline{\phi}}(u_\phi)  = u_\phi,
\end{equation}
and
\begin{equation}    \label{eqU_phiBounds}
\frac{1}{C_2} \leq u_\phi(x) \leq C_2, \qquad  \text{for each } x\in S^2,
\end{equation}
where $C_2\geq 1$ is a constant from Lemma~\ref{lmSigmaExpSnPhiBound}. Moreover, if we let $m_\phi$ be a Gibbs state from Theorem~\ref{thmMexists}, then 
\begin{equation}    \label{eqSudm=1}
\int \! u_\phi \, \mathrm{d}m_\phi =1,
\end{equation}
and $\mu_\phi=u_\phi m_\phi$ is a Gibbs state with respect to $f$, $\CC$, and $\phi$, with 
\begin{equation}   \label{eqP=L}
P_{\mu_\phi}=P_{m_\phi}=D_\phi=\lim\limits_{n\to+\infty}  \frac{1}{n} \log \RR_\phi^n(\mathbbm{1})(y),
\end{equation}
for each $y\in S^2$, and  
\begin{equation}   \label{eqf*mu=mu}
f_*(\mu_\phi)=\mu_\phi.
\end{equation}
\end{theorem}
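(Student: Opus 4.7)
\smallskip

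The plan is to construct $u_\phi$ as a limit of the Cesàro averages $v_n := \frac{1}{n}\sum_{j=0}^{n-1} \RR_{\overline{\phi}}^j(\mathbbm{1})$ and then set $\mu_\phi = u_\phi m_\phi$. First I would use Lemma~\ref{lmR1properties} to observe that $\{\RR_{\overline{\phi}}^n(\mathbbm{1})\}_{n\in\N}$ is uniformly bounded in $[C_2^{-1},C_2]$ and uniformly $\alpha$-Hölder with seminorm bounded by $C_3$; taking convex combinations, the same bounds pass to $\{v_n\}_{n\in\N}$. Hence $\{v_n\}_{n\in\N}$ is equicontinuous and uniformly bounded, so by Arzelà–Ascoli it has a uniformly convergent subsequence $\{v_{n_k}\}$ with limit $u_\phi$, and $u_\phi$ inherits the bounds $C_2^{-1}\le u_\phi\le C_2$ (giving (\ref{eqU_phiBounds})) together with an $\alpha$-Hölder seminorm bounded by $C_3$, so $u_\phi \in \Holder{\alpha}(S^2,d)$.

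Next I would verify the fixed-point equation (\ref{eqRu=u}) via the Cesàro trick: a direct telescoping gives
\[
\RR_{\overline{\phi}}(v_n) - v_n \;=\; \frac{1}{n}\bigl(\RR_{\overline{\phi}}^n(\mathbbm{1}) - \mathbbm{1}\bigr),
\]
which is $O(1/n)$ in the uniform norm by (\ref{eqR1Bound}); continuity of $\RR_{\overline{\phi}}$ on $\CCC(S^2)$ then gives $\RR_{\overline{\phi}}(u_\phi)=u_\phi$. The normalization (\ref{eqSudm=1}) is immediate from (\ref{eqR*m}): for every $n$,
\[
\int\! v_n\,\mathrm{d}m_\phi \;=\; \frac{1}{n}\sum_{j=0}^{n-1}\bigl\langle (\RR_{\overline{\phi}}^*)^j(m_\phi),\mathbbm{1}\bigr\rangle \;=\; 1,
\]
and uniform convergence on the subsequence passes this to $u_\phi$. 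To promote subsequential to full convergence, I would argue that any subsequential limit of $\{v_n\}$ is a continuous positive fixed point of $\RR_{\overline{\phi}}$ that integrates to $1$ against $m_\phi$, and then appeal to the uniqueness of such a fixed point; concretely, two such limits $u,u'$ give two $f$-invariant Gibbs states $um_\phi$ and $u'm_\phi$ with the same pressure constant $P=D_\phi$, and the rigidity of the Gibbs inequality (\ref{eqGibbsState}) combined with $f$-invariance forces $u=u'$. (If this rigidity step turns out to require material from Section~\ref{sctUniqueness}, one can equivalently invoke the uniqueness of the $\RR_{\overline{\phi}}^*$-eigenmeasure developed later.)

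Once $u_\phi$ is in hand, set $\mu_\phi := u_\phi m_\phi$. It is a probability measure by (\ref{eqSudm=1}). Since $m_\phi$ is a Gibbs state with respect to $f,\CC,\phi$ with $P_{m_\phi}=\log c = D_\phi$ by Proposition~\ref{propMisGibbsState} and Corollary~\ref{corLimitPxExists}, and $u_\phi$ is squeezed between $C_2^{-1}$ and $C_2$, the Gibbs inequality (\ref{eqGibbsState}) passes to $\mu_\phi$ with the same constant $P_{\mu_\phi}=D_\phi$, establishing (\ref{eqP=L}). For $f$-invariance (\ref{eqf*mu=mu}), I would use (\ref{eqRuvf}) and (\ref{eqR*m}): for every $v\in\CCC(S^2)$,
\[
\int\! v\circ f \,\mathrm{d}\mu_\phi = \int\! (v\circ f)\,u_\phi\,\mathrm{d}m_\phi = \int\! \RR_{\overline{\phi}}\bigl((v\circ f)u_\phi\bigr)\,\mathrm{d}m_\phi = \int\! v\,\RR_{\overline{\phi}}(u_\phi)\,\mathrm{d}m_\phi = \int\! v\,\mathrm{d}\mu_\phi.
\]

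The main obstacle is the upgrade from subsequential to full uniform convergence of $\{v_n\}$; all the other steps are direct consequences of the distortion estimates in Lemma~\ref{lmR1properties}, the eigenmeasure property (\ref{eqR*m}), and the multiplicative identity (\ref{eqRuvf}). The delicate point amounts to showing that the positive normalized fixed point of $\RR_{\overline{\phi}}$ in $\Holder{\alpha}(S^2,d)$ is unique, which is really a mean-ergodic-theorem-type assertion for the positive operator $\RR_{\overline{\phi}}$ and is the piece where one has to bring the Gibbs rigidity (or the uniqueness results of Section~\ref{sctUniqueness}) to bear.
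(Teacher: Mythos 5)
Your structure matches the paper's proof step for step, and most of your estimates (equicontinuity and bounds for $v_n$, the telescoping identity for \eqref{eqRu=u}, the integral normalization, the Gibbs property of $\mu_\phi$, and the $f$-invariance calculation) coincide with what is done there. However, the passage from subsequential to full convergence is a genuine gap in your proposal, and it is precisely the nontrivial content of the theorem.

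Your two suggested routes to uniqueness of the subsequential limit are both problematic. First, Gibbs rigidity is too weak: two positive fixed points $u,u'$ of $\RR_{\overline{\phi}}$ satisfying the normalization do give $f$-invariant Gibbs states $u\,m_\phi$ and $u'\,m_\phi$ with the same pressure constant, and these are indeed mutually absolutely continuous with bounded Radon--Nikodym derivatives. But two equivalent $f$-invariant probability measures need not be equal without some ergodicity input, and ergodicity of $\mu_\phi$ is only established later (Section~\ref{sctErgodicity}). Second, invoking the uniqueness of the $\RR_{\overline{\phi}}^*$-eigenmeasure from Corollary~\ref{corMandMuUnique} is circular: that corollary rests on Theorem~\ref{thmRR^nConv}, which rests on Lemma~\ref{lmRtildeSupBound}, which involves the modified potential $\widetilde\phi$ defined in \eqref{eqDefPhiTilde} --- and $\widetilde\phi$ is defined using the function $u_\phi$ that the present theorem is constructing.

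The paper resolves this with a short, self-contained, Perron--Frobenius-type positivity argument that you should note. Given two normalized subsequential limits $u_\phi$ and $v_\phi$, set $t=\sup\{s\in\R\,:\,u_\phi(x)-s\,v_\phi(x)>0\text{ for all }x\in S^2\}$; the bounds \eqref{eqU_phiBounds} put $t\in(0,+\infty)$, and compactness produces a point $y$ with $u_\phi(y)-t\,v_\phi(y)=0$. Since $w:=u_\phi-t\,v_\phi\geq 0$ satisfies $\RR_{\overline{\phi}}(w)=w$ and $\RR_{\overline{\phi}}(w)(y)$ is a sum over $f^{-1}(y)$ with strictly positive weights, each preimage $z\in f^{-1}(y)$ must also have $w(z)=0$; iterating, $w$ vanishes on $\bigcup_{i\in\N}f^{-i}(y)$, which is dense by Lemma~\ref{lmPreImageDense}, so $w\equiv 0$ and $u_\phi=t\,v_\phi$. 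The normalization \eqref{eqSudm=1} then forces $t=1$. This is the step your outline is missing, and it cannot be replaced by material from later sections without making the logic circular.
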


\begin{proof}
In order to prove this theorem, we first establish (\ref{eqRu=u}), (\ref{eqU_phiBounds}), and (\ref{eqSudm=1}) for a subsequential limit of the sequence $\Big\{\frac{1}{n}\sum\limits_{j=0}^{n-1} \RR_{\overline{\phi}}^j(\mathbbm{1})\Big\}_{n\in\N}$, then prove the above sequence has a unique subsequential limit, and finally justify (\ref{eqP=L}) and (\ref{eqf*mu=mu}).

Define, for each $n\in\N$, $u_n =  \frac{1}{n}\sum\limits_{j=0}^{n-1} \RR_{\overline{\phi}}^j(\mathbbm{1})$. Then $\{u_n\}_{n\in\N}$ is a uniformly bounded sequence of equicontinuous functions on $S^2$ by (\ref{eqR1Bound}) and (\ref{eqR1Diff}). By the Arzel\`a-Ascoli Theorem, there exists a continuous function $u_\phi \in \CCC(S^2)$ and an increasing sequence $\{n_i\}_{i\in \N}$ in $\N$ such that $u_{n_i} \rightarrow u_\phi$ uniformly on $S^2$ as $i\longrightarrow +\infty$.

To prove (\ref{eqRu=u}), we note that by the definition of $u_n$ and (\ref{eqR1Bound}), we have that for each $i\in\N$,
$$
\Norm{\RR_{\overline{\phi}} (u_{n_i}) - u_{n_i}}_{\infty} = \frac{1}{n_i}\Norm{\RR_{\overline{\phi}}^{n_i} (\mathbbm{1}) - \mathbbm{1}}_{\infty} \leq \frac{1+C_2}{n_i}.
$$
By letting $i\longrightarrow +\infty$, we can conclude that $\Norm{\RR_{\overline{\phi}} (u_\phi) - u_\phi}_{\infty}=0$. Thus (\ref{eqRu=u}) holds.

By (\ref{eqR1Bound}), we have that $C_2^{-1} \leq u_n(x) \leq C_2$, for each $n\in \N$ and each $x\in S^2$. Thus (\ref{eqU_phiBounds}) follows.

By (\ref{eqR*m}) and definition of $u_n$, we have $\int\! u_n \,\mathrm{d}m_\phi = \int \!\mathbbm{1} \,\mathrm{d}m_\phi  = 1$ for each $n\in\N$. Then by the Lebesgue Dominated Convergence Theorem, we can conclude that 
$$
\int \!u_\phi\,\mathrm{d}m_\phi= \lim\limits_{i\to+\infty} \int\! u_{n_i} \,\mathrm{d}m_\phi =1,
$$
proving (\ref{eqSudm=1}).

Next, we prove that $u_\phi$  is the unique subsequential limit of the sequence $\{u_n\}_{n\in\N}$ with respect to the uniform norm. Suppose that $v_\phi$ is another subsequential limit of $u_n, n\in\N$, with respect to the uniform norm. Then $v_\phi$ is also a continuous function on $S^2$ satisfying (\ref{eqRu=u}), (\ref{eqU_phiBounds}), and (\ref{eqSudm=1}) by the argument above. Let 
$$
t=\sup \{s\in\R  \,|\,  u_\phi(x) - s v_\phi(x) > 0  \text{ for all } x\in S^2  \}.
$$
By (\ref{eqU_phiBounds}), $t\in(0,+\infty)$. Then there is a point $y\in S^2$ such that $u_\phi(y)-t v_\phi(y) = 0$. By (\ref{eqR^nExpr}) and the equation
$$
\RR_{\overline{\phi}} (u_\phi - t v_\phi) = u_\phi - t v_\phi,
$$
which comes from (\ref{eqRu=u}), we get that $u_\phi(z)-t v_\phi(z)=0$ for all $z\in f^{-1}(y)$. Inductively, we can conclude that $u_\phi(z)-t v_\phi(z)=0$ for all $z\in \bigcup\limits_{i\in\N} f^{-i}(y)$. By Lemma~\ref{lmPreImageDense}, the set $\bigcup\limits_{i\in\N} f^{-i}(y)$ is dense in $S^2$. Hence $u_\phi = tv_\phi$ on $S^2$. Since both $u_\phi$ and $v_\phi$ satisfy (\ref{eqSudm=1}), we get $t=1$. Thus $u_\phi=v_\phi$. We have proved that $u_n$ converges to $u_\phi$ uniformly as $n\longrightarrow +\infty$.

We now prove that $u_\phi \in \Holder{\alpha}(S^2,d)$. Indeed, for each $\epsilon>0$, there exists $n\in\N$ such that $\Norm{u_n-u_\phi}_{\infty} < \epsilon$. Then by (\ref{eqR1Diff}), for each $x,y\in S^2$, we have
\begin{align*}
      & \Abs{u_\phi(x) - u_\phi(y)} \\
 \leq & \Abs{u_\phi(x)-u_n(x)}+\Abs{u_n(x)-u_n(y)}+\Abs{u_n(y)-u_\phi(y)}   \\
 \leq & 2\epsilon + \frac{1}n \sum\limits_{j=0}^{n-1} \Abs{\RR_{\overline{\phi}}^j(\mathbbm{1})(x) - \RR_{\overline{\phi}}^j(\mathbbm{1})(y)}   \\
 \leq & 2\epsilon + C_3 d(x,y)^\alpha,                        
\end{align*}
where $C_3$ is a constant in (\ref{eqR1Diff}) from Lemma~\ref{lmR1properties}. By letting $\epsilon \longrightarrow 0$, we conclude that $u_\phi \in \Holder{\alpha}(S^2,d)$.

Since $m_\phi$ is a Gibbs state by Proposition~\ref{propMisGibbsState}, then by (\ref{eqU_phiBounds}), $\mu_\phi = u_\phi m_\phi$ is also a Gibbs state with $P_{\mu_\phi}=P_{m_\phi}=D_\phi=\lim\limits_{n\to+\infty}  \frac{1}{n} \log \RR_\phi^n(\mathbbm{1})(y)$ for each $y\in S^2$, proving (\ref{eqP=L}).

Finally we need to prove that $\mu_\phi$ is $f$-invariant. It suffices to prove that $\langle \mu_\phi, g\circ f\rangle = \langle \mu_\phi,g\rangle$ for each $g\in\CCC(S^2)$. Indeed, by (\ref{eqR*m}), (\ref{eqRu=u}), and (\ref{eqRuvf}), we get
\begin{align*}
 \langle \mu_\phi, g\circ f \rangle 
 &= \langle m_\phi, u_\phi(g\circ f) \rangle
= \big\langle \RR_{\overline\phi}^* (m_\phi), u_\phi(g\circ f) \big\rangle\\
 &=\big\langle m_\phi,  \RR_{\overline\phi} (u_\phi(g\circ f)) \big\rangle  
= \big\langle m_\phi, g \RR_{\overline\phi} (u_\phi) \big\rangle \\
 &= \langle m_\phi, g u_\phi \rangle
= \langle \mu_\phi, g \rangle.
\end{align*}
\end{proof}

\begin{remark}
By a similar argument to that in the proof of the uniqueness of the subsequential limit of $\Big\{\frac{1}{n}\sum\limits_{j=0}^{n-1} \RR_{\overline{\phi}}^j(\mathbbm{1})\Big\}_{n\in\N}$, one can show that $u_\phi$ is the unique eigenfunction, upto scalar multiplication, of $\RR_{\overline{\phi}}$ corresponding to the eigenvalue $1$.
\end{remark}

We now get the following characterization of the topological pressure $P(f,\phi)$ of an expanding Thurston map $f$ with respect to a H\"older continuous potential $\phi$.

\begin{prop}   \label{propTopPressureDefPreImg}
Let $f$, $d$, $\phi$, $\alpha$ satisfy the Assumptions. Then for each $x\in S^2$, we have
\begin{equation}  \label{eqTopPressureDefPreImg}
P(f,\phi)= \lim\limits_{n\to +\infty} \frac{1}{n} \log \sum\limits_{y\in f^{-n}(x)} \deg_{f^n}(y) \exp (S_n\phi(y)) = D_\phi.
\end{equation}
\end{prop}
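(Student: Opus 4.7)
The first identity in (\ref{eqTopPressureDefPreImg}) reduces at once to the definition of $D_\phi$ in Corollary~\ref{corLimitPxExists}, because by (\ref{eqR^nExpr}) we have $\RR_\phi^n(\mathbbm{1})(x)=\sum_{y\in f^{-n}(x)}\deg_{f^n}(y)\exp(S_n\phi(y))$. So the real content of the proposition is the equality $P(f,\phi)=D_\phi$, which I will prove by two inequalities.

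The inequality $D_\phi\le P(f,\phi)$ is nearly automatic from what has already been built. Pick a Jordan curve $\CC\subseteq S^2$ satisfying the Assumptions (which exists by Theorem~\ref{thmCexistsBM}). Theorem~\ref{thmMuExist} then produces an $f$-invariant Gibbs state $\mu_\phi$ with respect to $f$, $\CC$, $\phi$ that satisfies $P_{\mu_\phi}=D_\phi$. Proposition~\ref{propInvGibbsIsEqlbStatePLessThanPressure} applied to $\mu_\phi$ gives
$$D_\phi=P_{\mu_\phi}\le h_{\mu_\phi}(f)+\int \!\phi\,\mathrm{d}\mu_\phi\le P(f,\phi),$$
where the last inequality is the Variational Principle (\ref{eqVPPressure}).

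The reverse inequality $P(f,\phi)\le D_\phi$ requires a combinatorial comparison between $(n,\epsilon)$-separated sets and the cell decomposition $\DD^{n+k}(f,\CC)$. Fix $\epsilon>0$ and let $F_n(\epsilon)$ be a maximal $(n,\epsilon)$-separated set. Combining Lemma~\ref{lmMetricDistortion} (applied with the roles of $n$ and $k$ suitably shifted) with the tile-diameter bound Lemma~\ref{lmCellBoundsBM}(ii), one checks that whenever $x,y$ lie in a common $(n+k)$-tile and $0\le j\le n-1$,
$$d(f^j(x),f^j(y))\le C_0 C\,\Lambda^{j-n-k}\le C_0 C\,\Lambda^{-k-1}.$$
Thus if $k=k(\epsilon)$ is chosen large enough that $C_0 C\Lambda^{-k-1}<\epsilon$, no two points of $F_n(\epsilon)$ can lie in the same $(n+k)$-tile. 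Next, I invoke the distortion Lemma~\ref{lmSnPhiBound} (with $m=n$) to replace $\exp(S_n\phi(x))$ for $x\in F_n(\epsilon)\cap X^{n+k}$ by a bounded multiple of $\exp(S_n\phi(y_{X^n}))$ for any chosen point $y_{X^n}$ in the unique $n$-tile $X^n$ containing $X^{n+k}$. Since $f^n$ maps $X^n$ homeomorphically onto a $0$-tile, and each $0$-tile contains exactly $(\deg f)^k$ of the $k$-tiles, each $n$-tile $X^n$ contains exactly $(\deg f)^k$ of the $(n+k)$-tiles. Choosing $y_{X^n}=(f^n|_{X^n})^{-1}(x_0^w)$ when $X^n$ is white and $y_{X^n}=(f^n|_{X^n})^{-1}(x_0^b)$ when $X^n$ is black, for fixed reference points $x_0^w,x_0^b$ in the interiors of the white and black $0$-tiles, the resulting sum $\sum_{X^n}\exp(S_n\phi(y_{X^n}))$ equals $\RR_\phi^n(\mathbbm{1})(x_0^w)+\RR_\phi^n(\mathbbm{1})(x_0^b)$, which by Lemma~\ref{lmSigmaExpSnPhiBound} is bounded by $(1+C_2)\RR_\phi^n(\mathbbm{1})(x_0^w)$. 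Putting this all together produces an estimate of the shape
$$\sum_{x\in F_n(\epsilon)}\exp(S_n\phi(x))\le M(\epsilon)\,\RR_\phi^n(\mathbbm{1})(x_0^w),$$
with $M(\epsilon)$ independent of $n$. Taking $\tfrac1n\log$ and $\limsup_{n\to+\infty}$ kills $M(\epsilon)$ and produces $D_\phi$ on the right, in view of Corollary~\ref{corLimitPxExists}; letting $\epsilon\to 0$ in (\ref{defTopPressure}) then gives $P(f,\phi)\le D_\phi$.

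The main obstacle is the upper bound $P(f,\phi)\le D_\phi$: the harder bookkeeping is to certify, via Lemma~\ref{lmMetricDistortion}, that each $(n+k)$-tile meets $F_n(\epsilon)$ in at most one point, and then to use Lemma~\ref{lmSnPhiBound} to replace arbitrary points in tiles by the canonical preimages of reference points in the $0$-skeleton, which is what converts the resulting tile-sum into a Ruelle-operator sum bounded through Lemma~\ref{lmSigmaExpSnPhiBound}.
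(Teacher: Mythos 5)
Your treatment of $D_\phi\le P(f,\phi)$ is exactly the paper's: an $f$-invariant Gibbs state from Theorem~\ref{thmMuExist} plus Proposition~\ref{propInvGibbsIsEqlbStatePLessThanPressure} plus the Variational Principle. For the reverse inequality $P(f,\phi)\le D_\phi$ the overall strategy — showing a maximal $(n,\epsilon)$-separated set meets each tile at a sufficiently deep level at most once, then controlling $S_n\phi$ by distortion and converting the resulting tile sum into a Ruelle-operator sum — is the same as in the paper. The implementation, however, has a genuine gap.

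Your argument hinges on the statements ``the unique $n$-tile $X^n$ containing $X^{n+k}$'' and ``each $n$-tile $X^n$ contains exactly $(\deg f)^k$ of the $(n+k)$-tiles.'' Both presuppose that $\DD^{n+k}(f,\CC)$ is a refinement of $\DD^n(f,\CC)$, i.e.\ that every $(n+k)$-tile lies inside a single $n$-tile. That is equivalent to $\CC\subseteq f^{-k}(\CC)$, i.e.\ $f^k(\CC)\subseteq\CC$. Under the Assumptions $\CC$ is only $f^{n_\CC}$-invariant, so this holds only when $n_\CC \mid k$; for other $k$ an $(n+k)$-tile can straddle several $n$-tiles, and your assignment $X^{n+k}\mapsto y_{X^n}$ is undefined. (The stated count $(\deg f)^k$ per $0$-tile is also not justified, but that does not matter since only some $n$-independent upper bound, such as $2(\deg f)^k$, is needed.) The gap is reparable: since you only need $k$ large enough that $C_0C\Lambda^{-k-1}<\epsilon$, you may take $k$ to be the smallest multiple of $n_\CC$ above that threshold; then $\DD^{n+k}$ does refine $\DD^n$, and the rest of your computation (using Lemma~\ref{lmSnPhiBound} with $m=n$, converting to $\RR_\phi^n(\mathbbm{1})(x_0^w)+\RR_\phi^n(\mathbbm{1})(x_0^b)$, and bounding by $(1+C_2)\RR_\phi^n(\mathbbm{1})(x_0^w)$ via Lemma~\ref{lmSigmaExpSnPhiBound}) goes through.

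The paper sidesteps refinement entirely. Fixing $\epsilon=C\Lambda^{-m}$ and reference points $x\in\inte(X_w^0)$, $y\in\inte(X_b^0)$, it builds an \emph{injection} $i_n\colon F_n(m)\to f^{-(m+n)}(x)\cup f^{-(m+n)}(y)$ by sending each separated point to the unique preimage of $x$ or $y$ in a chosen $(m+n)$-tile containing it; one then bounds $\exp(S_n\phi(z))\le C_4\exp(S_n\phi(i_n(z)))$ via Lemma~\ref{lmSnPhiBound}, inflates $S_n\phi$ to $S_{m+n}\phi$ at a cost of $e^{m\|\phi\|_\infty}$, and the resulting sum is $\RR_\phi^{m+n}(\mathbbm{1})(x)+\RR_\phi^{m+n}(\mathbbm{1})(y)$ directly, without ever comparing $(m+n)$-cells to $n$-cells. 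That route requires no divisibility hypothesis on the depth parameter, which is what makes it cleaner here.
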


Recall that $D_\phi = P_{m_\phi} = P_{\mu_\phi} = \log c = \log \int\! \RR_\phi(\mathbbm{1})\,\mathrm{d}m_\phi$, using the notation from Proposition~\ref{propMisGibbsState} and Theorem~\ref{thmMuExist}.

\begin{proof}
We pick a Jordan curve $\CC\subseteq S^2$ that satisfies the Assumptions (see Theorem~\ref{thmCexistsBM} for the existence of such $\CC$).

By Corollary~\ref{corLimitPxExists} and (\ref{eqR^nExpr}), for each $x\in S^2$, the limit in (\ref{eqTopPressureDefPreImg}) always exists and is equal to $D_\phi$, independent of $x$. Moreover, for an $f$-invariant Gibbs measure $\mu_\phi$ from Theorem~\ref{thmMuExist} with $P_{\mu_\phi} = D_\phi$, we get from Proposition~\ref{propInvGibbsIsEqlbStatePLessThanPressure} that
\begin{equation}
D_\phi = P_{\mu_\phi} \leq P(f,\phi).
\end{equation}

Now it suffices to prove $D_\phi \geq P(f,\phi)$.

Note that by Lemma~\ref{lmCellBoundsBM}(ii), there is a constant $C\geq 1$ depending only on $f$, $d$, and $\CC$ such that for each $n\in\N_0$ and each $n$-tile $X^n\in\X^n(f,\CC)$, we have $C^{-1}\Lambda^{-n}\leq \diam_d(X^n) \leq C\Lambda^{-n}$.

Fix $m\in\N$, let $\epsilon = C\Lambda^{-m}$. For each $n \in\N_0$, let $F_n(m)$ be a maximal $(n,\epsilon)$-separated subset of $S^2$.

We claim that if $y_1,y_2\in F_n(m)$ and $y_1,y_2\in X^{m+n}$ for some $(m+n)$-tile $X^{m+n}$ in $\X^{m+n}(f,\CC)$, then $y_1=y_2$.

Indeed, for each integer $j\in [0,n-1]$, we have that 
\begin{equation}
d(f^j(y_1),f^j(y_2)) \leq \diam_d(f^j(X^{m+n})) \leq C\Lambda^{-(m+n-j)} < \epsilon.
\end{equation}
So suppose that $y_1 \neq y_2$, then $y_1,y_2$ would not be $(n,\epsilon)$-separated, a contradiction.

We fix $x\in \inte (X_w^0)$ and $y\in \inte (X_b^0)$ where $X_w^0$ and $X_b^0$ are the white $0$-tile and black $0$-tile in $\X^0(f,\CC)$, respectively. We can now construct an injective map $i_n\:F_n(m) \rightarrow f^{-(m+n)}(x)\cup f^{-(m+n)}(y)$ for each $n\in \N$ by demanding that $z\in F_n(m)$ and $i_n(z)\in f^{-(m+n)}(x)\cup f^{-(m+n)}(y)$ be in the same $(m+n)$-tile. Since for each $X^{m+n} \in \X^{m+n}(f,\CC)$, $\card\(X^{m+n} \cap (f^{-(m+n)}(x) \cup f^{-(m+n)}(y)) \) = 1$, it follows that $i_n$ is well-defined (but not necessarily uniquely defined) for each $n\in\N$. Thus by Lemma~\ref{lmSnPhiBound} and Lemma~\ref{lmSigmaExpSnPhiBound}, we have that for each $n\in\N$,
\begin{align*}
& \sum\limits_{z\in F_n(m)} \exp (S_n\phi(z))  \\
\leq & C_4 \sum\limits_{z\in f^{-(m+n)}(x)\cup f^{-(m+n)}(y)} \exp (S_n\phi(z)) \\
\leq & C_4 e^{m \Norm{\phi}_{\infty}}\bigg( \sum\limits_{z\in f^{-(m+n)}(x)} \exp (S_{m+n}\phi(z)) + \sum\limits_{z\in f^{-(m+n)}(y)} \exp (S_{m+n}\phi(z)) \bigg)\\
\leq & C_4(1+C_2) \exp(m \Norm{\phi}_{\infty}) \sum\limits_{z\in f^{-(m+n)}(x)} \exp (S_{m+n}\phi(z)),
\end{align*}
where $C_4 = \exp \(C_1 \big(\diam_d(S^2)\big)^\alpha\)$, and $C_1,C_2$ are constants from Lemma~\ref{lmSnPhiBound} and Lemma~\ref{lmSigmaExpSnPhiBound}.  By taking logarithm and next dividing by $n$ on both sides, then taking $n\longrightarrow +\infty$ and finally taking $m\longrightarrow +\infty$ to make $\epsilon \longrightarrow 0$, we get from (\ref{defTopPressure}) that
\begin{align*}
P(f,\phi) & = \lim\limits_{m\to +\infty} \liminf_{n\to +\infty} \frac{1}{n} \log \sum\limits_{w\in F_n(m)} \exp (S_n\phi(w))  \\
          & \leq  \limsup\limits_{m\to +\infty} \liminf_{n\to +\infty} \frac{1}{n} \log \sum\limits_{z\in f^{-(m+n)}(x)} \exp (S_{m+n}\phi(z))  \\
          & = \limsup\limits_{m\to +\infty} \liminf_{n\to +\infty} \frac{1}{m+n} \log \sum\limits_{z\in f^{-(m+n)}(x)} \exp (S_{m+n}\phi(z))  \\
          & = \limsup\limits_{m\to +\infty} \liminf_{n\to +\infty} \frac{1}{n} \log \sum\limits_{z\in f^{-n}(x)} \exp (S_n\phi(z)) \\
          & = D_\phi,
\end{align*}
where the last equality follows from Corollary~\ref{corLimitPxExists}, (\ref{eqR^nExpr}), and the fact that $x\notin \post f$.
\end{proof}

The following corollary gives the existence part of Theorem~\ref{thmMain}.

\begin{cor}   \label{corExistES}
Let $f\:S^2 \rightarrow S^2$ be an expanding Thurston map and $d$ be a visual metric on $S^2$ for $f$. Let $\phi\in \Holder{\alpha}(S^2,d)$ be a real-valued H\"{o}lder continuous function with an exponent $\alpha\in(0,1]$. Then there exists an equilibrium state for $f$ and $\phi$. In fact, any measure $\mu_\phi$ defined in Theorem~\ref{thmMuExist} is an equilibrium state for $f$ and $\phi$.
\end{cor}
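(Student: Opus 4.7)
The plan is to combine the two propositions that immediately precede the corollary. First, one picks a Jordan curve $\CC \subseteq S^2$ with $\post f \subseteq \CC$ and $f^{n_\CC}(\CC) \subseteq \CC$ for some $n_\CC \in \N$; the existence of such a $\CC$ is guaranteed by Theorem~\ref{thmCexistsBM} (applied to $f$, producing an $f^n$-invariant Jordan curve for all sufficiently large $n$). Having fixed such a $\CC$, Theorem~\ref{thmMuExist} produces the Borel probability measure $\mu_\phi = u_\phi m_\phi$, which is an $f$-invariant Gibbs state with respect to $f$, $\CC$, and $\phi$, whose Gibbs constant satisfies $P_{\mu_\phi} = D_\phi$.

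The key observation is the sandwich
\[
D_\phi \;=\; P_{\mu_\phi} \;\leq\; h_{\mu_\phi}(f) + \int \! \phi \,\mathrm{d}\mu_\phi \;\leq\; P(f,\phi) \;=\; D_\phi,
\]
where the first inequality is the content of Proposition~\ref{propInvGibbsIsEqlbStatePLessThanPressure} applied to the $f$-invariant Gibbs state $\mu_\phi$, the second inequality is the Variational Principle \eqref{eqVPPressure}, and the final equality is Proposition~\ref{propTopPressureDefPreImg}. Since the two ends coincide, both middle inequalities must be equalities. In particular, $h_{\mu_\phi}(f) + \int \! \phi \,\mathrm{d}\mu_\phi = P(f,\phi)$, which by definition says that $\mu_\phi$ attains the supremum in the Variational Principle and is therefore an equilibrium state for $f$ and $\phi$.

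Since the corollary follows by merely assembling already-established results, there is no real obstacle to overcome; the argument is essentially a one-line deduction. The only minor care required is to ensure that the chosen $\CC$ satisfies the hypotheses of Theorem~\ref{thmMuExist}, i.e.\ that it is forward invariant under some iterate of $f$, which is precisely what Theorem~\ref{thmCexistsBM} provides. All the genuine work has already been done: the construction of the eigenmeasure $m_\phi$ with $\RR_\phi^*(m_\phi) = e^{D_\phi} m_\phi$ in Theorem~\ref{thmMexists}, the construction of the eigenfunction $u_\phi$ with $\RR_{\overline{\phi}}(u_\phi) = u_\phi$ and the verification that $\mu_\phi = u_\phi m_\phi$ is an $f$-invariant Gibbs state in Theorem~\ref{thmMuExist}, and the identification of the topological pressure with $D_\phi$ in Proposition~\ref{propTopPressureDefPreImg}.
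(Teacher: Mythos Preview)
Your proof is correct and follows essentially the same approach as the paper: pick a Jordan curve $\CC$ satisfying the Assumptions via Theorem~\ref{thmCexistsBM}, use Theorem~\ref{thmMuExist} to obtain the $f$-invariant Gibbs state $\mu_\phi$ with $P_{\mu_\phi}=D_\phi$, invoke Proposition~\ref{propTopPressureDefPreImg} for $D_\phi=P(f,\phi)$, and then apply Proposition~\ref{propInvGibbsIsEqlbStatePLessThanPressure} to force equality throughout. The only cosmetic difference is that you display the chain as a sandwich inequality, whereas the paper states the two equalities sequentially.
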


\begin{proof}
We pick a Jordan curve $\CC\subseteq S^2$ that satisfies the Assumptions (see Theorem~\ref{thmCexistsBM} for the existence of such $\CC$). Consider an $f$-invariant Gibbs state $\mu_\phi$ with respect to $f$, $\CC$, and $\phi$ from Theorem~\ref{thmMuExist}. Then by Theorem~\ref{thmMuExist} and Proposition~\ref{propTopPressureDefPreImg}, we have $P_{\mu_\phi} = D_\phi = P(f,\phi)$. Then by Proposition~\ref{propInvGibbsIsEqlbStatePLessThanPressure}, we have $P_{\mu_\phi} = h_{\mu_\phi} + \int \! \phi \,\mathrm{d}\mu_\phi = P(f,\phi)$. Therefore, $\mu_\phi$ is an equilibrium state for $f$ and $\phi$.
\end{proof}

We end this section by proving in Proposition~\ref{propRadialGibbsIFFGibbs} that the concept of a Gibbs state and that of a radial Gibbs state coincide if and only if the expanding Thurston map has no periodic critical point. The proof of the forward implication relies on the existence of Gibbs states for $f$, $\CC$, and $\phi$ that satisfy the Assumptions proved in Proposition~\ref{propMisGibbsState}.

\begin{prop}   \label{propRadialGibbsIFFGibbs}
Let $f$, $\CC$, $d$, $\phi$, $\alpha$ satisfy the Assumptions. Then a radial Gibbs state $\mu$ with respect to $f$, $d$, and $\phi$ must be a Gibbs state with respect to $f$, $\CC$, and $\phi$, with $\widetilde P_\mu = P_\mu$. Moreover, the following are equivalent:

\begin{enumerate}
\smallskip

\item $f$ has no periodic critical point.

\smallskip

\item A Borel probability measure $\mu\in\PPP(S^2)$ is a Gibbs state with respect to $f$, $\CC$, and $\phi$ if and only if it is a radial Gibbs state with respect to $f$, $d$, and $\phi$.

\smallskip

\item There exists a radial Gibbs state for $f$, $d$, and $\phi$.
\end{enumerate}
\end{prop}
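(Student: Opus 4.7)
My plan is to first establish the initial claim that every radial Gibbs state is a Gibbs state, then prove the chain of implications $(1)\Rightarrow(2)\Rightarrow(3)\Rightarrow(1)$, using the existence of a Gibbs state from Proposition~\ref{propMisGibbsState} to close the loop. For the initial claim, I would use Lemma~\ref{lmCellBoundsBM}(v) to select, for each $n$-tile $X^n$, a point $p\in X^n$ with $B_d(p,C^{-1}\Lambda^{-n})\subseteq X^n\subseteq B_d(p,C\Lambda^{-n})$. Choosing fixed integers $n_1,n_2$ so that $\Lambda^{-(n+n_1)}\le C^{-1}\Lambda^{-n}$ and $C\Lambda^{-n}\le\Lambda^{-(n-n_2)}$ (handling the finitely many small $n$ by a constant adjustment), I apply the radial Gibbs inequality \eqref{eqRadialGibbsState} to the two enveloping balls, replace $S_n\phi(p)$ by $S_n\phi(x)$ for arbitrary $x\in X^n$ via Lemma~\ref{lmSnPhiBound}, and absorb the bounded shifts $\Abs{S_{n\pm n_i}\phi(p)-S_n\phi(p)}\le (n_1\vee n_2)\Norm{\phi}_\infty$ together with $\pm n_i\widetilde P_\mu$ into multiplicative constants. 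This yields the Gibbs inequality \eqref{eqGibbsState} with $P_\mu=\widetilde P_\mu$.

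For $(1)\Rightarrow(2)$ the key step is a uniform bound: when $f$ has no periodic critical points, every critical point is strictly preperiodic and no $f|_{\post f}$-cycle meets $\crit f$, so since $\post f$ is finite, every forward orbit visits $\crit f$ at most a uniformly bounded number of times; hence there exists $N_0\in\N$ with $\deg_{f^n}(y)\le N_0$ for all $y\in S^2$ and $n\in\N_0$. By \cite[Lemma~7.2(i)]{BM10}, each $n$-vertex $v$ lies in exactly $2\deg_{f^n}(v)$ $n$-tiles, so the number of $n$-tiles in $U^n(x)$ is bounded by a constant $N_1$ independent of $n$ and $x$. Given a Gibbs state $\mu$, Lemma~\ref{lmCellBoundsBM}(iv) yields $U^{n+n_0}(x)\subseteq B_d(x,\Lambda^{-n})\subseteq U^{n-n_0}(x)$. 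Subadditivity and the Gibbs bound applied to each of the at most $N_1$ tiles in $U^{n-n_0}(x)$ — with Lemma~\ref{lmSnPhiBound} applied twice through a common point with a tile containing $x$, to control $\Abs{S_{n-n_0}\phi(y)-S_{n-n_0}\phi(x)}$ uniformly — yield the upper bound, while the lower bound follows from $\mu(U^{n+n_0}(x))\ge\mu(X^{n+n_0})$ for any single $(n+n_0)$-tile containing $x$. Absorbing the $n_0$-shifts into constants shows $\mu$ is a radial Gibbs state with $\widetilde P_\mu=P_\mu$.

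The implication $(2)\Rightarrow(3)$ is immediate, since Proposition~\ref{propMisGibbsState} produces a Gibbs state which under $(2)$ is also a radial Gibbs state. For $(3)\Rightarrow(1)$ I argue by contradiction: suppose a radial Gibbs state $\mu$ exists and $c$ is a periodic critical point of period $p$ with $d=\deg_f(c)\ge 2$. By the initial claim $\mu$ is a Gibbs state with $P_\mu=\widetilde P_\mu$. Since $c\in\V^{np}$, \cite[Lemma~7.2(i)]{BM10} implies that $\overline{W^{np}(c)}$ is a union of exactly $2\deg_{f^{np}}(c)=2d^n$ distinct $np$-tiles. Applying Lemma~\ref{lmTileInIntTile}, inside each such tile $T$ I select an $(np+M)$-tile $T'\subseteq\inte(T)$ for a constant $M$ independent of $n$; the resulting $2d^n$ tiles are pairwise disjoint. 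The Gibbs lower bound on each $T'$, together with Lemma~\ref{lmSnPhiBound} and $\Norm{\phi}_\infty<+\infty$ to compare $S_{np+M}\phi(y)$ for $y\in T'\subseteq T$ with $S_{np}\phi(c)$ (since $c\in T$), yields
\begin{equation*}
\mu\bigl(\overline{W^{np}(c)}\bigr)\ge A\,d^n\exp\bigl(S_{np}\phi(c)-np\,P_\mu\bigr)
\end{equation*}
for some constant $A>0$ independent of $n$. On the other hand, the diameter bound in Lemma~\ref{lmCellBoundsBM}(ii) gives $\overline{W^{np}(c)}\subseteq B_d(c,\Lambda^{-(np-n')})$ for some integer $n'$ independent of $n$, so the radial Gibbs upper bound forces $\mu(\overline{W^{np}(c)})\le B\exp(S_{np}\phi(c)-np\,P_\mu)$ for some constant $B$. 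Comparing, $d^n\le B/A$ for every $n$, contradicting $d\ge 2$.

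The main obstacle is bookkeeping rather than conceptual: one must carefully track the several small integer shifts $n_0,n_1,n_2,M,n'$ arising when translating among balls, flowers, tiles, and $U^n$-neighborhoods, and match them with the corresponding shifts in $S_n\phi$ and $nP_\mu$ so that all errors collapse into uniform multiplicative constants. The only genuinely subtle ingredient is the uniform bound on $\deg_{f^n}(\cdot)$ in $(1)\Rightarrow(2)$, which exploits that every critical orbit lands in a critical-free cycle in bounded time; this is precisely where the absence of periodic critical points is essential.
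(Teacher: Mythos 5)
Your proposal is correct and follows essentially the same route as the paper: first the preliminary claim via Lemma~\ref{lmCellBoundsBM}(v), then $(1)\Rightarrow(2)$ using the uniform bound on $\deg_{f^n}$ (which the paper simply cites from \cite[Lemma~17.1]{BM10}) to control the number of $n$-tiles covering $U^n(x)$, then $(2)\Rightarrow(3)$ via $m_\phi$, and finally $(3)\Rightarrow(1)$ by the flower-splitting contradiction. One minor slip: you write $2\deg_{f^{np}}(c)=2d^n$ with $d=\deg_f(c)$, but in fact $\deg_{f^{np}}(c)=(\deg_{f^p}(c))^n$ and $\deg_{f^p}(c)=\prod_{i=0}^{p-1}\deg_f(f^i(c))\ge d$, so the correct count is $2(\deg_{f^p}(c))^n$; since this is $\ge 2d^n\ge 2\cdot 2^n$, your lower bound and the ensuing contradiction go through unchanged.
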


The implication from (1) to (2) generalizes Proposition 20.10 in \cite{BM10}, which states that for an expanding Thurston map $f$ with no periodic critical point and with the measure of maximal entropy $\mu$ and a visual metric $d$, the metric measure space $(S^2, d, \mu)$ is Ahlfors regular.

\begin{proof}
By Lemma~\ref{lmCellBoundsBM}(v), there exists a constant $C\geq 1$ such that for each $n\in\N_0$, and each $n$-tile $X^n\in\X^n$, there exists a point $p\in X^n$ with
\begin{equation*}
B_d(p,C^{-1}\Lambda^{-n}) \subseteq X^n \subseteq B_d(p,C\Lambda^{-n}).
\end{equation*}
Thus there exists $m_1\in\N$ such that for each $n\in\N_0$, each $X^n\in\X^n$, there exists $p\in X^n$ such that 
\begin{equation}
B_d\(p,\Lambda^{-(n+m_1)}\) \subseteq X^n \subseteq B_d \(p,\Lambda^{-(n-m_1)}\).
\end{equation}
On the other hand, by Lemma~\ref{lmCellBoundsBM}(iv), there exists $m_2\in \N$ such that for each $x\in S^2$ and each $n\in \N_0$, we have
\begin{equation}  \label{eqPfRadialGibbsIFFGibbs1}
U^{n+m_2} (x) \subseteq  B_d(x,\Lambda^{-n})  \subseteq U^{n-m_2} (x),
\end{equation}
where the sets $U^l(x)$ for $l\in\N_0$ and $x\in S^2$ are defined in (\ref{defU^n}).

Note that for each $n\in\N_0$ and each $y\in U^n (x)$,  by choosing $z\in Y^n\cap X^n$ with $X^n,Y^n\in\X^n$ and $x\in X^n$, $y\in Y^n$, and applying Lemma~\ref{lmSnPhiBound}, we get
\begin{align*}
\abs{S_n\phi(x) - S_n\phi(y)} & \leq \abs{S_n\phi(x) - S_n\phi(z)} + \abs{S_n\phi(z) - S_n\phi(y)} \\
                              & \leq 2C_1 \(\diam_d(S^2)\)^\alpha,
\end{align*}
where $C_1$ is a constant from Lemma~\ref{lmSnPhiBound}.

\smallskip

If $\mu$ is a radial Gibbs state with constants $\widetilde{P}_\mu$ and $\widetilde{C}_\mu$, then for each $n\in\N_0$ and each $n$-tile $X^n\in\X^n$, there exists $p\in X^n$ such that
\begin{align*}
\mu(X^n)  & \leq \mu\( B_d\(p,\Lambda^{-(n-m_1)}\) \)   \\
          & \leq \widetilde{C}_\mu  \exp \big(S_{n-m_1} \phi(x)  - (n-m_1)\widetilde{P}_\mu \big) \\
          & \leq \widetilde{C}_\mu  \exp \big(m_1\Norm{\phi}_\infty + m_1 \widetilde{P}_\mu\big) \exp \big(S_{n} \phi(x)  - n\widetilde{P}_\mu \big),
\end{align*}
and
\begin{align*}
\mu(X^n)  & \geq \mu\( B_d\(p,\Lambda^{-(n+m_1)}\) \)   \\
          & \geq \frac{1}{\widetilde{C}_\mu}  \exp \big(S_{n+m_1} \phi(x)  - (n+m_1)\widetilde{P}_\mu \big) \\
          & \geq \frac{1}{\widetilde{C}_\mu \exp \big(m_1\Norm{\phi}_\infty + m_1 \widetilde{P}_\mu\big)}   \exp \big(S_{n} \phi(x)  - n\widetilde{P}_\mu \big).
\end{align*}
Thus $\mu$ is a Gibbs state for $f$, $\CC$, and $\phi$, with $P_\mu = \widetilde P_\mu$.

\smallskip
To prove the equivalence, we start with the implication from (1) to (2).

We have already shown above that any radial Gibbs state for $f$, $d$, and $\phi$ must be a Gibbs state for $f$, $\CC$, and $\phi$.

If we assume that $f$ has no periodic critical point, then there exists a constant $K\in\N$ such that for each $x\in S^2$ and each $n\in\N_0$, the set $U^n(x)$ is a union of at most $K$ distinct $n$-tiles, i.e.,
\begin{align*}
\card \{Y^n\in\X^n  \,|\,  & \text{there exists an $n$-tile } X^n\in \X^n \text{ with } \\
                           & x\in X^n \text{ and } X^n \cap Y^n \neq \emptyset\} \leq K.
\end{align*}
Indeed, if $f$ has no periodic critical point, then there exists a constant $N\in\N$ such that $\deg_{f^n}(x)\leq N$ for all $x\in S^2$ and all $n\in\N$ (\cite[Lemma~17.1]{BM10}). Since each $n$-flower $W^n(p)$ for $p\in\V^n$ is covered by exactly $2\deg_{f^n}(p)$ distinct $n$-tiles (\cite[Lemma~7.2(i)]{BM10}), $U^n(x)$ is covered by a bounded number of $n$-flowers and thus covered by a bounded number, independent of $x\in S^2$ and $n\in\N_0$, of distinct $n$-tiles.

\smallskip

If $\mu$ is a Gibbs state with constants $P_\mu$ and $C_\mu$, then by (\ref{eqPfRadialGibbsIFFGibbs1}) and Lemma~\ref{lmSnPhiBound}, for each $n\in\N_0$ and each $x\in S^2$, we have
\begin{align*}
        \mu\big( B_d(x,\Lambda^{-n}) \big) \geq &  \mu\(U^{n+m_2} (x)\) \geq C_\mu^{-1} \exp( S_{n+m_2} \phi(x) - (n+m_2) P_\mu    )\\
\geq &  \frac{1}{C_\mu \exp( m_2 \Norm{\phi}_\infty + m_2P_\mu)}   \exp (S_n\phi(x) - nP_\mu),
\end{align*}
and moreover, if $n\geq m_2$, then
\begin{align*} 
 & \mu\big( B_d (x,\Lambda^{-n}) \big)  \leq  \mu\(U^{n-m_2} (x)\) \leq \sum\limits_{\substack{X\in\X^{n-m_2} \\  X\subseteq U^{n-m_2}(x)}}  \mu(X)   \\
\leq &  K C_\mu \exp\(2C_1\(\diam_d(S^2)\)^\alpha\) \exp( S_{n-m_2} \phi(x) - (n-m_2) P_\mu    )\\
\leq &  K C_\mu \exp\(2C_1\(\diam_d(S^2)\)^\alpha + m_2 \(\Norm{\phi}_\infty + P_\mu\)\)  \exp (S_n\phi(x) - nP_\mu),
\end{align*}
and if $n < m_2$, then
\begin{equation*}
\mu( B_d (x,\Lambda^{-n}))  \leq 1 \leq \exp\( m_2(\Norm{\phi}_\infty + P_\mu ) \)  \exp(S_n\phi(x) - n P_\mu).
\end{equation*}
Thus $\mu$ is a radial Gibbs state for $f$, $d$, and $\phi$.

\smallskip

Next, we show that (2) implies (3). 

We assume (2) now. Let $\mu= m_\phi$, where $m_\phi$ is from Theorem~\ref{thmMexists}. Then by Proposition~\ref{propMisGibbsState}, $\mu$ is a Gibbs state for $f$, $\CC$, and $\phi$. Thus $\mu$ is also a radial Gibbs state for $f$, $d$, and $\phi$.

\smallskip

Finally, we prove the implication from (3) to (1) by contradiction.

Assume that $f$ has a periodic critical point $x\in S^2$ with a period $l\in\N$, and let $\mu$ be a radial Gibbs state for $f$, $d$, and $\phi$ with constants $\widetilde{P}_\mu$ and $\widetilde{C}_\mu$. So $\mu$ is also a Gibbs state for $f$, $\CC$, and $\phi$ with constants $P_\mu = \widetilde{P}_\mu$ and $C_\mu$, as shown in the first part of the proof.

We note that $x\in\post f \subseteq \V^n$ for each $n\in\N_0$. By (\ref{defFlower}), (\ref{defU^n}), and (\ref{eqPfRadialGibbsIFFGibbs1}), for each $n\in\N$,
\begin{equation*}
\overline{W^{nl+m_2} (x)} \subseteq  U^{nl+m_2} (x)  \subseteq B_d (x,\Lambda^{-nl}).
\end{equation*}
Recall that the number of distinct $(nl+m_2)$-tiles contained in $\overline{W^{nl+m_2}(x)}$ is $2\deg_{f^{nl+m_2}}(x)$. Denote these $(nl+m_2)$-tiles by $X^{nl+m_2}_i \in\X^{nl+m_2}$, $i\in\big\{1,2,\dots,2\deg_{f^{nl+m_2}}(x)\big\}$. Then by Lemma~\ref{lmTileInIntTile}, there exists an $(nl+m_2+M)$-tile $Y_i\in\X^{nl+m_2+M}$ such that $Y_i\subseteq \inte \(X^{nl+m_2}_i\)$. Here $M\in\N$ is a constant from Lemma~\ref{lmTileInIntTile}. We fix $x_i\in Y_i$ for each $i\in\big\{1,2,\dots,2\deg_{f^{nl+m_2}}(x)\big\}$. Note that $Y_i \cap Y_j = \emptyset$ for $1\leq i<j\leq 2\deg_{f^{nl+m_2}}(x)$. 
Thus
\begin{align*}
     & \widetilde{C}_\mu \exp (S_{nl} \phi (x)  - nlP_\mu)
\geq  \mu\big(B_d(x,\Lambda^{-nl}) \big)  \\
\geq & \mu\Big( \overline{W^{nl+m_2} (x)} \Big)  
\geq  \sum\limits_{i=1}^{2\deg_{f^{nl+m_2}}(x)} \mu(Y_i)\\
\geq & 2\deg_{f^{nl+m_2}} (x) \frac{1}{C_\mu}  \exp \( S_{nl+m_2+M} \phi(x_i) - (nl+m_2+M)P_\mu \)  \\
\geq & \frac{2 \(\deg_{f^l}(x) \)^n}{C_\mu \exp(M\Norm{\phi}_\infty + MP_\mu)}  \exp (S_{nl+m_2}\phi(x_i) - (nl+m_2)P_\mu)\\
\geq & \frac{2 \(\deg_{f^l}(x) \)^n \exp (S_{nl+m_2}\phi(x) - (nl+m_2)P_\mu)}{C_\mu \exp\(M\Norm{\phi}_\infty + MP_\mu + C_1 \(\diam_d(S^2)\)^\alpha\)}  \\
\geq & \frac{ \(\deg_{f^l}(x) \)^n \exp (S_{nl}\phi(x) -  nl P_\mu)}{C_\mu \exp\((m_2+M)\Norm{\phi}_\infty + (m_2+M)P_\mu + C_1 \(\diam_d(S^2)\)^\alpha\)},
\end{align*}
where the second-to-last inequality follows from Lemma~\ref{lmSnPhiBound} and the fact that $x_i,x \in X^{nl+m_2}_i$ for $i\in\big\{1,2,\dots,2\deg_{f^{nl+m_2}}(x)\big\}$, and $C_1$ is a constant from Lemma~\ref{lmSnPhiBound}. So 
$$
\(\deg_{f^l}(x) \)^n \leq  \widetilde{C}_\mu C_\mu \exp\((m_2+M)(\Norm{\phi}_\infty + P_\mu) + C_1 \(\diam_d(S^2)\)^\alpha\),
$$
for each $n\in\N$. However, since $x$ is a periodic critical point of $f$, we have $\deg_{f^l}(x)> 1$, a contradiction.
\end{proof}

As an immediate consequence, we get that if the expanding Thurston map does not have periodic critical points, then the property of being a Gibbs state does not depend on the choice of the Jordan curve $\CC\subseteq S^2$ that satisfies the Assumptions.

\begin{cor}    \label{corGibbsIndepCurves}
Let $f$, $d$, $\phi$, $\alpha$ satisfy the Assumptions. We assume that $f$ does not have periodic critical points. Let $\CC_1$ and $\CC_2$ be Jordan curves on $S^2$ that satisfy the Assumptions for $\CC$, and $\mu\in\PPP(S^2)$ be a Borel probability measure. Then $\mu$ is a Gibbs state with respect to $f$, $\CCC_1$, and $\phi$ if and only if $\mu$ is a Gibbs state with respect to $f$, $\CCC_2$, and $\phi$.
\end{cor}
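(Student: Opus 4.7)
The plan is to deduce this corollary directly from Proposition~\ref{propRadialGibbsIFFGibbs}, exploiting the key observation that the notion of a radial Gibbs state depends only on $f$, $d$, and $\phi$ and involves no Jordan curve at all. Thus if we can route both Gibbs-state conditions through the radial Gibbs condition, the $\CC$-dependence drops out.

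Concretely, first I would observe that $\CC_1$ and $\CC_2$, together with $f$, $d$, $\phi$, $\alpha$ from the hypothesis, each satisfy the Assumptions as written. Hence Proposition~\ref{propRadialGibbsIFFGibbs} is applicable with $\CC = \CC_1$ and again with $\CC = \CC_2$. Since by hypothesis $f$ has no periodic critical points, the equivalence (1) $\Longleftrightarrow$ (2) in that proposition ensures: a measure $\mu\in\PPP(S^2)$ is a Gibbs state with respect to $f$, $\CC_1$, $\phi$ if and only if it is a radial Gibbs state with respect to $f$, $d$, $\phi$; and likewise for $\CC_2$.

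Chaining these two equivalences yields that $\mu$ is a Gibbs state for $f$, $\CC_1$, $\phi$ if and only if $\mu$ is a Gibbs state for $f$, $\CC_2$, $\phi$, completing the proof. There is no genuine obstacle here, as all of the work has been absorbed into Proposition~\ref{propRadialGibbsIFFGibbs}; the only thing worth double-checking is that the constants $\widetilde{P}_\mu$ and $P_\mu$ appearing in the two directions of that proposition agree (they do, since the proof of Proposition~\ref{propRadialGibbsIFFGibbs} shows $\widetilde{P}_\mu = P_\mu$), so that the same scalar $P_\mu$ serves both cell-decomposition pictures. This also makes clear, as a by-product, that the pressure constant $P_\mu$ associated with a Gibbs state in the sense of Definition~\ref{defGibbsState} is intrinsic to $(f, d, \phi)$ and independent of $\CC$.
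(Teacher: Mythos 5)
Your proof is correct and follows essentially the same approach as the paper: both deduce the corollary by chaining the equivalence (1) $\Longleftrightarrow$ (2) of Proposition~\ref{propRadialGibbsIFFGibbs} applied with $\CC = \CC_1$ and then with $\CC = \CC_2$, using the radial Gibbs condition (which is $\CC$-free) as the intermediary. The extra remark about $\widetilde{P}_\mu = P_\mu$ is a sensible sanity check but is not needed beyond what the proposition already provides.
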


\begin{proof}
By Proposition~\ref{propRadialGibbsIFFGibbs}, since $f$ does not have periodic critical points, $f$ is a Gibbs state with respect to $f$, $\CC_1$, and $\phi$ if and only if $f$ is a radial Gibbs state with respect to $f$, $d$, and $\phi$ if and only if $f$ is a Gibbs state with respect to $f$, $\CC_2$, and $\phi$.
\end{proof}

\section{Uniqueness}  \label{sctUniqueness}
To prove the uniqueness of the equilibrium state of a continuous map $g$ on a compact metric space $X$, one of the techniques is to prove the (G\^ateaux) differentiability of the topological pressure function $P(g,\cdot)\: \CCC(X)\rightarrow \R$. We summarize the general ideas below, but refer the reader to \cite[Section~3.6]{PU10} for a detailed treatment in the case of forward-expansive maps and distance expanding maps.

For a continuous map $g\:X\rightarrow X$ on a compact metric space $X$, the topological pressure function $P(g,\cdot):\CCC(X)\rightarrow \R$ is Lipschitz continuous (\cite[Theorem~3.6.1]{PU10}) and convex (\cite[Theorem~3.6.2]{PU10}). For an arbitrary convex continuous function $Q\:V\rightarrow \R$ on a real topological vector space $V$, we call a continuous linear functional $L\:V\rightarrow\R$ \defn{tangent to $Q$ at $x\in V$} if 
\begin{equation}  \label{eqDefTangent}
Q(x) + L(y) \leq Q(x+y),  \qquad \text{for each } y\in V.
\end{equation}
We denote the set of all continuous linear functionals tangent to $Q$ at $x\in V$ by $V_{x,Q}^*$. It is known (see for example, \cite[Proposition~3.6.6]{PU10}) that if $\mu\in \MMM(X,g)$ is an equilibrium state for $g$ and $\phi\in\CCC(X)$, then the continuous linear functional $u\longmapsto \int\!u\,\mathrm{d}\mu$ for $u\in\CCC(X)$ is tangent to the topological pressure function $P(g,\cdot)$ at $\phi$. Indeed, let $\phi,\gamma\in\CCC(X)$ and $\mu\in\MMM(X,g)$ be an equilibrium state for $g$ and $\phi$. Then $P(g,\phi+\gamma) \geq h_\mu(g) +\int\! \phi +\gamma \,\mathrm{d}\mu$ by the Variational Principle (\ref{eqVPPressure}), and $P(g,\phi)=h_\mu(g) +\int\! \phi \,\mathrm{d}\mu$. It follows that $P(g,\phi) + \int \! \gamma \,\mathrm{d}\mu \leq P(g,\phi+\gamma)$. 

Thus in order to prove the uniqueness of the equilibrium state for an expanding Thurston map $f\: S^2\rightarrow S^2$ and a real-valued H\"older continuous potential $\phi$, it suffices to prove that $\card\big(V_{\phi,P(f,\cdot)}^* \big) = 1 $. Then we can apply the following fact from functional analysis (see \cite[Theorem~3.6.5]{PU10} for a proof):

\begin{theorem}  \label{thmUniqueTangent}
Let $V$ be a separable Banach space and $Q\: V\rightarrow\R$ be a convex continuous function. Then for each $x\in V$, the following statements are equivalent:
\begin{enumerate}
\smallskip

\item $\card \(V_{x,Q}^*\) = 1$.

\smallskip

\item The function $t\longmapsto Q(x+ty)$ is differentiable at $0$ for each $y\in V$.

\smallskip

\item There exists a subset $U\subseteq V$ that is dense in the weak topology on $V$ such that the function $t\longmapsto Q(x+ty)$ is differentiable at $0$ for each $y\in U$.
\end{enumerate}

\end{theorem}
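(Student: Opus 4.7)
The plan is to reduce the equivalence to a study of the one-sided directional derivatives of $Q$, and to use the Hahn--Banach theorem to link those derivatives to the set $V^*_{x,Q}$ of tangent functionals. For a convex continuous function $Q\: V \rightarrow \R$ and any $x, y \in V$, the map $t\longmapsto \frac{Q(x+ty)-Q(x)}{t}$ is nondecreasing in $t > 0$ and bounded below, so the right derivative $Q'_+(x;y) = \lim_{t\to 0^+} \frac{Q(x+ty)-Q(x)}{t}$ exists and is finite. Similarly the left derivative $Q'_-(x;y) = -Q'_+(x;-y)$ exists, and convexity yields $Q'_-(x;y) \leq Q'_+(x;y)$. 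Differentiability of $t\longmapsto Q(x+ty)$ at $0$ is exactly the statement that these two values coincide.

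The first step is to establish the identity $V^*_{x,Q} = \{L \in V^* : Q'_-(x;y) \leq L(y) \leq Q'_+(x;y) \text{ for all } y \in V\}$. One direction follows by substituting $\pm ty$ into $Q(x) + L(\pm ty) \leq Q(x \pm ty)$, dividing by $t > 0$, and passing to the limit; the other direction follows by reversing the argument together with the convexity inequality $Q(x+y) - Q(x) \geq Q'_+(x;y)$. From this description, (2) immediately implies (1): when $Q'_-(x;y) = Q'_+(x;y)$ for every $y$, the value $L(y)$ is forced for every tangent $L$, so $V^*_{x,Q}$ has at most one element. Existence of at least one tangent functional follows from Hahn--Banach applied to the sublinear functional $p(y) = Q'_+(x;y)$. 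Conversely, if $Q'_-(x;y_0) < Q'_+(x;y_0)$ for some $y_0$, then Hahn--Banach lets me extend the linear functional defined on $\R y_0$ by $y_0 \mapsto Q'_+(x;y_0)$ (respectively $y_0 \mapsto Q'_-(x;y_0)$) to an element of $V^*$ dominated by $p$; the two resulting extensions are tangent to $Q$ at $x$ and disagree at $y_0$, proving (1) $\Rightarrow$ (2). The implication (2) $\Rightarrow$ (3) is trivial with $U = V$.

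For (3) $\Rightarrow$ (1), suppose $L_1, L_2 \in V^*_{x,Q}$. For every $y \in U$ the function $t \longmapsto Q(x+ty)$ is differentiable at $0$, so by the characterization above we have $L_1(y) = L_2(y) = Q'_+(x;y) = Q'_-(x;y)$. Thus the continuous linear functional $L_1 - L_2$ vanishes on $U$. Since every element of $V^*$ is continuous with respect to the weak topology on $V$ (by the very definition of the weak topology), $L_1 - L_2$ vanishes on the weak closure of $U$, which is all of $V$. Hence $L_1 = L_2$, and $\card(V^*_{x,Q}) = 1$.

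The main obstacle is the implication (1) $\Rightarrow$ (2), where one must manufacture two distinct tangent functionals out of a single direction $y_0$ in which $Q$ fails to be differentiable. The Hahn--Banach extension step is delicate because one must verify that $p(y) = Q'_+(x;y)$ is genuinely sublinear (subadditivity comes from convexity of $Q$, and positive homogeneity from the definition) and finite everywhere (which follows from continuity of $Q$, giving a local Lipschitz bound). Separability of $V$ is not strictly needed for the argument above; it enters only tangentially, for instance if one wishes to describe $V^*_{x,Q}$ via a sequential density argument, but the weak-density hypothesis in (3) already suffices for the proof sketched here.
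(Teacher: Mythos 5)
The paper cites \cite[Theorem~3.6.5]{PU10} for this result rather than proving it, and your argument is a correct and standard proof: you characterize $V^*_{x,Q}$ via the one-sided directional derivatives $Q'_\pm(x;\cdot)$, produce tangent functionals (and, in any non-differentiable direction $y_0$, two distinct ones) by Hahn--Banach applied to the sublinear majorant $p=Q'_+(x;\cdot)$, and use weak continuity of elements of $V^*$ together with weak density of $U$ to force uniqueness. Your remark that separability of $V$ plays no role in this argument is also correct.
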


Now the problem of the uniqueness of equilibrium states transforms to the problem of (G\^ateaux) differentiability of the topological pressure function. To investigate the latter, we need a closer study of the fine properties of the Ruelle operator $\RR_\phi$.

A function $h\: [0,+\infty)\rightarrow [0,+\infty)$ is an \defn{abstract modulus of continuity} if it is continuous at $0$, non-decreasing, and $h(0)=0$. Given any metric $d$ on $S^2$ that generates the standard topology, any constant $b \in [0,+\infty]$, and any abstract modulus of continuity $h$, we define the subclass $\CCC_h^b(S^2, d)$ of $\CCC(S^2)$ as
\begin{align*}
C_h^b(S^2,d)=\{u\in\CCC(S^2)\,|\, & \Norm{u}_{\infty}\leq b \text{ and for }x,y\in S^2, \\
                                & \Abs{u(x)-u(y)}\leq h(d(x,y))\}.
\end{align*}
Note that by the Arzel\`a-Ascoli Theorem, each $\CCC_h^b(S^2,d)$ is precompact in $\CCC(S^2)$ equipped with the uniform norm. It is easy to see that each $\CCC_h^b(S^2,d)$ is actually compact. On the other hand, for $u\in\CCC(S^2)$, we can define an abstract modulus of continuity by
\begin{equation}    \label{eqAbsModContForU}
h(t)= \sup \{\abs{u(x)-u(y)} \,|\, x,y \in S^2, d(x,y) \leq t\}
\end{equation}
for $t\in [0,+\infty)$, so that $u\in \CCC_h^{\beta} (S^2,d)$, where $\beta=\Norm{u}_\infty$.

We will need the following lemma in this section.

\begin{lemma}  \label{lmChbChbSubsetChB}
Let $(X,d)$ be a metric space. For each pair of constants $b_1,b_2 \geq 0$ and each pair of abstract moduli of continuity $h_1,h_2$, there exists a constant $b\geq 0$ and an abstract modulus of continuity $h$ such that
\begin{equation}
\big\{u_1 u_2 \,\big|\, u_1 \in \CCC_{h_1}^{b_1} (X,d), u_2 \in \CCC_{h_2}^{b_2} (X,d) \big\}  \subseteq \CCC_h^b (X,d),
\end{equation}
and for each $c>0$,
\begin{equation}
\Big\{ \frac{1}{u} \,\Big|\,  u\in \CCC_{h_1}^{b_1} (X,d), u(x) \geq c \text{ for each } x\in X\Big\}  \subseteq \CCC_{c^{-2} h_1}^{c^{-1}}(X,d).
\end{equation}
\end{lemma}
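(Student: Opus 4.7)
The plan is to establish both inclusions by direct computation, using the triangle inequality and the elementary algebraic identities for products and reciprocals, and then check that the resulting bounds package into a valid pair $(b,h)$ with $h$ an abstract modulus of continuity.

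For the first inclusion, given $u_1\in\CCC_{h_1}^{b_1}(X,d)$ and $u_2\in\CCC_{h_2}^{b_2}(X,d)$, I would first note that $\Norm{u_1 u_2}_\infty\leq\Norm{u_1}_\infty\Norm{u_2}_\infty\leq b_1 b_2$. For the oscillation bound, I would apply the standard identity
\begin{equation*}
u_1(x)u_2(x)-u_1(y)u_2(y)=u_1(x)\bigl(u_2(x)-u_2(y)\bigr)+u_2(y)\bigl(u_1(x)-u_1(y)\bigr),
\end{equation*}
take absolute values, and use the sup and modulus bounds for $u_1,u_2$ to obtain
\begin{equation*}
\abs{u_1(x)u_2(x)-u_1(y)u_2(y)}\leq b_1 h_2(d(x,y))+b_2 h_1(d(x,y)).
\end{equation*}
Then I would set $b=b_1 b_2$ and $h(t)=b_1 h_2(t)+b_2 h_1(t)$, and verify that $h$ is an abstract modulus of continuity: it is non-decreasing and continuous at $0$ as a non-negative linear combination of non-decreasing functions continuous at $0$, and $h(0)=b_1 h_2(0)+b_2 h_1(0)=0$.

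For the second inclusion, given $u\in\CCC_{h_1}^{b_1}(X,d)$ with $u(x)\geq c$ for all $x\in X$, I would observe immediately that $\Norm{1/u}_\infty\leq 1/c$, and then compute
\begin{equation*}
\left|\frac{1}{u(x)}-\frac{1}{u(y)}\right|=\frac{\abs{u(y)-u(x)}}{u(x)u(y)}\leq\frac{h_1(d(x,y))}{c^2},
\end{equation*}
which directly yields $1/u\in\CCC_{c^{-2}h_1}^{c^{-1}}(X,d)$, noting that $c^{-2}h_1$ is again an abstract modulus of continuity (as a positive scalar multiple of one).

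There is no real obstacle here: the statement is a bookkeeping lemma that repackages the elementary product-rule and quotient-rule estimates into the language of the precompact classes $\CCC_h^b(X,d)$ defined earlier, in a form convenient for later applications to the Ruelle operator.
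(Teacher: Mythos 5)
Your proof is correct and follows exactly the same route as the paper's: the product-rule telescoping identity for the first inclusion and the direct quotient estimate for the second, yielding $b=b_1b_2$ and $h(t)=b_1h_2(t)+b_2h_1(t)$. The paper omits the explicit verification that $h$ is an abstract modulus of continuity, which you sensibly include.
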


\begin{proof}
For $u_1 \in \CCC_{h_1}^{b_1} (X,d), u_2 \in \CCC_{h_2}^{b_2} (X,d)$, we have $\Norm{u_1 u_2}_{\infty} \leq b_1 b_2$, and for $x,y\in X$,
\begin{align*}
     & \Abs{u_1(x)u_2(x)- u_1(y)u_2(y)}  \\
\leq & \Abs{u_1(x)}\Abs{u_2(x)-u_2(y)} + \Abs{u_2(y)}\Abs{u_1(x)-u_1(y)}  \\
\leq & b_1 h_2(d(x,y)) + b_2 h_1(d(x,y)).
\end{align*}
For $c>0$ and $u \in \CCC_{h_1}^{b_1} (X,d)$ with $u(x) \geq c$ for each $x\in X$, we have $\Norm{\frac{1}{u}}_{\infty} \leq \frac{1}{c}$, and for $x,y\in X$,
\begin{equation*}
 \Abs{\frac{1}{u(x)} - \frac{1}{u(y)}} 
\leq \Abs{\frac{u(x)-u(y)}{u(x)u(y)}} 
\leq \frac{1}{c^2} h_1(d(x,y)) .
\end{equation*}
\end{proof}

\smallskip

Let $f$, $d$, $\phi$, $\alpha$ satisfy the Assumptions. Recall that by (\ref{eqDefPhi-}) and Proposition~\ref{propTopPressureDefPreImg}, 
\begin{equation*}
\overline\phi=\phi-P(f,\phi).
\end{equation*}
We define the function 
\begin{equation}   \label{eqDefPhiTilde}
 \widetilde{\phi}= \phi -P(f,\phi) + \log u_\phi -\log (u_\phi\circ f),
\end{equation}
where $u_\phi$ is the continuous function given by Theorem~\ref{thmMuExist}. Then for $u\in\CCC(S^2)$ and $x\in S^2$, we have 
\begin{align*}
 & \RR_{\widetilde{\phi}}(u)(x) \\
= & \sum\limits_{y\in f^{-1}(x)}\deg_f(y)u(y)\exp(\phi(y)-P(f,\phi) + \log u_\phi(y) - \log u_\phi(f(y)))   \notag \\
  = & \frac{1}{u_\phi(x)} \sum\limits_{y\in f^{-1}(x)} \deg_f(y) u(y) u_\phi(y) \exp(\phi(y) - P(f,\phi)) \notag  \\
  = & \frac{1}{u_\phi(x)} \RR_{\overline\phi}(u u_\phi)(x),   \notag
\end{align*}
and thus
\begin{equation}  \label{eqRnphiTildeConjugate}
\RR_{\widetilde{\phi}}^n (u)(x) = \frac{1}{u_\phi(x)} \RR_{\overline\phi}^n (u u_\phi)(x), \qquad \text{for } n\in\N.
\end{equation}
Recall $m_\phi$ from Theorem~\ref{thmMexists}. Then we can show that $\mu_\phi = u_\phi m_\phi$ satisfies
\begin{equation}  \label{eqRtildePhi*Mu=Mu}
\RR_{\widetilde\phi}^* (\mu_\phi) = \mu_\phi.
\end{equation}
Indeed, by (\ref{eqRnphiTildeConjugate}) and (\ref{eqR*m}), for $u\in\CCC(S^2)$,
\begin{align*}
\int \! u \, \mathrm{d}\big(\RR_{\widetilde\phi}^* (\mu_\phi) \big)  & = \int \! \RR_{\widetilde\phi} (u) u_\phi \,\mathrm{d} m_\phi = \int \! \RR_{\overline\phi} (u u_\phi) \,\mathrm{d} m_\phi \\
                                                           &= \int \! u u_\phi  \,\mathrm{d}\big(\RR_{\overline\phi}^* (m_\phi)\big) = \int \, u u_\phi \,\mathrm{d} m_\phi  = \int \! u \,\mathrm{d} \mu_\phi.
\end{align*}
By (\ref{eqRu=u}) and (\ref{eqRnphiTildeConjugate}), $\RR_{\widetilde\phi}(\mathbbm{1}) = \frac{1}{u_\phi} \RR_{\overline\phi}(u_\phi) =\mathbbm{1}$, i.e.,
\begin{equation}  \label{eqRtildePhi1=1}
\sum\limits_{y\in f^{-1}(x)} \deg_f(y)\exp\widetilde\phi(y) = 1  \qquad \text{for } x\in S^2.
\end{equation}

\begin{lemma}   \label{lmRtildeNorm=1}
Let $f$, $d$, $\phi$, $\alpha$ satisfy the Assumptions. Then the operator norm of $\RR_{\widetilde\phi}$ is given by $\|\RR_{\widetilde\phi}\|=1$. In addition, $\RR_{\widetilde\phi}(\mathbbm{1})=\mathbbm{1}$.
\end{lemma}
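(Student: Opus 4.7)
The proof is essentially a one-liner built on what has already been established just above the lemma statement, so the ``plan'' is really just to unpack two short observations.

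First, the identity $\RR_{\widetilde\phi}(\mathbbm{1}) = \mathbbm{1}$ is precisely the content of equation (\ref{eqRtildePhi1=1}), which was derived immediately from $\RR_{\overline\phi}(u_\phi) = u_\phi$ (Theorem~\ref{thmMuExist}) via the conjugacy formula (\ref{eqRnphiTildeConjugate}) applied with $n=1$ and $u=\mathbbm{1}$. So the second assertion of the lemma requires no further work.

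For the norm computation, I would exploit positivity of the Ruelle operator. Since $\widetilde\phi$ is real-valued and $\deg_f(y) \geq 1$, the expression in (\ref{eqDefRuelleOp}) shows that $\RR_{\widetilde\phi}$ is a positive operator on $\CCC(S^2)$, and in particular $|\RR_{\widetilde\phi}(u)(x)| \leq \RR_{\widetilde\phi}(|u|)(x)$ for every $u \in \CCC(S^2)$ and $x\in S^2$. If $\|u\|_\infty \leq 1$, then $|u| \leq \mathbbm{1}$ pointwise, so by monotonicity of $\RR_{\widetilde\phi}$ on non-negative functions together with $\RR_{\widetilde\phi}(\mathbbm{1}) = \mathbbm{1}$,
\begin{equation*}
|\RR_{\widetilde\phi}(u)(x)| \leq \RR_{\widetilde\phi}(|u|)(x) \leq \RR_{\widetilde\phi}(\mathbbm{1})(x) = 1.
\end{equation*}
Taking the supremum over $x \in S^2$ and over $u \in \CCC(S^2)$ with $\|u\|_\infty \leq 1$ yields $\|\RR_{\widetilde\phi}\| \leq 1$. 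The reverse inequality is immediate: testing on $u = \mathbbm{1}$ gives $\|\RR_{\widetilde\phi}\| \geq \|\RR_{\widetilde\phi}(\mathbbm{1})\|_\infty / \|\mathbbm{1}\|_\infty = 1$.

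There is no main obstacle here; the substantive work was done earlier in constructing $u_\phi$ and verifying $\RR_{\overline\phi}(u_\phi) = u_\phi$, which forced the normalization $\RR_{\widetilde\phi}(\mathbbm{1}) = \mathbbm{1}$ built into the definition (\ref{eqDefPhiTilde}) of $\widetilde\phi$.
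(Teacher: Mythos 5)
Your proof is correct and takes essentially the same approach as the paper: both deduce $\RR_{\widetilde\phi}(\mathbbm{1})=\mathbbm{1}$ from (\ref{eqRtildePhi1=1}) and then bound $\abs{\RR_{\widetilde\phi}(u)(x)}\leq \Norm{u}_\infty\RR_{\widetilde\phi}(\mathbbm{1})(x)=\Norm{u}_\infty$ using the non-negativity of the weights $\deg_f(y)\exp\widetilde\phi(y)$, with the lower bound $\|\RR_{\widetilde\phi}\|\geq 1$ coming from testing on $\mathbbm{1}$. Your phrasing in terms of positivity and monotonicity of the operator is just a slightly more structured way of stating the same estimate the paper carries out in a single display.
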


\begin{proof}
By (\ref{eqRtildePhi1=1}), for each $x\in S^2$ and each $u\in\CCC(S^2)$, we have
\begin{align*}
\Abs{\RR_{\widetilde\phi}(u)(x)} & = \bigg| \sum\limits_{y\in f^{-1}(x)} \deg_f(y)u(y)\exp\widetilde\phi(y) \bigg|  \\
                                & \leq \Norm{u}_{\infty} \sum\limits_{y\in f^{-1}(x)} \deg_f(y)\exp\widetilde\phi(y) \\
                                & = \Norm{u}_{\infty}.
\end{align*}
Thus $\| \RR_{\widetilde\phi}  \| \leq 1$. Since $\RR_{\widetilde\phi}(\mathbbm{1})=\mathbbm{1}$ by (\ref{eqRtildePhi1=1}), $\|\RR_{\widetilde\phi} \| = 1$.
\end{proof}

\begin{lemma}  \label{lmTildePhiIsHolder}
Let $f$, $d$, $\phi$, $\alpha$ satisfy the Assumptions. Then
\begin{equation}  \label{eqTildePhiIsHolder}
\widetilde\phi \in \Holder{\alpha}(S^2,d).
\end{equation}
\end{lemma}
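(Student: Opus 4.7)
The plan is to verify that each of the four terms in
\begin{equation*}
\widetilde\phi = \phi - P(f,\phi) + \log u_\phi - \log(u_\phi\circ f)
\end{equation*}
belongs to $\Holder{\alpha}(S^2,d)$, and then to invoke the fact that $\Holder{\alpha}(S^2,d)$ is a real vector space. The first term lies in $\Holder{\alpha}(S^2,d)$ by the Assumptions, and the second is a constant, hence trivially H\"{o}lder of every exponent.

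For $\log u_\phi$ I would use the uniform bounds $C_2^{-1}\leq u_\phi\leq C_2$ from (\ref{eqU_phiBounds}) together with the H\"{o}lder regularity $u_\phi\in\Holder{\alpha}(S^2,d)$ asserted in Theorem~\ref{thmMuExist}. Since the function $\log$ is Lipschitz on the compact interval $[C_2^{-1},C_2]\subseteq(0,+\infty)$ with Lipschitz constant at most $C_2$, for all $x,y\in S^2$ one obtains
\begin{equation*}
\Abs{\log u_\phi(x) - \log u_\phi(y)} \leq C_2 \Abs{u_\phi(x) - u_\phi(y)} \leq C_2 \Hseminorm{\alpha}{u_\phi}\, d(x,y)^\alpha,
\end{equation*}
so $\log u_\phi\in\Holder{\alpha}(S^2,d)$.

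For the remaining term $\log(u_\phi\circ f)=(\log u_\phi)\circ f$, I would combine the previous step with Lemma~\ref{lmLipschitz}, which furnishes a Lipschitz constant $L_f\geq 1$ for $f$ with respect to $d$. Then for all $x,y\in S^2$,
\begin{equation*}
\Abs{\log u_\phi(f(x)) - \log u_\phi(f(y))} \leq C_2 \Hseminorm{\alpha}{u_\phi}\, d(f(x),f(y))^\alpha \leq C_2 \Hseminorm{\alpha}{u_\phi} L_f^\alpha\, d(x,y)^\alpha,
\end{equation*}
so $(\log u_\phi)\circ f\in\Holder{\alpha}(S^2,d)$ as well. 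Summing the four H\"{o}lder-$\alpha$ contributions yields (\ref{eqTildePhiIsHolder}). This is a routine verification; the substantive work was done earlier, namely in Theorem~\ref{thmMuExist}, which provided both the H\"{o}lder continuity and the positive lower bound for $u_\phi$, and in Lemma~\ref{lmLipschitz}, which guarantees that precomposition with $f$ preserves H\"{o}lder-$\alpha$ regularity. I do not anticipate any genuine obstacle.
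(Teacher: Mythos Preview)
Your proof is correct and follows essentially the same approach as the paper: both reduce the claim to the H\"older regularity and positive lower bound of $u_\phi$ from Theorem~\ref{thmMuExist} together with the Lipschitz continuity of $f$ from Lemma~\ref{lmLipschitz}. The only cosmetic difference is that the paper verifies $u_\phi\circ f\in\Holder{\alpha}(S^2,d)$ and then takes $\log$, whereas you write $\log(u_\phi\circ f)=(\log u_\phi)\circ f$ and compose the already-established H\"older function with the Lipschitz map; both routes are equally valid.
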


\begin{proof}
By Theorem~\ref{thmMuExist}, $u_\phi \in \Holder{\alpha}(S^2,d)$ and $C_2^{-1} \leq u_\phi (x) \leq C_2$ for each $x\in S^2$, where $C_2\geq 1$ is a constant from Lemma~\ref{lmSigmaExpSnPhiBound}. So $\log u_\phi \in \Holder{\alpha}(S^2,d)$. Note that $\phi \in \Holder{\alpha}(S^2,d)$, so by (\ref{eqDefPhiTilde}) it suffices to prove that $u_\phi \circ f \in \Holder{\alpha}(S^2,d)$. But this follows from the fact that $f$ is Lipschitz with respect to $d$ (Lemma~\ref{lmLipschitz}) and $u_\phi \in \Holder{\alpha}(S^2,d)$.
\end{proof}

\begin{theorem}   \label{thmChbInChb}
Let $f\:S^2 \rightarrow S^2$ be an expanding Thurston map, and $d$ be a visual metric on $S^2$ for $f$ with an expansion factor $\Lambda>1$ and a linear local connectivity constant $L\geq 1$. Then for each $\alpha\in(0,1]$, each $b\geq 0$, and each $\theta \geq 0$, there exist constants $\widehat{b}\geq 0$ and $\widehat{C} \geq 0$ with the following property:

For each abstract modulus of continuity $h$, there exists an abstract modulus of continuity $\widetilde{h}$ such that for each $\phi \in \Holder{\alpha}(S^2,d)$ with $\Hseminorm{\alpha}{\phi} \leq \theta$, we have
\begin{equation}   \label{eqChbInChb1}
\big\{\RR_{\overline{\phi}}^n(u) \,|\, u\in\CCC_h^b(S^2,d),n\in\N_0\big\} \subseteq \CCC_{\widehat{h}}^{\widehat{b}}(S^2,d),
\end{equation}   
\begin{equation}   \label{eqChbInChb2}
\big\{\RR_{\widetilde{\phi}}^n(u) \,|\, u\in\CCC_h^b(S^2,d),n\in\N_0\big\} \subseteq \CCC_{\widetilde{h}}^{b}(S^2,d),
\end{equation}
where $\widehat{h}(t)=\widehat{C}(t^\alpha+h(2C_0 L t))$ is an abstract modulus of continuity, and $C_0 > 1$ is a constant depending only on $f$ and $d$ from Lemma~\ref{lmMetricDistortion}.
\end{theorem}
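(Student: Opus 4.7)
The plan is to prove (1) first by pairing preimages of $\RR_{\overline\phi}^n$ tile-by-tile, and then to deduce (2) from (1) via the conjugation identity (\ref{eqRnphiTildeConjugate}), $\RR_{\widetilde\phi}^n(u) = u_\phi^{-1} \RR_{\overline\phi}^n(u u_\phi)$. I will fix a Jordan curve $\CC \subseteq S^2$ containing $\post f$ (which exists by Theorem~\ref{thmCexistsBM}); a crucial observation is that the constants $C_1$, $C_2$, $C_3$ of Lemma~\ref{lmSnPhiBound} and Lemma~\ref{lmR1properties} depend on $\phi$ only through $\Hseminorm{\alpha}{\phi}$, so they are automatically uniform over $\{\phi : \Hseminorm{\alpha}{\phi} \leq \theta\}$. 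The sup-norm assertion in (1) is then immediate from Lemma~\ref{lmR1properties}, yielding $\Norm{\RR_{\overline\phi}^n(u)}_\infty \leq b \Norm{\RR_{\overline\phi}^n(\mathbbm{1})}_\infty \leq b C_2$, so one takes $\widehat b = b C_2$.

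For the modulus of continuity in (1), I will fix $x, y \in S^2$ and first treat the case where both lie in a common $0$-tile, say $X^0_w$. Proposition~\ref{propCellDecomp}(i) provides, for each $X^n \in \X^n_w$, unique points $x(X^n), y(X^n) \in X^n$ with $f^n$ mapping them to $x$ and $y$ respectively, so $\RR_{\overline\phi}^n(u)(x) - \RR_{\overline\phi}^n(u)(y)$ becomes a sum indexed by $\X^n_w$. Decomposing each summand via $|ab - cd| \leq |a|\,|b - d| + |a - c|\,|d|$, I will control the oscillation of $u$ by $h(d(x(X^n), y(X^n))) \leq h(C_0 d(x,y))$ using Lemma~\ref{lmMetricDistortion}, and the oscillation of $\exp S_n\overline\phi$ by combining Lemma~\ref{lmSnPhiBound} with the elementary bound $|e^s - 1| \leq |s|e^{|s|}$. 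Summing over $X^n \in \X^n_w$ then contributes the weight $\RR_{\overline\phi}^n(\mathbbm{1})(y) \leq C_2$, producing a bound of the form $C'(d(x,y)^\alpha + h(C_0 d(x,y)))$ with $C'$ depending only on $f, d, \alpha, b, \theta$.

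When $x, y$ lie in different $0$-tiles, I will use the linear local connectivity of $(S^2, d)$ to find a continuum $E \ni x, y$ with $\diam_d E \leq 2 L d(x,y)$, pick a point $z \in E \cap \CC$, and split $\RR_{\overline\phi}^n(u)(x) - \RR_{\overline\phi}^n(u)(y)$ through $z$, applying the previous case to each of the two pieces (with $d(x,z), d(z,y) \leq 2 L d(x,y)$). This produces exactly the asserted form $\widehat h(t) = \widehat C(t^\alpha + h(2 C_0 L t))$, with $\widehat C$ depending only on $f, d, \alpha, b, \theta$.

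For (2), identity (\ref{eqRnphiTildeConjugate}) reduces the problem to controlling $u_\phi^{-1}\RR_{\overline\phi}^n(u u_\phi)$. Theorem~\ref{thmMuExist} together with Lemma~\ref{lmR1properties} will give the uniform bounds $C_2^{-1} \leq u_\phi \leq C_2$ and $u_\phi \in \CCC_{t \mapsto C_3 t^\alpha}^{C_2}$, valid for all $\phi$ with $\Hseminorm{\alpha}{\phi} \leq \theta$. Lemma~\ref{lmChbChbSubsetChB} then places $u u_\phi \in \CCC_{h_1}^{b C_2}$ for some $h_1$ determined by $h$, $b$, $\theta$; part (1) puts $\RR_{\overline\phi}^n(u u_\phi) \in \CCC_{\widehat{h}_1}^{\widehat b_1}$; and a second invocation of Lemma~\ref{lmChbChbSubsetChB} (applied to division by $u_\phi$) produces some modulus $\widetilde h$ controlling $\RR_{\widetilde\phi}^n(u)$, while the improved sup-norm $\Norm{\RR_{\widetilde\phi}^n(u)}_\infty \leq b$ follows from Lemma~\ref{lmRtildeNorm=1}. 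The main obstacle is the cross-tile step in (1): when $x$ and $y$ lie on opposite sides of $\CC$, their preimages live in disjoint families of $n$-tiles (white for one, black for the other), so direct pairing is unavailable, and the detour through $\CC$ forced by linear local connectivity is precisely what introduces the factor $2 C_0 L$ inside $\widehat h$.
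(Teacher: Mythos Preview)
Your proposal is correct and follows essentially the same architecture as the paper's proof: the same tile-by-tile pairing in the common-$0$-tile case, the same linear-local-connectivity detour through a point of $\CC$ for the cross-tile case, and the same conjugation argument via (\ref{eqRnphiTildeConjugate}) together with Lemma~\ref{lmChbChbSubsetChB} and Lemma~\ref{lmRtildeNorm=1} for (\ref{eqChbInChb2}). The only difference is cosmetic---where you bound the exponential oscillation termwise via $|e^s-1|\le |s|e^{|s|}$ and then sum against $\RR_{\overline\phi}^n(\mathbbm{1})(y)\le C_2$, the paper first splits the $n$-tiles according to the sign of $S_n\overline\phi(x_n)-S_n\overline\phi(y_n)$ and bounds each piece by a ratio-of-sums argument; both routes produce the same $d(x,y)^\alpha$ contribution with constants depending only on $f,d,\alpha,b,\theta$.
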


\begin{proof}
Fix arbitrary $\alpha\in(0,1]$, $b\geq 0$, and $\theta \geq 0$. By Lemma~\ref{lmR1properties}, for $n\in\N_0$, $u\in \CCC_h^b(S^2,d)$, and $\phi\in\Holder{\alpha}(S^2,d)$ with $\Hseminorm{\alpha}{\phi} \leq \theta$, we have
$$
\Norm{\RR_{\overline{\phi}}^n(u)}_{\infty} \leq \Norm{u}_{\infty} \Norm{\RR_{\overline{\phi}}^n(\mathbbm{1})}_{\infty}  \leq C_2 \Norm{u}_{\infty},
$$
where $C_2$ is the constant defined in (\ref{eqC2Bound}) in Lemma~\ref{lmSigmaExpSnPhiBound}. So we can choose $\widehat{b} = C_2 b$. Note that by (\ref{eqC2Bound}) that $C_2$ only depends on $f$, $d$, $\theta$, and $\alpha$.

We pick a Jordan curve $\CC\subseteq S^2$ that satisfies the Assumptions (see Theorem~\ref{thmCexistsBM} for the existence of such $\CC$).

Let $X^0$ be either the white $0$-tile $X^0_w\in\X^0(f,\CC)$ or the black $0$-tile $X^0_b\in\X^0(f,\CC)$. If $X^m\in\X^m(f,\CC)$ is an $m$-tile with $f^m(X^m)=X^0$ for some $m\in\N_0$, then by Proposition~\ref{propCellDecomp}(i), the restriction $f^m|_{X^m}$ of $f^m$ to $X^m$ is a bijection from $X^m$ to $X^0$. So for each $x\in X^0$, there exists a unique point contained in $X^m$ whose image under $f^m$ is $x$. We denote this unique point by $x_m(X^m)$. Note that for each $z=x_m(X^m)$, the number of distinct $m$-tiles $X\in \X^m(f,\CC)$ that satisfy both $f^m(X)=X^0$ and $x_m(X)=z$ is exactly $\deg_{f^m}(z)$.


Then for each $x,y\in X^0$, we have
\begin{align*}
  & \Abs{\RR_{\overline{\phi}}^n(u)(x) - \RR_{\overline{\phi}}^n(u)(y)} \\
= &  \bigg|{ \sum\limits_{\begin{subarray}{l} X^n\in\X^n(f,\CC)\\ f^n(X^n)=X^0 \end{subarray}}  (u \exp(S_n\overline\phi)) ( x_n(X^n)) - (u \exp(S_n\overline\phi))( y_n(X^n) ) }\bigg|   \\
\leq & \bigg|  \sum\limits_{\begin{subarray}{l} X^n\in\X^n(f,\CC)\\ f^n(X^n)=X^0 \end{subarray}}  u(x_n(X^n))\(\exp(S_n\overline\phi(x_n(X^n))) - \exp(S_n\overline\phi(y_n(X^n)) )\)  \bigg|   \\  
     &+ \bigg| \sum\limits_{\begin{subarray}{l} X^n\in\X^n(f,\CC)\\ f^n(X^n)=X^0 \end{subarray}}  \exp(S_n\overline\phi(y_n(X^n))) \(u(x_n(X^n))-u(y_n(X^n))\) \bigg|.
\end{align*}

The second term above is 
$$
\leq C_2 h(C_0 \Lambda^{-n} d(x,y))\leq C_2 h(C_0 d(x,y)),
$$
due to (\ref{eqR1Bound}) and the fact that $d(x_n(X^n),y_n(X^n))\leq C_0 \Lambda^{-n} d(x,y)$ by Lemma~\ref{lmMetricDistortion}, where the constant $C_0$ comes from.

In order to bound the first term, we define 
$$
A_n^+=\{X^n\in\X^n(f,\CC)\,|\, f^n(X^n)=X^0, S_n\overline\phi(x_n(X^n)) \geq S_n\overline\phi(y_n(X^n))\},
$$
and
$$
A_n^-=\{X^n\in\X^n(f,\CC)\,|\, f^n(X^n)=X^0, S_n\overline\phi(x_n(X^n)) < S_n\overline\phi(y_n(X^n))\}.
$$
Then by (\ref{eqR^nExpr}), Lemma~\ref{lmSnPhiBound}, and Lemma~\ref{lmR1properties} the first term is
\begin{align*}
\leq & \sum\limits_{X^n\in A_n^+} \Norm{u}_{\infty}\(\exp(S_n\overline\phi(x_n(X^n))) - \exp(S_n\overline\phi(y_n(X^n)))\) \\
     & +  \sum\limits_{X^n\in A_n^-} \Norm{u}_{\infty}\(\exp(S_n\overline\phi(y_n(X^n))) - \exp(S_n\overline\phi(x_n(X^n)))\) \\
=    & \Norm{u}_{\infty}  \Bigg( \Bigg( \frac{\sum\limits_{X^n\in A_n^+} \exp(S_n\overline\phi(x_n(X^n)))}{\sum\limits_{X^n\in A_n^+}\exp(S_n\overline\phi(y_n(X^n)))}  - 1 \Bigg)\sum\limits_{X^n\in A_n^+}e^{S_n\overline\phi(y_n(X^n))}  \\
     & +  \Bigg( \frac{\sum\limits_{X^n\in A_n^-} \exp(S_n\overline\phi(y_n(X^n)))}{\sum\limits_{X^n\in A_n^-}\exp(S_n\overline\phi(x_n(X^n)))}  - 1 \Bigg)\sum\limits_{X^n\in A_n^-}e^{S_n\overline\phi(x_n(X^n))} \Bigg)   \\
\leq &  2 b C_2(\exp(C_1 d(x,y)^\alpha) -1) \\
\leq &  2 b \widetilde{C}_3 d(x,y)^\alpha,
\end{align*}
for some constant $\widetilde{C}_3>0$ that only depends on $C_1, C_2$, and $\diam_d(S^2)$, where $C_1>0$ is the constant defined in (\ref{eqC1Expression}) in Lemma~\ref{lmSnPhiBound} and $C_2\geq 1$ is the constant defined in (\ref{eqC2Bound}) in Lemma~\ref{lmSigmaExpSnPhiBound}. Note that the justification of the second inequality above is similar to that of (\ref{eqR1Diff}) in Lemma~\ref{lmR1properties}. We observe that by (\ref{eqC1Expression}) and (\ref{eqC2Bound}), both $C_1$ and $C_2$ only depend on $f$, $d$, $\theta$, and $\alpha$, so does $\widetilde{C}_3$.

Hence we get 
\begin{equation*}
       \Abs{\RR_{\overline{\phi}}^n(u)(x) - \RR_{\overline{\phi}}^n(u)(y)} 
\leq 2 b \widetilde{C}_3 d(x,y)^\alpha +  C_2 h(C_0 d(x,y)).
\end{equation*}

Now we consider arbitrary $x\in X^0_w$ and $y\in X^0_b$. Since the metric space $(S^2,d)$ is linearly locally connected with a linear local connectivity constant $L \geq 1$, there exists a continuum $E\subseteq S^2$ with $x,y\in E$ and $E\subseteq B_d(x,Ld(x,y))$. We can then choose $z\in\CC \cap E$. Note that $\max\{d(x,z),d(y,z)\} \leq 2L d(x,y)$.

Hence we get
\begin{align*}
& \Abs{\RR_{\overline{\phi}}^n(u)(x) - \RR_{\overline{\phi}}^n(u)(y)}\\
\leq  & \Abs{\RR_{\overline{\phi}}^n(u)(x) - \RR_{\overline{\phi}}^n(u)(z)}  +\Abs{\RR_{\overline{\phi}}^n(u)(z) - \RR_{\overline{\phi}}^n(u)(y)}\\
\leq  & 2 b \widetilde{C}_3 d(x,z)^\alpha +  C_2 h(C_0 d(x,z)) +2 b \widetilde{C}_3 d(z,y)^\alpha +  C_2 h(C_0 d(z,y))\\
\leq  & 8bL \widetilde{C}_3 d(x,y)^\alpha + 2C_2 h(2C_0 L d(x,y)).
\end{align*}
By choosing $\widehat{C}=\max\big\{8bL \widetilde{C}_3, 2C_2\big\}$, which only depends on $f$, $d$, $\theta$, and $\alpha$, we complete the proof of (\ref{eqChbInChb1}).

\smallskip

We now prove (\ref{eqChbInChb2}). 

We fix an arbitrary $\phi \in \Holder{\alpha}(S^2,d)$ with $\Hseminorm{\alpha}{\phi} \leq \theta$. Then by (\ref{eqU_phiBounds}) in Theorem~\ref{thmMuExist} and (\ref{eqC2Bound}) in Lemma~\ref{lmSigmaExpSnPhiBound}, we have
\begin{equation*}  
\Norm{u_\phi}_\infty \leq b_1,
\end{equation*}
where $b_1 = \exp\(4 \frac{\theta C_0}{1-\Lambda^{-1}}L \big(\diam(S^2)\big)^\alpha \)$. By Theorem~\ref{thmMuExist} and (\ref{eqR1Diff}) in Lemma~\ref{lmR1properties}, for each $x,y\in S^2$, we have
\begin{align*}
\Abs{u_\phi(x) - u_\phi(y)}  &    = \Abs{\lim\limits_{n\to+\infty} \frac{1}{n} \sum\limits_{j=0}^{n-1} \big(\RR_{\overline\phi}^j(\mathbbm{1})(x) - \RR_{\overline\phi}^j(\mathbbm{1})(x)  \big)}  \\
                             & \leq \limsup\limits_{n\to+\infty}  \frac{1}{n}  \sum\limits_{j=0}^{n-1} \Abs{\RR_{\overline\phi}^j(\mathbbm{1})(x) - \RR_{\overline\phi}^j(\mathbbm{1})(x) }   \\
                             & \leq C_2 \( \exp\(4C_1 Ld(x,y)^\alpha\) - 1 \).
\end{align*}
So
\begin{equation}  \label{eqUphiInCbh}
u_\phi \in \CCC^{b_1}_{h_1} (S^2,d),
\end{equation}
where $h_1$ is an abstract modulus of continuity given by 
\begin{equation*}
h_1(t) = C_2 \( \exp\(4C_1 L t^\alpha\) - 1 \), \text{ for } t\in[0,+\infty).
\end{equation*}

Thus by Lemma~\ref{lmChbChbSubsetChB}, there exist a constant $b_2\geq 0$ and an abstract modulus of continuity $h_2$ such that
\begin{equation}   \label{eqUUphiInCbh}
\big\{ u u_\phi \,\big|\, u\in \CCC^b_h(S^2,d), \phi \in \Holder{\alpha}(S^2,d), \Hseminorm{\alpha}{\phi} \leq \theta \big\} \subseteq \CCC^{b_2}_{h_2} (S^2,d).
\end{equation}
Then by (\ref{eqRnphiTildeConjugate}), (\ref{eqUUphiInCbh}), (\ref{eqChbInChb1}), and Lemma~\ref{lmChbChbSubsetChB}, we get that there exist a constant $b_3\geq 0$ and an abstract modulus of continuity $\widetilde{h}$ such that
\begin{equation}
\big\{ \RR_{\widetilde\phi}^n (u) \,\big|\, u\in \CCC^b_h(S^2,d), n\in \N_0 \big\}  \subseteq \CCC^{b_3}_{\widetilde{h}} (S^2,d),
\end{equation}
for each $\phi\in \Holder{\alpha}(S^2,d)$ with $\Hseminorm{\alpha}{\phi} \leq \theta$. 

On the other hand, by Lemma~\ref{lmRtildeNorm=1}, $\Norm{\RR_{\widetilde\phi}^n(u)}_\infty \leq \Norm{u}_\infty \leq b$ for each $u\in  \CCC^b_h(S^2,d)$, each $n\in \N_0$, and each $\phi \in \Holder{\alpha}(S^2,d)$. Therefore, we have proved (\ref{eqChbInChb2}).
\end{proof}

As a consequence, both $\RR_{\overline\phi}$ and $\RR_{\widetilde\phi}$ are almost periodic.

\begin{definition}     \label{defAlmostPeriodic}
A bounded linear operator $L\: B\rightarrow B$ on a Banach space $B$ is \defn{almost periodic} if for each $z\in B$, the closure of the set $\{L^n(z) \,|\, n\in\N_0\}$ is compact in the norm topology.
\end{definition}

\begin{cor}    \label{corRAlmostPeriodic}
Let $f$, $d$, $\phi$, and $\alpha$ satisfy the Assumptions. Let $\CCC(S^2)$ be equipped with the uniform norm. Then both $\RR_{\overline\phi} \: \CCC(S^2)\rightarrow \CCC(S^2)$ and $\RR_{\widetilde\phi} \: \CCC(S^2)\rightarrow \CCC(S^2)$ are almost periodic.
\end{cor}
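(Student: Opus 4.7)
My plan is to derive almost periodicity directly from Theorem~\ref{thmChbInChb} together with the Arzel\`a--Ascoli theorem, since the theorem already provides the required uniform equicontinuity and uniform bound on the forward orbits.

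First I would fix an arbitrary $z\in\CCC(S^2)$ and manufacture an abstract modulus of continuity for $z$ via the standard recipe (\ref{eqAbsModContForU}): set
\[
h(t)=\sup\{\abs{z(x)-z(y)}\,|\,x,y\in S^2,\,d(x,y)\leq t\},\qquad t\in[0,+\infty),
\]
and put $b=\Norm{z}_\infty$. Since $z$ is continuous on the compact space $(S^2,d)$, the function $h$ is an abstract modulus of continuity, and by construction $z\in\CCC_h^b(S^2,d)$.

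Next I would apply Theorem~\ref{thmChbInChb} with $\theta=\Hseminorm{\alpha}{\phi}$ (which is finite since $\phi\in\Holder{\alpha}(S^2,d)$). This produces constants $\widehat b\geq 0$ and an abstract modulus of continuity $\widehat h$ (respectively $\widetilde h$) such that
\[
\big\{\RR_{\overline\phi}^n(z)\,\big|\,n\in\N_0\big\}\subseteq\CCC_{\widehat h}^{\widehat b}(S^2,d),\qquad
\big\{\RR_{\widetilde\phi}^n(z)\,\big|\,n\in\N_0\big\}\subseteq\CCC_{\widetilde h}^{b}(S^2,d).
\]
Both sets on the right are equicontinuous (they all obey a single modulus of continuity) and uniformly bounded, and $S^2$ is compact, so by the Arzel\`a--Ascoli theorem they are precompact in $\CCC(S^2)$ with respect to the uniform norm. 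Moreover, as noted right after the definition of $\CCC_h^b(S^2,d)$, the sets $\CCC_{\widehat h}^{\widehat b}(S^2,d)$ and $\CCC_{\widetilde h}^{b}(S^2,d)$ are themselves closed in the uniform norm, hence compact. Thus the uniform closures of the two forward orbits are contained in these compact sets, so they are compact.

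Since $z\in\CCC(S^2)$ was arbitrary, Definition~\ref{defAlmostPeriodic} applies and both $\RR_{\overline\phi}$ and $\RR_{\widetilde\phi}$ are almost periodic. There is essentially no obstacle beyond unpacking Theorem~\ref{thmChbInChb}; the only mild point worth emphasizing in the write-up is that even a merely continuous $z$ admits a modulus of continuity by uniform continuity on the compact $(S^2,d)$, so Theorem~\ref{thmChbInChb} is applicable to every $z\in\CCC(S^2)$ rather than only to H\"older test functions.
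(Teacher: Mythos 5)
Your argument matches the paper's own proof: fix $z\in\CCC(S^2)$, endow it with the modulus of continuity from (\ref{eqAbsModContForU}), apply Theorem~\ref{thmChbInChb} with $\theta=\Hseminorm{\alpha}{\phi}$, and invoke Arzel\`a--Ascoli on the compact sets $\CCC_{\widehat h}^{\widehat b}(S^2,d)$ and $\CCC_{\widetilde h}^{b}(S^2,d)$. It is correct and takes essentially the same route, just spelled out in more detail.
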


\begin{proof}
For each $u\in\CCC(S^2)$, we have $u\in\CCC^{\beta}_{h} (S^2,d)$ for $\beta=\Norm{u}_\infty$ and some abstract modulus of continuity $h$ defined in (\ref{eqAbsModContForU}). Then the corollary follows immediately from Theorem~\ref{thmChbInChb} and Arzel\`a-Ascoli theorem.
\end{proof}

\begin{lemma}    \label{lmRtildeSupBound}
Let $f$ and $d$ satisfy the Assumptions. Let $g$ be an abstract modulus of continuity.  Then for $\alpha\in(0,1], K\in (0,+\infty)$, and $\delta_1 \in (0,+\infty)$, there exist constants $\delta_2 \in (0,+\infty)$ and $n\in\N$ with the following property:

For each $u\in \CCC_g^{+\infty}(S^2,d)$, each $\phi\in \Holder{\alpha}(S^2,d)$, and each choice of $m_\phi$ from Theorem~\ref{thmMexists}, if $\Hnorm{\alpha}{\phi}\leq K$, $\int \! u u_\phi\,\mathrm{d}m_\phi = 0$, and $\Norm{u}_{\infty} \geq \delta_1$, then 
\begin{equation*}
\Norm{\RR_{\widetilde\phi}^n(u)}_{\infty} \leq \Norm{u}_{\infty} - \delta_2.
\end{equation*}
\end{lemma}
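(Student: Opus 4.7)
The plan is to exploit the fact that $u$ has zero $\mu_\phi$-mean (since $\int u\, u_\phi \, \mathrm{d}m_\phi = \int u \, \mathrm{d}\mu_\phi = 0$ by Theorem~\ref{thmMuExist}) together with the fact that $\RR_{\widetilde\phi}^n$ acts as a weighted average over preimages, the weights $\big\{\deg_{f^n}(y)\exp(S_n\widetilde\phi(y))\big\}_{y\in f^{-n}(x)}$ summing to $1$ by Lemma~\ref{lmRtildeNorm=1} iterated. The first step is to observe that for $\phi$ with $\Hnorm{\alpha}{\phi}\leq K$, the constants in Lemma~\ref{lmSnPhiBound}, Lemma~\ref{lmSigmaExpSnPhiBound}, and Lemma~\ref{lmR1properties} are all uniform, depending only on $f, d, \alpha, K$. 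Consequently, by Theorem~\ref{thmMuExist}, the bounds $C_2^{-1} \leq u_\phi \leq C_2$ and $\Hseminorm{\alpha}{u_\phi}\leq C_3$ are uniform, and together with Lemma~\ref{lmLipschitz} this yields a uniform bound on $\Hnorm{\alpha}{\widetilde\phi}$, on $|P(f,\phi)|$, and on the Gibbs constant $C_\mu$ for $\mu_\phi$ (via Proposition~\ref{propMisGibbsState}).

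Next, since $\mu_\phi$ is a Gibbs state and therefore positive on every open set, the condition $\int u\, \mathrm{d}\mu_\phi = 0$ forces the continuous function $u$ to take values of both signs on $S^2$. Hence there exist $y_0, y_1 \in S^2$ with $u(y_0) \leq 0 \leq u(y_1)$. I would then choose $r = r(g, \delta_1) > 0$ with $g(r) \leq \delta_1/2$; by the modulus of continuity, $u \leq \delta_1/2 \leq \Norm{u}_{\infty} - \delta_1/2$ on $B_d(y_0, r)$, and symmetrically $u \geq -\Norm{u}_{\infty} + \delta_1/2$ on $B_d(y_1, r)$.

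The main geometric step is producing, for each $x \in S^2$, a preimage of $x$ under $f^n$ in each of $B_d(y_0, r)$ and $B_d(y_1, r)$ carrying uniformly positive weight. Fixing a Jordan curve $\CC$ satisfying the Assumptions, Lemma~\ref{lmCellBoundsBM}(ii) gives $n$ depending only on $f, d, r$ such that every $n$-tile has diameter less than $r/2$, so $B_d(y_i, r/2)$ contains some $n$-tile, necessarily contained in $B_d(y_i, r)$. Applying Lemma~\ref{lmTileInIntTile} to that tile (after increasing $n$ by a universal constant), each $B_d(y_i, r)$ contains both a white and a black $n$-tile, and then Proposition~\ref{propCellDecomp}(i) provides the desired preimage $y_i^\ast$. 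Using the identity $S_n\widetilde\phi(y) = S_n\phi(y) - nP(f,\phi) + \log u_\phi(y) - \log u_\phi(f^n(y))$, the Gibbs bound $\mu_\phi(X^n) \leq C_\mu \exp(S_n\phi(y)-nP(f,\phi))$, and $u_\phi \leq C_2$, one obtains $\exp(S_n\widetilde\phi(y^\ast_i))\geq C_2^{-2}C_\mu^{-1}\mu_\phi(X^n_i)$, where $X^n_i$ is the $n$-tile enclosing $y_i^\ast$. For fixed $n$, the Gibbs lower bound (\ref{eqGibbsState}) yields $\mu_\phi(X^n_i)\geq C_\mu^{-1}\exp(-n(\Norm{\phi}_{\infty} + |P(f,\phi)|))$, which is bounded below by a positive constant $w_0 = w_0(f, d, K, \alpha, g, \delta_1)$.

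Since the weights on $f^{-n}(x)$ sum to $1$, the value $\RR_{\widetilde\phi}^n u(x)$ is a convex combination of $u(y)$ over $y \in f^{-n}(x)$. The preimage $y_0^\ast \in B_d(y_0, r)$ contributes weight at least $w_0$ at a value $\leq \Norm{u}_{\infty} - \delta_1/2$, giving $\RR_{\widetilde\phi}^n u(x) \leq \Norm{u}_{\infty} - w_0 \delta_1/2$; symmetrically $\RR_{\widetilde\phi}^n u(x) \geq -\Norm{u}_{\infty} + w_0 \delta_1/2$. Setting $\delta_2 := w_0 \delta_1/2$ will finish the proof. The hardest part will be the uniformity bookkeeping: verifying that $\Hnorm{\alpha}{\widetilde\phi}$, the Gibbs constants, and $w_0$ depend only on $f, d, K, \alpha, g, \delta_1$, and are independent of the particular choice of $\phi$ and $m_\phi$.
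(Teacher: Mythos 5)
Your strategy is the same as the paper's: since $\int u\,\mathrm{d}\mu_\phi = 0$ there are points $y_0,y_1$ with $u(y_0)\le 0\le u(y_1)$; use the modulus $g$ to find small balls around them where $u$ is at most $\Norm{u}_\infty - \delta_1/2$ (resp.\ at least $-\Norm{u}_\infty + \delta_1/2$); locate for each $x$ a preimage under $f^n$ in each ball; and then use the fact that $\RR_{\widetilde\phi}^n u(x)$ is a convex combination of values $u(y)$ with $y\in f^{-n}(x)$ to deduce a drop of size (weight)$\cdot\delta_1/2$. The paper locates the preimage via flowers (Lemma~\ref{lmCellBoundsBM}(iv): $U^n(z)\subseteq B_d(z,\epsilon)$, combined with $f^n(U^n(z))=S^2$), while you go through small $n$-tiles and Lemma~\ref{lmTileInIntTile}; both work.

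The one place where your route genuinely diverges and introduces an avoidable difficulty is in bounding the weight from below. You pass through the Gibbs property of $\mu_\phi$ twice, once in each direction, producing $\exp(S_n\widetilde\phi(y_i^\ast))\ge C_2^{-2}C_\mu^{-2}\exp(-n(\Norm{\phi}_\infty + |P(f,\phi)|))$. This is strictly weaker than what you get by not invoking the Gibbs property at all: from the telescoping identity $S_n\widetilde\phi(y) = S_n\phi(y) - nP(f,\phi) + \log u_\phi(y) - \log u_\phi(f^n(y))$ you already have $\exp(S_n\widetilde\phi(y))\ge C_2^{-2}\exp(-n(\Norm{\phi}_\infty + |P(f,\phi)|))$ directly, and the uniformity of $C_2$ and of $|P(f,\phi)|\le K + |\log\deg f|$ is immediate from Lemma~\ref{lmSigmaExpSnPhiBound} and Proposition~\ref{propTopPressureDefPreImg}. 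By contrast, your route requires the Gibbs constant $C_\mu$ of $\mu_\phi$ to be bounded uniformly over all $\phi$ with $\Hnorm{\alpha}{\phi}\le K$, which you correctly flag as "the hardest part" but do not establish; this is not recorded anywhere in the paper, and tracing through the proof of Proposition~\ref{propMisGibbsState} shows the constant involves $\min(m_\phi(X^0_w),m_\phi(X^0_b))$, whose uniform lower bound needs a separate argument (it is true, but you would have to prove it). The paper's proof sidesteps this entirely by bounding $\Norm{\widetilde\phi}_\infty$ termwise and setting $\delta_2 = \frac{\delta_1}{2}\inf\exp(S_n\widetilde\phi)$; I recommend you do the same, as it both sharpens the constant and removes the open bookkeeping item.
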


Note that at this point, we have not proved yet that $m_\phi$ from Theorem~\ref{thmMexists} is unique. We will prove it in Corollary~\ref{corMandMuUnique}. Recall that $u_\phi$ is the continuous function defined in Theorem~\ref{thmMuExist} that only depends on $f$ and $\phi$.

\begin{proof}
Fix arbitrary constants $\alpha\in(0,1], K\in (0,+\infty)$, and $\delta_1 \in (0,+\infty)$. Fix $\epsilon > 0$ small enough such that $g(\epsilon) < \frac{\delta_1}{2}$. Fix a choice of $m_\phi$, an arbitrary $\phi\in\Holder{\alpha}(S^2,d)$, and an arbitrary $u\in \CCC_g^{+\infty}(S^2,d)$ with $\Hnorm{\alpha}{\phi}\leq K$, $\int \! u u_\phi\,\mathrm{d}m_\phi = 0$, and $\Norm{u}_{\infty} \geq \delta_1$.

We pick a Jordan curve $\CC\subseteq S^2$ that satisfies the Assumptions (see Theorem~\ref{thmCexistsBM} for the existence of such $\CC$). 

By Lemma~\ref{lmCellBoundsBM}(iv), there exists $n\in\Z$ depending only on $f$, $\CC$, $d$, $g$, and $\delta_1$ such that for each $z\in S^2$, we have $U^n(z)\subseteq B_d(z,\epsilon)$, where $U^n(z)$ is defined in (\ref{defU^n}). Since $\int \! u u_\phi \,\mathrm{d}m_\phi = 0$, there exist points $y_1,y_2\in S^2$ such that $u(y_1)\leq 0$ and $u(y_2)\geq 0$.

We fix a point $x \in S^2$. Since  $f^n(U^n(y_1)) = S^2$, there exists $y\in f^{-n}(x)$ such that $y \in U^n(y_1) \subseteq B_d(y_1,\epsilon)$. Thus
$$
u(y) \leq u(y_1) + g(\epsilon) < \frac{\delta_1}{2} \leq \Norm{u}_{\infty} -\frac{\delta_1}{2}.
$$
So by Lemma~\ref{lmRtildeNorm=1} and (\ref{eqR^nExpr}) we have
\begin{align*}
     & \RR_{\widetilde\phi}^n(u)(x)  \\
   = & \deg_{f^n}(y)u(y)\exp\big(S_n\widetilde\phi(y)\big) \\
     & +  \sum\limits_{w\in f^{-n}(x)\setminus \{y\}}\deg_{f^n}(w)u(w)\exp\big(S_n\widetilde\phi(w)\big) \\
\leq & \(  \Norm{u}_{\infty} -\frac{\delta_1}{2}  \) \deg_{f^n}(y)\exp\big(S_n\widetilde\phi(y)\big)  \\
     & + \Norm{u}_{\infty}\sum\limits_{w\in f^{-n}(x)\setminus \{y\}}\deg_{f^n}(w)\exp\big(S_n\widetilde\phi(w)\big) \\
\leq & \Norm{u}_{\infty}\sum\limits_{w\in f^{-n}(x)}\deg_{f^n}(w)\exp\big(S_n\widetilde\phi(w)\big)- \frac{\delta_1}{2} \exp\big(S_n\widetilde\phi(y)\big) \\
   = & \Norm{u}_{\infty} -\frac{\delta_1}{2} \exp\big(S_n\widetilde\phi(y)\big).
\end{align*}
Similarly, there exists $z\in f^{-n}(x)$ such that $z\in U^n(y_2) \subseteq B_d(y_2,\epsilon)$ and
$$
\RR_{\widetilde\phi}^n (u)(x) \geq -\Norm{u}_{\infty} + \frac{\delta_1}{2} \exp\big(S_n\widetilde\phi(z)\big).
$$
Hence we get
\begin{equation}   \label{eqPflmRtildeSupBound}
\Norm{\RR_{\widetilde\phi}^n(u)}_{\infty}  \leq  \Norm{u}_{\infty} - \frac{\delta_1}{2}  \inf \big\{\exp\big(S_n\widetilde\phi(w)\big) \,\big|\, w\in S^2   \big\}.
\end{equation}

Now it suffices to bound each term in the definition of $\widetilde\phi$ in (\ref{eqDefPhiTilde}).

First, by the hypothesis, $\Norm{\phi}_{\infty} \leq \Hnorm{\alpha}{\phi} \leq K$ (see (\ref{eqDefHolderNorm})).

Next, for each fixed $x\in S^2$, by Proposition~\ref{propTopPressureDefPreImg}, we have
\begin{align*}
P(f,\phi) & = \lim\limits_{n\to+\infty} \frac{1}{n} \log \sum\limits_{y\in f^{-n}(x)} \deg_{f^n}(y)\exp(S_n\phi(y))  \\
          & \leq  \lim\limits_{n\to+\infty} \frac{1}{n} \log \sum\limits_{y\in f^{-n}(x)} \deg_{f^n}(y)\exp(nK)  \\
          & = K + \lim\limits_{n\to+\infty}\frac{1}{n}  \log \sum\limits_{y\in f^{-n}(x)} \deg_{f^n}(y)\\
          & = K + \log(\deg f).
\end{align*}
Similarly, $P(f,\phi) \geq -K + \log (\deg f)$. So $\Abs{P(f,\phi)} \leq K + \Abs{\log (\deg f)}$.

Finally, by Theorem~\ref{thmMuExist} and (\ref{eqC2Bound}) in Lemma~\ref{lmSigmaExpSnPhiBound}, we have 
$$
\Norm{u_\phi}_{\infty}  \leq C_2 \leq \exp\( C_5 \),
$$
where 
$$
C_5 = 4 \frac{K C_0}{1-\Lambda^{-1}} L\( \diam_d (S^2) \)^\alpha,
$$
and $C_0 > 1$ is a constant from Lemma~\ref{lmMetricDistortion} depending only on $f$ and $d$.

Therefore, by (\ref{eqDefPhiTilde}) and (\ref{eqPflmRtildeSupBound}), $\Norm{\RR_{\widetilde\phi}^n(u)}_{\infty} \leq  \Norm{u}_{\infty} - \delta_2$, where
$$
\delta_2 = \frac{\delta_1}{2} \exp\( -n \( 2K + \Abs{\log(\deg f)} + 2 C_5 \)\),
$$
which only depends on $f$, $d$, $\alpha$, $K$, $\delta_1$, $g$, and $n$.
\end{proof}

\begin{theorem}  \label{thmRR^nConv} 
Let $f\:S^2 \rightarrow S^2$ be an expanding Thurston map. Let $d$ be a visual metric on $S^2$ for $f$ with an expansion factor $\Lambda>1$. Let $b \in (0,+\infty)$ be a constant and $h\:[0,+\infty)\rightarrow [0,+\infty)$ an abstract modulus of continuity. Let $H$ be a bounded subset of $\Holder{\alpha}(S^2,d)$ for some $\alpha \in (0,1]$. Then for each $u\in \CCC_h^b(S^2,d)$, each $\phi\in H$, and each choice of $m_\phi$ from Theorem~\ref{thmMexists}, we have
\begin{equation}   \label{eqRR^nConv}
\lim\limits_{n\to+\infty} \Norm{\RR_{\overline\phi}^n (u) - u_\phi \int\! u\,\mathrm{d}m_\phi}_{\infty} = 0.
\end{equation} 
If, in addition, $\int\! u u_\phi\,\mathrm{d}m_\phi = 0$, then
\begin{equation}   \label{eqRR^nConv0}
\lim\limits_{n\to+\infty} \Norm{\RR_{\widetilde\phi}^n(u)}_{\infty} = 0.
\end{equation}
Moreover, the convergence in both (\ref{eqRR^nConv}) and (\ref{eqRR^nConv0}) is uniform in $u\in\CCC_h^b(S^2,d)$, $\phi\in H$, and the choice of $m_\phi$.
\end{theorem}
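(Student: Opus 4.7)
The plan is to first reduce (\ref{eqRR^nConv}) to (\ref{eqRR^nConv0}) using the conjugation identity (\ref{eqRnphiTildeConjugate}), and then establish (\ref{eqRR^nConv0}) uniformly by iterating the contraction estimate from Lemma~\ref{lmRtildeSupBound} while keeping orbits precompact via Theorem~\ref{thmChbInChb}. Throughout we will set $K=\sup\{\Hnorm{\alpha}{\phi}\,|\,\phi\in H\}<+\infty$ and $\theta=\sup\{\Hseminorm{\alpha}{\phi}\,|\,\phi\in H\}<+\infty$; these are the quantitative handles on the family $H$.

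For the reduction, fix $u\in\CCC_h^b(S^2,d)$, $\phi\in H$, and $m_\phi$, and set $c_\phi=\int\!u\,\mathrm{d}m_\phi$ and $v=u-c_\phi u_\phi$. By (\ref{eqSudm=1}), $\int\!v\,\mathrm{d}m_\phi=0$, and by (\ref{eqRu=u}) the identity $\RR_{\overline\phi}^n(u)-c_\phi u_\phi=\RR_{\overline\phi}^n(v)$ holds for every $n\in\N_0$. Applying (\ref{eqRnphiTildeConjugate}) with $w=v/u_\phi$ produces $\RR_{\overline\phi}^n(v)=u_\phi\RR_{\widetilde\phi}^n(w)$ together with $\int\!wu_\phi\,\mathrm{d}m_\phi=\int\!v\,\mathrm{d}m_\phi=0$. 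Using the uniform bounds on $u_\phi$ from (\ref{eqU_phiBounds}) and (\ref{eqUphiInCbh}) (which depend only on $K$) together with Lemma~\ref{lmChbChbSubsetChB}, one checks that $w$ lies in an enlarged class $\CCC_{h'}^{b'}(S^2,d)$ with parameters $h',b'$ depending only on $h$, $b$, and $K$, independently of $\phi\in H$ and of the choice of $m_\phi$. Since $\|u_\phi\|_\infty$ is uniformly bounded on $H$, it suffices to prove the uniform version of (\ref{eqRR^nConv0}) for arbitrary functions in $\CCC_{h'}^{b'}(S^2,d)$ satisfying the corresponding vanishing-integral condition.

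For the uniform version of (\ref{eqRR^nConv0}), Theorem~\ref{thmChbInChb} supplies an abstract modulus of continuity $\widetilde h$ (depending only on $h'$, $b'$, and $\theta$) such that $\RR_{\widetilde\phi}^n(w)\in\CCC_{\widetilde h}^{b'}(S^2,d)$ for all admissible $w$, $\phi$, and $n$. The crucial fact that the vanishing integral is preserved under iteration follows from (\ref{eqRtildePhi*Mu=Mu}): since $\mu_\phi=u_\phi m_\phi$ is $\RR_{\widetilde\phi}^*$-invariant,
\begin{equation*}
\int\!\RR_{\widetilde\phi}^n(w)\,u_\phi\,\mathrm{d}m_\phi=\int\!w\,u_\phi\,\mathrm{d}m_\phi=0,\qquad n\in\N_0.
\end{equation*}
Given any $\delta_1>0$, Lemma~\ref{lmRtildeSupBound} applied with $g=\widetilde h$, the constant $K$, and $\delta_1$ yields $n_0\in\N$ and $\delta_2>0$ such that for $W_k=\RR_{\widetilde\phi}^{kn_0}(w)$, whenever $\|W_k\|_\infty\geq\delta_1$ we have $\|W_{k+1}\|_\infty\leq\|W_k\|_\infty-\delta_2$. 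Starting from $\|W_0\|_\infty\leq b'$ and invoking monotonicity $\|\RR_{\widetilde\phi}^{n+1}(w)\|_\infty\leq\|\RR_{\widetilde\phi}^n(w)\|_\infty$ (Lemma~\ref{lmRtildeNorm=1}), we obtain $\|\RR_{\widetilde\phi}^n(w)\|_\infty<\delta_1$ for every $n\geq\lceil b'/\delta_2\rceil n_0$. Since $\delta_1$ was arbitrary and the threshold depends only on $f$, $d$, $\alpha$, $h$, $b$, and $H$, this establishes the desired uniform convergence.

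The main obstacle is simply the careful bookkeeping required to ensure that every constant appearing above (the bounds and moduli for $u_\phi$, the enlarged class $\CCC_{h'}^{b'}$, the modulus $\widetilde h$, and the thresholds $n_0,\delta_2$) depends only on the permitted parameters and not on the specific $\phi\in H$ or on the particular choice of $m_\phi$. Once this uniformity is in place, the iteration reduces the quantitative decay to finitely many applications of Lemma~\ref{lmRtildeSupBound}, and the qualitative content of the argument is quite short.
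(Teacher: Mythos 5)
Your proof is correct and uses the same ingredients (Theorem~\ref{thmChbInChb}, Lemma~\ref{lmRtildeSupBound}, Lemma~\ref{lmRtildeNorm=1}, the invariance (\ref{eqRtildePhi*Mu=Mu}), and the uniform bounds on $u_\phi$), but the argument for uniform convergence in (\ref{eqRR^nConv0}) is genuinely different from the paper's. The paper introduces the sequence of suprema $a_n=\sup\big\{\Norm{\RR_{\widetilde\phi}^n(u)}_{\infty}\,\big|\,\phi\in H,\ u\in\CCC_h^b,\ \int u\,\mathrm{d}\mu_\phi=0,\ m_\phi\big\}$, notes it is non-increasing, supposes $\lim a_n=a>0$, applies Lemma~\ref{lmRtildeSupBound} with $\delta_1=a/2$, and derives a contradiction by examining the two cases $\Norm{\RR_{\widetilde\phi}^m(u)}_\infty\geq a/2$ and $<a/2$ for $m$ large. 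You instead iterate Lemma~\ref{lmRtildeSupBound} directly: given any $\delta_1>0$ you get $n_0,\delta_2$, and after at most $\lceil b'/\delta_2\rceil$ blocks of length $n_0$ the norm drops below $\delta_1$, with a threshold independent of $u$, $\phi$, $m_\phi$; monotonicity from $\Norm{\RR_{\widetilde\phi}}=1$ locks it below $\delta_1$ thereafter. Your route is constructive and slightly cleaner — it avoids the supremum sequence and the case analysis, and gives an explicit bound $n\geq\lceil b'/\delta_2\rceil n_0$ on when the estimate kicks in — at the cost of a little extra bookkeeping to check that $w=(u-c_\phi u_\phi)/u_\phi$ lives in a controlled class $\CCC_{h'}^{b'}$ uniformly in $\phi\in H$ and $m_\phi$ (which you do correctly via (\ref{eqUphiInCbh}) and Lemma~\ref{lmChbChbSubsetChB}; note also that your $w$ equals the paper's $v=u/u_\phi-\mathbbm{1}\int u/u_\phi\,\mathrm{d}\mu_\phi$, since $\int u/u_\phi\,\mathrm{d}\mu_\phi=\int u\,\mathrm{d}m_\phi$). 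Performing the reduction of (\ref{eqRR^nConv}) to (\ref{eqRR^nConv0}) first rather than last is a presentational choice and does not affect correctness.
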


The equation (\ref{eqRR^nConv0}) demonstrates the contracting behavior of $\RR_{\widetilde\phi}$ on a codimension $1$ subspace of $\CCC(S^2)$.

\begin{proof}
Let $L$ be a linear local connectivity constant of $d$. Fix a constant $K \in (0,+\infty)$ such that $\Hnorm{\alpha}{\phi} \leq K$ for each $\phi\in H$.

Let $M_\phi$ be the set of possible choices of $m_\phi$ from Theorem~\ref{thmMexists}, i.e.,
\begin{equation}
M_\phi  = \{ m\in\PPP(S^2) \,|\, \RR^*_\phi (m) = c m \text{ for some } c\in\R   \}.
\end{equation}

We recall that $\mu_\phi$ defined in Theorem~\ref{thmMuExist} by $\mu_\phi= u_\phi m_\phi$ depends on the choice of $m_\phi$.

Define for each $n\in \N_0$,
$$
a_n = \sup \Big\{ \Norm{\RR_{\widetilde\phi}^n(u)}_{\infty}  \,\Big|\,  \phi \in H, u\in \CCC_h^b(S^2,d),\int\!u\,\mathrm{d}\mu_\phi = 0, m_\phi \in M_\phi \Big\}.
$$
By Lemma~\ref{lmRtildeNorm=1}, $\|\RR_{\widetilde\phi}\|=1$, so $\Norm{\RR_{\widetilde\phi}^n(u)}_{\infty}$ is non-increasing in $n$ for fixed $\phi\in H$ and $u\in\CCC_h^b(S^2,d)$. Note that $a_0\leq b <+\infty$. Thus $\{a_n\}_{n\in\N_0}$ is a non-increasing sequence of non-negative real numbers.

Suppose now that $\lim\limits_{n\to+\infty} a_n = a >0$. By Theorem~\ref{thmChbInChb}, there exists an abstract modulus of continuity $g$ such that
$$
\big\{ \RR_{\widetilde\phi}^n(u) \,\big|\, n\in\N_0 , \phi \in H, u\in \CCC^b_h(S^2,d)  \big\} \subseteq \CCC_g^{b}(S^2,d).
$$
Note that for each $\phi\in H$, each $n\in\N_0$, and each $u\in\CCC^b_h(S^2,d)$ with $\int \! u u_\phi\, \mathrm{d}m_\phi = 0$, we have $\int \! \RR_{\widetilde{\phi}}^n(u) u_\phi \,\mathrm{d}m_\phi = \int \! \RR_{\widetilde{\phi}}^n(u) \,\mathrm{d}\mu_\phi = 0$ by (\ref{eqRtildePhi*Mu=Mu}). So by applying Lemma~\ref{lmRtildeSupBound} with $g,\alpha, K$, and $\delta_1=\frac{a}{2}$, we find constants $n_0 \in \N$ and $\delta_2 > 0$ such that
\begin{equation}
\Norm{\RR_{\widetilde\phi}^{n_0}\Big(\RR_{\widetilde\phi}^n(u)\Big)}_{\infty}   \leq \Norm{\RR_{\widetilde\phi}^n(u)}_{\infty} - \delta_2,
\end{equation}
for each $n\in\N_0$, each $\phi\in H$, each $m_\phi \in M_\phi$, and each $u\in\CCC_h^b(S^2,d)$ with $\int\! u u_\phi \, \mathrm{d} m_\phi = 0$ and $\Norm{\RR_{\widetilde\phi}^n(u)}_{\infty} \geq \frac{a}{2}$. Since $\lim\limits_{n\to+\infty} a_n = a$, we can fix $m>1$ large enough such that $a_m\leq a +\frac{\delta_2}{2}$. Then for each $\phi\in H$, each $m_\phi \in M_\phi$, and each $u\in \CCC_h^b(S^2,d)$ with $\int\! u \, \mathrm{d} \mu_\phi = 0$ and $\Norm{\RR_{\widetilde\phi}^m(u)}_{\infty} \geq \frac{a}{2}$, we have
\begin{equation}
\Norm{\RR_{\widetilde\phi}^{n_0+m}(u)}_{\infty}  \leq \Norm{\RR_{\widetilde\phi}^m(u)}_{\infty} - \delta_2 \leq a_m-\delta_2 \leq a - \frac{\delta_2}{2}.
\end{equation}
On the other hand, since $\Norm{\RR_{\widetilde\phi}^n(u)}_{\infty}$ is non-increasing in $n$, we have that for each $\phi\in H$, each $m_\phi \in M_\phi$, and each $u\in \CCC_h^b(S^2,d)$ with $\int\! u \, \mathrm{d} \mu_\phi = 0$ and $\Norm{\RR_{\widetilde\phi}^m(u)}_{\infty} < \frac{a}{2}$, the following holds:
\begin{equation}
\Norm{\RR_{\widetilde\phi}^{n_0+m}(u)}_{\infty}  \leq \Norm{\RR_{\widetilde\phi}^m(u)}_{\infty} < \frac{a}{2}.
\end{equation}
Thus $a_{n_0+m} \leq \max\big\{a -\frac{\delta_2}{2},\frac{a}{2} \big\} < a$, contradicting the fact that $\{a_n\}_{n\in\N_0}$ is a non-increasing sequence and the assumption that $\lim\limits_{n\to+\infty}a_n = a $. This proves the uniform convergence in (\ref{eqRR^nConv0}).

\smallskip

Next, we prove the uniform convergence in (\ref{eqRR^nConv}). By Lemma~\ref{lmSnPhiBound}, Lemma~\ref{lmSigmaExpSnPhiBound}, Lemma~\ref{lmRtildeNorm=1}, and (\ref{eqRnphiTildeConjugate}), for each $u\in\CCC_h^b(S^2,d)$, each $\phi\in H$, and each $m_\phi \in M_\phi$, we have
\begin{align} \label{eqRR^nConvBound}
     & \Norm{ \RR_{\overline\phi}^n (u) - u_\phi \int\! u\,\mathrm{d}m_\phi}_{\infty}  \\
\leq & \Norm{u_\phi}_{\infty}  \Norm{\frac{1}{u_\phi} \RR_{\overline\phi}^n(u) -\int\! u\,\mathrm{d}m_\phi   }_{\infty} \notag \\
   = & \Norm{u_\phi}_{\infty}  \Norm{ \RR_{\widetilde\phi}^n\(\frac{u}{u_\phi}\)- \int\! \frac{u}{u_\phi}\,\mathrm{d}\mu_\phi   }_{\infty} \notag \\
   = & \Norm{u_\phi}_{\infty}  \Norm{ \RR_{\widetilde\phi}^n \( \frac{u}{u_\phi}- \mathbbm{1} \int\! \frac{u}{u_\phi}\,\mathrm{d}\mu_\phi  \) }_{\infty}.  \notag
\end{align}
By (\ref{eqU_phiBounds}) and (\ref{eqC2Bound}), we have
\begin{equation}  \label{eqU_phiBoundsByK}
\exp\(-C_5 \) \leq \Norm{u_\phi}_{\infty} \leq \exp\(C_5 \),
\end{equation}
where 
$$
C_5 = 4 \frac{K C_0}{1-\Lambda^{-1}} L\( \diam_d (S^2) \)^\alpha,
$$
and $C_0$ is a constant from Lemma~\ref{lmMetricDistortion} depending only on $f$ and $d$. Let $v= \frac{u}{u_\phi}-\mathbbm{1}\int\! \frac{u}{u_\phi}\,\mathrm{d}\mu_\phi$. Then $v$ satisfies
\begin{equation}
\Norm{v}_{\infty} \leq 2 \Norm{\frac{u}{u_\phi}}_{\infty} \leq 2 b \exp\( C_5 \).
\end{equation}
Due to the first inequality in (\ref{eqU_phiBoundsByK}) and the fact that $u_\phi\in \Holder{\alpha}(S^2,d)$ by Theorem~\ref{thmMuExist}, we can apply Lemma~\ref{lmChbChbSubsetChB} and conclude that there exists an abstract modulus of continuity $g$ of $\frac{u}{u_\phi}$ such that $g$ is independent of the choices of $u\in\CCC_h^b(S^2,d)$, $\phi\in H$, and $m_\phi \in M_\phi$. Thus $v \in \CCC_g^{\widehat{b}}(S^2,d)$, where $\widehat{b} =  2 b \exp(C_5)$. Note that $\int\! v u_\phi\,\mathrm{d}m_\phi =\int\! v\,\mathrm{d}\mu_\phi = 0$. Finally, we can apply the uniform convergence in (\ref{eqRR^nConv0}) with $u=v$ to conclude the uniform convergence in (\ref{eqRR^nConv}) by (\ref{eqRR^nConvBound}) and (\ref{eqU_phiBoundsByK}).
\end{proof}

Theorem~\ref{thmRR^nConv} implies in particular the uniqueness of $m_\phi$ and $\mu_\phi$.
\begin{cor}    \label{corMandMuUnique}
Let $f$, $d$, $\phi$, $\alpha$ satisfy the Assumptions. Then the measure $m_\phi\in\PPP(S^2)$ defined in Theorem~\ref{thmMexists} is unique, i.e., $m_\phi$ is the unique Borel probability measure on $S^2$ that satisfies $\RR^*_\phi (m_\phi) = cm_\phi$ for some constant $c\in \R$. Moreover, $\mu_\phi= u_\phi m_\phi$ is the unique Borel probability measure on $S^2$ that satisfies $\RR_{\widetilde\phi}^* (\mu_\phi) = \mu_\phi$. In particular, we have $m_{\widetilde\phi} = \mu_\phi$.
\end{cor}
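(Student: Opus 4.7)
The plan is to deduce all three assertions from Theorem~\ref{thmRR^nConv} together with the eigenvalue identity $\RR_{\widetilde\phi}(\mathbbm{1}) = \mathbbm{1}$ from Lemma~\ref{lmRtildeNorm=1}. The guiding idea is that $\RR_{\widetilde\phi}$ contracts the codimension-one subspace of continuous functions whose integral against $\mu_\phi$ vanishes, so $\mu_\phi$ must be the unique $\RR_{\widetilde\phi}^*$-fixed Borel probability measure.

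First I would establish the uniqueness of the $\RR_{\widetilde\phi}^*$-fixed probability measure. Suppose $\nu\in\PPP(S^2)$ satisfies $\RR_{\widetilde\phi}^*(\nu)=\nu$. For any $u\in\CCC(S^2)$, write $u = v+\mathbbm{1}\int\! u\,\mathrm{d}\mu_\phi$ with $v = u-\mathbbm{1}\int\! u\,\mathrm{d}\mu_\phi$. Using $\mu_\phi=u_\phi m_\phi$ together with (\ref{eqSudm=1}) one checks $\int\! v u_\phi\,\mathrm{d}m_\phi=0$, so the second half of Theorem~\ref{thmRR^nConv} (applied with $H=\{\phi\}$ and the modulus of continuity for $v$ given by (\ref{eqAbsModContForU})) yields $\Norm{\RR_{\widetilde\phi}^n(v)}_\infty\to 0$. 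Combined with $\RR_{\widetilde\phi}(\mathbbm{1})=\mathbbm{1}$, this gives
\[
\int\! u\,\mathrm{d}\nu = \int\! \RR_{\widetilde\phi}^n(u)\,\mathrm{d}\nu = \int\! \RR_{\widetilde\phi}^n(v)\,\mathrm{d}\nu + \int\! u\,\mathrm{d}\mu_\phi \xrightarrow[n\to+\infty]{} \int\! u\,\mathrm{d}\mu_\phi,
\]
so $\nu=\mu_\phi$.

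Next I would derive the uniqueness of $m_\phi$ from that of $\mu_\phi$. Let $m\in\PPP(S^2)$ satisfy $\RR_\phi^*(m)=c'm$ for some $c'\in\R$. Iterating on $\mathbbm{1}$ gives $(c')^n=\int\!\RR_\phi^n(\mathbbm{1})\,\mathrm{d}m$, and the uniform quotient bound (\ref{eqRR1Quot}) applied to $\RR_\phi^n=e^{nD_\phi}\RR_{\overline\phi}^n$ together with Corollary~\ref{corLimitPxExists} forces $\log c'=D_\phi=P(f,\phi)$; in particular $\RR_{\overline\phi}^*(m)=m$. The conjugation identity (\ref{eqRnphiTildeConjugate}) then yields, for every $u\in\CCC(S^2)$,
\[
\int\! u\,\mathrm{d}\bigl(\RR_{\widetilde\phi}^*(u_\phi m)\bigr) = \int\! \RR_{\widetilde\phi}(u) u_\phi\,\mathrm{d}m = \int\! \RR_{\overline\phi}(u u_\phi)\,\mathrm{d}m = \int\! u u_\phi\,\mathrm{d}m = \int\! u\,\mathrm{d}(u_\phi m),
\]
so $u_\phi m$ is $\RR_{\widetilde\phi}^*$-invariant. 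Setting $k=\int\! u_\phi\,\mathrm{d}m>0$, the probability measure $k^{-1}u_\phi m$ equals $\mu_\phi=u_\phi m_\phi$ by the previous step. Since $u_\phi$ is continuous and uniformly bounded away from $0$ by (\ref{eqU_phiBounds}), testing against $h/u_\phi$ for arbitrary $h\in\CCC(S^2)$ converts the identity $u_\phi m = k\, u_\phi m_\phi$ into $m = k\,m_\phi$, and the probability normalization forces $k=1$, hence $m=m_\phi$.

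Finally, the identification $m_{\widetilde\phi}=\mu_\phi$ is immediate: $\widetilde\phi\in\Holder{\alpha}(S^2,d)$ by Lemma~\ref{lmTildePhiIsHolder}, so Theorem~\ref{thmMexists} produces $m_{\widetilde\phi}$ with $\RR_{\widetilde\phi}^*(m_{\widetilde\phi})=c_{\widetilde\phi}\,m_{\widetilde\phi}$; applying (\ref{eqLogc=P}) to $\widetilde\phi$ together with $\RR_{\widetilde\phi}^n(\mathbbm{1})=\mathbbm{1}$ from Lemma~\ref{lmRtildeNorm=1} gives $\log c_{\widetilde\phi}=0$, and then the uniqueness of the $\RR_{\widetilde\phi}^*$-fixed probability measure identifies $m_{\widetilde\phi}=\mu_\phi$. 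The only delicate step in the whole argument is arranging the hypothesis of Theorem~\ref{thmRR^nConv} for the auxiliary function $v$, but this is painless: each continuous $v$ lies in some class $\CCC_h^b(S^2,d)$ via the canonical modulus of continuity (\ref{eqAbsModContForU}), and $H=\{\phi\}$ is trivially bounded in $\Holder{\alpha}(S^2,d)$.
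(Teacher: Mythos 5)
Your proof is correct, but it is organized in the reverse order from the paper's. You first establish uniqueness of the $\RR_{\widetilde\phi}^*$-fixed probability measure via the contraction estimate (\ref{eqRR^nConv0}), and only then derive uniqueness of $m_\phi$ by (a) showing any candidate eigenvalue $c'$ must equal $e^{D_\phi}$ via the quotient bound, (b) conjugating by $u_\phi$ to produce an $\RR_{\widetilde\phi}^*$-fixed probability measure, and (c) invoking the already-established $\mu_\phi$-uniqueness. The paper instead applies (\ref{eqRR^nConv}) directly: since $\RR_{\overline\phi}^n(u) \to u_\phi\int u\,\mathrm{d}m_\phi$ uniformly and this limit cannot depend on the particular choice of $m_\phi$ while $u_\phi$ is bounded away from $0$, the identity $\int u\,\mathrm{d}m_\phi = \int u\,\mathrm{d}\widehat{m}_\phi$ follows for all $u\in\CCC(S^2)$ in one stroke, giving $m_\phi = \widehat{m}_\phi$; it then obtains the $\mu_\phi$ statements by simply specializing this $m_\phi$-uniqueness to the H\"older potential $\widetilde\phi$ and noting $\RR_{\widetilde\phi}^*(\mu_\phi)=\mu_\phi$. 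Both routes rest on Theorem~\ref{thmRR^nConv}, but use different halves of it: the paper uses the first convergence (\ref{eqRR^nConv}) and exploits the arbitrariness of the choice built into that theorem, while you use the codimension-one contraction (\ref{eqRR^nConv0}) and then have to do the additional bookkeeping (eigenvalue identification, conjugation, normalization) to recover $m_\phi$. The paper's argument is shorter; yours foregrounds the more conceptually transparent picture of a unique fixed point for the normalized transfer operator at the cost of an extra layer of translation back to $m_\phi$. I checked the delicate points — that $\int v u_\phi\,\mathrm{d}m_\phi = 0$ indeed follows from (\ref{eqSudm=1}), that $(c')^n = \int\RR_\phi^n(\mathbbm{1})\,\mathrm{d}m > 0$ forces $c' > 0$ and then $\log c' = D_\phi$ by (\ref{eqRR1Quot}) and Corollary~\ref{corLimitPxExists}, and that the conjugation $u_\phi\RR_{\widetilde\phi}(u) = \RR_{\overline\phi}(u u_\phi)$ passes $\RR_{\overline\phi}^*$-invariance of $m$ to $\RR_{\widetilde\phi}^*$-invariance of $u_\phi m$ — and they all hold.
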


\begin{proof}
Let $m_\phi, \widehat{m}_\phi \in \PPP(S^2)$ be two measures, both of which arise from Theorem~\ref{thmMexists}. Recall that for each $u\in\CCC(S^2)$, there exists some abstract modulus of continuity $h$ such that $u\in\CCC_h^{\beta} (S^2,d)$, where $\beta=\Norm{u}_\infty$. Then by (\ref{eqRR^nConv}) and (\ref{eqU_phiBounds}), we see that $\int \! u \,\mathrm{d} m_\phi = \int \! u \,\mathrm{d} \widehat{m}_\phi$ for each $u\in\CCC(S^2)$. Thus $m_\phi = \widehat{m}_\phi$.

By (\ref{eqRtildePhi*Mu=Mu}), $\RR_{\widetilde\phi}^* (\mu_\phi) = \mu_\phi$. Since $\widetilde\phi \in \Holder{\alpha}(S^2,d)$ by Lemma~\ref{lmTildePhiIsHolder}, we get that $\mu_\phi = m_{\widetilde\phi}$ and $\mu_\phi$ is the only Borel probability measure on $S^2$ that satisfies $\RR_{\widetilde\phi}^* (\mu_\phi) = \mu_\phi$.
\end{proof}

\begin{lemma}   \label{lmDerivConv}
Let $f$ and $d$ satisfy the Assumptions. Let $b \geq 0$ be a constant and $h$ an abstract modulus of continuity. Let $H$ be a bounded subset of $\Holder{\alpha}(S^2,d)$ for some $\alpha \in (0,1]$. Then for each $x\in S^2$, each $u\in \CCC_h^b(S^2,d)$, and each $\phi\in H$, we have
\begin{equation}  \label{eqDerivConv}
\lim\limits_{n\to+\infty} \frac{\frac{1}{n}\sum\limits_{y\in f^{-n}(x)} \deg_{f^n}(y) \(S_n u(y)\) \exp(S_n\phi(y))}{\sum\limits_{y\in f^{-n}(x)} \deg_{f^n}(y) \exp(S_n\phi(y))}  =  \int \! u \,\mathrm{d} \mu_\phi.
\end{equation}
Moreover, the convergence is uniform in $x\in S^2$, $u\in \CCC_h^b(S^2,d)$, and $\phi\in H$.
\end{lemma}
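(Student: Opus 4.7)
The plan is to rewrite the ratio in (\ref{eqDerivConv}) as a Ces\`aro-type average of Ruelle operator iterates and then invoke the uniform convergence in Theorem~\ref{thmRR^nConv}. Using (\ref{eqR^nExpr}), the denominator equals $\RR_\phi^n(\mathbbm{1})(x)$, and splitting $S_n u(y)=\sum_{j=0}^{n-1}u(f^j(y))$ writes the numerator as $\sum_{j=0}^{n-1}\RR_\phi^n(u\circ f^j)(x)$. Since $\RR_{\overline\phi}=e^{-P(f,\phi)}\RR_\phi$ by (\ref{eqDefR-}) and Proposition~\ref{propTopPressureDefPreImg}, the common factor $e^{-nP(f,\phi)}$ cancels, so the left-hand side of (\ref{eqDerivConv}) equals
\begin{equation*}
\frac{1}{n}\sum_{j=0}^{n-1}\frac{\RR_{\overline\phi}^n(u\circ f^j)(x)}{\RR_{\overline\phi}^n(\mathbbm{1})(x)}.
\end{equation*}

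A straightforward induction on $j$, based on (\ref{eqRuvf}), establishes the commutation identity $\RR_{\overline\phi}^n(u\circ f^j)=\RR_{\overline\phi}^{n-j}\bigl(u\cdot\RR_{\overline\phi}^j(\mathbbm{1})\bigr)$ for $0\leq j\leq n$. The heuristic is then that, when both $j$ and $n-j$ are large, $\RR_{\overline\phi}^j(\mathbbm{1})$ is uniformly close to $u_\phi$ and $\RR_{\overline\phi}^{n-j}(uu_\phi)$ is uniformly close to $u_\phi\int\!u\,\mathrm{d}\mu_\phi$ (since $\mu_\phi=u_\phi m_\phi$ by Theorem~\ref{thmMuExist}), while the denominator $\RR_{\overline\phi}^n(\mathbbm{1})$ is close to $u_\phi$, so every such ratio lies near $\int\!u\,\mathrm{d}\mu_\phi$. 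The remaining boundary terms (indices $j<M$ or $n-j<M$) will be controlled by the a priori two-sided bound $C_2^{-1}\leq\RR_{\overline\phi}^n(\mathbbm{1})\leq C_2$ from Lemma~\ref{lmR1properties} together with the elementary pointwise estimate $|\RR_{\overline\phi}^k(v)|\leq\|v\|_\infty\RR_{\overline\phi}^k(\mathbbm{1})$; there are at most $2M$ such indices, so after dividing by $n$ they contribute $O(M/n)\to 0$.

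The main obstacle will be keeping every estimate uniform in $\phi\in H$ and $u\in\CCC_h^b(S^2,d)$. To that end, I will observe that Theorem~\ref{thmMuExist} together with the argument behind (\ref{eqUphiInCbh}) in the proof of Theorem~\ref{thmChbInChb} confines $u_\phi$ to a class $\CCC_{h_1}^{b_1}(S^2,d)$ depending only on $f$, $d$, $\alpha$, and the $\Holder{\alpha}$-bound on $H$; then Lemma~\ref{lmChbChbSubsetChB} places $\{uu_\phi\,|\,u\in\CCC_h^b(S^2,d),\,\phi\in H\}$ inside a single fixed class $\CCC_{h_2}^{b_2}(S^2,d)$. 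Theorem~\ref{thmRR^nConv} thus applies uniformly both to $\mathbbm{1}$ and to $uu_\phi$, so for any $\varepsilon>0$ a single $M$ can be chosen with $\|\RR_{\overline\phi}^j(\mathbbm{1})-u_\phi\|_\infty<\varepsilon$ and $\|\RR_{\overline\phi}^{n-j}(uu_\phi)-u_\phi\int\!u\,\mathrm{d}\mu_\phi\|_\infty<\varepsilon$ whenever $\min\{j,n-j\}\geq M$, uniformly in $\phi\in H$, $u\in\CCC_h^b(S^2,d)$, and $x\in S^2$. For such $j$, the commutation identity yields $|\RR_{\overline\phi}^n(u\circ f^j)(x)-u_\phi(x)\int\!u\,\mathrm{d}\mu_\phi|\leq\varepsilon+C_2\|u\|_\infty\varepsilon$, which combined with $|\RR_{\overline\phi}^n(\mathbbm{1})(x)-u_\phi(x)|<\varepsilon$ and $\RR_{\overline\phi}^n(\mathbbm{1})(x)\geq C_2^{-1}$ places each middle-index ratio within $O(\varepsilon)$ of $\int\!u\,\mathrm{d}\mu_\phi$. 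Averaging over $j$ and letting $n\to\infty$, then $\varepsilon\to 0$, completes the proof.
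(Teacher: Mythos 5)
Your argument is correct and follows essentially the same route as the paper's proof: after writing the ratio as a Ces\`aro average $\frac{1}{n}\sum_{j=0}^{n-1}\RR_{\overline\phi}^{n-j}(u\,\RR_{\overline\phi}^j(\mathbbm{1}))(x)/\RR_{\overline\phi}^n(\mathbbm{1})(x)$, both proofs invoke Theorem~\ref{thmRR^nConv} with the equicontinuity inputs from Theorem~\ref{thmChbInChb} and Lemma~\ref{lmChbChbSubsetChB}, use the a priori bounds on $\RR_{\overline\phi}^n(\mathbbm{1})$ to absorb the boundary indices in the Ces\`aro sum, and pass to $\int u\,\mathrm{d}\mu_\phi$ via $\mu_\phi=u_\phi m_\phi$. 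The only cosmetic difference is that the paper applies Theorem~\ref{thmRR^nConv} directly to the $j$-dependent family $\{u\,\RR_{\overline\phi}^j(\mathbbm{1})\}$ (getting the $j$-dependent target $u_\phi\int u\,\RR_{\overline\phi}^j(\mathbbm{1})\,\mathrm{d}m_\phi$ and then averaging), whereas you first replace $\RR_{\overline\phi}^j(\mathbbm{1})$ by $u_\phi$ via $\|\RR_{\overline\phi}^{n-j}\|\le C_2$ and then apply Theorem~\ref{thmRR^nConv} to $uu_\phi$; both are valid and require the same uniformity inputs.
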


\begin{proof}
By (\ref{eqR^nExpr}) and (\ref{eqLocalDegreeProduct}), for $x\in S^2$, $u\in \CCC_h^b(S^2,d)$, $\phi\in H$, and $n\in\N$,
\begin{align*}
  & \frac{\frac{1}{n}\sum\limits_{y\in f^{-n}(x)} \deg_{f^n}(y) \(S_n u(y)\) \exp(S_n\phi(y))}{\sum\limits_{y\in f^{-n}(x)} \deg_{f^n}(y) \exp(S_n\phi(y))} \\
 =& \frac{\frac{1}{n} \sum\limits_{j=0}^{n-1} \sum\limits_{y\in f^{-n}(x)}  \deg_{f^n}(y) u(f^j (y)) \exp(S_n\phi(y))}{\RR_\phi^n(\mathbbm{1})(x)}   \\
 =& \frac{\frac{1}{n} \sum\limits_{j=0}^{n-1} \sum\limits_{z\in f^{-(n-j)}(x)} \sum\limits_{y\in f^{-j}(z)}  \deg_{f^{n-j}}(z)\deg_{f^j}(y) u(z) e^{S_j\phi(y)+S_{n-j}\phi(z)}}{\RR_\phi^n(\mathbbm{1})(x)}   \\
 =&  \frac{\frac{1}{n} \sum\limits_{j=0}^{n-1} \sum\limits_{z\in f^{-(n-j)}(x)} \deg_{f^{n-j}}(z) u(z)\RR_\phi^j(\mathbbm{1})(z)\exp(S_{n-j}\phi(z))}{\RR_\phi^n(\mathbbm{1})(x)}   \\
 =& \frac{\frac{1}{n} \sum\limits_{j=0}^{n-1} \RR_\phi^{n-j} \Big(u  \RR_\phi^j(\mathbbm{1})\Big)(x)}{\RR_\phi^n(\mathbbm{1})(x)}   \\
 =& \frac{\frac{1}{n} \sum\limits_{j=0}^{n-1} \RR_{\overline\phi}^{n-j} \Big(u  \RR_{\overline\phi}^j(\mathbbm{1})\Big)(x)}{\RR_{\overline\phi}^n(\mathbbm{1})(x)}.
\end{align*}
By Theorem~\ref{thmChbInChb}, $\big\{\RR_{\overline\phi}^n(\mathbbm{1}) \,|\, n\in \N_0 \big\}  \subseteq \CCC_{\widehat{h}}^{\widehat{b}}(S^2,d)$, for some constant $\widehat{b}\geq 0$ and some abstract modulus of continuity $\widehat{h}$, which are independent of the choice of $\phi\in H$. Thus by Lemma~\ref{lmChbChbSubsetChB},
\begin{equation}  \label{eqUR^n1}
\big\{u \RR_{\overline\phi}^n(\mathbbm{1}) \,|\, n\in \N_0, u\in \CCC_h^b(S^2,d) \big\}  \subseteq \CCC_{h_1}^{b_1}(S^2,d),
\end{equation}
for some constant $b_1\geq 0$ and some abstract modulus of continuity $h_1$, which are independent of the choice of $\phi\in H$.

Hence, by Theorem~\ref{thmRR^nConv} and Corollary~\ref{corMandMuUnique}, we have
\begin{equation}  \label{eqRRphi^n1ConvToUphi}
\Norm{\RR_{\overline\phi}^l(\mathbbm{1}) - u_\phi}_{\infty} \longrightarrow 0,
\end{equation}
and
\begin{equation}  \label{eqURRphi^nBounds}
\Norm{\RR_{\overline\phi}^l \(u\RR_{\overline\phi}^j(\mathbbm{1})\)- u_\phi \int \! u \RR_{\overline\phi}^j (\mathbbm{1}) \,\mathrm{d}m_\phi}_{\infty} \longrightarrow 0,
\end{equation}
as $l\longrightarrow +\infty$, uniformly in $j\in\N_0,\phi\in H$, and $u\in\CCC_h^b(S^2,d)$.

Fix a constant $K\in(0,+\infty)$ such that for each $\phi\in H$, $\Hnorm{\alpha}{\phi} \leq K$. By (\ref{eqU_phiBounds}) and (\ref{eqC2Bound}), we have that for each $x\in S^2$,
\begin{equation}   \label{eqUphiBoundsC5}
\exp (- C_5  ) \leq u_\phi(x) \leq \exp ( C_5 ),
\end{equation}
where
$$
C_5 = 4 \frac{K C_0}{1-\Lambda^{-1}} L\( \diam_d (S^2) \)^\alpha,
$$
and $C_0\geq 1$ is a constant from Lemma~\ref{lmMetricDistortion} depending only on $f$ and $d$. So by (\ref{eqUR^n1}), we get that for $j\in\N_0$, $u\in\CCC_h^b(S^2,d)$, and $\phi\in H$,
\begin{equation}  \label{eqUphiULBounds}
\Norm{u_\phi \int \! u \RR_{\overline\phi}^j (\mathbbm{1}) \,\mathrm{d}m_\phi}_{\infty} \leq \Norm{u_\phi}_{\infty}\Norm{u\RR_{\overline\phi}^j (\mathbbm{1})}_{\infty}  \leq b_1 \exp ( C_5 ) .
\end{equation}
By (\ref{eqChbInChb1}) in Theorem~\ref{thmChbInChb} and (\ref{eqUR^n1}), we get some constant $b_2 >0$ such that for all $j,l\in\N_0$, each $u\in\CCC_h^b(S^2,d)$, and each $\phi\in H$,
\begin{equation}   \label{eqLuLBounds}
\Norm{\RR_{\overline\phi}^l \(u \RR_{\overline\phi}^j (\mathbbm{1}) \)}_\infty < b_2.
\end{equation}
Hence we can conclude from (\ref{eqUphiULBounds}), (\ref{eqLuLBounds}), and (\ref{eqURRphi^nBounds}) that
\begin{equation*}
 \lim\limits_{n\to+\infty} \frac{1}{n}  \bigg| \sum\limits_{j=0}^{n-1} \RR_{\overline\phi}^{n-j} \(u  \RR_{\overline\phi}^j(\mathbbm{1})\)(x)   -   \sum\limits_{j=0}^{n-1}u_\phi(x) \int \! u \RR_{\overline\phi}^j (\mathbbm{1}) \,\mathrm{d}m_\phi \bigg|= 0,
\end{equation*}
uniformly in $u\in\CCC_h^b(S^2,d),\phi\in H$, and $x\in S^2$. Thus by (\ref{eqRRphi^n1ConvToUphi}) and (\ref{eqUphiBoundsC5}), we have
\begin{equation*}
 \lim\limits_{n\to+\infty} \Bigg| \frac{\frac{1}{n} \sum\limits_{j=0}^{n-1} \RR_{\overline\phi}^{n-j} \(u  \RR_{\overline\phi}^j(\mathbbm{1})\)(x)}{\RR_{\overline\phi}^n(\mathbbm{1})(x)} -  \frac{\frac{1}{n} \sum\limits_{j=0}^{n-1}u_\phi(x) \displaystyle\int \! u \RR_{\overline\phi}^j (\mathbbm{1}) \,\mathrm{d}m_\phi}{u_\phi(x)} \Bigg| = 0,
\end{equation*}
uniformly in $u\in\CCC_h^b(S^2,d),\phi\in H$, and $x\in S^2$. Combining the above with (\ref{eqUR^n1}), (\ref{eqRRphi^n1ConvToUphi}), (\ref{eqUphiBoundsC5}), and the calculation in the beginning part of the proof, we can conclude, therefore, that the left-hand side of (\ref{eqDerivConv}) is equal to
\begin{equation*}
\lim\limits_{n\to+\infty} \frac{1}{n} \sum\limits_{j=0}^{n-1}\int \! u \RR_{\overline\phi}^j (\mathbbm{1}) \,\mathrm{d}m_\phi = \lim\limits_{n\to+\infty} \frac{1}{n} \sum\limits_{j=0}^{n-1}\int \! u u_\phi \,\mathrm{d}m_\phi = \int \! u\,\mathrm{d}\mu_\phi,
\end{equation*}
and the convergence is uniform in $u\in\CCC_h^b(S^2,d)$ and $\phi\in H$.
\end{proof}

We record the following well-known fact for the convenience of the reader.

\begin{lemma}   \label{lmHolderDense}
For each metric $d$ on $S^2$ that generates the standard topology on $S^2$ and each $\alpha\in(0,1]$, $\Holder{\alpha}(S^2,d)$ is a dense subset of $\CCC(S^2)$ with respect to the uniform norm. In particular, $\Holder{\alpha}(S^2,d)$ is a dense subset of $\CCC(S^2)$ in the weak topology.
\end{lemma}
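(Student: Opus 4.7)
The plan is to apply the Stone--Weierstrass theorem to $\Holder{\alpha}(S^2,d)$ viewed as a subset of $\CCC(S^2)$ equipped with the uniform norm. Since $S^2$ is compact and Hausdorff, Stone--Weierstrass reduces the claim to verifying three elementary facts about $\Holder{\alpha}(S^2,d)$: that it is a subalgebra of $\CCC(S^2)$, that it contains the constants, and that it separates points.

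First I would observe that $\Holder{\alpha}(S^2,d)$ is closed under pointwise addition and scalar multiplication (with $\Hseminorm{\alpha}{\cdot}$ being a seminorm) and under pointwise multiplication, essentially by the same product rule argument as in Lemma~\ref{lmChbChbSubsetChB}: if $u,v\in\Holder{\alpha}(S^2,d)$ then
\[
\abs{u(x)v(x)-u(y)v(y)} \leq \Norm{u}_\infty \Hseminorm{\alpha}{v} d(x,y)^\alpha + \Norm{v}_\infty \Hseminorm{\alpha}{u} d(x,y)^\alpha.
\]
Constants are trivially in $\Holder{\alpha}(S^2,d)$ with vanishing seminorm.

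The main (and only mildly substantive) point is to exhibit enough Hölder functions to separate points. For each fixed $p\in S^2$, define $\varphi_p(z)=d(z,p)^\alpha$. Using the inequality $\abs{a^\alpha-b^\alpha}\leq \abs{a-b}^\alpha$ for $a,b\geq 0$ and $\alpha\in(0,1]$, together with the reverse triangle inequality, one gets
\[
\abs{\varphi_p(x)-\varphi_p(y)} \leq \abs{d(x,p)-d(y,p)}^\alpha \leq d(x,y)^\alpha,
\]
so $\varphi_p\in\Holder{\alpha}(S^2,d)$ with $\Hseminorm{\alpha}{\varphi_p}\leq 1$. Given distinct $x,y\in S^2$, the function $\varphi_x$ satisfies $\varphi_x(x)=0\neq\varphi_x(y)$, so $\Holder{\alpha}(S^2,d)$ separates points. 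The Stone--Weierstrass theorem now yields that $\Holder{\alpha}(S^2,d)$ is dense in $\CCC(S^2)$ with respect to the uniform norm.

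For the last assertion, I would simply note that the weak topology on $\CCC(S^2)$ is coarser than the norm topology, so every uniform neighborhood of a point contains a weak neighborhood; hence the uniform closure of any subset is contained in its weak closure. In particular, the weak closure of $\Holder{\alpha}(S^2,d)$ is all of $\CCC(S^2)$. No real obstacle is expected in any step; this is a short density-by-Stone--Weierstrass argument in which only the construction of the separating Hölder functions $\varphi_p$ requires a brief calculation.
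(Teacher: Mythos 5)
Your proof is correct, but it takes a genuinely different route from the paper. The paper's proof is a one-liner: it cites the standard fact that Lipschitz functions are dense in $\CCC(S^2)$ with respect to the uniform norm (from Heinonen's book), relying implicitly on the observation that since $S^2$ is bounded, Lipschitz functions lie in $\Holder{\alpha}(S^2,d)$ for every $\alpha\in(0,1]$. You instead give a self-contained Stone--Weierstrass argument: you verify that $\Holder{\alpha}(S^2,d)$ is a subalgebra of $\CCC(S^2)$ containing the constants, and you exhibit the separating functions $\varphi_p(z)=d(z,p)^\alpha$, whose $\alpha$-H\"older bound follows from the subadditivity inequality $\abs{a^\alpha-b^\alpha}\le\abs{a-b}^\alpha$ and the reverse triangle inequality. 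Both approaches are standard and both are valid; yours is longer but does not rely on an external reference, and it also shows directly that $\Holder{\alpha}(S^2,d)$ is dense as a subalgebra, which is slightly more structure than the bare density statement.

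One small slip in the last paragraph: you write that ``every uniform neighborhood of a point contains a weak neighborhood,'' which is false in an infinite-dimensional Banach space (weak-open sets are unbounded, while one can take arbitrarily small norm balls), and if it were true it would give the opposite inclusion of closures. The correct observation, which does yield your stated conclusion, is the reverse: since the weak topology is coarser, every weak neighborhood is a norm neighborhood, so any point in the norm closure of a set is also in its weak closure. The conclusion you reach (uniform closure $\subseteq$ weak closure, hence norm density implies weak density) is correct; only the intermediate justification should be replaced by this corrected statement.
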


\begin{proof}
The lemma follows from the fact that the set of Lipschitz functions are dense in $\CCC(S^2)$ with respect to the uniform norm (see for example, \cite[Theorem~6.8]{He01}).
\end{proof}

\begin{theorem}  \label{thmPisDiff}
Let $f\:S^2 \rightarrow S^2$ be an expanding Thurston map, and $d$ be a visual metric on $S^2$ for $f$. Let $\phi,\gamma \in \Holder{\alpha}(S^2,d)$ be real-valued H\"{o}lder continuous functions with an exponent $\alpha\in(0,1]$. Then for each $t\in \R$, we have
\begin{equation}
\frac{\mathrm{d}}{\mathrm{d}t} P(f,\phi + t\gamma) = \int \!\gamma \,\mathrm{d}\mu_{\phi + t\gamma}.
\end{equation}

\end{theorem}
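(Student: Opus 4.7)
The plan is to differentiate under the limit using the pre-image formula for topological pressure. Set $\psi_t = \phi + t\gamma$ and, for each $x\in S^2$, let
\begin{equation*}
Z_n(t,x) = \sum_{y\in f^{-n}(x)} \deg_{f^n}(y) \exp(S_n\psi_t(y)), \qquad P_n(t,x) = \tfrac{1}{n}\log Z_n(t,x).
\end{equation*}
By Proposition~\ref{propTopPressureDefPreImg}, for every fixed $x\in S^2$, $P_n(t,x) \to P(f,\psi_t)$ as $n\to+\infty$. For every fixed $n\in\N$ and $x$, the function $P_n(\cdot,x)$ is smooth (being the logarithm of a finite sum of exponentials of affine functions of $t$), and a direct computation using $\frac{\mathrm{d}}{\mathrm{d}t}S_n\psi_t = S_n\gamma$ gives
\begin{equation*}
\frac{\partial}{\partial t} P_n(t,x) = \frac{1}{n} \cdot \frac{\sum_{y\in f^{-n}(x)} \deg_{f^n}(y) (S_n\gamma(y)) \exp(S_n\psi_t(y))}{Z_n(t,x)}.
\end{equation*}

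Fix an arbitrary $t_0\in\R$ and $\epsilon>0$, and set $H = \{\psi_t \,|\, t\in[t_0-\epsilon,t_0+\epsilon]\}$. Since $\Hnorm{\alpha}{\psi_t} \le \Hnorm{\alpha}{\phi} + (|t_0|+\epsilon)\Hnorm{\alpha}{\gamma}$, the set $H$ is a bounded subset of $\Holder{\alpha}(S^2,d)$. Set $b = \Norm{\gamma}_{\infty}$ and let $h$ be the abstract modulus of continuity for $\gamma$ defined as in (\ref{eqAbsModContForU}), so that $\gamma \in \CCC_h^b(S^2,d)$. Then Lemma~\ref{lmDerivConv} applied with $u=\gamma$ yields
\begin{equation*}
\lim_{n\to+\infty} \frac{\partial}{\partial t} P_n(t,x) = \int \! \gamma \,\mathrm{d}\mu_{\psi_t},
\end{equation*}
and, crucially, the convergence is uniform in $t\in[t_0-\epsilon,t_0+\epsilon]$ (and in $x\in S^2$).

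Finally, I invoke the standard real-analysis fact: if $\{g_n\}$ is a sequence of $C^1$ functions on an interval $I$ converging pointwise to a function $g$, and the derivatives $g_n'$ converge uniformly on $I$ to a function $G$, then $g$ is $C^1$ on $I$ with $g'=G$. Applying this on $I=[t_0-\epsilon,t_0+\epsilon]$ to $g_n(t) = P_n(t,x)$, $g(t)=P(f,\psi_t)$, and $G(t) = \int \gamma \, \mathrm{d}\mu_{\psi_t}$, one concludes that $t\mapsto P(f,\psi_t)$ is differentiable at $t_0$ with derivative $\int \gamma\,\mathrm{d}\mu_{\psi_{t_0}}$. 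Since $t_0$ is arbitrary, the theorem follows.

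No serious obstacle remains: the work has already been done in Lemma~\ref{lmDerivConv}, whose proof furnished precisely the uniform convergence of derivatives needed to pass the derivative through the limit defining the topological pressure. The only small check is that $H$ lies in a bounded set of $\Holder{\alpha}(S^2,d)$ and that $\gamma$ lies in a single class $\CCC_h^b(S^2,d)$, both of which are immediate.
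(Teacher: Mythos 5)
Your proposal is correct and takes essentially the same route as the paper: you use Proposition~\ref{propTopPressureDefPreImg} to express $P(f,\phi+t\gamma)$ as a pointwise limit of the smooth functions $P_n(t,x)$, apply Lemma~\ref{lmDerivConv} (with the boundedness of $H=\{\phi+t\gamma\}$ in $\Holder{\alpha}(S^2,d)$ and $\gamma\in\CCC_h^b(S^2,d)$) to get uniform convergence of the derivatives, and then invoke the standard theorem on differentiating a pointwise limit with uniformly convergent derivatives. The only cosmetic difference is that you work on $[t_0-\epsilon,t_0+\epsilon]$ for arbitrary $t_0$ rather than $(-l,l)$ for arbitrary $l$, which is immaterial.
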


\begin{proof}
We will use the well-known fact from real analysis that if a sequence $\{g_n\}_{n\in\N}$ of real-valued differentiable functions defined on a finite interval in $\R$ converges pointwise to some function $g$ and the sequence of the corresponding derivatives $\big\{\frac{\mathrm{d}g_n}{\mathrm{d}t} \big\}_{n\in\N}$ converges uniformly to some function $h$, then $g$ is differentiable and $\frac{\mathrm{d}g}{\mathrm{d}t} = h$.

Fix a point $x\in S^2$ and a constant $l \in (0,+\infty)$. For $n\in\N$ and $t\in\R$, define
\begin{equation}
P_n(t) = \frac{1}{n} \log \sum\limits_{y\in f^{-n}(x)} \deg_{f^n}(y)\exp (S_n(\phi + t\gamma)(y)).
\end{equation}

Observe that there exists a bounded subset $H$ of $\Holder{\alpha}(S^2,d)$ such that $\phi + t\gamma \in H$ for each $t\in (-l,l)$. Then by Lemma~\ref{lmDerivConv},
\begin{equation}
\frac{\mathrm{d}P_n}{\mathrm{d}t} (t) = \frac{\frac{1}{n}  \sum\limits_{y\in f^{-n}(x)}\deg_{f^n}(y)(S_n\gamma(y))\exp(S_n(\phi + t\gamma)(y))}{ \sum\limits_{y\in f^{-n}(x)}\deg_{f^n}(y)\exp(S_n(\phi + t\gamma)(y))}
\end{equation}
converges to $\int \! \gamma \,\mathrm{d}\mu_{\phi + t\gamma}$ as $n\longrightarrow +\infty$, uniformly in $t\in (-l,l)$.

On the other hand, by Proposition~\ref{propTopPressureDefPreImg}, for each $t\in (-l,l)$, we have 
\begin{equation}
\lim\limits_{n\to+\infty}P_n(t) = P(f,\phi + t\gamma).
\end{equation}

Hence $P(f,\phi + t\gamma)$ is differentiable with respect to $t$ on $(-l,l)$ and
$$
\frac{\mathrm{d}}{\mathrm{d}t} P(f,\phi + t\gamma) =\lim\limits_{n\to+\infty} \frac{\mathrm{d}P_n}{\mathrm{d}t} (t) = \int \!\gamma \,\mathrm{d}\mu_{\phi + t\gamma}.
$$

Since $l\in (0,+\infty)$ is arbitrary, the proof is complete.
\end{proof}

\begin{theorem}  \label{thmUniqueES}
Let $f\:S^2 \rightarrow S^2$ be an expanding Thurston map and $d$ be a visual metric on $S^2$ for $f$. Let $\phi\in \Holder{\alpha}(S^2,d)$ be a real-valued H\"{o}lder continuous function with an exponent $\alpha\in(0,1]$. Then there exists a unique equilibrium state $\mu_\phi$ for $f$ and $\phi$. Moreover, the map $f$ with respect to $\mu_\phi$ is forward quasi-invariant (i.e., for each Borel set $A\subseteq S^2$, if $\mu_\phi(A)=0$, then $\mu_\phi(f(A))=0$), and nonsingular (i.e., for each Borel set $A\subseteq S^2$, $\mu_\phi(A)=0$ if and only if $\mu_\phi(f^{-1}(A))=0$).
\end{theorem}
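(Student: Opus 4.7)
The plan is to combine the existence result already obtained in Corollary~\ref{corExistES} with a G\^{a}teaux differentiability argument to get uniqueness, and to transfer the regularity properties from $m_\phi$ to $\mu_\phi$ via mutual absolute continuity.

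First I would recall that by Corollary~\ref{corExistES}, the measure $\mu_\phi = u_\phi m_\phi$ constructed in Theorem~\ref{thmMuExist} is an equilibrium state for $f$ and $\phi$, so existence is immediate. For uniqueness, I would apply the functional-analytic criterion in Theorem~\ref{thmUniqueTangent} to the separable Banach space $V = \CCC(S^2)$ and the convex continuous function $Q = P(f,\cdot)\colon \CCC(S^2)\to\R$ at the point $\phi$. The set $U = \Holder{\alpha}(S^2,d)$ is dense in $\CCC(S^2)$ in the uniform (hence weak) topology by Lemma~\ref{lmHolderDense}, and Theorem~\ref{thmPisDiff} tells us that for every $\gamma\in\Holder{\alpha}(S^2,d)$ the map $t\longmapsto P(f,\phi+t\gamma)$ is differentiable at $0$. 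Thus condition (3) of Theorem~\ref{thmUniqueTangent} is met, so the set $V^*_{\phi,P(f,\cdot)}$ of continuous linear functionals tangent to $P(f,\cdot)$ at $\phi$ consists of exactly one element.

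Next I would recall the standard fact (used earlier in this section) that if $\mu\in\MMM(S^2,f)$ is any equilibrium state for $f$ and $\phi$, then the linear functional $u\longmapsto\int\! u\,\mathrm{d}\mu$ lies in $V^*_{\phi,P(f,\cdot)}$: indeed, the Variational Principle gives $P(f,\phi+\gamma)\geq h_\mu(f)+\int\!(\phi+\gamma)\,\mathrm{d}\mu$ while $P(f,\phi)=h_\mu(f)+\int\!\phi\,\mathrm{d}\mu$, and subtracting yields the tangent inequality (\ref{eqDefTangent}). Applying this both to an arbitrary equilibrium state $\mu$ and to $\mu_\phi$, and using that $V^*_{\phi,P(f,\cdot)}$ is a singleton, we obtain $\int\! u\,\mathrm{d}\mu = \int\! u\,\mathrm{d}\mu_\phi$ for every $u\in\CCC(S^2)$, whence $\mu = \mu_\phi$ by the Riesz representation theorem.

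Finally, I would transfer forward quasi-invariance and nonsingularity from $m_\phi$ to $\mu_\phi$. By Theorem~\ref{thmMuExist} (see (\ref{eqU_phiBounds})) we have $C_2^{-1}\le u_\phi(x)\le C_2$ for all $x\in S^2$, so $\mu_\phi = u_\phi m_\phi$ and $m_\phi$ are mutually absolutely continuous and share the same null sets. Since Theorem~\ref{thmMexists}(iii) asserts that $f$ is forward quasi-invariant and nonsingular with respect to $m_\phi$, the corresponding statements for $\mu_\phi$ follow immediately. No serious obstacle is expected in this proof since the heavy lifting has been done in Theorem~\ref{thmRR^nConv}, Lemma~\ref{lmDerivConv}, and Theorem~\ref{thmPisDiff}; the main point to be careful about is simply to verify the hypotheses of the abstract functional-analytic theorem on the correct separable Banach space.
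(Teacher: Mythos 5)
Your proof is correct, and the uniqueness part matches the paper's argument exactly: apply Theorem~\ref{thmUniqueTangent} with $V=\CCC(S^2)$, $Q=P(f,\cdot)$, $U=\Holder{\alpha}(S^2,d)$ (weakly dense by Lemma~\ref{lmHolderDense}), invoking Theorem~\ref{thmPisDiff} for differentiability, and then observe via the Variational Principle that any equilibrium state gives a tangent functional, forcing equality with $\mu_\phi$. The only place you diverge from the paper is the last step. The paper establishes forward quasi-invariance and nonsingularity by invoking the identity $\mu_\phi = m_{\widetilde\phi}$ from Corollary~\ref{corMandMuUnique} together with Lemma~\ref{lmTildePhiIsHolder} (so $\widetilde\phi$ is H\"older) and applying Theorem~\ref{thmMexists}(iii) directly to the potential $\widetilde\phi$. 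You instead note that $\mu_\phi = u_\phi m_\phi$ with $C_2^{-1} \le u_\phi \le C_2$ by (\ref{eqU_phiBounds}), so $\mu_\phi$ and $m_\phi$ share null sets, and then transfer Theorem~\ref{thmMexists}(iii) from $m_\phi$ to $\mu_\phi$. Both routes are valid; yours is marginally more economical, as it bypasses $\widetilde\phi$, Lemma~\ref{lmTildePhiIsHolder}, and Corollary~\ref{corMandMuUnique} entirely, requiring only the density bounds for $u_\phi$ already established in Theorem~\ref{thmMuExist}.
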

\begin{proof}
The existence is proved in Corollary~\ref{corExistES}.

We now prove the uniqueness.

Since $\phi \in \Holder{\alpha}(S^2,d)$, by Theorem~\ref{thmPisDiff} the function
$$
t\longmapsto P(f,\phi + t\gamma)
$$
is differentiable at $0$ for $\gamma \in \Holder{\alpha}(S^2,d)$. Recall that by Lemma~\ref{lmHolderDense} $\Holder{\alpha}(S^2,d)$ is dense in $\CCC(S^2)$ in the weak topology. We note that the topological pressure function $P(f,\cdot) \: \CCC(S^2)\rightarrow\R$ is convex continuous (see for example, \cite[Theorem~3.6.1 and Theorem~3.6.2]{PU10}). Thus by Theorem~\ref{thmUniqueTangent} with $V=\CCC(S^2)$, $x=\phi$, $U=\Holder{\alpha}(S^2,d)$, and $Q=P(f,\cdot)$, we get $\card \big( V_{\phi,P(f,\cdot)}^* \big) = 1$.

On the other hand, if $\mu$ is an equilibrium state for $f$ and $\phi$, then by (\ref{defMeasTheoPressure}) and (\ref{eqVPPressure}),
\begin{equation*}
 h_\mu (f)  + \int \! \phi \,\mathrm{d}\mu= P(f,\phi),
\end{equation*}
and for each $\gamma \in \CCC(S^2)$,
\begin{equation*}
 h_\mu (f)  + \int \! (\phi + \gamma)  \,\mathrm{d}\mu  \leq  P(f,\phi + \gamma).
\end{equation*}
So $\int \! \gamma \,\mathrm{d}\mu \leq P(f,\phi + \gamma) - P(f,\phi)$. Thus by (\ref{eqDefTangent}), the continuous functional $\gamma\longmapsto \int\! \gamma\,\mathrm{d}\mu$ on $\CCC(S^2)$ is in $V_{\phi,P(f,\cdot)}^*$. Since $\mu_\phi=u_\phi m_\phi$ defined in Theorem~\ref{thmMuExist} is an equilibrium state for $f$ and $\phi$, and $\card \big( V_{\phi,P(f,\cdot)}^* \big) = 1$, we get that each equilibrium state $\mu$ for $f$ and $\phi$ must satisfy $\int\! \gamma\,\mathrm{d}\mu = \int\! \gamma\,\mathrm{d}\mu_\phi$ for $\gamma\in\CC(S^2)$, i.e., $\mu=\mu_\phi$. 

The fact that the map $f$ is forward quasi-invariant and nonsingular with respect to $\mu_\phi$ follows from the corresponding result for $m_\phi$ in Theorem~\ref{thmMexists}, Lemma~\ref{lmTildePhiIsHolder}, and the fact that $m_{\widetilde\phi} = \mu_\phi$ from Corollary~\ref{corMandMuUnique}.
\end{proof}

\begin{remark}
Since the entropy map $\mu \longmapsto h_\mu(f)$ for an expanding Thurston map $f$ is \defn{affine} (see for example, \cite[Theorem~8.1]{Wa82}), i.e., if $\mu,\nu\in\MMM(S^2,f)$ and $p\in[0,1]$, then $h_{p\mu+(1-p)\nu}(f) = p h_\mu(f) + (1-p) h_\nu(f)$, so is the pressure map $\mu\longmapsto P_\mu(f,\phi)$ for $f$ and a H\"older continuous potential $\phi\: S^2\rightarrow \R$. Thus the uniqueness of the equilibrium state $\mu_\phi$ and the Variational Principle (\ref{eqVPPressure}) imply that $\mu_\phi$ is an extreme point of the convex set $\MMM(S^2,f)$. It follows from the fact (see for example, \cite[Theorem~2.2.8]{PU10}) that the extreme points of $\MMM(S^2,f)$ are exactly the ergodic measures in $\MMM(S^2,f)$ that $\mu_\phi$ is ergodic. However, we are going to prove a much stronger ergodic property of $\mu_\phi$ in Section~\ref{sctErgodicity}.
\end{remark}

The following proposition is an immediate consequence of Theorem~\ref{thmRR^nConv}.

\begin{prop}    \label{propWeakConvR*^nProbToMu}
Let $f$, $d$, $\phi$ satisfy the Assumptions. Let $\mu_\phi$ be the unique equilibrium state for $f$ and $\phi$. Then for each Borel probability measure $\mu\in\PPP(S^2)$, we have
\begin{equation}  \label{eqWeakConvR*^nProbToMu}
\big(\RR^*_{\widetilde\phi} \big)^n (\mu)  \stackrel{w^*}{\longrightarrow} \mu_\phi \text{ as } n\longrightarrow +\infty.
\end{equation}
\end{prop}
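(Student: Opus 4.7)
The plan is to obtain the proposition as a direct corollary of Theorem~\ref{thmRR^nConv} applied to the (H\"older continuous) potential $\widetilde\phi$ in place of $\phi$, combined with the duality relation $\big\langle (\RR^*_{\widetilde\phi})^n(\mu), u \big\rangle = \big\langle \mu, \RR^n_{\widetilde\phi}(u) \big\rangle$ from Lemma~\ref{lmRmuLocal}. The key point is that $\widetilde\phi$ is itself a H\"older continuous potential to which all the machinery of Section~\ref{sctExistence} and Section~\ref{sctUniqueness} applies, but with much better book-keeping: one has $\RR_{\widetilde\phi}(\mathbbm{1}) = \mathbbm{1}$ (Lemma~\ref{lmRtildeNorm=1}), hence by Proposition~\ref{propTopPressureDefPreImg} the pressure $P(f,\widetilde\phi) = \lim\frac{1}{n}\log\RR^n_{\widetilde\phi}(\mathbbm{1})(y) = 0$, so $\overline{\widetilde\phi} = \widetilde\phi$; the eigenfunction $u_{\widetilde\phi}$ of $\RR_{\overline{\widetilde\phi}} = \RR_{\widetilde\phi}$ is (by uniqueness up to scaling, together with $\int\mathbbm{1}\,dm_{\widetilde\phi} = 1$) equal to $\mathbbm{1}$; and the eigenmeasure is $m_{\widetilde\phi} = \mu_\phi$ by Corollary~\ref{corMandMuUnique}.

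With these identifications in hand, Theorem~\ref{thmRR^nConv} applied to the potential $\widetilde\phi$ gives that for every $u \in \CCC^b_h(S^2,d)$,
\begin{equation*}
\Norm{\RR^n_{\widetilde\phi}(u) - \mathbbm{1}\cdot \smallint u\,\mathrm{d}\mu_\phi}_\infty \longrightarrow 0 \qquad \text{as } n \to +\infty.
\end{equation*}
Given any $u \in \CCC(S^2)$, uniform continuity on the compact space $S^2$ lets us place $u$ in $\CCC^b_h(S^2,d)$ with $b = \Norm{u}_\infty$ and $h$ the abstract modulus of continuity of $u$ defined as in (\ref{eqAbsModContForU}), so the above convergence applies to every continuous $u$.

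Now fix $\mu\in\PPP(S^2)$ and $u\in\CCC(S^2)$. By Lemma~\ref{lmRmuLocal}(i),
\begin{equation*}
\big\langle (\RR^*_{\widetilde\phi})^n(\mu), u \big\rangle = \big\langle \mu, \RR^n_{\widetilde\phi}(u) \big\rangle.
\end{equation*}
Since $\RR^n_{\widetilde\phi}(u)$ converges uniformly to the constant function $\int u\,\mathrm{d}\mu_\phi$, and $\mu$ is a probability measure, the right-hand side tends to $\int u\,\mathrm{d}\mu_\phi$. As $u\in\CCC(S^2)$ was arbitrary, this is exactly the definition of weak$^*$ convergence $(\RR^*_{\widetilde\phi})^n(\mu) \stackrel{w^*}{\longrightarrow} \mu_\phi$, establishing (\ref{eqWeakConvR*^nProbToMu}).

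The only conceptual step is recognizing that Theorem~\ref{thmRR^nConv} specializes cleanly to $\widetilde\phi$; once that is verified, the argument is a one-line duality together with the uniform convergence of $\RR^n_{\widetilde\phi}(u)$ to a constant. I do not expect any real obstacle, since the heavy lifting is entirely contained in the almost-periodicity results (Theorem~\ref{thmChbInChb}, Corollary~\ref{corRAlmostPeriodic}) and the contraction estimate (Lemma~\ref{lmRtildeSupBound}) that underlie Theorem~\ref{thmRR^nConv}.
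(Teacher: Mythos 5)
Your proof is correct and lands on the same underlying convergence result, Theorem~\ref{thmRR^nConv}, but reaches it via a slightly different path than the paper. The paper applies equation~(\ref{eqRR^nConv0}) with the original potential $\phi$: it decomposes $u = \big(u - \langle\mu_\phi, u\rangle\mathbbm{1}\big) + \langle\mu_\phi, u\rangle\mathbbm{1}$, notes that the first term satisfies $\int\big(u - \langle\mu_\phi,u\rangle\mathbbm{1}\big)u_\phi\,\mathrm{d}m_\phi = 0$ so that $\RR_{\widetilde\phi}^n$ of it tends uniformly to zero, and handles the constant term with $\RR_{\widetilde\phi}(\mathbbm{1})=\mathbbm{1}$. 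You instead apply equation~(\ref{eqRR^nConv}) to the potential $\widetilde\phi$ itself (using that $\widetilde\phi$ is H\"older, via Lemma~\ref{lmTildePhiIsHolder}), after verifying the clean bookkeeping $P(f,\widetilde\phi)=0$, $\overline{\widetilde\phi}=\widetilde\phi$, $u_{\widetilde\phi}=\mathbbm{1}$, and $m_{\widetilde\phi}=\mu_\phi$ (the last from Corollary~\ref{corMandMuUnique}). Both routes are sound and roughly equally short; yours makes the structural content more transparent (specializing the machinery to the normalized potential $\widetilde\phi$ trivializes all the constants), whereas the paper's route avoids re-running the pressure and eigenfunction identifications for $\widetilde\phi$ by doing the small decomposition of $u$ by hand. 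Note, though, that once you have proved $\RR_{\widetilde\phi}^n(u)\to \mathbbm{1}\int u\,\mathrm{d}\mu_\phi$ uniformly, the duality step does not require Lemma~\ref{lmRmuLocal}(i) (which extends duality to bounded Borel functions); the identity $\langle (\RR^*_{\widetilde\phi})^n(\mu), u\rangle = \langle\mu, \RR^n_{\widetilde\phi}(u)\rangle$ for continuous $u$ is just the definition of the adjoint.
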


\begin{proof}
Recall that for each $u\in\CCC(S^2)$, there exists some abstract modulus of continuity $h$ such that $u\in\CCC_h^{\beta} (S^2,d)$, where $\beta=\Norm{u}_\infty$. By Theorem~\ref{thmUniqueES} and Theorem~\ref{thmMuExist}, we have $\mu_\phi= u_\phi m_\phi$ as constructed in Theorem~\ref{thmMuExist}. Then by Lemma~\ref{lmRtildeNorm=1} and (\ref{eqRR^nConv0}) in Theorem~\ref{thmRR^nConv},
\begin{align*}
   & \lim\limits_{n\to+\infty}  \big\langle   \big(\RR^*_{\widetilde\phi} \big)^n (\mu) , u \big\rangle \\
=  &  \lim\limits_{n\to+\infty} \big( \big\langle   \mu, \RR^n_{\widetilde\phi}(u - \langle \mu_\phi, u \rangle \mathbbm{1} ) \big\rangle + \big\langle   \mu, \RR^n_{\widetilde\phi}(\langle \mu_\phi, u \rangle \mathbbm{1} ) \big\rangle  \big) \\
=  &   0 + \langle \mu , \langle \mu_\phi,u \rangle \mathbbm{1} \rangle    \\
=  &  \langle \mu_\phi, u\rangle,
\end{align*}
for each $u\in\CCC(S^2)$. Therefore, (\ref{eqWeakConvR*^nProbToMu}) holds.
\end{proof}

\section{Ergodic properties}   \label{sctErgodicity}
In this section, we first prove that if $f$, $\CC$, $d$, and $\phi$ satisfies the Assumptions, then any edge in the cell decompositions induced by $f$ and $\CC$ is a zero set with respect to the measures $m_\phi$ or $\mu_\phi$. This result is also important for Theorem~\ref{thmCohomologous}. We then show in Theorem~\ref{thmFisExact} that the measure-preserving transformation $f$ of the probability space $(S^2,\mu_\phi)$ is \emph{exact} (Definition~\ref{defExact}), and as an immediate consequence, mixing (Corollary~\ref{corMixing}). Another consequence of Theorem~\ref{thmFisExact} is that $\mu_\phi$ is non-atomic (Corollary~\ref{corNonAtomic}).

\smallskip

\begin{prop}    \label{propEdgeIs0set}
Let $f$, $\CC$, $n_\CC$, $d$, $\phi$, $\alpha$ satisfy the Assumptions. Let $\mu_\phi$ be the unique equilibrium state for $f$ and $\phi$, and $m_\phi$ be as in Corollary~\ref{corMandMuUnique}. Then 
\begin{equation}
m_\phi \(\bigcup_{i=0}^{+\infty} f^{-i}(\CC)\) = \mu_\phi \(\bigcup_{i=0}^{+\infty} f^{-i}(\CC)\)  = 0.
\end{equation}
\end{prop}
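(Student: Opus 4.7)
The plan is to reduce the claim to $m_\phi(\CC)=0$, and then to prove this by combining the Ruelle operator identity $\RR_{\widetilde\phi}^{*}(\mu_\phi)=\mu_\phi$, the partial invariance $f^{n_\CC}(\CC)\subseteq\CC$, and expansion of $f$.

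I first reduce to $m_\phi(\CC)=0$. Since $\mu_\phi=u_\phi m_\phi$ with $C_2^{-1}\le u_\phi\le C_2$ by Theorem~\ref{thmMuExist}, the two measures are mutually absolutely continuous, so the two equalities in the proposition are equivalent. By the nonsingularity of $f$ with respect to $m_\phi$ in Theorem~\ref{thmMexists}(iii), $m_\phi(\CC)=0$ propagates to $m_\phi(f^{-i}(\CC))=0$ for every $i\in\N_0$, and countable subadditivity finishes the reduction.

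Next, iterating $f^{n_\CC}(\CC)\subseteq\CC$ yields $f^{jn_\CC}(\CC)\subseteq\CC$ for all $j\in\N$. Hence for any $x\in S^2\setminus\CC$, no $y\in\CC$ satisfies $f^{jn_\CC}(y)=x$, and so $\RR_{\widetilde\phi}^{jn_\CC}(\mathbbm{1}_\CC)(x)=0$ on $S^2\setminus\CC$. Using $\RR_{\widetilde\phi}^{*}(\mu_\phi)=\mu_\phi$ from~(\ref{eqRtildePhi*Mu=Mu}) and Lemma~\ref{lmRmuLocal}(i),
\[
\mu_\phi(\CC)=\big\langle(\RR_{\widetilde\phi}^{*})^{n_\CC}\mu_\phi,\mathbbm{1}_\CC\big\rangle =\int_\CC \RR_{\widetilde\phi}^{n_\CC}(\mathbbm{1}_\CC)\,\mathrm{d}\mu_\phi.
\]
The crux is to exhibit $\delta\in(0,1)$ with $\RR_{\widetilde\phi}^{n_\CC}(\mathbbm{1}_\CC)(x)\le 1-\delta$ for every $x\in\CC$; combined with $\RR_{\widetilde\phi}(\mathbbm{1})=\mathbbm{1}$ from Lemma~\ref{lmRtildeNorm=1}, this is equivalent to $\RR_{\widetilde\phi}^{n_\CC}(\mathbbm{1}_{S^2\setminus\CC})(x)\ge\delta$ on $\CC$, and the display above then forces $\mu_\phi(\CC)\le(1-\delta)\mu_\phi(\CC)$, hence $\mu_\phi(\CC)=0$.

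For the uniform lower bound, expansion implies $f^{-n_\CC}(\CC)\supsetneq\CC$ strictly, since otherwise the $1$-skeleton of $\DD^{jn_\CC}(f,\CC)$ would equal $\CC$ for every $j\ge 0$ and the maximal tile diameter would remain at $\max\{\diam_d X^0_w,\diam_d X^0_b\}>0$, contradicting Definition~\ref{defExpanding}. A degree count on the branched self-cover $f^{n_\CC}|_\CC\:\CC\to\CC$, whose degree $d_\CC$ is strictly less than $(\deg f)^{n_\CC}$, then shows that for every $x\in\CC$ the total $\deg_{f^{n_\CC}}$-multiplicity of preimages of $x$ lying on $\CC$ is strictly less than $(\deg f)^{n_\CC}$; since the totality equals $(\deg f)^{n_\CC}$ by~(\ref{eqDeg=SumLocalDegree}), at least one preimage $y^{*}(x)\in f^{-n_\CC}(x)\setminus\CC$ exists, contributing at least $\exp(-n_\CC\Norm{\widetilde\phi}_{\infty})$ to $\RR_{\widetilde\phi}^{n_\CC}(\mathbbm{1}_{S^2\setminus\CC})(x)$, so $\delta=\exp(-n_\CC\Norm{\widetilde\phi}_{\infty})$ works. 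The main technical obstacle is to execute this degree count uniformly at the vertices $x\in\V^0=\post f$, where several preimages may coalesce with large local degrees; this remains a purely topological statement following from the same strict inclusion $f^{-n_\CC}(\CC)\supsetneq\CC$.
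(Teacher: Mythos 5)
Your overall strategy is genuinely different from the paper's. The paper exploits $f$-invariance of $\mu_\phi$ to get $\mu_\phi\big(f^{-mn_\CC}(\CC)\setminus\CC\big)=0$, then uses Lemma~\ref{lmTileInIntTile} to find an $(mn_\CC)$-tile $X$ with $X\cap\CC=\emptyset$, concludes $m_\phi(\partial X)=0$, and pushes this forward along $f^{mn_\CC}|_{\partial X}$ via the Jacobian to get $m_\phi(\CC)=0$. You instead work with the eigenmeasure identity $\RR_{\widetilde\phi}^*\mu_\phi=\mu_\phi$ and aim for a uniform contraction of $\mathbbm{1}_\CC$ under $\RR_{\widetilde\phi}^{n_\CC}$. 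Your steps (the mutual absolute continuity reduction; the vanishing of $\RR_{\widetilde\phi}^{jn_\CC}(\mathbbm{1}_\CC)$ off $\CC$; the identity $\mu_\phi(\CC)=\int_\CC\RR_{\widetilde\phi}^{n_\CC}(\mathbbm{1}_\CC)\,\mathrm{d}\mu_\phi$ via Lemma~\ref{lmRmuLocal}(i); the self-improving inequality) are all fine.

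The gap is the uniform lower bound $\RR_{\widetilde\phi}^{n_\CC}(\mathbbm{1}_{S^2\setminus\CC})\ge\delta$ on $\CC$, i.e., the claim that every $x\in\CC$ has at least one $f^{n_\CC}$-preimage off $\CC$. Your degree count does not establish this. First, $f^{n_\CC}|_\CC:\CC\to\CC$ is not in general a covering of the circle: at an $n_\CC$-vertex $v\in\CC$, the two $n_\CC$-edges of $\CC$ adjacent to $v$ can map onto the \emph{same} $0$-edge, producing a fold. For a circle map with folds the topological degree $d_\CC$ is only a \emph{lower} bound on the number of $\CC$-preimages, so even if $|d_\CC|<(\deg f)^{n_\CC}$ the number of $\CC$-preimages of a generic $x$ can still reach $(\deg f)^{n_\CC}$. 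Second, the strict inclusion $f^{-n_\CC}(\CC)\supsetneq\CC$ only guarantees that \emph{some} $0$-edge has an $n_\CC$-edge preimage not contained in $\CC$; it does not by itself force this for every $0$-edge, which is what you need.

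The repair stays inside the framework of your argument and uses the same tool the paper uses. Take $N=jn_\CC$ with $j$ large enough that Lemma~\ref{lmTileInIntTile} produces an $N$-tile $Z\in\X^N(f,\CC)$ with $Z\cap\CC=\emptyset$ (this is exactly the same strict-inclusion-from-expansion reasoning you already invoke). By Proposition~\ref{propCellDecomp}(i), $f^N|_Z$ is a homeomorphism onto the $0$-tile $f^N(Z)$, which contains $\CC$; hence every $x\in\CC$ has the preimage $y=(f^N|_Z)^{-1}(x)\in Z\subseteq S^2\setminus\CC$, so $\RR_{\widetilde\phi}^{N}(\mathbbm{1}_{S^2\setminus\CC})(x)\ge\exp(-N\Norm{\widetilde\phi}_\infty)=:\delta$. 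Running your steps with $N$ in place of $n_\CC$ (the vanishing of $\RR_{\widetilde\phi}^N(\mathbbm{1}_\CC)$ off $\CC$ still holds since $N$ is a multiple of $n_\CC$) gives $\mu_\phi(\CC)\le(1-\delta)\mu_\phi(\CC)$, and the proof is complete.
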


\begin{proof}
Since $\mu_\phi \in \MMM(S^2,f)$ is $f$-invariant, and $\CC\subseteq f^{-in_\CC}(\CC)$ for each $i\in\N$, we have $\mu_\phi \( f^{-in_\CC}(\CC) \setminus \CC  \) = 0$ for each $i\in\N$. Since $f$ is expanding, by Lemma~\ref{lmTileInIntTile}, there exists $m\in\N$ and an $(mn_\CC)$-tile $X\in\X^{mn_\CC}$ such that $X \cap \CC =\emptyset$. Then $\partial X \subseteq f^{mn_\CC}(\CC)\setminus \CC$. So $\mu_\phi (\partial X ) = 0$. Since $\mu_\phi = u_\phi m_\phi$, where $u_\phi$ is bounded away from $0$ (see Theorem~\ref{thmMuExist}), we have $m_\phi (\partial X ) = 0$. Note that $f^{mn_\CC}|_{\partial X }$ is a homeomorphism from $\partial X $ to $\CC$ (see Proposition~\ref{propCellDecomp}). Thus by the information on the Jacobian for $f$ with respect to $m_\phi$ in Theorem~\ref{thmMexists}, we get $m_\phi (\CC) = 0$. 

Now suppose there exist $k\in \N$ and a $k$-edge $e\in \E^k$ such that $m_\phi (e) > 0$. Then by using the Jacobian for $f$ with respect to $m_\phi$ again, we get $m_\phi (\CC) >0$, a contradiction. Hence $m_\phi \(\bigcup\limits_{i=0}^{+\infty} f^{-i}(\CC)\) = 0 $. Since $\mu_\phi = u_\phi m_\phi$, we get $\mu_\phi \(\bigcup\limits_{i=0}^{+\infty} f^{-i}(\CC)\)  = 0$.
\end{proof}

For each Borel measure $\mu$ on a compact metric space $(X,d)$, we denote by $\overline\mu$ the \defn{completion} of $\mu$, i.e., $\overline\mu$ is the unique measure defined on the smallest $\sigma$-algebra $\overline\BB$ containing all Borel sets and all subsets of $\mu$-null sets, satisfying $\overline\mu(E)=\mu(E)$ for each Borel set $E\subseteq X$.

\begin{definition}  \label{defExact}
Let $g$ be a measure-preserving transformation of a probability space $(X,\mu)$. Then $g$ is called \defn{exact} if for every measurable set $E$ with $\mu(E)>0$ and measurable images $g(E),g^2(E),\dots$, the following holds:
\begin{equation*}
\lim\limits_{n\to+\infty}\mu\( g^n(E)\) = 1.
\end{equation*}
\end{definition}

Note that in Definition~\ref{defExact}, we do not require $\mu$ to be a Borel measure. In the case when $g$ is a Thurston map on $S^2$ and $\mu$ is a Borel measure, the set $g^n(E)$ is a Borel set for each $n\in\N$ and each Borel set $E\subseteq S^2$. Indeed, a Borel set $E\subseteq S^2$ can be covered by $n$-tiles in the cell decompositions of $S^2$ induced by $g$ and any Jordan curve $\CC\subseteq S^2$ containing $\post g$. For each $n$-tile $X\in\X^n(f,\CC)$, the restriction $g^n|_{X}$ of $g^n$ to $X$ is a homeomorphism from the closed set $X$ onto $g^n(X)$ by Proposition~\ref{propCellDecomp}. It is then clear that the set $g^n(E)$ is also Borel.

We now prove that the measure-preserving transformation $f$ of the probability space $(S^2,\mu_\phi)$ is exact. The argument that we use here is similar to that in the proof of the exactness of an open, topologically exact, distance-expanding self-map of a compact metric space equipped with a certain Gibbs state (\cite[Theorem~5.2.12]{PU10}).

\begin{theorem}   \label{thmFisExact}
Let $f\:S^2 \rightarrow S^2$ be an expanding Thurston map and $d$ be a visual metric on $S^2$ for $f$. Let $\phi\in \Holder{\alpha}(S^2,d)$ be a real-valued H\"{o}lder continuous function with an exponent $\alpha\in(0,1]$. Let $\mu_\phi$ be the unique equilibrium state for $f$ and $\phi$, and $\overline{\mu_\phi}$ its completion. 

Then the measure-preserving transformation $f$ of the probability space $(S^2,\mu_\phi)$ (resp.\ $(S^2,\overline{\mu_\phi})$) is exact.
\end{theorem}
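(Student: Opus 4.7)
The plan is to derive exactness from the uniform convergence of the Ruelle operator given by Theorem~\ref{thmRR^nConv}. For a Borel set $E \subseteq S^2$ with $\mu_\phi(E) > 0$, I will show that $\mu_\phi(f^n(E)) \to 1$ as $n \to +\infty$.

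The starting observation is that, by formula (\ref{eqR^nExpr}), $\RR^n_{\widetilde\phi}(\mathbbm{1}_E)(y)$ is a sum with strictly positive weights $\deg_{f^n}(x)\exp(S_n\widetilde\phi(x))$, so
\[
\{y \in S^2 : \RR^n_{\widetilde\phi}(\mathbbm{1}_E)(y) > 0\} = f^n(E),
\]
which is Borel by Proposition~\ref{propCellDecomp}(i) (since $f^n$ restricted to each $n$-tile is a homeomorphism, $f^n(E)$ is a finite union of homeomorphic images of Borel sets).

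To exploit this, I will extend the conclusion of Theorem~\ref{thmRR^nConv} from continuous functions to $L^1(\mu_\phi)$. Since $\RR^*_{\widetilde\phi}(\mu_\phi) = \mu_\phi$ by (\ref{eqRtildePhi*Mu=Mu}), the inequality $\int |\RR_{\widetilde\phi} u|\,\mathrm{d}\mu_\phi \leq \int \RR_{\widetilde\phi}(|u|)\,\mathrm{d}\mu_\phi = \int |u|\,\mathrm{d}\mu_\phi$ shows that $\RR_{\widetilde\phi}$ is an $L^1(\mu_\phi)$-contraction. Combined with the density of $\CCC(S^2)$ in $L^1(\mu_\phi)$, a standard approximation argument upgrades the uniform convergence on continuous functions to $L^1$-convergence $\RR^n_{\widetilde\phi}(v) \to \int v\,\mathrm{d}\mu_\phi$ for every $v \in L^1(\mu_\phi)$. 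Applied to $v = \mathbbm{1}_E$, this gives $\RR^n_{\widetilde\phi}(\mathbbm{1}_E) \to \mu_\phi(E)$ in $L^1(\mu_\phi)$, hence in $\mu_\phi$-measure, and therefore
\[
\mu_\phi(f^n(E)) = \mu_\phi\bigl(\{\RR^n_{\widetilde\phi}(\mathbbm{1}_E) > 0\}\bigr) \geq \mu_\phi\bigl(\{|\RR^n_{\widetilde\phi}(\mathbbm{1}_E) - \mu_\phi(E)| \leq \mu_\phi(E)/2\}\bigr) \to 1.
\]

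Finally, the case of the completion $\overline{\mu_\phi}$ reduces to the Borel case: any $E \in \overline{\BB}$ differs from a Borel set by a set contained in a Borel $\mu_\phi$-null set, and forward quasi-invariance of $f$ with respect to $\mu_\phi$ from Theorem~\ref{thmUniqueES} ensures that the iterated image of the null part remains $\mu_\phi$-null, so $\overline{\mu_\phi}(f^n(E))$ equals $\mu_\phi$ of the iterated Borel part. The main difficulty will be cleanly executing the $L^1$-extension of Theorem~\ref{thmRR^nConv}, namely verifying the $L^1$-contraction property and propagating the uniform limit to $L^1$ via approximation; once that is in place, the identification of $f^n(E)$ with the support of $\RR^n_{\widetilde\phi}(\mathbbm{1}_E)$ delivers the result directly.
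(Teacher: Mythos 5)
Your proof is correct but takes a genuinely different route from the paper. The paper's argument is geometric and self-contained at the level of the Gibbs property: it picks a compact $E\subseteq A$ with $m_\phi(E)>0$, uses outer regularity and the small diameter of $n$-tiles to find, for each large $n$, an $n$-tile $Y^n$ with $m_\phi(Y^n\setminus E)/m_\phi(Y^n)$ arbitrarily small, pushes this forward via the Jacobian formula from Theorem~\ref{thmMexists} and the distortion estimate in Lemma~\ref{lmSnPhiBound} to show $f^n(E)$ nearly fills a $0$-tile, and then applies $f^k$ once more (via Lemma~\ref{lmTileInIntTile}) to fill $S^2$. You instead derive exactness as a corollary of the operator-convergence Theorem~\ref{thmRR^nConv}: you extend that uniform convergence on $\CCC(S^2)$ to $L^1(\mu_\phi)$-convergence via the contraction $\|\RR_{\widetilde\phi}(v)\|_{L^1(\mu_\phi)}\le\|v\|_{L^1(\mu_\phi)}$ and density, and then the pointwise identity $f^n(E)=\{\RR^n_{\widetilde\phi}(\mathbbm{1}_E)>0\}$ combined with convergence in measure to the positive constant $\mu_\phi(E)$ yields $\mu_\phi(f^n(E))\to 1$. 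This is sound provided two small points are made explicit: the assertion that $\RR^n_{\widetilde\phi}(u)\to\int u\,\mathrm{d}\mu_\phi$ uniformly for arbitrary $u\in\CCC(S^2)$ is not literally the statement of Theorem~\ref{thmRR^nConv}, but follows by writing $u=\big(u-\mathbbm{1}\int u\,\mathrm{d}\mu_\phi\big)+\mathbbm{1}\int u\,\mathrm{d}\mu_\phi$ and invoking $\RR_{\widetilde\phi}(\mathbbm{1})=\mathbbm{1}$ (Lemma~\ref{lmRtildeNorm=1}), exactly as in the proof of Proposition~\ref{propWeakConvR*^nProbToMu}; and the $L^1$-contraction and the well-definedness of $\RR_{\widetilde\phi}$ on $\mu_\phi$-equivalence classes rest on extending $\RR^*_{\widetilde\phi}(\mu_\phi)=\mu_\phi$ to bounded Borel functions, which is supplied by Lemma~\ref{lmRmuLocal}(i). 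The trade-off is that your argument is shorter once Theorem~\ref{thmRR^nConv} is in hand, but it places exactness logically downstream of the spectral/mixing machinery of Section~\ref{sctUniqueness}, whereas the paper's proof needs only the Gibbs estimates of Section~\ref{sctExistence}.
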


\begin{proof}
We fix a Jordan curve $\CC\subseteq S^2$ that satisfies the Assumptions (see Theorem~\ref{thmCexistsBM} for the existence of such $\CC$).

Since $\mu_\phi = u_\phi m_\phi$, by (\ref{eqU_phiBounds}), it suffices to prove that 
$$
\lim\limits_{n\to+\infty} m_\phi(S^2\setminus f^n(A)) = 0
$$
for each Borel set $A\subseteq S^2$ with $m_\phi(A) > 0$.

Let $A\subseteq S^2$ be an arbitrary Borel  subset of $S^2$ with $m_\phi (A) > 0$. Then there exists a compact set $E\subseteq A$ such that $m_\phi(E)>0$. Fix an arbitrary $\epsilon > 0$.  Since $f$ is expanding, by Lemma~\ref{lmTileInIntTile}, $n$-tiles have uniformly small diameters if $n$ is large. This and the outer regularity of the Borel measures enable us to choose $N\in\N$ such that for each $n\geq N$, the collection 
$$
\PP^n=\{X^n\in\X^n(f,\CC)  \,|\, X^n\cap E \neq \emptyset \}
$$
of $n$-tiles satisfies $m_\phi\(\bigcup\PP^n\) \leq m_\phi(E) + \epsilon$. Thus for each $n \geq N$, we have $m_\phi\Big( \bigcup\limits_{X^n\in\PP^n} X^n \setminus E\Big) \leq\epsilon$. So $\sum\limits_{X^n\in\PP^n} m_\phi\(X^n \setminus E\) \leq \epsilon$ by Proposition~\ref{propEdgeIs0set}. Hence
\begin{equation}
\frac{\sum\limits_{X^n\in\PP^n} m_\phi\(X^n \setminus E\)}{\sum\limits_{X^n\in\PP^n} m_\phi\(X^n\) } \leq \frac{\epsilon}{m_\phi(E)}.
\end{equation}
Thus for each $n\geq N$, there exists some $n$-tile $Y^n\in\PP^n$ such that 
\begin{equation}   \label{eqPfthmFisExact}
\frac{m_\phi(Y^n\setminus E)}{m_\phi(Y^n)}  \leq \frac{\epsilon}{m_\phi(E)}.
\end{equation}
By Proposition~\ref{propCellDecomp}(i), the map $f^n$ is injective on $Y^n$. So by Theorem~\ref{thmMexists}, Lemma~\ref{lmSnPhiBound}, (\ref{eqC2Bound}), and (\ref{eqPfthmFisExact}), we have
\begin{align*}
 & \frac{m_\phi \( f^n(Y^n)\setminus f^n(E)  \)}{ m_\phi \(f^n(Y^n) \)}
\leq  \frac{m_\phi \( f^n(Y^n\setminus E)  \)}{ m_\phi \(f^n(Y^n) \)}  \\
= & \frac{\displaystyle\int_{Y^n\setminus E} \! \exp(-S_n\phi) \,\mathrm{d}m_\phi}{\displaystyle\int_{Y^n} \! \exp(-S_n\phi) \,\mathrm{d}m_\phi} 
\leq C_2^2 \frac{m_\phi(Y^n\setminus E)}{m_\phi(Y^n)}  \leq  \frac{C_2^2 \epsilon}{m_\phi(E)},
\end{align*}
where $C_2 \geq 1$ is the constant defined in (\ref{eqC2Bound}) that depends only on $f$, $d$, $\phi$, and $\alpha$. By Lemma~\ref{lmTileInIntTile}, there exists $k\in\N$ that depends only on $f$ and $\CC$ such that $f^k(X_w^0) = f^k(X_b^0) = S^2$, where $X_w^0$ and $X_b^0$ are the while $0$-tile and the black $0$-tile, respectively. Since $f^n(Y^n)$ is either $X_w^0$ or $X_b^0$, by Proposition~\ref{propMfABounds}, for each $n\geq N$,
\begin{align*}
     &  m_\phi \( S^2\setminus f^{n+k}(E) \)   \leq  m_\phi \(f^k\( f^n(Y^n) \setminus f^n(E) \)\)  \\
\leq &  \int_{f^n(Y^n) \setminus f^n(E)}  \!  \exp(-S_k\phi) \,\mathrm{d}m_\phi  \leq \exp(k\Norm{\phi}_\infty ) \frac{C_2^2 \epsilon}{m_\phi(E) }.
\end{align*}
Since $\epsilon > 0$ was arbitrary, we get
\begin{equation}
\lim\limits_{n\to+\infty} m_\phi \(S^2 \setminus f^{n+k}(E)\) = 0.
\end{equation}
Thus
\begin{equation*}
\lim\limits_{n\to+\infty} m_\phi \(f^n(A)\) \geq \lim\limits_{n\to+\infty} m_\phi \(f^n(E)\) = 1.
\end{equation*}
Hence the measure-preserving transformation $f$ of the probability space $(S^2,\mu_\phi)$ is exact.

Next, we observe that since $f$ is $\mu_\phi$-measurable, and is a measure-preserving transformation of the probability space $(S^2,\mu_\phi)$, it is clear that $f$ is also $\overline{\mu_\phi}$-measurable, and is a measure-preserving transformation of the probability space $(S^2,\overline{\mu_\phi})$.

To prove that the measure-preserving transformation $f$ of the probability space $(S^2,\overline{\mu_\phi})$ is exact, we consider a $\overline{\mu_\phi}$-measurable set $B\subseteq S^2$ with $\overline{\mu_\phi}(B)>0$. Since $\overline{\mu_\phi}$ is the completion of the Borel probability measure $\mu_\phi$, we can choose Borel sets $A$ and $C$ such that $A\subseteq B \subseteq C \subseteq S^2$ and $\overline{\mu_\phi}(B)=\overline{\mu_\phi}(A)=\overline{\mu_\phi}(C)=\mu_\phi(A)=\mu_\phi(C)$. For each $n\in\N$, we have $f^n(A)\subseteq f^n(B) \subseteq f^n(C)$ and both $f^n(A)$ and $f^n(C)$ are Borel sets (see the discussion following Definition~\ref{defExact}). Since $f$ is forward quasi-invariant with respect to $\mu_\phi$ (see Theorem~\ref{thmUniqueES}), it is clear that $\mu_\phi\( f^n(A) \) = \mu_\phi\( f^n(C) \)$. Thus
\begin{equation*}
\mu_\phi\( f^n(A) \) = \overline{\mu_\phi}\( f^n(A) \) = \overline{\mu_\phi}\( f^n(B) \) = \overline{\mu_\phi}\( f^n(C) \) = \mu_\phi\( f^n(C) \).
\end{equation*}
Therefore, $\lim\limits_{n\to+\infty} \overline{\mu_\phi} \(f^n(B)\)=\lim\limits_{n\to+\infty} \mu_\phi \(f^n(A)\)=1$.
\end{proof}

Let $\mu$ be a measure on a topological space $X$. Then $\mu$ is called \defn{non-atomic} if $\mu(\{x\})=0$ for each $x\in X$.

The following corollary strengthens Theorem~\ref{thmMexists}.

\begin{cor}   \label{corNonAtomic}
Let $f$, $d$, $\phi$, $\alpha$ satisfy the Assumptions. Let $\mu_\phi$ be the unique equilibrium state for $f$ and $\phi$, and $m_\phi$ be as in Corollary~\ref{corMandMuUnique}. Then both $\mu_\phi$ and $m_\phi$ as well as their corresponding completions are non-atomic.
\end{cor}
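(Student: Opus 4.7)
The plan is to derive the non-atomicity of $\mu_\phi$ first, from which the non-atomicity of $m_\phi$ and of both completions will follow by routine bookkeeping.

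Suppose toward contradiction that $\mu_\phi(\{x\}) = a > 0$ for some $x\in S^2$. The $f$-invariance of $\mu_\phi$ together with the inclusion $\{x\}\subseteq f^{-1}(\{f(x)\})$ yields $\mu_\phi(\{f(x)\}) = \mu_\phi(f^{-1}(\{f(x)\})) \geq a$, and inductively $\mu_\phi(\{f^n(x)\}) \geq a$ for every $n\in\N_0$. Since $\mu_\phi(S^2) = 1$, the forward iterates $f^n(x)$ cannot all be distinct, so the orbit of $x$ eventually lands on a periodic cycle: there exists a periodic point $y$ of some minimal period $p\in\N$ each of whose iterates $f^i(y)$ is an atom of mass at least $a$.

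I will then apply the exactness of the measure-preserving transformation $f$ of $(S^2,\mu_\phi)$ established in Theorem~\ref{thmFisExact} to the Borel set $E=\{y\}$. The images $f^k(E) = \{f^k(y)\}$ remain singletons cycling through the periodic orbit, so exactness forces $\mu_\phi(\{f^k(y)\}) \longrightarrow 1$ as $k\to+\infty$. A $p$-periodic sequence of real numbers tending to $1$ must be identically $1$; hence $\mu_\phi(\{f^i(y)\}) = 1$ for every $i\in\{0,1,\dots,p-1\}$. Since the $p$ points $f^0(y),\dots,f^{p-1}(y)$ are distinct and their atomic masses contribute a total of $p\leq \mu_\phi(S^2) = 1$, we must have $p=1$, forcing $\mu_\phi = \delta_y$ for some fixed point $y$ of $f$.

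This conclusion contradicts the Gibbs property of $\mu_\phi$ from Theorem~\ref{thmMuExist}: fixing any Jordan curve $\CC$ satisfying the Assumptions, the lower bound in Definition~\ref{defGibbsState} together with the boundedness of $\phi$ forces $\mu_\phi(X^n) \geq C_\mu^{-1}\exp(-n\Norm{\phi}_\infty - n P_{\mu_\phi}) > 0$ for every $n$-tile $X^n$, yet Lemma~\ref{lmCellBoundsBM}(ii) supplies, for all sufficiently large $n$, an $n$-tile of diameter less than $d(y,\CC)$ (or simply an $n$-tile inside a $0$-tile not containing $y$) that is disjoint from $\{y\}$, on which any Dirac measure at $y$ vanishes. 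Hence $\mu_\phi$ is non-atomic. Finally, the identity $\mu_\phi = u_\phi m_\phi$ together with the uniform lower bound $u_\phi \geq C_2^{-1} > 0$ from Theorem~\ref{thmMuExist} gives $m_\phi(\{x\}) = \mu_\phi(\{x\})/u_\phi(x) = 0$ for every $x\in S^2$, so $m_\phi$ is non-atomic; and both completions inherit the property since singletons are Borel and therefore retain measure zero under $\overline{\mu_\phi}$ and $\overline{m_\phi}$. The substantive obstacle is the exactness-plus-Gibbs contradiction at the end of step three; everything else amounts to bookkeeping with $f$-invariance and the change of density $u_\phi$.
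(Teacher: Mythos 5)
Your proof is correct and follows essentially the same strategy as the paper: use exactness (Theorem~\ref{thmFisExact}) to force an atom to produce a Dirac mass at a fixed point, then contradict the Gibbs property of $\mu_\phi$ by exhibiting a small tile the Dirac measure ignores. The only difference is in the middle step: where you trace the orbit of the atom $x$ to a periodic cycle of atoms and then argue via the periodicity of $\mu_\phi(\{f^k(y)\})$ that the period must be $1$, the paper proceeds more directly by observing that every singleton mass is bounded above by $\max\{\mu_\phi(\{x\}), 1-\mu_\phi(\{x\})\}$, so exactness applied to $\{x\}$ immediately forces $\mu_\phi(\{x\})=1$ and hence $\mu_\phi=\delta_x$ with $f(x)=x$ by invariance. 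Your route is a bit longer but entirely valid; the paper's is a touch slicker because it avoids discussing periodic orbits. You also cite Lemma~\ref{lmCellBoundsBM}(ii) where the paper invokes Lemma~\ref{lmTileInIntTile} for the small-tile step, but both accomplish the same thing. The reductions to $m_\phi$ and to the completions are handled identically.
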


\begin{proof}
Since $\mu_\phi=u_\phi m_\phi$, where $u_\phi$ is bounded away from $0$ (see Theorem~\ref{thmMuExist}), it suffices to prove that $\mu_\phi$ is non-atomic.

Suppose there exists a point $x\in S^2$ with $\mu_\phi(\{x\}) > 0$, then for all $y\in S^2$, we have
\begin{equation*}
\mu_\phi(\{y\}) \leq \max \{\mu_\phi(\{x\}), 1-\mu_\phi(\{x\})  \}.
\end{equation*}
Since the transformation $f$ of $(S^2,\mu_\phi)$ is exact by Theorem~\ref{thmFisExact}, we get that $\mu_\phi(\{x\}) = 1$ and $f(x)=x$.

We fix a Jordan curve $\CC\subseteq S^2$ that satisfies the Assumptions (see Theorem~\ref{thmCexistsBM} for the existence of such $\CC$). It is clear from Lemma~\ref{lmTileInIntTile} that there exist $n\in\N$ and an $n$-tile $X^n\in\X^n(f,\CC)$ with $x\notin X^n$. Then $\mu_\phi(X^n) = 0$, which contradicts with the fact that $\mu_\phi$ is a Gibbs state for $f$, $\CC$, and $\phi$ (see Theorem~\ref{thmMuExist} and Definition~\ref{defGibbsState}).

The fact that the completions are non-atomic now follows immediately.
\end{proof}

Let $f$, $d$, $\phi$, $\alpha$ satisfy the Assumptions. Let $\mu_\phi$ be the unique equilibrium state for $f$ and $\phi$, and $\overline{\mu_\phi}$ its completion. Then by Theorem~2.7 in \cite{Ro49}, the complete separable metric space $(S^2,d)$ equipped the complete non-atomic measure $\overline{\mu_\phi}$ is a Lebesgue space in the sense of V.~Rokhlin. We omit V.~Rokhlin's definition of a \emph{Lebesgue space} here and refer the reader to \cite[Section~2]{Ro49}, since the only results we will use about Lebesgue spaces are V.~Rokhlin's definition of exactness of a measure-preserving transformation on a Lebesgue space and its implication to the mixing properties. More precisely, in \cite{Ro61}, V.~Rokhlin gave a definition of exactness for a measure-preserving transformation on a Lebesgue space equipped with a complete non-atomic measure, and showed \cite[Section~2.2]{Ro61} that in such a context, it is equivalent to our definition of exactness in Definition~\ref{defExact}. Moreover, he proved \cite[Section~2.6]{Ro61} that if a measure-preserving transformation on a Lebesgue space equipped with a complete non-atomic measure is exact, then it is \emph{mixing} (he actually proved that it is \emph{mixing of all degrees}, which we will not discuss here).

Let us recall the definition of mixing for a measure-preserving transformation.

\begin{definition}  \label{defMixing}
Let $g$ be a measure-preserving transformation of a probability space $(X,\mu)$. Then $g$ is called \defn{mixing} if for all measurable sets $A,B\subseteq X$, the following holds:
\begin{equation*}
\lim\limits_{n\to+\infty}\mu\( g^{-n}(A) \cap B \) = \mu(A) \cdot \mu(B).
\end{equation*}
We call $g$ \defn{ergodic} if for each measurable set $E\subseteq X$, $g^{-1}(E)=E$ implies either $\mu(E)=0$ or $\mu(E)=1$.
\end{definition}

It is well-known and easy to see that if $g$ is mixing, then it is ergodic (see for example, \cite{Wa82}).

\begin{cor}  \label{corMixing}
Let $f$, $d$, $\phi$, $\alpha$ satisfy the Assumptions. Let $\mu_\phi$ be the unique equilibrium state for $f$ and $\phi$, and $\overline{\mu_\phi}$ its completion. Then the measure-preserving transformation $f$ of the probability space $(S^2,\mu_\phi)$ (resp.\ $(S^2,\overline{\mu_\phi})$) is mixing and ergodic.
\end{cor}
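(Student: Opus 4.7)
The plan is to deduce Corollary~\ref{corMixing} directly from Theorem~\ref{thmFisExact}, Corollary~\ref{corNonAtomic}, and the classical results of V.~Rokhlin mentioned in the paragraph preceding the corollary. Since ergodicity is a formal consequence of mixing (as noted in Definition~\ref{defMixing}), the only substantive task is to establish the mixing property. I will handle the completed measure $\overline{\mu_\phi}$ first, and then transfer mixing back to $\mu_\phi$ itself.

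First I would verify that the hypotheses needed to invoke Rokhlin's machinery are in place. By Corollary~\ref{corNonAtomic}, the completion $\overline{\mu_\phi}$ is a complete non-atomic probability measure, and $(S^2, d)$ is a complete separable metric space (it is compact). Theorem~2.7 of \cite{Ro49} then guarantees that $(S^2, \overline{\mu_\phi})$ is a Lebesgue space in the sense of Rokhlin. Next, Theorem~\ref{thmFisExact} asserts that the measure-preserving transformation $f$ of $(S^2, \overline{\mu_\phi})$ is exact in the sense of Definition~\ref{defExact}, which on a Lebesgue space equipped with a complete non-atomic measure coincides with Rokhlin's notion of exactness (see \cite[Section~2.2]{Ro61}). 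By \cite[Section~2.6]{Ro61}, any exact measure-preserving transformation on such a Lebesgue space is mixing; in particular, $f$ is mixing on $(S^2, \overline{\mu_\phi})$, and hence ergodic.

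To conclude mixing on $(S^2, \mu_\phi)$ itself, I would observe that every Borel set is $\overline{\mu_\phi}$-measurable and that $\mu_\phi(E) = \overline{\mu_\phi}(E)$ for each Borel set $E \subseteq S^2$. Thus for Borel sets $A, B \subseteq S^2$, the set $f^{-n}(A) \cap B$ is Borel, so
\[
\lim_{n\to+\infty} \mu_\phi\bigl(f^{-n}(A) \cap B\bigr) = \lim_{n\to+\infty} \overline{\mu_\phi}\bigl(f^{-n}(A) \cap B\bigr) = \overline{\mu_\phi}(A)\,\overline{\mu_\phi}(B) = \mu_\phi(A)\,\mu_\phi(B),
\]
giving the mixing property of $f$ on $(S^2, \mu_\phi)$. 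Ergodicity then follows, since a mixing transformation is automatically ergodic: if $f^{-1}(E) = E$, then applying the mixing identity with $A = B = E$ gives $\mu_\phi(E) = \mu_\phi(E)^2$.

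I expect no serious obstacle here; the entire proof is a bookkeeping exercise that packages the already-established exactness of Theorem~\ref{thmFisExact} together with Rokhlin's Lebesgue-space framework. The only point that deserves care is the verification that Definition~\ref{defExact} agrees with Rokhlin's definition of exactness under the non-atomicity hypothesis supplied by Corollary~\ref{corNonAtomic}, and that the transfer from $\overline{\mu_\phi}$ to $\mu_\phi$ respects the Borel structure used in Definition~\ref{defMixing}; both are straightforward given the results already in the paper.
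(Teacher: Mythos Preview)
Your proposal is correct and follows essentially the same approach as the paper: both invoke Rokhlin's Lebesgue-space framework (via \cite{Ro49} and \cite{Ro61}) together with Theorem~\ref{thmFisExact} and Corollary~\ref{corNonAtomic} to obtain mixing for $\overline{\mu_\phi}$, and then transfer mixing back to $\mu_\phi$ by observing that Borel sets are $\overline{\mu_\phi}$-measurable with the same measure. Your write-up is simply a more explicit unpacking of what the paper compresses into the phrase ``by the discussion preceding Definition~\ref{defMixing}.''
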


\begin{proof}
By the discussion preceding Definition~\ref{defMixing}, we know that the measure-preserving transformation $f$ of $(S^2,\overline{\mu_\phi})$ is mixing and thus ergodic. Since any $\mu_\phi$-measurable sets $A,B\subseteq S^2$ are also $\overline{\mu_\phi}$-measurable,  the measure-preserving transformation $f$ of $(S^2,\mu_\phi)$ is also mixing and ergodic.
\end{proof}

\section{Co-homologous potentials}    \label{sctCohomologous}
The goal of this section is to prove in Theorem~\ref{thmCohomologous} that two equilibrium states are identical if and only if there exists a constant $K\in\R$ such that $K\mathbbm{1}_{S^2}$ and the difference of the corresponding potentials are \emph{co-homologous} (see Definition~\ref{defCohomologous}). We use some of the ideas from \cite{PU10} in the process of proving Theorem~\ref{thmCohomologous}. We establish a form of the \emph{closing lemma} for expanding Thurston maps in Lemma~\ref{lmClosingLemma}.

\begin{theorem}    \label{thmCohomologous}
Let $f\: S^2\rightarrow S^2$ be an expanding Thurston map, and $d$ be a visual metric on $S^2$ for $f$. Let $\phi,\psi\in\Holder{\alpha}(S^2,d)$ be real-valued H\"older continuous functions with an exponent $\alpha\in (0,1]$. Let $\mu_\phi$ (resp.\ $\mu_\psi$) be the unique equilibrium state for $f$ and $\phi$ (resp.\ $\psi$). Then $\mu_\phi =\mu_\psi$ if and only if there exists a constant $K\in\R$ such that $\phi-\psi$ and $K\mathbbm{1}_{S^2}$ are co-homologous in the space $\CCC(S^2)$ of real-valued continuous functions.
\end{theorem}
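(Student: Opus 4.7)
The plan is to handle the two implications separately. For the easy direction, if $\phi-\psi-K\mathbbm{1}_{S^2}=u\circ f-u$ for some $K\in\R$ and $u\in\CCC(S^2)$, then for every $\nu\in\MMM(S^2,f)$ we have $\int(u\circ f-u)\,\mathrm{d}\nu=0$, hence $\int\phi\,\mathrm{d}\nu-\int\psi\,\mathrm{d}\nu=K$, so $P_\nu(f,\phi)=P_\nu(f,\psi)+K$. Taking the supremum in the Variational Principle~(\ref{eqVPPressure}) yields $P(f,\phi)=P(f,\psi)+K$, and a measure attains the supremum for $\phi$ if and only if it attains it for $\psi$. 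By the uniqueness of equilibrium states (Theorem~\ref{thmUniqueES}), $\mu_\phi=\mu_\psi$.

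For the converse, I would assume $\mu_\phi=\mu_\psi=:\mu$, set $K=P(f,\phi)-P(f,\psi)$ and $g=\phi-\psi-K\mathbbm{1}_{S^2}\in\Holder{\alpha}(S^2,d)$, and aim to construct $w\in\CCC(S^2)$ with $g=w\circ f-w$. Fix a Jordan curve $\CC$ satisfying the Assumptions. By Theorem~\ref{thmMuExist}, $\mu$ is simultaneously a Gibbs state for $(f,\CC,\phi)$ with $P_\mu=P(f,\phi)$ and for $(f,\CC,\psi)$ with $P_\mu=P(f,\psi)$. Dividing the two Gibbs inequalities~(\ref{eqGibbsState}) for a common $n$-tile $X^n$ and $x\in X^n$ produces a constant $M\geq 0$ with
\begin{equation*}
\Abs{S_n g(x)}\leq M\quad\text{for every } n\in\N_0 \text{ and every } x\in S^2,
\end{equation*}
the extension to all of $S^2$ being automatic since every point lies in some $n$-tile. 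In particular, if $f^n(p)=p$, the cocycle identity $S_{kn}g(p)=kS_n g(p)$ together with $|S_{kn}g(p)|\leq M$ forces $S_n g(p)=0$.

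To build $w$, I would invoke Lemma~\ref{lmwLimitPt} to fix $x_0\in S^2$ whose forward orbit $\{f^n(x_0)\}_{n\in\N_0}$ is dense in $S^2$, and define $w(f^n(x_0)):=S_n g(x_0)$, so that $(w\circ f-w)(f^n(x_0))=g(f^n(x_0))$ by construction. It then suffices to verify that $w$ is uniformly continuous on the orbit: the extension to $\CCC(S^2)$ is then immediate, and the equation $g=w\circ f-w$ propagates from the dense orbit to all of $S^2$ by continuity. For $n<m$, setting $y=f^n(x_0)$ and $k=m-n$ gives
\begin{equation*}
w(f^m(x_0))-w(f^n(x_0))=S_k g(y),
\end{equation*}
so the task reduces to controlling $|S_k g(y)|$ in terms of $d(y,f^k(y))$. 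The closing lemma (Lemma~\ref{lmClosingLemma}) will supply a periodic point $p$ with $f^k(p)=p$, lying in a common $k$-tile with $y$ and with $d(p,y)$ controlled by $d(y,f^k(y))$; since $S_k g(p)=0$ by the previous paragraph, the distortion Lemma~\ref{lmSnPhiBound} applied to the H\"older function $g$ gives
\begin{equation*}
|S_k g(y)|=|S_k g(y)-S_k g(p)|\leq C_1 d(f^k(y),p)^\alpha,
\end{equation*}
and the right-hand side tends to $0$ as $d(y,f^k(y))\to 0$, furnishing the required modulus of continuity.

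The hard part will be the last step: the distortion estimate demands that $y$ and the shadowing periodic point $p$ lie in a common $k$-cell \emph{at the same combinatorial level $k$}, so the closing lemma must be quantitative enough to deliver such a $p$ with $d(p,y)$ (and hence $d(f^k(y),p)$) controlled by a positive power of $d(y,f^k(y))$. Ensuring that the resulting modulus of continuity is uniform in $n$ and $m$---and in particular that no deterioration occurs when $y$ happens to lie on or near the $1$-skeleton $\bigcup_{j\geq 0}f^{-j}(\CC)$, where the tile containing $y$ is not unique---is the delicate technical point, and this is where Proposition~\ref{propEdgeIs0set} (combined with careful choice of $x_0$ and the closing lemma's statement) is essential.
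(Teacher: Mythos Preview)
Your backward implication is correct and is actually cleaner than the paper's: the paper argues via the Gibbs property, mutual absolute continuity, and ergodicity (using Proposition~\ref{propEdgeIs0set}), whereas your Variational-Principle argument is direct and bypasses all of that.

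The forward implication has the right skeleton but contains a genuine gap. The closing lemma (Lemma~\ref{lmClosingLemma}), and hence the whole shadowing step, is stated and proved only under the hypothesis $f(\CC)\subseteq\CC$; the Assumptions merely guarantee $f^{n_\CC}(\CC)\subseteq\CC$ for some $n_\CC\in\N$, and in general no $f$-invariant Jordan curve through $\post f$ exists (see \cite[Example~15.5]{BM10}). So you cannot invoke Lemma~\ref{lmClosingLemma} for $f$ itself. The paper fixes this by passing to the iterate $F=f^{n_\CC}$: one checks that $\mu_\phi=\mu_\psi$ is also the equilibrium state for $F$ with potentials $\phi_{n_\CC}=S_{n_\CC}\phi$ and $\psi_{n_\CC}=S_{n_\CC}\psi$, runs your Gibbs-comparison-plus-closing-lemma argument for $F$ (this is packaged as Proposition~\ref{propCohomologousEquiv}) to obtain $u\in\Holder{\alpha}(S^2,d)$ with $\phi_{n_\CC}-\psi_{n_\CC}-\delta=u\circ F-u$, and then performs an additional density argument (equation~(\ref{eqPfthmCohomologousInduction}) and a forward-orbit dense under $f^{n_\CC}$) to descend from $F$ back to $f$. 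Your outline omits both the passage to the iterate and the descent.

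Two smaller technical points. First, the closing lemma places the periodic point $p$ in $U^{N+k}(y)$, a union of adjacent $(N+k)$-tiles, not in a single tile containing $y$; so Lemma~\ref{lmSnPhiBound} does not apply verbatim. Instead one uses the explicit exponential shadowing bound $d(f^i(y),f^i(p))\leq\beta_0\delta\Lambda^{-(k-i)}$ from the closing lemma and sums the H\"older increments of $g$ term by term, as in the proof of Proposition~\ref{propCohomologousEquiv}. Second, Proposition~\ref{propEdgeIs0set} plays no role in this direction of the argument; the $1$-skeleton causes no difficulty here because the shadowing estimates are metric, not combinatorial.
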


\begin{definition}   \label{defCohomologous}
Let $g\: X\rightarrow X$ be a continuous map on a metric space $(X,d)$. Let $\mathcal{K} \subseteq \CCC(X)$ be a subspace of the space $\CCC(X)$ of real-valued continuous function on $X$. Two functions $\phi,\psi \in \CCC(X)$ are said to be \defn{co-homologous (in $\mathcal{K}$)} if there exists $u\in\mathcal{K}$ such that $\phi-\psi = u\circ g - u$.
\end{definition}

\begin{rem}
As we will see in the proof of Theorem~\ref{thmCohomologous} at the end of this section, if $\mu_\phi=\mu_\psi$ then the corresponding $u$ can be chosen from $\Holder{\alpha}(S^2,d)$.
\end{rem}

\begin{lemma}     \label{lmFixedPtInFlower}
Let $f$ and $\CC$ satisfy the Assumptions. If $f(\CC)\subseteq \CC$, then for $m,n\in\N$ with $m\geq n$ and each $m$-vertex $v^m \in \V^m(f,\CC)$ with $\overline{W^m(v^m)} \subseteq W^{m-n}(f^n(v^m))$, there exists $x\in\overline{W^m(v^m)}$ such that $f^n(x)=x$.
\end{lemma}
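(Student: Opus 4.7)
The plan is to apply a topological-degree argument to the closed flower $V := \overline{W^m(v^m)}$ using the branched covering structure induced by $f^n$. Set $u := f^n(v^m)$ and $U := \overline{W^{m-n}(u)}$; both $V$ and $U$ are homeomorphic to closed $2$-disks by standard flower results in [BM10, Lemma~7.2], and the hypothesis $\overline{W^m(v^m)} \subseteq W^{m-n}(u)$ places $V$ inside $\inte U$ (using that $W^{m-n}(u) \subseteq \inte U$).

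First I would verify that $f^n|_V \colon V \to U$ is a surjective branched cover of disks of degree $d := \deg_{f^n}(v^m) \geq 1$ with $v^m \in \inte V$ being the unique preimage of $u$ in $V$. The surjectivity follows from Proposition~\ref{propCellDecomp}: each $m$-cell containing $v^m$ maps cellularly and homeomorphically onto an $(m-n)$-cell containing $u$, and Proposition~\ref{propCellDecomp}(ii) shows every $(m-n)$-cell containing $u$ arises this way. For uniqueness of the preimage, any $x \in V$ lies in some $m$-cell $c$ containing $v^m$, and the cellular homeomorphism $f^n|_c$ sends the unique vertex of $c$ mapped to $u$ to that very vertex, which must be $v^m$; hence $f^n(x) = u$ forces $x = v^m$. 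The branched cover has the boundary-to-boundary property $f^n(\partial V) \subseteq \partial U$: an outer $m$-edge of $V$ (not adjacent to $v^m$) maps to an $(m-n)$-edge whose endpoints cannot equal $u$ (since $v^m$ is the only preimage of $u$ in $V$), hence lies on $\partial U$.

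Fix an orientation-preserving homeomorphism $\phi \colon U \to \bar{D}$ (closed unit disk) with $\phi(u) = 0$; then $\phi(V) \subseteq D = \inte\bar{D}$. Consider the homotopy $H_t(x) := \phi(f^n(x)) - t\phi(x)$ from $V$ into $\R^2$ for $t \in [0, 1]$. On $\partial V$, the boundary-to-boundary property gives $\phi(f^n(x)) \in \partial\bar{D} = S^1$, while $\phi(x) \in D$ yields $|t\phi(x)| \leq |\phi(x)| < 1$ so $t\phi(x) \in D$; hence $H_t(x) \neq 0$ on $\partial V$ for all $t \in [0, 1]$. By homotopy invariance of the Brouwer degree, $\deg(H_1, \inte V, 0) = \deg(H_0, \inte V, 0) = \deg(\phi \circ f^n|_V, \inte V, 0) = d \neq 0$, so $H_1$ has a zero in $\inte V$, that is, a point $x \in V$ with $\phi(f^n(x)) = \phi(x)$, and therefore $f^n(x) = x$.

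The main difficulty lies in invoking the prerequisite flower and branched-cover facts from [BM10]---that $V$ and $U$ are Jordan domains and that the boundary-to-boundary property holds---which rest on the cellular Markov structure provided by $f(\CC) \subseteq \CC$. The hypothesis $\overline{W^m(v^m)} \subseteq W^{m-n}(u)$ is precisely what places $V$ in the interior of $U$, making the homotopy $H_t$ admissible on $\partial V$; without this open-inside-closed inclusion, the segments $\{t\phi(x) : t \in [0, 1]\}$ for $x \in \partial V$ could reach $\partial\bar{D}$ and meet $\phi(f^n(x))$, invalidating the degree computation. An alternative iterative approach via cellular inverse branches (using the diameter bound $\diam_d(c_k) \leq C\Lambda^{-(m+kn)}$ from Lemma~\ref{lmCellBoundsBM}(ii) to produce a Cauchy sequence of preimages) is more elementary but encounters geometric complications when $v^m$ lies only on the interior of a lower-dimensional $(m-n)$-cell containing $u$, because then the inverse-branch cells $c_k$ can escape the flower $V$ through adjacent cells not containing $v^m$.
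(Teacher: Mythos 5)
Your argument is correct, and it takes a genuinely different route from the paper. The paper proceeds by a trichotomy on the position of $v^m$ in the $(m-n)$-th cell decomposition: if $v^m$ coincides with an $(m-n)$-vertex the conclusion is immediate; if it lies in the interior of an $(m-n)$-edge the paper picks a suitable pair of adjacent $m$-tiles $Y_1,Y_2\subseteq \overline{W^m(v^m)}$ with $f^n(Y_i)=X_i$ and splits again according to whether some $Y_i\subseteq X_i$ (applying Brouwer's Fixed Point Theorem to the inverse branch $(f^n|_{Y_i})^{-1}$ on a disk in the first subcase, and a one-dimensional argument on the $m$-edge $Y_1\cap Y_2$ in the second); and if it lies in the interior of an $(m-n)$-tile it again applies Brouwer to an inverse branch on a disk. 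You instead collapse all of this into a single topological-degree computation on $V=\overline{W^m(v^m)}$, using that $f^n$ sends $\partial V$ into $\partial U$, that $v^m$ is the unique preimage of $u=f^n(v^m)$ in $V$, and that $V\subseteq W^{m-n}(u)=\inte U$ makes the linear homotopy $\phi\circ f^n - t\phi$ nondegenerate on $\partial V$. Your supporting claims all check out: the Jordan-domain facts about flowers, including $\inte\overline{W^k(p)}=W^k(p)$, are in [BM10, Lemma~7.2]; the boundary-to-boundary property and the uniqueness of the preimage of $u$ both follow from Proposition~\ref{propCellDecomp}(i) applied to the $m$-cells in $V$ as you indicate; and $\deg_{f^n}(v^m)>0$ since $f^n$ is orientation-preserving. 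One small inaccuracy: you assert $f^n|_V\colon V\to U$ is a degree-$d$ branched cover, but this is neither proved nor needed—the degree argument only requires the boundary condition, the unique interior preimage of $u$, and positivity of the local degree at $v^m$. Your unified argument is cleaner and avoids the combinatorial bookkeeping in the paper's Case~2, at the mild cost of invoking homotopy invariance of the Brouwer degree rather than just the Brouwer Fixed Point Theorem.
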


Here $\overline{W^m(v^m)}$ denotes the closure of the open set $W^m(v^m)$.

\begin{proof}
Since $v^m\in W^{m-n}(f^n(v^m))$ and $f(\CC)\subseteq \CC$, depending on the location of $v^m$, there are exactly three cases, namely, (i) $v^m=f^n(v^m)$; (ii) $v^m$ is contained in the interior of some $(m-n)$-edge; (iii) $v^m$ is contained in the interior of some $(m-n)$-tile. We will find a fixed point $x\in \overline{W^m(v^m)}$ of $f^n$ in each case.

\smallskip

\emph{Case 1.} When $v^m = f^n(v^m)$, we can just set $x=v^m$.

\smallskip

\emph{Case 2.} When $v^m \in \inte (e^{m-n})$ for some $(m-n)$-edge $e^{m-n} \in\E^{m-n}$ with $\inte (e^{m-n}) \subseteq W^{m-n}\(f^n(v^m)\)$, it is clear that $W^m(v^m) \subseteq X_1 \cup X_2$ when $X_1,X_2\in\X^{m-n}$ form the unique pair of distinct $(m-n)$-tiles contained in $W^{m-n}\(f^n(v^m)\)$ with $X_1\cap X_2 = e^{m-n}$. We can choose a pair of distinct $m$-tiles $Y_1, Y_2 \in \X^m$ with $Y_1\cup Y_2 \subseteq \overline{W^m(v^m)}$, $f^n(Y_1)=X_1$, $f^n(Y_2)=X_2$, and $Y_1\cap Y_2 = e^m$ for some $m$-edge $e^m\in\E^m$. If either $Y_1\subseteq X_1$ or $Y_2\subseteq X_2$, say $Y_2\subseteq X_2$, then since $X_2$ is homeomorphic to the closed unit disk in $\R^2$, and $f^n$ maps $Y_2$ homeomorphically onto $X_2$ (Proposition~\ref{propCellDecomp}(i)), we can conclude by applying Brouwer's Fixed Point Theorem on $((f^n)|_{Y_2})^{-1}$ that there exists a fixed point $x\in Y_2$ of $f^n$. (See for example, Figure~\ref{figFixPt1}.) So we can assume without loss of generality that $Y_1\subseteq X_2$ and $Y_2\subseteq X_1$. Suppose now that $\inte (e^m)\subseteq \inte (X_i)$, then $Y_1\cup Y_2 \subseteq X_i$, for $i\in\{1,2\}$. So $e^m\subseteq e^{m-n}$. Since $f^n$ maps $e^m$ homeomorphically onto $e^{m-n}$ by Proposition~\ref{propCellDecomp}(i), and $e^{m-n}$ is homeomorphic to the closed unit interval in $\R$, it is clear that there exists a fixed point $x\in e^m$ of $f^n$. (See for example, Figure~\ref{figFixPt2}.)

\begin{figure}
    \centering
    \begin{picture}(240,240)
      \put(20,120){\line(-1,-6){20}}   
      \put(20,120){\line(-1,6){20}}
      \put(120,20){\line(-6,-1){120}}
      \put(120,20){\line(6,-1){120}}
      \put(220,120){\line(1,-6){20}}
      \put(220,120){\line(1,6){20}}
      \put(120,220){\line(-6,1){120}}
      \put(120,220){\line(6,1){120}}
      \put(120,20){\line(0,1){200}}
      \put(20,120){\line(1,0){200}}
      \put(140,100){\line(1,1){40}}      
      \put(140,140){\line(1,-1){40}}
      \put(160,100){\line(0,1){40}}
      \put(160,100){\line(-1,-1){10}}
      \put(160,100){\line(1,-1){10}}
      \put(160,140){\line(1,1){10}}  
      \put(160,140){\line(-1,1){10}}      
      \put(140,120){\line(-1,1){10}}  
      \put(140,120){\line(-1,-1){10}}      
      \put(180,120){\line(1,1){10}}  
      \put(180,120){\line(1,-1){10}}       
      \put(130,130){\line(1,1){20}}  
      \put(130,110){\line(1,-1){20}}  
      \put(190,130){\line(-1,1){20}}  
      \put(190,110){\line(-1,-1){20}} 
      \put(120,120){\circle*{5}}
      \put(83,110){$f^n(v^m)$}
      \linethickness{2pt}
      \put(120,120){\line(1,0){100}} 
      \put(194,110){$e^{m-n}$}
      \put(180,50){$X_1$}      
      \put(180,185){$X_2$} 
      \put(146,135){$Y_1$}      
      \put(163,135){$Y_2$}         
    \end{picture}
    \caption{An example for Case 2 when $Y_2\subseteq X_2$.}
    \label{figFixPt1}
\end{figure}

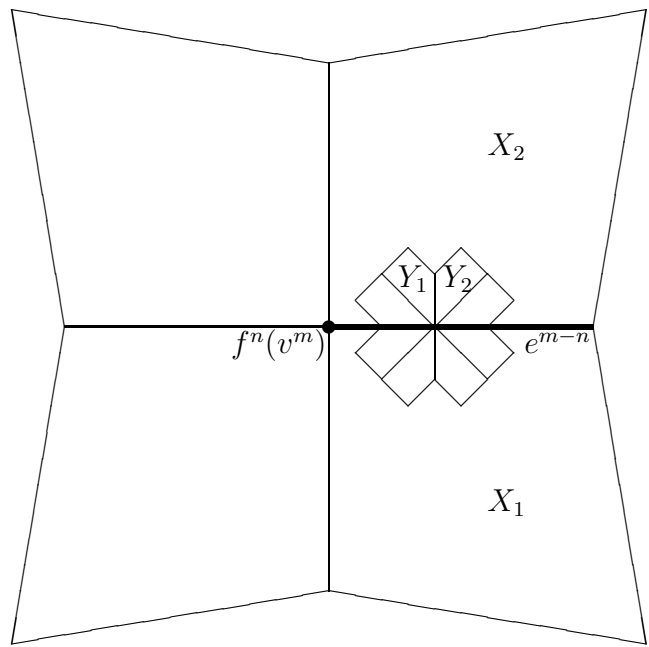
\begin{figure}
    \centering
    \begin{picture}(240,240)
      \put(20,120){\line(-1,-6){20}}   
      \put(20,120){\line(-1,6){20}}
      \put(120,20){\line(-6,-1){120}}
      \put(120,20){\line(6,-1){120}}
      \put(220,120){\line(1,-6){20}}
      \put(220,120){\line(1,6){20}}
      \put(120,220){\line(-6,1){120}}
      \put(120,220){\line(6,1){120}}
      \put(120,20){\line(0,1){200}}
      \put(20,120){\line(1,0){200}}
      \put(140,100){\line(1,1){40}}      
      \put(140,140){\line(1,-1){40}}
      \put(160,100){\line(0,1){40}}
      \put(160,100){\line(-1,-1){10}}
      \put(160,100){\line(1,-1){10}}
      \put(160,140){\line(1,1){10}}  
      \put(160,140){\line(-1,1){10}}      
      \put(140,120){\line(-1,1){10}}  
      \put(140,120){\line(-1,-1){10}}      
      \put(180,120){\line(1,1){10}}  
      \put(180,120){\line(1,-1){10}}       
      \put(130,130){\line(1,1){20}}  
      \put(130,110){\line(1,-1){20}}  
      \put(190,130){\line(-1,1){20}}  
      \put(190,110){\line(-1,-1){20}} 
      \put(120,120){\circle*{5}}
      \put(83,110){$f^n(v^m)$}
      \linethickness{2pt}
      \put(120,120){\line(1,0){100}} 
      \put(194,110){$e^{m-n}$}
      \put(180,50){$X_1$}      
      \put(180,185){$X_2$} 
      \put(138,123){$Y_1$}      
      \put(138,110){$Y_2$}          
    \end{picture}
    \caption{An example for Case 2 when $Y_1\nsubseteq X_1$, $Y_2\nsubseteq X_2$.}
    \label{figFixPt2}
\end{figure}

\smallskip

\emph{Case 3.} When $v^m\in\inte (X^{m-n})$ for some $(m-n)$-tile $X^{m-n} \in \X^{m-n}$ contained in $\overline{W^{m-n}\(f^n(v^m)\)}$, it is clear that $W^m(v^m)\subseteq X^{m-n}$. Let $X^m \in \X^m$ be an $m$-tile contained in $\overline{W^m(v^m)}$ such that $f^n(X^m)=X^{m-n}$. Since $X^{m-n}$ is homeomorphic to the closed unit disk in $\R^2$, and $f^n$ maps $X^m$ homeomorphically onto $X^{m-n}$ (Proposition~\ref{propCellDecomp}(i)), we can conclude by applying Brouwer's Fixed Point Theorem on $((f^n)|_{X^m})^{-1}$ that there exists a fixed point $x\in X^m$ of $f^n$.
\end{proof}

\begin{lemma}    \label{lmUinW}
Let $f$ and $\CC$ satisfy the Assumptions. Then there exists a number $\kappa\in\N_0$ such that the following statement holds:

For each $x\in S^2$, each $n\in\N_0$, and each $n$-tile $X^n\in\X^n(f,\CC)$, if $x\in X^n$, then there exists an $n$-vertex $v^n\in\V^n(f,\CC) \cap X^n$ with 
\begin{equation}   \label{eqUinW}
U^{n+\kappa}(x)\subseteq W^n(v^n).
\end{equation}
\end{lemma}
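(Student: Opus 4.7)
The plan is to reduce the lemma to a base-case statement at level $n=0$ via compactness, and then lift to arbitrary $n$ using the cellularity of $f^n$ from Proposition~\ref{propCellDecomp} together with the visual-metric bounds in Lemma~\ref{lmCellBoundsBM}(iii).

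For the base case, I first note that for every $y \in S^2$ and every $0$-tile $X^0$ containing $y$, at least one vertex $v_0 \in \V^0 \cap X^0$ satisfies $y \in W^0(v_0)$: take any vertex of the unique $0$-cell $c \in \DD^0$ with $y \in \inte(c)$, using that $c \subseteq X^0$, so the vertices of $c$ lie in $\V^0 \cap X^0$ and $y \in \inte(c) \subseteq W^0(v_0)$. Consequently the function
\[
F(y, X^0) \,:=\, \max_{v_0 \in \V^0 \cap X^0}  d\bigl(y,\, S^2 \setminus W^0(v_0)\bigr)
\]
is strictly positive and continuous on the compact set $\{(y, X^0) : y \in X^0 \in \X^0(f,\CC)\}$, hence bounded below by some $\epsilon_0 > 0$. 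Picking $\kappa \in \N_0$ with $K \Lambda^{-\kappa} \leq \epsilon_0$ (where $K$ is the constant from Lemma~\ref{lmCellBoundsBM}(iii)) then yields, for every such $(y, X^0)$, some $v_0 \in \V^0 \cap X^0$ with $U^\kappa(y) \subseteq B_d(y, K\Lambda^{-\kappa}) \subseteq W^0(v_0)$.

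To lift to general $n$, given $x \in X^n$ I set $y := f^n(x) \in X^0 := f^n(X^n)$, pick $v_0 \in \V^0 \cap X^0$ as in the base case, and take $v := (f^n|_{X^n})^{-1}(v_0) \in \V^n \cap X^n$ via Proposition~\ref{propCellDecomp}(i). The key structural identity---a direct consequence of the cellularity of $f^n$ and the injectivity of $f^n$ on any single $n$-cell---is
\[
(f^n)^{-1}\bigl(W^0(v_0)\bigr) \,=\, \bigsqcup_{v' \in (f^n)^{-1}(v_0)} W^n(v'),
\]
a pairwise disjoint union of connected open sets. Now choose an $(n+\kappa)$-tile $X^{n+\kappa}$ with $x \in X^{n+\kappa} \subseteq X^n$ (which exists because $X^n$ is a union of $(n+\kappa)$-tiles by refinement). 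For any $(n+\kappa)$-tile $Z \subseteq U^{n+\kappa}(x)$ the image $f^n(Z)$ is a $\kappa$-tile contained in $U^\kappa(y) \subseteq W^0(v_0)$, so $Z \subseteq (f^n)^{-1}(W^0(v_0))$, and by connectedness $Z$ lies in a single flower $W^n(v'_Z)$. Applied to $X^{n+\kappa}$: its $2$-dimensional interior lies inside $\inte(X^n)$ and inside $W^n(v')$, but $W^n(v')$ is a union of interiors of $n$-cells at $v'$ and interiors of distinct $n$-tiles are disjoint, so $X^n$ itself must be one of the $n$-cells at $v'$, forcing $v' \in X^n$ and hence $v' = v$ by injectivity of $f^n|_{X^n}$. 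For any other $(n+\kappa)$-tile $Y \subseteq U^{n+\kappa}(x)$ take $X' \ni x$ with $Y \cap X' \neq \emptyset$; both $X'$ and $Y$ lie in $(f^n)^{-1}(W^0(v_0))$ by the same argument, and $X^{n+\kappa} \cup X' \cup Y$ is connected through the point $x \in X^{n+\kappa} \cap X'$, so it lies in a single component $W^n(v)$, giving $Y \subseteq W^n(v)$ and hence $U^{n+\kappa}(x) \subseteq W^n(v)$.

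The principal obstacle is the identification $v' = v$ in the lifting: one has to exploit the $2$-dimensional nature of $\inte(X^{n+\kappa})$ together with the disjointness of distinct $n$-tile interiors to confine $v'$ to $X^n$, and then use the injectivity of $f^n|_{X^n}$. Once this is done, the remainder is a routine connectivity argument through the point $x$.
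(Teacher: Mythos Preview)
Your overall strategy---establish the $n=0$ case by compactness, then lift via the decomposition $(f^n)^{-1}(W^0(v_0)) = \bigsqcup_{v'} W^n(v')$ and connectivity---is sound and close in spirit to the paper's, but there is one genuine gap. You assert that there is an $(n+\kappa)$-tile $X^{n+\kappa}$ with $x \in X^{n+\kappa} \subseteq X^n$ ``by refinement.'' Under the Assumptions, however, only $f^{n_\CC}(\CC) \subseteq \CC$ is guaranteed, not $f(\CC) \subseteq \CC$; consequently $\DD^{n+\kappa}(f,\CC)$ is in general \emph{not} a refinement of $\DD^n(f,\CC)$, and no such $(n+\kappa)$-tile need exist. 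This invalidates the step ``its $2$-dimensional interior lies inside $\inte(X^n)$,'' on which your identification $v'=v$ rests.

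The gap is easy to repair. Your connectivity argument (which uses only that $f^n$ sends $(n+\kappa)$-tiles to $\kappa$-tiles, not refinement) already yields $U^{n+\kappa}(x) \subseteq W^n(v')$ for a single $v' \in (f^n)^{-1}(v_0)$. In particular $x \in W^n(v')$, so $x \in \inte(c)$ for the unique $n$-cell $c$, and $v' \in c$. Since also $x \in X^n$, a short case analysis on $\dim c$ (using that interiors of distinct $n$-tiles are disjoint and that interiors of $n$-edges and $n$-vertices lie in $f^{-n}(\CC) \subseteq \partial X^n$) gives $c \subseteq X^n$; hence $v' \in X^n$, and $v' = v$ by injectivity of $f^n|_{X^n}$. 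No appeal to $X^{n+\kappa} \subseteq X^n$ is needed.

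For comparison, the paper's lifting is a brief contradiction argument that avoids both the preimage decomposition and the identification of $v'$: if $U^{n+\kappa}(x) \nsubseteq W^n(v^n)$ for every $v^n \in \V^n \cap X^n$, then (since $x \in X^n \subseteq \overline{W^n(v^n)}$ and $U^{n+\kappa}(x)$ is connected) $U^{n+\kappa}(x)$ meets each $\partial W^n(v^n)$; applying $f^n$ and using $f^n(\partial W^n(v^n)) = \partial W^0(f^n(v^n))$ together with $f^n(U^{n+\kappa}(x)) \subseteq U^\kappa(f^n(x))$ shows $U^\kappa(f^n(x)) \nsubseteq W^0(v_0)$ for every $v_0 \in \V^0$, contradicting the base case.
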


\begin{proof}
We will first find $\kappa\in\N_0$ such that the statement above holds when $n=0$. We will then show that the same $\kappa$ works for arbitrary $n\in\N_0$.

We fix a visual metric $d$ on $S^2$ for $f$ with an expansion factor $\Lambda>1$.

Note that the collection of $0$-flowers $\{W^0(v^0) \,|\, v^0\in\V^0 \}$ forms a finite open cover of $S^2$. By the Lebesgue Number Lemma (\cite[Lemma~27.5]{Mu00}), there exists a number $\epsilon>0$ such that any set of diameter at most $\epsilon$ is a subset of $W^0(v^0)$ for some $v^0\in\V^0$. Here $\epsilon$ depends only on $f$, $\CC$, and $d$. Then by Proposition~\ref{lmCellBoundsBM}(iii), there exists $\kappa\in\N_0$ depending only on $f$, $\CC$, and $d$ such that $\diam_d(U^\kappa(x))<\epsilon$ for $x\in S^2$. So for each $x\in S^2$, there exists a $0$-vertex $v^0\in \V^0$ such that $U^\kappa (x) \subseteq W^0(v^0)$. Let $X^0\in\X^0$ be a $0$-tile with $x\in X^0$, then clearly $v^0\in X^0$.

In general, we fix $x\in S^2$, $n\in \N_0$, and $X^n\in\X^n$ with $x\in X^n$. Set $A=\V^n\cap X^n$. By Proposition~\ref{propCellDecomp}, we have $f^n(W^n(v^n))= W^0(f^n(v^n))$ and $f^n(\partial W^n(v^n)) = \partial W^0(f^n(v^n))$ for each $v^n\in A$. Suppose $U^{n+\kappa}(x) \nsubseteq W^n(v^n)$ for all $v^n\in A$. Since $x\in X^n$ and $U^{n+\kappa}(x)$ is connected, we have $U^{n+\kappa}(x)\cap \partial W^n(v^n) \neq \emptyset$, and thus by Proposition~\ref{propCellDecomp}(i) 
$$
U^\kappa(f^n(x)) \cap \partial W^0(f^n(v^n)) \supseteq f^n(U^{n+\kappa}(x)) \cap f^n(\partial W^n(v^n)) \neq \emptyset,
$$
for each $v^n\in A$. Since $f^n(A)=\V^0$ by Proposition~\ref{propCellDecomp}, it follows that $U^\kappa(f^n(x)) \nsubseteq  W^0(v^0)$ for all $v^0\in \V^0$, contradicting the discussion above for the case when $n=0$.

Finally, we note that (\ref{eqUinW}) holds or fails independently of the choice of $d$. Therefore, the number $\kappa$ depends only on $f$ and $\CC$.
\end{proof}

The following result can be considered as a form of the \emph{closing lemma} for expanding Thurston maps. It is a key ingredient in the proof of Proposition~\ref{propCohomologousEquiv}, which will be used to prove Theorem~\ref{thmCohomologous}. Note that Lemma~\ref{lmClosingLemma} is more technical and in some sense slightly stronger than the closing lemma for forward-expansive maps (see \cite[Corollary~4.2.5]{PU10}). We need it in this slightly stronger form, since the distortion lemmas (Lemma~\ref{lmSnPhiBound} and Lemma~\ref{lmSigmaExpSnPhiBound}) cannot be applied in the proof of Proposition~\ref{propCohomologousEquiv}.

\begin{lemma}[Closing lemma]     \label{lmClosingLemma}
Let $f$, $\CC$, $d$, $\Lambda$ satisfy the Assumptions. If $f(\CC)\subseteq\CC$, then there exist $M\in\N_0$, $\delta_0\in(0,1)$, and $\beta_0>1$ such that the following statement holds:

For each $\delta\in  (0, \delta_0 ]$, if $x\in S^2$ and $l\in\N$ satisfy $l>M$ and $d(x,f^l(x))\leq \delta$, then there exists $y\in S^2$ such that $f^l(y)=y\in U^{N+l}(x)$ and $d(f^i(x),f^i(y))\leq \beta_0\delta\Lambda^{-(l-i)}$ for each $i\in\{0,1,\dots,l\}$, where $N=\big\lceil - \log_\Lambda\(\delta_0^{-1} \delta\)\big\rceil \in \N_0$.
\end{lemma}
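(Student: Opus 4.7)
The plan is to construct $y$ as a fixed point of $f^l$ by invoking Lemma~\ref{lmFixedPtInFlower} (specifically via its Case 3). The idea is to locate an $(N+l)$-vertex $v^{N+l}$ in the interior of some $N$-tile $X^N$ that also contains $w^N := f^l(v^{N+l})$, with $x$ lying in an $(N+l)$-tile $X^{N+l} \subseteq X^N$ satisfying $f^l(X^{N+l}) = X^N$. The fixed point $y$ then appears as the unique attractor of the inverse branch $(f^l|_{X^{N+l}})^{-1}\colon X^N \to X^{N+l} \subseteq X^N$, which by Lemma~\ref{lmMetricDistortion} is a contraction with factor $C_0\Lambda^{-l}$, where $C_0>1$ is the distortion constant.

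First I would fix the constants: let $\kappa$ come from Lemma~\ref{lmUinW} and $C$, $K$ from Lemma~\ref{lmCellBoundsBM}. Choose $\delta_0 \leq K^{-1}\Lambda^{-(\kappa+1)}$, $M\in\N$ large enough that $C_0\Lambda^{-M} < 1/2$, and $\beta_0$ of order $C_0/(\delta_0(1-C_0\Lambda^{-M}))$. Given $x, l, \delta$ with $d(x, f^l(x)) \leq \delta$, set $N = \lceil -\log_\Lambda(\delta_0^{-1}\delta)\rceil$, so $\delta_0\Lambda^{-N} \leq \delta \leq \delta_0\Lambda \cdot \Lambda^{-N}$. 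Pick an $N$-tile $X^N$ containing $x$; by Lemma~\ref{lmUinW} there is an $N$-vertex $w^N \in X^N$ with $U^{N+\kappa}(x) \subseteq W^N(w^N)$, and by Lemma~\ref{lmCellBoundsBM}(iii) together with our choice of $\delta_0$, $f^l(x) \in B_d(x,\delta) \subseteq W^N(w^N)$. Select the $(N+l)$-tile $X^{N+l} \subseteq X^N$ containing $x$ so that $f^l(X^{N+l}) = X^N$, which is possible since $\DD^{N+l}$ refines $\DD^N$ (as $f(\CC)\subseteq\CC$) and $f^l(x) \in X^N$. Finally, let $v^{N+l} \in X^{N+l}$ be the $(N+l)$-vertex with $f^l(v^{N+l}) = w^N$; after possibly adjusting the choice among vertices of $X^N$, we may assume $v^{N+l} \in \inte(X^N)$, with boundary configurations handled by Cases 1 and 2 of Lemma~\ref{lmFixedPtInFlower}.

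Case 3 of Lemma~\ref{lmFixedPtInFlower} (with $m = N+l$, $n = l$) now produces $y \in \overline{W^{N+l}(v^{N+l})}$ with $f^l(y) = y$; the inclusion $y \in U^{N+l}(x)$ follows since $\overline{W^{N+l}(v^{N+l})} \subseteq U^{N+l}(v^{N+l}) \subseteq U^{N+l}(x)$ (because $v^{N+l}$ and $x$ share the tile $X^{N+l}$). For the orbit bound, note $x, y \in X^{N+l}$, so for each $i \in \{0, 1, \dots, l\}$ both $f^i(x)$ and $f^i(y)$ lie in the $(N+l-i)$-tile $f^i(X^{N+l})$; Lemma~\ref{lmMetricDistortion} then gives
\begin{equation*}
d(f^i(x), f^i(y)) \leq C_0\Lambda^{-(l-i)} d(f^l(x), f^l(y)) = C_0\Lambda^{-(l-i)} d(f^l(x), y).
\end{equation*}
Since $y$ is the attractor of the contraction $g:=(f^l|_{X^{N+l}})^{-1}$ and $g(f^l(x))=x$ lies within $\delta$ of $f^l(x)$, a geometric-series estimate yields $d(f^l(x), y) \leq \delta/(1-C_0\Lambda^{-l})$; collecting constants produces the desired $\beta_0\delta\Lambda^{-(l-i)}$.

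The main obstacle is the combinatorial step of arranging $f^l(X^{N+l}) = X^N$ together with $v^{N+l} \in \inte(X^N)$ while keeping $x \in X^{N+l}$. If $f^l(x)$ lies close to $\partial X^N$, or if $x$ itself sits on a lower-dimensional cell of $\DD^N$, the naive choice fails, and one must exploit the freedom to adjust $X^N$ among the several $N$-tiles within the flower $W^N(w^N)$. Quantifying precisely how small $\delta_0$ and how large $M$ must be to force the correct combinatorial configuration in every such case, and verifying the flower/tile inclusions hidden in the summary above, constitutes the principal technical work in executing this plan.
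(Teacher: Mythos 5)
Your high-level strategy matches the paper's: both invoke Lemma~\ref{lmFixedPtInFlower} to produce the fixed point $y$. However, the combinatorial setup you propose has a genuine gap, and you correctly sense this yourself when you call it the ``principal technical work.'' The problem is the requirement $X^{N+l}\subseteq X^N$, $f^l(X^{N+l})=X^N$, with $x\in X^{N+l}$ and $v^{N+l}\in\inte(X^N)$: this is both more than Lemma~\ref{lmFixedPtInFlower} needs and sometimes impossible to arrange. If your $X^N$ is chosen to contain $x$, there is no guarantee $f^l(x)\in X^N$ (you only get $f^l(x)\in W^N(w^N)$), and without $f^l(x)\in X^N$ there cannot exist an $(N+l)$-tile containing $x$ that maps onto $X^N$. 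Moreover, demanding Case~3 of Lemma~\ref{lmFixedPtInFlower} (so that $v^{N+l}$ sits in an $N$-tile interior) is not something you can always force, and your contraction/orbit-bound argument breaks down in Cases~1 and~2 because the inverse branch no longer maps a single tile into itself.

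The paper sidesteps all of this. It takes $X^N$ to be an $N$-tile containing $f^l(x)$ (not $x$), applies Lemma~\ref{lmUinW} at $f^l(x)$ to get $v^N$ with $U^{N+\kappa}(f^l(x))\subseteq W^N(v^N)$, and then checks the hypothesis of Lemma~\ref{lmFixedPtInFlower} directly at the flower level: it shows every $z\in W^{N+l}(v^{N+l})$ satisfies $d(f^l(x),z)\le K^{-1}\Lambda^{-(N+\kappa)}$, whence $\overline{W^{N+l}(v^{N+l})}\subseteq U^{N+\kappa}(f^l(x))\subseteq W^N(v^N)$. This needs no tile nesting, works uniformly in all three cases of Lemma~\ref{lmFixedPtInFlower}, and the orbit bound then follows because $f^i(x)$ and $f^i(y)$ both lie in $\overline{W^{N+l-i}(f^i(v^{N+l}))}$, whose diameter is controlled by Lemma~\ref{lmCellBoundsBM}(iii), rather than from a contraction estimate on an inverse branch. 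Your use of Lemma~\ref{lmMetricDistortion} to pull the bound from time $l$ back to time $i$ is fine as far as it goes, but it relies on $x$ and $y$ lying in a common $(N+l)$-tile, which Lemma~\ref{lmFixedPtInFlower} does not guarantee (it only gives $y\in\overline{W^{N+l}(v^{N+l})}$). In short: switch your base point from $x$ to $f^l(x)$ when applying Lemma~\ref{lmUinW}, verify the flower inclusion that Lemma~\ref{lmFixedPtInFlower} actually asks for, and replace the contraction argument by a direct diameter bound on the nested flowers; as stated, the proposal does not close.
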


\begin{proof}
Define
\begin{align}  
\delta_0 & = (2K)^{-1} \Lambda^{-(\kappa+1)},  \label{eqPfClosingLmDelta0}  \\
\beta_0  & = 4K^2\Lambda^{\kappa+1} = 2K\delta_0^{-1},    \label{eqPfClosingLmBeta0}   \\
M        & =\big\lceil \log_\Lambda \(10 K^2\) \big\rceil + \kappa \in\N_0, \label{eqPfClosingLmM}  
\end{align}
where $K\geq 1$ and $\kappa \in \N_0$ are constants depending only on $f$, $\CC$, and $d$ from Lemma~\ref{lmCellBoundsBM} and Lemma~\ref{lmUinW}, respectively.

We fix $\delta\in(0, \delta_0 ]$ and set 
\begin{equation}      \label{eqPfClosingLmBeta}
\beta = \beta_0 \delta.  
\end{equation}
Note that
$
N   =\big\lceil - \log_\Lambda \(\delta_0^{-1} \delta \)\big\rceil    =\big\lceil - \log_\Lambda \frac{\beta}{2K} \big\rceil \in \N_0
$
by (\ref{eqPfClosingLmBeta}) and (\ref{eqPfClosingLmBeta0}). So
\begin{equation}   \label{eqPfClosingLmBetaBounds}
2K \Lambda^{-N} \leq  \beta \leq 2K \Lambda^{-N+1},
\end{equation}
and by (\ref{eqPfClosingLmBeta}) and (\ref{eqPfClosingLmBeta0}), we have
\begin{equation}   \label{eqPfClosingLmAlphaBound}
\delta \leq (2K)^{-1} \Lambda^{-(N+\kappa)}.
\end{equation}

Recall that by Lemma~\ref{lmCellBoundsBM}(iii), for $z\in S^2$ and $n\in\N_0$, we have
\begin{equation}   \label{eqPfClosingLmUxRadii}
B_d(z,K^{-1}\Lambda^{-n}) \leq U^n(z) \leq B_d(z,K\Lambda^{-n}).
\end{equation}

Fix $x\in S^2$ and $l\in\N$ as in the lemma. Let $X^N \in \X^N $ be an $N$-tile containing $f^l(x)$. By Lemma~\ref{lmUinW}, there exists an $N$-vertex $v^N\in \V^N \cap X^N$ such that
\begin{equation}    \label{eqPfClosingLmUInW}
U^{N+\kappa}\(f^l(x)\) \subseteq W^N\(v^N\).
\end{equation}
There exist $X^{N+l} \in \X^{N+l}$ and $v^{N+l} \in \V^{N+l}\cap X^{N+l}$ such that $x\in X^{N+l}$, $f^l\(X^{N+l}\) = X^N$, and $f^l\(v^{N+l}\)= v^N$. Since $l>M$ and $W^{N+l}\( v^{N+l}\) \subseteq U^{N+l}(x)$,  we get from (\ref{eqPfClosingLmAlphaBound}), (\ref{eqPfClosingLmUxRadii}), and (\ref{eqPfClosingLmM}) that if $z\in W^{N+l}\(v^{N+l}\)$, then
\begin{align*}
 d\(f^l(x),z\)  \leq & d \( f^l(x), x\) + d(x,z)   \leq \delta + 2K \Lambda^{-(N+l)} \\
                \leq & \frac{\Lambda^{-(N+\kappa)}}{2K}  + \frac{2K\Lambda^{-(N+\kappa)}}{10K^2} \leq K^{-1} \Lambda^{-(N+\kappa)}.
\end{align*}
Thus by (\ref{eqPfClosingLmUxRadii}) and (\ref{eqPfClosingLmUInW}), we get
$$
 \overline{ W^{N+l}\(v^{N+l}\)} \subseteq U^{N+\kappa} \(f^l(x)\) \subseteq W^N\(v^N\).
$$
By Lemma~\ref{lmFixedPtInFlower}, there exists $y \in  \overline{ W^{N+l}\(v^{N+l}\)} \subseteq U^{N+l}(x)$ such that $f^l(y)=y$.

It suffices now to verify that $d\(f^i(x),f^i(y)\) \leq \beta_0 \delta \Lambda^{-(l-i)}$ for $i\in\{0,1,\dots,l\}$. Indeed, since by Proposition~\ref{propCellDecomp}, 
$$
\{f^i(x), f^i(y)\} \subseteq \overline{W^{N+l-i} \(f^i\(v^{N+l}\)\)} \subseteq U^{N+l-i} \(f^i\(v^{N+l}\)\)
$$
for $i\in\{0,1,\dots,l\}$, we get from (\ref{eqPfClosingLmUxRadii}),  (\ref{eqPfClosingLmBetaBounds}), and (\ref{eqPfClosingLmBeta}) that 
$$
d\(f^i(x),f^i(y)\) \leq 2K\Lambda^{-(N+l-i)} \leq \beta \Lambda^{-(l-i)}= \beta_0\delta \Lambda^{-(l-i)}.
$$
\end{proof}

The next lemma follows from the \emph{topological transitivity} (see \cite[Definition~4.3.1]{PU10}) of expanding Thurston maps and Lemma~4.3.4 in \cite{PU10}. We include a direct proof here for completeness.

\begin{lemma}   \label{lmwLimitPt}
Let $f\: S^2 \rightarrow S^2$ be an expanding Thurston map. Then there exists a point $x\in S^2$ such that the set $\{f^n(x) \,|\, n\in\N\}$ is dense in $S^2$.
\end{lemma}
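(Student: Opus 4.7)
The plan is to use a standard Baire category argument combined with Lemma~\ref{lmPreImageDense}. Since $(S^2,d)$ (with $d$ any visual metric for $f$, or equivalently with the standard topology on $S^2$) is a compact metric space, it is complete and second countable, so the Baire category theorem applies and there exists a countable basis $\{U_n\}_{n\in\N}$ of non-empty open subsets of $S^2$.

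First, for each $n\in\N$ I would set
\[
V_n = \bigcup_{k=1}^{+\infty} f^{-k}(U_n).
\]
Each $V_n$ is open since $f$ is continuous. The key point is to verify that each $V_n$ is dense in $S^2$. Given any non-empty open set $W\subseteq S^2$, I pick an arbitrary point $p\in U_n$; by Lemma~\ref{lmPreImageDense} the set $\bigcup_{k=1}^{+\infty} f^{-k}(p)$ is dense in $S^2$, so it meets $W$. Any point in the intersection lies in $W\cap f^{-k}(U_n)\subseteq W\cap V_n$ for some $k\in\N$, proving density.

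By the Baire category theorem, the intersection $\bigcap_{n\in\N} V_n$ is a dense $G_\delta$ subset of $S^2$, in particular non-empty. I would then pick any $x\in \bigcap_{n\in\N}V_n$. For every $n\in\N$, by definition of $V_n$, there exists $k=k(n)\in\N$ with $f^k(x)\in U_n$. Since $\{U_n\}_{n\in\N}$ is a basis of the topology, every non-empty open subset of $S^2$ contains some $U_n$ and is therefore hit by the forward orbit $\{f^k(x)\,|\,k\in\N\}$. Hence this forward orbit is dense in $S^2$, proving the lemma.

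The argument involves no real obstacle: Lemma~\ref{lmPreImageDense} is doing all the heavy lifting, and the rest is the standard Baire-category packaging (essentially the proof that topological transitivity, in the sense of dense orbits of preimages, implies the existence of a point with dense forward orbit, in a Polish space without isolated points).
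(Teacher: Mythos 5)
Your proof is correct, but it takes a genuinely different route from the paper's. The paper invokes Theorem~1.6 of \cite{BM10} to realize $(S^2,f)$ as a topological factor of the one-sided full shift $(J^\omega,\Sigma)$ with $\card J = \deg f$; it then builds an explicit dense orbit in the shift (concatenating all finite words) and pushes it forward through the semiconjugacy. Your argument is the classical Baire-category packaging of topological transitivity: Lemma~\ref{lmPreImageDense} gives density of backward orbits, from which each $V_n=\bigcup_{k\geq 1}f^{-k}(U_n)$ is dense open, and a point of $\bigcap_n V_n$ has dense forward orbit. Both are sound. Your version is more intrinsic to the present paper (it leans only on Lemma~\ref{lmPreImageDense}, already established here, instead of importing the symbolic-dynamics coding) and it actually proves slightly more, namely that the set of points with dense forward orbit is a residual $G_\delta$. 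The paper's version, by contrast, is constructive: it produces a specific point via the symbolic model. Either is adequate for the use made of the lemma in the proof of Theorem~\ref{thmCohomologous}.
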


\begin{proof}
By Theorem~1.6 in \cite{BM10}, the topological dynamical system $(S^2,f)$ is a factor of the topological dynamical system $(J^\omega,\Sigma)$ of the left-shift $\Sigma$ on the space $J^\omega$ of all infinite sequences in a finite set $J$ of cardinality $\card J = \deg f$. More precisely, if we equip $J^\omega =\prod\limits_{i=1}^{+\infty} J$ with the product topology, where $J=\{1,2,\dots,\deg f\}$, and let the left-shift operator $\Sigma$ map $(i_1,i_2,\dots)\in J^\omega$ to $(i_2,i_3,\dots)$, then there exists a surjective continuous map $\xi \: J^\omega \rightarrow S^2$ such that $\xi\circ \Sigma = f \circ \xi$.

It suffices now to find $y\in J^\omega$ such that the set $\{ \Sigma^n(y) \,|\, n\in \N \}$ is dense in $J^\omega$. Indeed, if we let $\{w_i\}_{i\in\N}$ be an enumeration of all elements in the set $\bigcup\limits_{i=1}^{+\infty} J^i$ of all finite sequences in $J$, and set $y$ to be the concatenation of $w_1, w_2, \dots$, then it is clear that $\{ \Sigma^n(y) \,|\, n\in \N \}$ is dense in $J^\omega$.
\end{proof}

Following similar argument as in the proof of Proposition~4.4.5 in \cite{PU10}, we get the next proposition. Note that here we do not explicitly use the distortion lemmas (Lemma~\ref{lmSnPhiBound} and Lemma~\ref{lmSigmaExpSnPhiBound}).

\begin{prop}    \label{propCohomologousEquiv}
Let $f$, $\CC$, $d$, $\Lambda$ satisfy the Assumptions. Let $\phi,\psi\in \Holder{\alpha}(S^2,d)$ be real-valued H\"older continuous functions with an exponent $\alpha\in(0,1]$. If $f(\CC)\subseteq \CC$, then the following conditions are equivalent:

\smallskip

\begin{enumerate}
\item[(i)] If $x\in S^2$ satisfies $f^n(x)= x$ for some $n\in\N$, then $S_n\phi(x)= S_n\psi(x)$.

\smallskip

\item[(ii)] There exists a constant $C>0$ such that $\abs{S_n\phi(x) - S_n\psi(x)} \leq C$ for $x\in S^2$ and $n\in\N_0$.

\smallskip

\item[(iii)] There exists $u\in\Holder{\alpha}(S^2,d)$ such that $\phi-\psi = u\circ f - u$.
\end{enumerate}
\end{prop}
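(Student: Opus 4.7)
The plan is to prove the cycle $(\text{iii}) \Rightarrow (\text{ii}) \Rightarrow (\text{i}) \Rightarrow (\text{iii})$. The first two implications are formal. If $\phi - \psi = u\circ f - u$ with $u \in \Holder{\alpha}(S^2,d)$, then telescoping yields $S_n(\phi-\psi)(x) = u(f^n(x)) - u(x)$ for all $x \in S^2$ and $n\in\N_0$, giving (ii) with $C = 2\Norm{u}_\infty$. Conversely, if (ii) holds and $f^n(x) = x$, then $S_{kn}(\phi-\psi)(x) = k S_n(\phi-\psi)(x)$ remains bounded in $k\in\N$, forcing $S_n(\phi-\psi)(x)=0$, which is (i).

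The substantive implication is $(\text{i}) \Rightarrow (\text{iii})$, which I handle by a Livschitz-type construction. Set $\eta = \phi - \psi \in \Holder{\alpha}(S^2,d)$, and use Lemma~\ref{lmwLimitPt} to fix $x_0 \in S^2$ with dense forward orbit. Rather than defining $u$ directly on the orbit (which would force one to handle arbitrarily short returns), I construct $u$ as a pointwise limit along sparse subsequences. For each $y \in S^2$, density lets me pick $n_1 < n_2 < \cdots$ with $n_{k+1} - n_k > M$ and $f^{n_k}(x_0) \to y$, where $M$ is the constant from Lemma~\ref{lmClosingLemma}. For $j<k$, writing $x_j = f^{n_j}(x_0)$ and $l = n_k - n_j > M$, the quantity $\delta := d(x_j, f^l(x_j)) = d(f^{n_j}(x_0), f^{n_k}(x_0))$ tends to $0$ as $j,k \to\infty$. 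Once $\delta \leq \delta_0$, Lemma~\ref{lmClosingLemma} furnishes $z\in S^2$ with $f^l(z) = z$ and $d(f^i(x_j), f^i(z)) \leq \beta_0 \delta \Lambda^{-(l-i)}$ for $i = 0, \ldots, l$. Hypothesis (i) gives $S_l\eta(z)=0$, and the H\"older estimate
\begin{equation*}
\Abs{S_{n_k}\eta(x_0) - S_{n_j}\eta(x_0)} = \Abs{S_l\eta(x_j) - S_l\eta(z)} \leq \Hseminorm{\alpha}{\eta}\sum_{i=0}^{l-1}(\beta_0\delta)^\alpha \Lambda^{-\alpha(l-i)} \lesssim \delta^\alpha
\end{equation*}
shows $\{S_{n_k}\eta(x_0)\}_{k\in\N}$ is Cauchy; define $u(y)$ as its limit.

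To see $u$ is well defined, I compare two admissible sequences $\{n_k\}, \{n'_k\}$ for the same $y$. When the differences $|n_k - n'_k|$ exceed $M$ the same closing-lemma estimate forces the two limits to agree. The remaining case is when, passing to a subsequence, $n'_k - n_k = l \leq M$ is constant: then continuity of $f^l$ gives $f^l(y) = y$, so (i) yields $S_l\eta(y)=0$, whence $S_{n'_k}\eta(x_0) - S_{n_k}\eta(x_0) = S_l\eta(f^{n_k}(x_0)) \to S_l\eta(y) = 0$ by continuity. The same Cauchy estimate applied to sequences $\{n_k\}, \{m_k\}$ approaching distinct points $y_1, y_2$ and aligned so that $m_k - n_k > M$ yields $u \in \Holder{\alpha}(S^2,d)$ with $\abs{u(y_1)-u(y_2)} \lesssim d(y_1,y_2)^\alpha$. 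Finally, if $f^{n_k}(x_0) \to y$ with the required spacing then $f^{n_k+1}(x_0) \to f(y)$ inherits the same spacing, and
\begin{equation*}
u(f(y)) = \lim_{k\to\infty} S_{n_k+1}\eta(x_0) = \lim_{k\to\infty}\bigl(S_{n_k}\eta(x_0) + \eta(f^{n_k}(x_0))\bigr) = u(y) + \eta(y),
\end{equation*}
so $\phi - \psi = u\circ f - u$.

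I expect the main technical obstacle to be the control of the short-return case in the well-definedness argument, i.e.\ ensuring $u(y)$ does not depend on the choice of sparse sequence when two such sequences happen to synchronize at a gap $\leq M$; this is precisely where hypothesis (i) must be leveraged both through the closing-lemma route (for long returns) and through a separate continuity/periodicity argument (for short ones), and is the reason for imposing the sparsity $n_{k+1}-n_k > M$ from the outset.
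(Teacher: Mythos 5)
Your proof is correct, and your argument for (i) $\Rightarrow$ (iii) takes a genuinely different route from the paper's, although both rest on the same two ingredients: the dense forward orbit from Lemma~\ref{lmwLimitPt} and the closing lemma, Lemma~\ref{lmClosingLemma}. The paper sets $\eta = \phi-\psi$, defines $v(f^n(x_0)) = S_n\eta(x_0)$ on the entire forward orbit $A = \{f^n(x_0)\,|\,n\in\N\}$, and proves $v$ is locally H\"older on $A$ before extending by density; since two close orbit points $x_n, x_m$ may satisfy $m-n\leq M$ (a short return, where the closing lemma cannot be applied), the paper introduces a third orbit index $k>m+M$ with $x_k$ close to $x_m$ and applies the closing lemma twice, once around $x_n$ for period $k-n$ and once around $x_m$ for period $k-m$. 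You instead build sparsity into the sequence from the start ($n_{k+1}-n_k>M$), so that a single application of the closing lemma controls $\abs{S_{n_k}\eta(x_0) - S_{n_j}\eta(x_0)} = \abs{S_{n_k-n_j}\eta(f^{n_j}(x_0))}$; the cost is a separate check that $u(y)$ does not depend on the chosen sparse sequence. Your treatment of that step is more elaborate than needed: rather than splitting into a long-gap case and a short-gap case where $n'_k - n_k = l\leq M$ forces $f^l(y)=y$, you can always compare $S_{n_k}\eta(x_0)$ against $S_{n'_{k'}}\eta(x_0)$ with $k'$ chosen large enough that $n'_{k'} - n_k > M$ and both orbit points are $\epsilon$-close to $y$, so the short-gap case never arises. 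With that simplification your route is marginally more economical than the paper's in the core estimate (one closing-lemma application per comparison instead of two), at the price of the well-definedness check that the paper sidesteps by parametrizing $v$ by the orbit time $n$. The implications (iii)$\Rightarrow$(ii)$\Rightarrow$(i) you give are the same telescoping/linear-growth arguments as in the paper.
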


\begin{proof}
The implication from (iii) to (ii) holds since $\abs{S_n\phi(x)-S_n\psi(x)}=\abs{(u\circ f^n )(x) - u(x)} \leq 2 \Norm{u}_\infty$ for $x\in S^2$ and $n\in\N$.

\smallskip

To prove that (ii) implies (i), we suppose that $f^n(x)=x$ and $D = S_n\phi(x) - S_n\psi(x)  \neq 0$ for some $x\in S^2$ and some $n\in\N$. Then $\abs{S_{ni}\phi(x) - S_{ni}\psi(x)}=iD > C$ for $i$ large enough, contradicting (ii).

\smallskip

We now prove the implication from (i) to (iii).

Let $x\in S^2$ be a point from Lemma~\ref{lmwLimitPt} so that the set $A=\{f^i(x) \,|\, i\in\N\}$ is dense in $S^2$. Set $x_i=f^i(x)$ for $i\in\N$. Note that $x_i\neq x_j$ for $j>i\geq 0$. Denote $\eta=\phi - \psi$. Then $\eta \in \Holder{\alpha}(S^2,d)$. We define a function $v$ on $A$ by setting $v(x_n)= S_n \eta(x)$. We will prove that $v$ extends to a H\"older continuous function $u\in \Holder{\alpha}(S^2,d)$ defined on $S^2$ by showing that $v$ is H\"older continuous with an exponent $\alpha$ on $A$.

Fix some $n,m\in \N$ with $n<m$ and $d(x_n,x_m)< \frac12 \delta_0$, where $\delta_0\in(0,1)$ is a constant depending only on $f$, $\CC$, and $d$ from Lemma~\ref{lmClosingLemma}. Set $\epsilon= d(x_n,x_m)$. We can choose $k\in\N$ such that $d(x_m,x_k)<\epsilon$ and $k>m+M$, where $M\in\N_0$ is a constant from Lemma~\ref{lmClosingLemma}. Note that $d(x_n,x_k)\leq d(x_n,x_m)+d(x_m,x_k)<2\epsilon < \delta_0$ and $k> n+M$. Thus by applying Lemma~\ref{lmClosingLemma} with $\delta=2\epsilon$, there exist periodic points $p,q\in S^2$ such that $f^{k-n}(p)=p$, $f^{k-m}(q)=q$, $d\(f^i(x_n),f^i(p)\) <\beta_0\delta \Lambda^{-(k-n-i)}$ for $i\in\{0,1,\dots, k-n\}$, and $d\(f^j(x_m),f^j(q)\) <\beta_0 \delta \Lambda^{-(k-m-j)}$ for $j\in\{0,1,\dots, k-m\}$, where $\beta_0 >0$ is a constant depending only on $f$, $\CC$, and $d$ from Lemma~\ref{lmUinW}. Then by (i), we get that  
\begin{align*}
                        &  \abs{v(x_n)-v(x_m)}            = \abs{S_n\eta(x)-S_m\eta(x)} \\
                   \leq &  \abs{S_{k-n}\eta(x_n)}+\abs{S_{k-m}\eta(x_m)} \\
                     =  &  \abs{S_{k-n}\eta(x_n) - S_{k-n}\eta(p) }+\abs{S_{k-m}\eta(x_m)-S_{k-m}\eta(q)} \\
                   \leq &  \sum\limits_{i=0}^{k-n-1}  \Abs{\eta\(f^i(x_n)\)-\eta\(f^i(p)\)} + \sum\limits_{j=0}^{k-m-1}  \Abs{\eta\(f^j(x_m)\)-\eta\(f^j(q)\)}  \\
                   \leq &  \Hseminorm{\alpha}{\eta} \beta_0^\alpha \delta^\alpha \(\sum\limits_{i=0}^{k-n-1}\Lambda^{-\alpha(k-n-i)} + \sum\limits_{j=0}^{k-m-1}\Lambda^{-\alpha(k-m-i)}\) \\
                   \leq & 2^{1+\alpha} \Hseminorm{\alpha}{\eta} \beta_0^\alpha \epsilon^\alpha \sum\limits_{i=0}^{\infty}\Lambda^{-\alpha i} 
                     = C d(x_n,x_m)^\alpha,
\end{align*}
where $C=2^{1+\alpha} (1-\Lambda^{-\alpha})^{-1}\Hseminorm{\alpha}{\eta} \beta_0^\alpha$ is a constant depending only on $f$, $\CC$, $d$, $\eta$, and $\alpha$. It immediately follows that $v$ extends continuously to a H\"older continuous function $u\in\Holder{\alpha}(S^2,d)$ with an exponent $\alpha$ defined on $\overline{A}=S^2$. Since $u|_A=v$ and 
\begin{align*}
(v\circ f)(x_i)-v(x_i) & = v(x_{i+1})-v(x_i)= S_{i+1}\eta(x)-S_i\eta(x)\\
                     & =\eta(f^i(x))=\phi(x_i)-\psi(x_i),
\end{align*}
for $i\in\N$, we get that $(u\circ f)(y)-u(y)=\phi(y)-\psi(y)$ for $y\in S^2$ by continuity.
\end{proof}

We are now ready to prove Theorem~\ref{thmCohomologous}.

\begin{proof}[Proof of Theorem~\ref{thmCohomologous}]
We fix a Jordan curve $\CC\subseteq S^2$ that satisfies the Assumptions (see Theorem~\ref{thmCexistsBM} for the existence of such $\CC$). 

We first prove the backward implication. We assume that
\begin{equation}   \label{eqPfthmCohomologousCohoEq}
\phi - \psi - K\mathbbm{1}_{S^2} = u\circ f - u
\end{equation}
for some $u\in\CCC(S^2)$ and $K\in\R$. It follows immediately from Proposition~\ref{propTopPressureDefPreImg} that
\begin{equation}  \label{eqPfthmCohomologousPressure}
P(f,\phi) =P(f,\psi)+K.
\end{equation}
By Theorem~\ref{thmMuExist}, Proposition~\ref{propTopPressureDefPreImg}, Corollary~\ref{corExistES}, and Theorem~\ref{thmUniqueES}, the measure $\mu_\phi$ (resp.\ $\mu_\psi$) is a Gibbs state with respect to $f$, $\CC$, and $\phi$ (resp.\ $\psi$) with constants $P_{\mu_\phi}=P(f,\phi)$ and $C_{\mu_\phi}$ (resp.\ $P_{\mu_\psi}=P(f,\psi)$ and $C_{\mu_\psi}$). Then by (\ref{eqGibbsState}), (\ref{eqPfthmCohomologousCohoEq}), and (\ref{eqPfthmCohomologousPressure}), for $i\in\N_0$ and $X^i\in\X^i(f,\CC)$,
\begin{align}
\frac{\mu_\phi(X^i)}{\mu_\psi(X^i)}  & \leq C_{\mu_\phi} C_{\mu_\psi} \frac{\exp(S_i\psi(x)- iP(f,\psi))}{\exp(S_i\phi(x)- iP(f,\phi))} \notag \\
                                     & = C_{\mu_\phi} C_{\mu_\psi} \exp(u(x)-(u\circ f)(x)) \label{eqPfthmCohomologousMuComparable}\\
                                     & \leq C_{\mu_\phi} C_{\mu_\psi} \exp(2\Norm{u}_\infty),  \notag
\end{align}
where $x\in X^i$. Let $E\subseteq S^2$ be a Borel set with $\mu_\psi(E)=0$. Fix an arbitrary number $\epsilon>0$. We can find an open set $U\subseteq S^2$ such that $E\subseteq U$ and $\mu_\psi(U)<\epsilon$. Set
$$
V= \bigcup \bigg\{ \inte (X) \,\bigg|\, X\in \bigcup\limits_{i=0}^{+\infty} \V^i(f,\CC), \, X\cap E \neq \emptyset, \, X\subseteq U \bigg\}.
$$
Then $E\subseteq V\cup A$, where $A=\bigcup\limits_{i=0}^{+\infty} f^{-i}(\CC)$. By Proposition~\ref{propEdgeIs0set}, we have $\mu_\phi(A)=\mu_\psi(A)=0$. So by (\ref{eqPfthmCohomologousMuComparable}), we get 
$$
\mu_\phi(E)\leq \mu_\phi(V) \leq D\mu_\psi(V) \leq D\mu_\psi(U) < D\epsilon,
$$
where $D=C_{\mu_\phi} C_{\mu_\psi} \exp(2\Norm{u}_\infty)$. Thus $\mu_\phi$ is absolutely continuous with respect to $\mu_\psi$. Similarly $\mu_\psi$ is absolutely continuous with respect to $\mu_\phi$. On the other hand, by Corollary~\ref{corMixing}, both $\mu_\phi$ and $\mu_\psi$ are ergodic measures. So suppose $\mu_\phi\neq \mu_\psi$, then they must be mutually singular (see for example, \cite[Theorem~6.10(iv)]{Wa82}). Hence $\mu_\phi=\mu_\psi$.

\smallskip

We will now prove the forward implication. We assume $\mu_\phi=\mu_\psi$. 

Denote $F=f^n$, where $n=n_\CC$ is a number from the Assumptions with $f^n(\CC)=F(\CC)\subseteq \CC$. By Remark~\ref{rmExpanding} the map $F$ is also an expanding Thurston map.

For the rest of the proof, we denote $S_m \eta = \sum\limits_{i=0}^{m-1} \eta\circ f^i$ and $\widetilde{S}_m \eta = \sum\limits_{i=0}^{m-1} \eta \circ F^i$ for $\eta\in\CCC(S^2)$ and $m\in\N_0$.

Denote $\phi_n=S_n\phi$ and $\psi_n=S_n\psi$. It follows immediately from Lemma~\ref{lmLipschitz} that $\phi_n, \psi_n \in \Holder{\alpha}(S^2,d)$.

Note that since $\mu_\phi$ is an equilibrium state for $f$ and $\phi$, it follows that $\mu_\phi$ is also an equilibrium state for $F$ and $\phi_n$. Indeed, by (\ref{defMeasTheoPressure}) and the fact that $h_{\mu_\phi}(f^n)= nh_{\mu_\phi}(f)$ (see for example, \cite[Theorem~4.13]{Wa82}), we have
\begin{align*}
P_{\mu_\phi}(F,\phi_n) & = h_{\mu_\phi}(f^n)+ \int  \! S_n\phi \, \mathrm{d} \mu_\phi = nh_{\mu_\phi}(f)+ n\int  \!  \phi \, \mathrm{d} \mu_\phi  \\
                       & = n P(f,\phi) = P(F, \phi_n),
\end{align*}
where the last equality follows immediately from Proposition~\ref{propTopPressureDefPreImg}. Similarly, the measure $\mu_\phi=\mu_\psi$ is an equilibrium state for $F$ and $\psi_n$.

Thus by Theorem~\ref{thmMuExist}, Proposition~\ref{propTopPressureDefPreImg}, Corollary~\ref{corExistES}, and Theorem~\ref{thmUniqueES}, the measure $\mu_\phi = \mu_\psi$ is both a Gibbs state with respect to $F$, $\CC$, and $\phi_n$, and with constants $P(F,\phi_n)$ and $C$, as well as a Gibbs state with respect to $F$, $\CC$, and $\psi_n$, and with constants $P(F,\psi_n)$ and $C'$, for some $C\geq 1$ and $C'\geq 1$. By (\ref{eqGibbsState}), we have
$$
\frac{1}{C  C'}\leq \exp\big(\widetilde{S}_m\phi_n(x)-\widetilde{S}_m\psi_n(x) - mP(F,\phi_n)+mP(F,\psi_n)\big) \leq C  C'
$$
for $x\in S^2$ and $m\in\N_0$. So $\Abs{\widetilde{S}_m\overline{\phi}_n(x)-\widetilde{S}_m\overline{\psi}_n(x)} \leq \log(C C')$ for $x\in S^2$ and $m\in \N_0$, where $\overline{\phi}_n(x) = \phi_n(x)-P(F,\phi_n)\in \Holder{\alpha}(S^2,d)$ and $\overline{\psi}_n(x) = \psi_n(x)-P(F,\psi_n)\in \Holder{\alpha}(S^2,d)$. By Proposition~\ref{propCohomologousEquiv}, there exists $u\in\Holder{\alpha}(S^2,d)$ such that
\begin{equation}   \label{eqPfthmCohomologousStepF}
(u\circ f^n)(x) - u(x) = \overline{\phi}_n(x) - \overline{\psi}_n(x) = S_n\phi(x) - S_n\psi(x) - \delta
\end{equation}
for $x\in S^2$, where $\delta= P(F,\phi_n)-P(F,\psi_n)$.

Fix an arbitrary point $y\in S^2$. By subtracting (\ref{eqPfthmCohomologousStepF}) with $x= y$ from (\ref{eqPfthmCohomologousStepF}) with $x= f(y)$, we get
\begin{align*}
    & (u\circ f^{n+1})(y) - (u\circ f^n)(y) + (u\circ f)(y) - u(y) \\
 =  & (\phi\circ f^n)(y)-\phi(y) - (\psi\circ f^n) (y) + \psi(y),
\end{align*}
or equivalently,
\begin{align}     \label{eqPfthmCohomologousInduction}
   & \phi ( f^n(y)) - \psi ( f^n (y)) - (u\circ f)(f^n(y)) + u(f^n(y)) \\
=  & \phi(y)-\psi(y)-(u\circ f)(y)+ u(y).   \notag
\end{align}

Let $z\in S^2$ be a point from Lemma~\ref{lmwLimitPt} so that the set $A=\{f^{ni}(z) \,|\, i\in\N\}$ is dense in $S^2$. By replacing $y$ in (\ref{eqPfthmCohomologousInduction}) with $f^{ni}(z)$ for $i\in\N_0$ and induction, we get that
$$
\phi(f^{ni}(z)) - \psi ( f^{ni} (z)) - (u\circ f)(f^{ni}(z)) + u(f^{ni}(z)) = K
$$
for $i\in\N$, where $K = \phi(z)-\psi(z) - (u\circ f)(z)+u(z)$. Since $A$ is dense in $S^2$, we get that $\phi(x)-\psi(x)-(u\circ f)(x)+ u(x) = K$ for $x\in S^2$, i.e., the functions $\phi-\psi$ and $K\mathbbm{1}_{S^2}$ are co-homologous in $\Holder{\alpha}(S^2,d)$.
\end{proof}

\section{Equidistribution}     \label{sctEquidistribution}

In this section, we will discuss equidistribution results for preimages. Let $f$, $d$, $\phi$, $\alpha$ satisfy the Assumptions and let $\mu_\phi$ be the unique equilibrium state for $f$ and $\phi$ throughout this section. We prove in Proposition~\ref{propWeakConvPreImgWithWeight} three versions of equidistribution of preimages under $f^n$ as $n\longrightarrow +\infty$ with respect to $\mu_\phi$ and $m_\phi$ as defined in Corollary~\ref{corMandMuUnique}, respectively. Proposition~\ref{propWeakConvPreImgWithWeight} partially generalizes Theorem~1.2 in \cite{Li13}, where we established the equidistribution of preimages with respect to the measure of maximal entropy. In Theorem~\ref{thmASConvToES}, we generalizes Theorem~7.1 in \cite{Li13} following the idea of J.~Hawkins and M.~Taylor \cite{HT03}, to show that for each $p\in S^2$, the equilibrium state $\mu_\phi$ is almost surely the limit of 
$$
\frac{1}{n}\sum\limits_{i=0}^{n-1} \delta_{q_i}
$$
as $n\longrightarrow +\infty$ in the weak$^*$ topology, where $q_0 = p$, and for each $i\in\N_0$, the point $q_{i+1}$ is one of the points $x$ in $f^{-1}(q_i)$, chosen with probability proportional to $\deg_f(x) \exp{\widetilde\phi(x)}$, where $\widetilde\phi$ is defined in (\ref{eqDefPhiTilde}).

\begin{prop}  \label{propWeakConvPreImgWithWeight}
Let $f$, $d$, $\phi$, $\alpha$ satisfy the Assumptions. Let $\mu_\phi$ be the unique equilibrium state for $f$ and $\phi$, and $m_\phi$ be as in Corollary~\ref{corMandMuUnique} and $\widetilde\phi$ as defined in (\ref{eqDefPhiTilde}). For each sequence $\{ x_n \}_{n\in\N}$ of points in $S^2$, we define the Borel probability measures
\begin{align}
         \nu_n & = \frac{1}{Z_n(\phi)} \sum\limits_{y\in f^{-n}(x_n)} \deg_{f^n} (y) \exp\(S_n\phi(y)\) \delta_y,  \\
\widehat\nu_n & = \frac{1}{Z_n(\phi)} \sum\limits_{y\in f^{-n}(x_n)} \deg_{f^n} (y) \exp\(S_n\phi(y)\) \frac{1}{n} \sum\limits_{i=0}^{n-1} \delta_{f^i(y)},\\
         \widetilde\nu_n & = \frac{1}{Z_n\big(\widetilde\phi \hspace{0.5mm} \big)} \sum\limits_{y\in f^{-n}(x_n)} \deg_{f^n} (y) \exp\big(S_n\widetilde\phi(y)\big) \delta_y, 
\end{align}
for each $n\in\N_0$, where $Z_n(\psi) = \sum\limits_{y\in f^{-n}(x_n)} \deg_{f^n} (y) \exp\(S_n\psi(y)\)$, for $\psi\in\CCC(S^2)$. Then
\begin{equation}  \label{eqWeakConvPreImgToMWithWeight}
\nu_n \stackrel{w^*}{\longrightarrow} m_\phi \text{ as } n\longrightarrow +\infty,
\end{equation}
\begin{equation}  \label{eqWeakConvPreImgToMuSumWithWeight}
\widehat\nu_n \stackrel{w^*}{\longrightarrow} \mu_\phi \text{ as } n\longrightarrow +\infty,
\end{equation}
\begin{equation}  \label{eqWeakConvPreImgToMuTildeWithWeight}
\widetilde\nu_n \stackrel{w^*}{\longrightarrow} \mu_\phi \text{ as } n\longrightarrow +\infty.
\end{equation}
\end{prop}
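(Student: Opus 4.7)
The plan is to recognize each of the three measure sequences as (normalizations of) adjoint Ruelle iterates applied to $\delta_{x_n}$, and then read off the limits from Theorem~\ref{thmRR^nConv} and Lemma~\ref{lmDerivConv}, both of which already deliver the crucial uniformity in the base point.

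First I would establish (\ref{eqWeakConvPreImgToMuTildeWithWeight}). By (\ref{eqR^nExpr}) and Lemma~\ref{lmRtildeNorm=1}, $Z_n\big(\widetilde\phi\big)=\RR_{\widetilde\phi}^n(\mathbbm{1})(x_n)=1$, so for each $u\in\CCC(S^2)$,
\begin{equation*}
\langle \widetilde\nu_n,u\rangle \;=\; \sum_{y\in f^{-n}(x_n)} \deg_{f^n}(y)\, u(y)\exp\big(S_n\widetilde\phi(y)\big) \;=\; \RR_{\widetilde\phi}^n(u)(x_n).
\end{equation*}
Given such $u$, I pick $b=\Norm{u}_\infty$ and the abstract modulus of continuity $h$ defined in (\ref{eqAbsModContForU}), so that $u\in\CCC_h^b(S^2,d)$. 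Then by Lemma~\ref{lmChbChbSubsetChB}, $uu_\phi$ lies in some $\CCC_{h'}^{b'}(S^2,d)$, and (\ref{eqRnphiTildeConjugate}), Theorem~\ref{thmRR^nConv}, and (\ref{eqU_phiBounds}) together imply that $\RR_{\widetilde\phi}^n(u)=\frac{1}{u_\phi}\RR_{\overline\phi}^n(uu_\phi)$ converges uniformly on $S^2$ to the constant $\int uu_\phi\,\mathrm{d}m_\phi=\int u\,\mathrm{d}\mu_\phi$. Evaluating at $x_n$ gives $\langle \widetilde\nu_n,u\rangle\to\int u\,\mathrm{d}\mu_\phi$.

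Next, for (\ref{eqWeakConvPreImgToMWithWeight}), the same manipulation yields
\begin{equation*}
\langle \nu_n,u\rangle \;=\; \frac{\RR_\phi^n(u)(x_n)}{\RR_\phi^n(\mathbbm{1})(x_n)} \;=\; \frac{\RR_{\overline\phi}^n(u)(x_n)}{\RR_{\overline\phi}^n(\mathbbm{1})(x_n)},
\end{equation*}
since $\RR_{\overline\phi}=e^{-P(f,\phi)}\RR_\phi$ by (\ref{eqDefR-}) and Proposition~\ref{propTopPressureDefPreImg}. For each fixed $u\in\CCC(S^2)$, Theorem~\ref{thmRR^nConv} gives $\RR_{\overline\phi}^n(u)\to u_\phi\int u\,\mathrm{d}m_\phi$ and $\RR_{\overline\phi}^n(\mathbbm{1})\to u_\phi$ uniformly on $S^2$; since $u_\phi$ is bounded away from $0$ by (\ref{eqU_phiBounds}), the quotient evaluated at $x_n$ tends to $\int u\,\mathrm{d}m_\phi$, proving (\ref{eqWeakConvPreImgToMWithWeight}).

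Finally, (\ref{eqWeakConvPreImgToMuSumWithWeight}) is essentially a restatement of Lemma~\ref{lmDerivConv}. Using $S_nu(y)=\sum_{i=0}^{n-1}u(f^i(y))$ and interchanging the two summations,
\begin{equation*}
\langle \widehat\nu_n,u\rangle \;=\; \frac{\tfrac{1}{n}\sum_{y\in f^{-n}(x_n)} \deg_{f^n}(y)\,(S_n u(y))\exp(S_n\phi(y))}{\sum_{y\in f^{-n}(x_n)} \deg_{f^n}(y)\exp(S_n\phi(y))},
\end{equation*}
which coincides with the ratio on the left-hand side of (\ref{eqDerivConv}) at $x=x_n$ with the potential $\phi$ itself. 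Taking the bounded subset $H=\{\phi\}\subseteq \Holder{\alpha}(S^2,d)$ and the pair $(b,h)$ as above, Lemma~\ref{lmDerivConv} yields $\langle \widehat\nu_n,u\rangle\to\int u\,\mathrm{d}\mu_\phi$ uniformly in the choice of the sequence $\{x_n\}$, which completes the proof. The work is already packaged in Theorem~\ref{thmRR^nConv} and Lemma~\ref{lmDerivConv}; no substantive new obstacle appears, the only mild point being that those results require test functions in a uniformly equicontinuous family, which is handled for each individual $u\in\CCC(S^2)$ by forming the singleton family $\{u\}$ equipped with its own modulus of continuity via (\ref{eqAbsModContForU}).
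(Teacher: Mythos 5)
Your proof is correct and takes essentially the same approach as the paper: identify each weighted sum as a Ruelle-operator iterate, then read off the limits from Theorem~\ref{thmRR^nConv} (for (\ref{eqWeakConvPreImgToMWithWeight})) and Lemma~\ref{lmDerivConv} (for (\ref{eqWeakConvPreImgToMuSumWithWeight})). The only cosmetic difference is your treatment of (\ref{eqWeakConvPreImgToMuTildeWithWeight}): you argue directly via the conjugation identity (\ref{eqRnphiTildeConjugate}) and the fact that $Z_n(\widetilde\phi)=1$, whereas the paper obtains it as the special case of (\ref{eqWeakConvPreImgToMWithWeight}) applied to the potential $\widetilde\phi$, invoking $\widetilde\phi\in\Holder{\alpha}(S^2,d)$ (Lemma~\ref{lmTildePhiIsHolder}) and $m_{\widetilde\phi}=\mu_\phi$ (Corollary~\ref{corMandMuUnique}); both routes are valid and rest on the same underlying convergence.
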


We note that when $\phi \equiv 0$ and $x_n=x_{n+1}$ for each $n\in\N$, the versions (\ref{eqWeakConvPreImgToMWithWeight}) and (\ref{eqWeakConvPreImgToMuTildeWithWeight}) reduce to (1.2) of Theorem~1.2 in \cite{Li13}.

\begin{proof}
We note that (\ref{eqWeakConvPreImgToMuSumWithWeight}) follows directly from Lemma~\ref{lmDerivConv}.

The proof of (\ref{eqWeakConvPreImgToMWithWeight}) is similar to that of Lemma~\ref{lmDerivConv}. For the completeness, we include it here in detail.

For each sequence $\{x_n\}_{n\in\N}$ of points in $S^2$, and each $u\in\CCC(S^2,d)$, by (\ref{eqR^nExpr}) and (\ref{eqDefR-}) we have
\begin{equation*}
  \langle \nu_n, u \rangle 
=  \frac{\RR^n_\phi (u)(x_n)}{\RR^n_\phi (\mathbbm{1})(x_n)}  
= \frac{\RR^n_{\overline\phi} (u)(x_n)}{\RR^n_{\overline\phi} (\mathbbm{1})(x_n)}.
\end{equation*}
By Theorem~\ref{thmRR^nConv},
\begin{equation*}
\Norm{\RR^n_{\overline\phi}(\mathbbm{1}) - u_\phi }_\infty  \longrightarrow  0 \text{ and }\Norm{\RR^n_{\overline\phi}(u) - u_\phi \int \! u \, \mathrm{d} m_\phi }_\infty  \longrightarrow  0
\end{equation*}
as $n\longrightarrow +\infty$. So by (\ref{eqU_phiBounds}),
\begin{equation*}
\lim\limits_{n\to+\infty} \frac{\RR^n_{\overline\phi} (u)(x_n)}{\RR^n_{\overline\phi} (\mathbbm{1})(x_n)} = \int \! u \, \mathrm{d}m_\phi.
\end{equation*}
Hence, (\ref{eqWeakConvPreImgToMWithWeight}) holds.

Finally, (\ref{eqWeakConvPreImgToMuTildeWithWeight}) follows from (\ref{eqWeakConvPreImgToMWithWeight}) and the fact that $\widetilde\phi \in \Holder{\alpha}(S^2,d)$ (Lemma~\ref{lmTildePhiIsHolder}) and $m_{\widetilde\phi} = \mu_\phi$ (Corollary~\ref{corMandMuUnique}).
\end{proof}

For the rest of this section, we prove that almost surely,
$$
\frac{1}{n}\sum\limits_{i=0}^{n-1} \delta_{q_i}  \stackrel{w^*}{\longrightarrow} \mu_\phi \text{ as } n\longrightarrow +\infty,
$$
where $q_i \in S^2$, $i\in \N_0$, is the location of the $i$-th step of a certain random walk on $S^2$ induced by $\RR_{\widetilde\phi}$ starting from an arbitrary fixed starting position $q_0 \in S^2$. This result generalizes Theorem~7.1 of \cite{Li13}, which is Theorem~\ref{thmASConvToES} in the case when $\phi\equiv 0$.

More precisely, let $Q=\RR_{\widetilde\phi}$. Then for each $u\in\CCC(S^2)$,
$$
Qu(x) = \int \! u(y) \,\mathrm{d}\mu_x(y),
$$
where
$$
\mu_x = \sum\limits_{z\in f^{-1}(x)} \deg_f(z) \exp\big( \widetilde\phi (z)\big) \delta_z.
$$
By (\ref{eqRtildePhi1=1}), we get that $\mu_x\in\PPP(S^2)$ for each $x\in S^2$. We showed that the Ruelle operator in (\ref{eqDefRuelleOp}) is well-defined, from which it immediately follows that the map $x\mapsto \mu_x$ from $S^2$ to $\PPP(S^2)$ is continuous with respect to weak$^*$ topology on $\PPP(S^2)$. The operator $Q$ (or equivalently, the measures $\mu_x$, $x\in S^2$) and an arbitrary starting point $q_0\in S^2$ determine a random walk $\{q_i\}_{i\in\N_0}$ on $S^2$ with the probability that $q_{i+1}\in A$ is equal to $\mu_{q_i}(A)$ for each $i\in\N_0$ and each Borel set $A\subseteq S^2$. In the language of \cite[Section~8]{Li13}, this random walk is a \emph{Markov process determined by the operator $Q$}. We refer the reader to \cite[Section~8]{Li13} for a more detailed discussion.

\begin{theorem}    \label{thmASConvToES}
Let $f$, $d$, $\phi$, $\alpha$ satisfy the Assumptions. Let $\mu_\phi$ be the unique equilibrium state for $f$ and $\phi$, and $\widetilde\phi$ be as defined in (\ref{eqDefPhiTilde}). Suppose that $q_0\in S^2$ and $\{q_i\}_{i\in\N_0}$ is the random walk determined by $Q=\RR_{\widetilde\phi}$ described above. Then almost surely,
$$
\frac{1}{n}\sum\limits_{i=0}^{n-1} \delta_{q_i}  \stackrel{w^*}{\longrightarrow} \mu_\phi \text{ as } n\longrightarrow +\infty.
$$
\end{theorem}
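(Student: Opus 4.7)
The plan is to realize the empirical measures $\frac{1}{n}\sum_{i=0}^{n-1}\delta_{q_i}$ as the time averages of a Markov chain whose transition operator is $Q=\RR_{\widetilde\phi}$, and then invoke the almost-sure ergodic theorem of H.~Furstenberg and Y.~Kifer from \cite{FK83}. The essential input has already been produced in the preceding sections: $Q$ is a Markov operator with a unique invariant probability measure, and that measure is precisely $\mu_\phi$.

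First, I would verify that the setup is a \emph{Feller--Markov} process on the compact metric space $(S^2,d)$. By Lemma~\ref{lmRtildeNorm=1} and equation (\ref{eqRtildePhi1=1}), the operator $Q$ is positive and satisfies $Q\mathbbm{1}=\mathbbm{1}$, so $Q\:\CCC(S^2)\to\CCC(S^2)$ is a continuous Markov operator. The transition kernel $x\mapsto \mu_x$ is weak$^*$-continuous (which is just the statement that $Qu\in\CCC(S^2)$ for $u\in\CCC(S^2)$), so the corresponding Markov chain with arbitrary deterministic start $q_0\in S^2$ is well-defined on the canonical path space $(\Omega,\mathbb{P}_{q_0})$, where $\Omega=(S^2)^{\N_0}$ and $\mathbb{P}_{q_0}$ is the probability measure determined by the kernel $\{\mu_x\}_{x\in S^2}$.

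Next, I would recall the precise form of the Furstenberg--Kifer theorem \cite{FK83} needed here: for a Feller--Markov process on a compact metric space, if there is a \emph{unique} Borel probability measure $\nu$ satisfying $Q^*\nu=\nu$, then for every starting point $q_0$, the empirical distributions $\frac{1}{n}\sum_{i=0}^{n-1}\delta_{q_i}$ converge in the weak$^*$ topology to $\nu$ almost surely with respect to $\mathbb{P}_{q_0}$. The uniqueness hypothesis is exactly the content of Corollary~\ref{corMandMuUnique}, which asserts that $\mu_\phi=m_{\widetilde\phi}$ is the only Borel probability measure on $S^2$ fixed by $\RR^*_{\widetilde\phi}$. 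Applying the cited theorem with $\nu=\mu_\phi$ yields the desired almost-sure weak$^*$-convergence.

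The main (and essentially only) obstacle is to cite the Furstenberg--Kifer result in the exact form just described and to confirm that our continuous transition kernel satisfies their hypotheses; once this is done, the theorem is a black box and no further analytic estimates are needed. In particular, all the dynamical heavy lifting---the existence of $\mu_\phi$, its characterization as the unique fixed point of $\RR^*_{\widetilde\phi}$, and the H\"older regularity of $\widetilde\phi$ (Lemma~\ref{lmTildePhiIsHolder}) underlying that uniqueness---has already been carried out, so the proof of Theorem~\ref{thmASConvToES} itself should consist of only a few lines.
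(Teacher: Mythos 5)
Your proposal is correct and takes essentially the same route as the paper: the paper's proof also reduces the statement to Corollary~\ref{corMandMuUnique} (uniqueness of the fixed point of $\RR^*_{\widetilde\phi}$) and then applies the Furstenberg--Kifer theorem as a black box. The extra remarks you make about the Feller property and $Q\mathbbm{1}=\mathbbm{1}$ are just spelling out the hypotheses that the paper leaves implicit by pointing to its earlier discussion and to \cite{Li13}.
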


\begin{proof}
By Corollary~\ref{corMandMuUnique}, the equilibrium state $\mu_\phi$ is the unique Borel probability measure on $S^2$ that satisfies $Q^*(\mu_\phi)=\mu_\phi$. Then the theorem follows directly from a theorem of H.~Furstenberg and Y.~Kifer in \cite{FK83} formulated as Theorem~7.2 in \cite{Li13}.
\end{proof}

\end{document}